\documentclass[a4paper,12pt]{amsart}
\usepackage{bbm}
\usepackage{units}
\usepackage{mathtools}
\usepackage{cancel}
\usepackage{amsmath}
\usepackage{amsfonts}
\usepackage{latexsym}
\usepackage{amssymb}
\usepackage{mathrsfs}
\usepackage{amscd}
\usepackage{hyperref}
\usepackage{soul}
\usepackage{psfrag}
\usepackage{graphicx}
\usepackage{ulem}
\usepackage{fullpage}
\usepackage[all]{xy}
\usepackage{rotating}
\usepackage{url}
\usepackage{color}
\usepackage{enumerate}
\usepackage{cleveref}
\usepackage{dsfont}
\usepackage{tikz}
\usetikzlibrary{arrows,backgrounds,arrows.meta,decorations.markings}
\usepackage{tikz-cd}


\numberwithin{equation}{section}
\numberwithin{figure}{section}

\theoremstyle{plain}
\newtheorem{thm}{Theorem}[section]

\theoremstyle{plain}
\newtheorem{lem}[thm]{Lemma}

\theoremstyle{remark}
\newtheorem{rem}[thm]{Remark}

\theoremstyle{plain}

\theoremstyle{definition}
\newtheorem{defn}[thm]{Definition}

\theoremstyle{definition}

\theoremstyle{definition}

\theoremstyle{plain}
\newtheorem{prop}[thm]{Proposition}

\theoremstyle{plain}
\newtheorem{fact}[thm]{Fact}

\theoremstyle{definition}

\theoremstyle{plain}


\newcommand{\comments}[1]{}
\newcommand{\ra}{\rightarrow}
\newcommand{\rab}{\rangle}
\newcommand{\lra}{\longrightarrow}

\newcommand{\lab}{\langle}

\newcommand{\mcal}{\mathcal}

\newcommand{\N}{\mathbb N}

\newcommand{\C}{\mathbb{C}}

\newcommand{\mscr}{\mathscr}
\newcommand{\vlon}{\varepsilon}

\newcommand{\btimes}{\boxtimes}


\setlength{\parskip}{0.5ex plus 0.1ex minus -0.2ex}


\title{Unitary connections on Bratteli diagrams}
\author{Paramita Das, Mainak Ghosh, Shamindra Ghosh and Corey Jones}
\newcommand{\Contact}{{
		\bigskip
		\footnotesize
		
		Paramita Das, \textsc{Stat-Math Unit, Indian Statistical Institute}\par\nopagebreak
		\textit{E-mail address}: \texttt{paramita.isi@gmail.com}
		
		\medskip

		Mainak Ghosh, \textsc{Stat-Math Unit, Indian Statistical Institute}\par\nopagebreak
		\textit{E-mail address}: \texttt{main\_ghosh@rediffmail.com}

		\medskip

		Shamindra Ghosh, \textsc{Stat-Math Unit, Indian Statistical Institute}\par\nopagebreak
		\textit{E-mail address}: \texttt{shamindra.isi@gmail.com}

		\medskip
		
		Corey Jones, \textsc{Department of Mathematics, North Carolina State University}\par\nopagebreak
		\textit{E-mail address}: \texttt{cmjones6@ncsu.edu}
}}

\begin{document}
	\global\long\def\vlon{\varepsilon}
	\global\long\def\bt{\bowtie}
	\global\long\def\ul#1{\underline{#1}}
	\global\long\def\ol#1{\overline{#1}}
	\global\long\def\norm#1{\left\|{#1}\right\|}
	\global\long\def\os#1#2{\overset{#1}{#2}}
	\global\long\def\us#1#2{\underset{#1}{#2}}
	\global\long\def\ous#1#2#3{\overset{#1}{\underset{#3}{#2}}}
	\global\long\def\t#1{\text{#1}}
	\global\long\def\lrsuf#1#2#3{\vphantom{#2}_{#1}^{\vphantom{#3}}#2^{#3}}
	\global\long\def\tr{\triangleright}
	\global\long\def\tl{\triangleleft}
	\global\long\def\cc90#1{\begin{sideways}#1\end{sideways}}
	\global\long\def\turnne#1{\begin{turn}{45}{#1}\end{turn}}
	\global\long\def\turnnw#1{\begin{turn}{135}{#1}\end{turn}}
	\global\long\def\turnse#1{\begin{turn}{-45}{#1}\end{turn}}
	\global\long\def\turnsw#1{\begin{turn}{-135}{#1}\end{turn}}
	\global\long\def\fusion#1#2#3{#1 \os{\textstyle{#2}}{\otimes} #3}
	
	\global\long\def\abs#1{\left|{#1}\right	|}
	\global\long\def\red#1{\textcolor{red}{#1}}
\maketitle
\section{Introduction}

\begin{abstract}
In this paper, we extend Ocneanu's theory of connections on graphs to define a 2-category whose 0-cells are tracial Bratteli diagrams, and whose 1-cells are generalizations of unitary connections. We show that this 2-category admits an embedding into the 2-category of hyperfinite von Neumann algebras, generalizing fundamental results from subfactor theory to a 2-categorical setting.
\end{abstract}

Jones seminal results on the index for subfactors gave rise to the modern theory of subfactors \cite{Jo2}. Popa proved that amenable finite index subfactors of the hyperfinite $\rm{II}_{1}$ factor are completely classified by their standard invariant \cite{Po1}, which are axiomatized in general by standard $\lambda$-lattices \cite{Po2} or planar algebras \cite{Jo1}. This has led to remarkable progress in the classification of finite index hyperfinite subfactors, by transforming a large part this fundamentally analytic problem to the (essentially) algebraic problem of classifying abstract standard invariants \cite{JMS}, \cite{AMP}.

In the finite depth setting, Ocneanu introduced and established the theory of biunitary connections on 4-partite graph as an essential tool for constructing hyperfinite subfactors. Biunitary connections feature in his paragroup axiomatization of finite depth standard invariants (\cite{O0}, \cite{EvKaw}) but can also be used to construct infinite depth hyperfinite subfactors from finite graphs \cite{Sch}. While the other approaches to standard invariants are now more common, the theory of biunitary connections remain an important ingredient in the construction and classification of hyperfinite subfactors \cite{EvKaw}, \cite{JMS}. Many features of subfactor theory now have a clear higher-categorical interpretation (\cite{Mu1}, \cite{CPJP}, \cite{JMS}), and while there is some work investigating biunitary connections from a categorical viewpoint (\cite{Ch}), the general theory of biunitary connections and particularly their role in hyperfinite subfactor construction has remained mysterious from the categorical viewpoint.

In this paper, we shed some light on this problem by showing that graphs and bi-unitary connections can be viewed naturally as part of a larger W* 2-category $\textbf{UC}^\t{tr}$ (see \Cref{unicontr}). We then build a 2-functor to the 2-category of tracial von Neumann algebras, which puts the hyperfinite subfactor construction from biunitary connections into a larger categorical context. The 0-cells (or objects) of the 2-category $\textbf{UC}^{tr}$ are Bratteli diagrams equipped with tracial weighting data. These generalize the Bratteli diagrams arising from taking the tower of relative commutants of a finite index subfactor. 1-cells in our 2-category are \textit{unitary connections} between Bratteli diagrams which are compatible with the tracial data. These naturally generalize Ocneanu's biunitary connections from subfactor theory to our Bratteli diagram setting. Finally, the 2-cells of our category are built as certain fixed points under a UCP map, strongly resembling a noncommutative Poisson boundary as in \cite{Izm}, \cite{NY}.

Recall $\textbf{vNAlg}$ denotes the 2-category of von Neumann algebras, bimodules, and intertwiners. The following is the main theorem of the paper.

\begin{thm}\label{mainthm}
There is a {\normalfont W*} $ 2 $-functor $\normalfont \mathcal{PB}:\textbf{UC}^\t{tr}\rightarrow \textbf{vNAlg}$ which is fully faithful at the level of $ 2 $-cells.
\end{thm}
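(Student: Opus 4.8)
The plan is to build the 2-functor $\mathcal{PB}$ cell-dimension by cell-dimension and then concentrate the analytic work on the fully-faithfulness statement. On 0-cells, a tracial Bratteli diagram $\mathcal{B}$ determines a tower of finite-dimensional C*-algebras whose inclusion multiplicities are read off from the edges of $\mathcal{B}$ and whose trace is fixed by the weighting data; taking the GNS completion of the inductive limit with respect to this trace yields a hyperfinite tracial von Neumann algebra $\mathcal{PB}(\mathcal{B})$, a 0-cell of $\textbf{vNAlg}$. On 1-cells, a unitary connection from $\mathcal{B}$ to $\mathcal{C}$ supplies coherent finite-level intertwining data satisfying the unitarity relations, and I would assemble these into a sequence of finite-dimensional bimodules whose inductive limit (completed in the appropriate norm) is a bimodule ${}_{\mathcal{PB}(\mathcal{B})}H_{\mathcal{PB}(\mathcal{C})}$. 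This is precisely the bimodule incarnation of Ocneanu's hyperfinite subfactor construction, now read categorically.

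On 2-cells, the connection data for a pair of parallel 1-cells induces a UCP map $P$ on a space of finite-level intertwiners, and by construction the 2-cells of $\textbf{UC}^{\t{tr}}$ are exactly the $P$-fixed points; I would define $\mathcal{PB}$ on such a fixed point by passing to the bimodule intertwiner obtained through martingale convergence of its finite-level approximants. The routine verifications are that $\mathcal{PB}$ preserves identities and both compositions: vertical composition of 2-cells must match composition of intertwiners, horizontal composition must match the relative (Connes) tensor product of bimodules, and the interchange law should follow from compatibility of the connection relations with the UCP structure. That $\mathcal{PB}$ is a W* 2-functor (linearity and $*$-preservation on 2-cells) reduces to the corresponding properties at each finite level, which pass to the limit.

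The main obstacle, and the heart of the theorem, is full faithfulness at the level of 2-cells: for parallel 1-cells $f, g \colon \mathcal{B} \to \mathcal{C}$ the induced map
\[
\mathcal{PB} \colon \t{Hom}_{\textbf{UC}^{\t{tr}}}(f,g) \longrightarrow \t{Hom}_{\textbf{vNAlg}}\big(\mathcal{PB}(f), \mathcal{PB}(g)\big)
\]
must be a linear isomorphism. This is exactly a noncommutative Poisson boundary identification in the spirit of \cite{Izm}, \cite{NY}: the space of bimodule intertwiners is to be identified with the space of bounded $P$-harmonic elements. For injectivity I would show that a fixed point whose associated intertwiner vanishes is already zero, using that the finite-level data is recovered by faithful compressions of the intertwiner. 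The substantive direction is surjectivity: given $T \in \t{Hom}(\mathcal{PB}(f), \mathcal{PB}(g))$, I would restrict $T$ to each finite level, verify that the resulting sequence is a bounded $P$-martingale, and invoke martingale convergence in the tracial von Neumann algebra to extract a genuine $P$-fixed point mapping back to $T$. The key technical input is a normality/continuity estimate guaranteeing the harmonic element so produced is bounded and lands in the correct fixed-point space; this is where the tracial compatibility of the connection and the UCP structure of $P$ are essential, and where I expect the argument to be most delicate.
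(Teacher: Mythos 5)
Your plan follows the same route as the paper: $\mathcal{PB}$ on $0$-cells is the GNS/trace completion, on $1$-cells the completed inductive-limit bimodule, and on $2$-cells the identification of bounded harmonic (quasi-flat) sequences with bimodule intertwiners, proved injective by compressing an intertwiner back to finite levels and surjective by restricting a given $T$ to finite levels; your ``martingale convergence'' of finite-level approximants is exactly the paper's SOT-convergence argument (\Cref{Spropiv}), and your compression scheme is \Cref{BQFSprop} and \Cref{BQFSthm}. The genuine gap is at the $1$-cell stage. Completing $H_\infty$ in the trace inner product gives a Hilbert space $H$ on which $A_\infty$ and $B_\infty$ act boundedly, but for $\mathcal{PB}(\Lambda_\bullet)$ to be a $1$-cell of $\textbf{vNAlg}$ the left action must extend to a \emph{normal} action of the von Neumann algebra $A=A_\infty''$ built on $L^2(A_\infty,\t{Tr}_{A_\infty})$, and this is not automatic: the representation of $A_\infty$ on $H$ could a priori fail to be quasi-contained in the GNS representation; concretely, the canonical trace $\t{Tr}'$ on $\mcal L_B(H)$ coming from the Pimsner--Popa basis need not restrict on $A_\infty$ to anything equivalent to $\t{Tr}_{A_\infty}$. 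This is precisely the point of the boundedness condition \Cref{bndonwt} in the definition of $\textbf{UC}^{\t{tr}}_1$ and of \Cref{vNext}: one forms the natural transformations $\theta^k$ with entries $\mu^k_v/\left[\Lambda'_k\,\ul\nu^k\right]_v$, uses \Cref{bndonwt} to get the uniform bounds $M^{-1}\le T_k$ and $\norm{T_k}\le \epsilon^{-1}$ on the associated operators, extracts a WOT cluster point $T_0$, and checks that it is a positive, invertible, central element of $A_\infty''\subset\mcal L(H)$ satisfying $\t{Tr}'\left(\Phi_{\Lambda_\bullet\alpha}\,T_0\right)=\t{Tr}_{A_\infty}(\alpha)$, so that $\t{Tr}_{A_\infty}$ extends to a faithful normal trace on $A_\infty''$ and the normal extension follows. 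Your proposal never invokes the boundedness condition and folds all of this into ``completed in the appropriate norm'', so the functor is not actually defined on $1$-cells yet.

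A second, smaller gap: you list preservation of the two compositions as ``routine verifications'' passing to the limit from finite levels, but in $\textbf{UC}^{\t{tr}}$ the vertical and horizontal compositions of $2$-cells are themselves \emph{defined} by limits of the form $\lim_{l} S_{k+1}\cdots S_{k+l}\left(\kappa^{(k+l)}\circ\eta^{(k+l)}\right)$ (\Cref{2cellcompdef}), whose convergence is not known in advance; the paper deliberately avoids the Choi--Effros/Poisson-boundary route and instead proves existence of these limits as a by-product of the embedding itself (\Cref{2cellcomplim} and its horizontal analogue), using finite-dimensionality of the level-$k$ intertwiner spaces together with the already-constructed operators. So either you must prove these limits exist directly (nontrivial), or, if you define composition abstractly as a Choi--Effros product on the fixed-point space of your UCP map, then the compatibility of that product with composition of bimodule intertwiners is precisely the content to be established, not a finite-level check.
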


We note that the von Neumann algebras in the image of $\mathcal{PB}$ are always hyperfinite by construction. To see how the usual subfactor theory construction fits into this story, from a 4-partite graph and a biuntary connection we build a pair of tracial Bratteli diagrams by repeatedly reflecting the ``vertical" bipartite graphs, and taking the Markov trace as data. The horizontal graphs and the biunitary connection assemble into a 1-morphism in $\textbf{UC}^\t{tr}$ from this pair of tracial Bratteli diagrams. By carefully choosing the initial vertex data, we can build a unital inclusion of hyperfinite von Neumann algebras from this data, which we will see is just a special case of our construction \Cref{subfactors}. One way of looking at our result is that we are generalizing connections to be 1-morphisms between graphs that can be composed. Our main result is that a ``compositional'' version of the subfactor construction holds, and many of the results from the subfactor setting are true for bimodules as well. For example, it is well known that the relative commutants of the subfactor constructed as above can be computed as the ``flat part'' of the initial biunitary connection. We prove a generalization of this \Cref{periodicOcneanucompact}. 

To motivate our definitions in $\textbf{UC}^\t{tr}$, we first consider a purely algebraic category $\textbf{UC}$ consisting of Bratteli diagrams (without tracial data), unitary connections between them, and natural intertwiners between connections which we call flat sequences \Cref{unicon}. This 2-category is essentially equivalent to the 2-category studied in \cite{CPJ} in the context of fusion category actions on AF-C*-algebras, with only minor differences at the level of 0-cells and 2-only. As in \cite{CPJ}, from a 0-cell we define an AF-algebra\footnote{we are slightly abusing terminology: by AF-algebra we mean inductive limit of finite dimensional C*-algebras in the category of *-algebras, so we do not complete in norm}. We see that the 1-morphisms in $\textbf{UC}$ are precisely the data we need to define inductive limit bimodules between the AF-algebras built from the 0-cells, and the 2-cells in $\textbf{UC}$ are precisely the intertwiners between the resulting bimodules. Then picking a tracial state on the AF-algebras, we ask which 1-cell bimodules extend to the corresponding von Neumann completion, and if they do, what are the morphisms between them? This consideration leads us precisely to our definitions of 1-cell and 2-cell in $\textbf{UC}^\t{tr}$, which answers this question and proves our main theorem simultaneously.

There are at least two natural questions arising from our investigations. First, which bimodules between hyperfinite von Neumann algebras can be realized by this construction? This is deeply related to the question about possible values of the index for irreducible hyperfinite subfactors and the recent work of Popa \cite{Po5}. Second, how is the story modified if we pick arbitrary states on our Bratteli diagrams instead of tracial ones? This could have interesting applications for the study of defects and categorical symmetries in 1-D spin chains.

The outline of the paper is as follows. In Section 2, we record some categorical preliminaries and introduce notation. In Section 3, we detail the 2-category $\textbf{UC}$, and its realization as a category of bimodules over AF-algebras. In Section 4, we introduce $\textbf{UC}^\t{tr}$ and prove our main theorem in Section 5. In Section 6, we investigate flatness and in Section 7 we consider some examples, including the relationship between our work and classical subfactor constructions, as well as the work of Izumi \cite{Izm}. 

\subsection*{Acknowledgements}
The authors would like to thank David Penneys for helpful comments and enlightening conversations. Corey Jones was supported by 
NSF Grant DMS-2100531.
\section{Preliminaries and notations}

In this section, we set up several notations and state well-known facts which will be used in the latter part of this article.
Although, we have not used any pictures in this section, we urge the reader to translate the expressions in terms of pictures using standard graphical calculus of morphisms and see what pictorial moves are given by the equations and maps.

\subsection{Categorical trace}\label{CaTr}
Let $ \mcal M $ be a semisimple C*-category and $ V $ be a maximal set of mutually non-isomorphic simple objects
in $ \mcal M $.
For all $ v \in V $, $ x \in \t{ob} (\mcal M) $, consider the inner product $ \lab \cdot , \cdot \rab_{v,x} $ on $ \mcal M (v ,x) $ defined by $ \tau^* \sigma = \lab \sigma , \tau \rab_{v,x} \ 1_v $.
An orthonormal basis for such spaces is basically a maximal orthogonal family of isometries in $ \mcal M (v,x) $.

\noindent\textbf{Convention.} If a statement is independent of the choice of orthonormal basis for $ \mcal M (v,x) $, then we denote it by ONB$ (v,x) $.
For instance, $ 1_x = \us {v \in V} \sum\ \us {\sigma \in \t{ONB} (v,x)} \sum \sigma \sigma^* $.

Given a map $ \mu : V \ra (0,\infty) $ (referred as a \textit{weight function on $ \mcal M $}), consider the linear functional
\[
\t{End} (x) \ni \alpha \os {\displaystyle \t{Tr}_x} \longmapsto \ \us {v \in V} \sum\ \us {\sigma \in \t{ONB} (v,x)} \sum \mu_v \ \left\lab \alpha \sigma , \sigma \right \rab_{v,x} \ \in \C \ .
\]
Clearly, $ \t{Tr}_x $ is a faithful, positive functional.
Moreover, $ \t{Tr} = \left(\t{Tr}_x\right)_{x\in \t{ob} (\mcal M)}  $ is a `categorical' trace, namely it satisfies $ \t{Tr}_x  (\alpha \beta) = \t{Tr}_y  (\beta \alpha )$ for all $ \alpha \in \mcal M (y,x) $, $ \beta \in \mcal M (x,y)$.
We refer Tr as the \textit{categorical trace associated to the weight function $ \mu $}.

\subsection{Graphs and functors}\label{GraFunc}

Let $ \Gamma = (V_\pm,E)$ (also denoted by $ V_- \os {\displaystyle \Gamma} \lra V_+ $) be a bipartite graph with vertex sets $ V_\pm$ and edge sets $ E_{v_+ , v_-} $ for $ (v_+ , v_-) \in V_+ \times V_- $, such that the set of edges attached to any vertex is non-empty and finite.
Consider the semisimple C*-category $ \mcal M_\pm $ whose objects consists of finitely supported $ V_\pm $-graded finite dimensional Hilbert spaces.
Note that $ \Gamma $ induces the following pair of faithful functors
\[
\mcal M_- \ni \left(H_{v_-}\right)_{v_-\in V_-}  \os {F_+ }\longmapsto \left( \us {v_- \in V_-} \oplus H_{v_-} \otimes \ell^2(E_{ v_+ , v_- }) \right)_{v_+ \in V_+} \in \mcal M_+
\]
\[
\mcal M_+ \ni \left(H_{v_+}\right)_{v_+\in V_+}  \os {F_- }\longmapsto \left( \us {v_+ \in V_+} \oplus H_{v_+} \otimes \ell^2(E_{v_+ , v_- }) \right)_{v_- \in V_-} \in \mcal M_-
\]
where the action of each of the functors on a morphism is obtained by distributing it over the direct sum and tensor product keeping the edge vectors (in $ \ell^2 (E_{v_+ , v_- }) $'s) fixed.
One can easily show that such $ F_\pm $ is $ * $-linear, \textit{bi-faithful} (that is, both itself and it adjoint are faithful), and $ F_+ $ and $ F_- $ are adjoints of each other.
Conversely, every adjoint pair of $ * $-linear faithful functors $ F_\pm : \mcal M_\mp \ra \mcal M_\pm  $ between semisimple C*-categories $ \mcal M_\pm $, gives rise to such a bipartite graph by setting the vertex set $ V_\pm $ as a maximal set of mutually non-isomorphic simple objects in $ \mcal M_\pm $
, and edge set $ E_{v_+,v_-} $ as a choice of orthonormal basis in $ \mcal M_+ (v_+ , F_+ v_-) $ with respect to $ \lab \cdot , \cdot \rab_{v_+ , F_+ v_-} $ (defined in \Cref{CaTr}).

For $ F_\pm $, $ \mcal M_\pm  $, $ V_\pm $ as before, we will try to characterize the set of solutions to conjugate equations implementing the duality of $ F_\pm $.
At this point, it will be useful for us to introduce some pictorial notation for morphisms and natural transformations which are quite standard in articles appearing in category theory.
\subsection*{Pictorial notation}
(i) A morphism $ f:C\ra D $ will be denoted by
\raisebox{-8mm}{
	\begin{tikzpicture}
		\node[draw,thick,rounded corners, fill=white] at (0,0) {$f$};
		\begin{scope}[on background layer]
			\draw[dashed,thick] (0,-.9) to (0,.9);
		\end{scope}	
		\node[right] at (-0.1,0.6) {$D$};
		\node[right] at (-0.1,-0.6) {$C$};
	\end{tikzpicture}
}, and composition of two morphisms will be represented by two vertically stacked labelled boxes.

(ii) Let $\mcal C$ and $\mcal D$ be two categories and $F,G: \mcal C \ra \mcal D$ be two functors.
Then a natural transformation $\eta:F \ra G$ will be denoted by 
\raisebox{-8mm}{
	\begin{tikzpicture}
		\node[draw,thick,rounded corners,fill = white] at (0,0) {$\eta$};
		\begin{scope}[on background layer]
			\draw (0,-.8) to (0,.8);
		\end{scope}
		\node[right] at (-0.1,0.6) {$G$};
		\node[right] at (-0.1,-0.6) {$F$};
	\end{tikzpicture}
}.
For an object $x$ in $\mcal C$, the morphism $\eta_{x} :Fx \ra Gx$ will be denoted by
\raisebox{-8mm}{
	\begin{tikzpicture}
		\node[draw,thick,rounded corners,fill=white] at (0,0) {$\eta_{x}$};
		\begin{scope}[on background layer]
			\draw (-.2,-.8) to (-.2,.8);
			\draw[thick,dashed] (.2,-.8) to (.2,.8);
		\end{scope}
		\node[left] at (-0.07,0.6) {$G$};
		\node[left] at (-0.07,-0.6) {$F$};
		\node[right] at (0.07,0.6) {$x$};
		\node[right] at (0.07,-0.6) {$x$};
		\node[left=] at (1,0) {$ = $};
		\node[draw,thick,rounded corners,fill=white] at (1.3,0) {$\eta$};
		\begin{scope}[on background layer]
			\draw (1.3,-.8) to (1.3,.8);
		\end{scope}
		\node[right] at (.8,0.6) {$G$};
		\node[right] at (.8,-0.6) {$F$};
		\draw[thick,dashed] (1.7,-0.8) to (1.7,0.8);
		\node[right] at (1.6,0) {$x$};
	\end{tikzpicture}
}.

(iii)
For a $ * $-linear functor $ F: \mcal C \ra \mcal D$ between two semisimple C*-categories categories, we will denote a solution to conjugate equation by
\[
\rho = \raisebox{-2.5mm}{
	\begin{tikzpicture}
		\draw (-.6,.3) to[out=-90,in=180] (0,-.3) to[out=0,in=-90] (.6,.3);
		\node[right] at (0.4,0.1) {$F'$};
		\node[left] at (-0.5,0.1) {$F$};
	\end{tikzpicture}
}\!\!\! : \t{id}_{\mcal D} \lra FF' \ \ \t{ and } \ \ 
\rho' = \raisebox{-2.5mm}{
	\begin{tikzpicture}
		\draw (-.6,.3) to[out=-90,in=180] (0,-.3) to[out=0,in=-90] (.6,.3);
		\node[right] at (0.4,0.1) {$F$};
		\node[left] at (-0.5,0.1) {$F'$};
	\end{tikzpicture}
}\!\!\! : \t{id}_{\mcal C} \lra F 'F 
\]
\[
\rho^* = \raisebox{-2.5mm}{
	\begin{tikzpicture}
		\draw (-.6,-.3) to[out=90,in=180] (0,.3) to[out=0,in=90] (.6,-.3);
		\node[right] at (0.5,-0.1) {$F'$};
		\node[left] at (-0.45,-0.1) {$F$};
	\end{tikzpicture}
}\!\!\! : FF' \lra \t{id}_{\mcal D} \ \ \t{ and } \ \ 
\left[\rho' \right]^* = \raisebox{-2.5mm}{
	\begin{tikzpicture}
		\draw (-.6,-.3) to[out=90,in=180] (0,.3) to[out=0,in=90] (.6,-.3);
		\node[right] at (0.5,-0.1) {$F$};
		\node[left] at (-0.4,-0.1) {$F'$};
	\end{tikzpicture}
}\!\!\! : F 'F \lra  \t{id}_{\mcal C}
\]
where $ F' : \mcal D \ra \mcal C $ is an adjoint functor of $ F $.

We will extend the above dictionary (between things appearing in the category world and pictures) in an obvious way.
For instance, composition of morphisms and natural transformations will be pictorially represented by stacking the boxes vertically whereas tensor product (resp., composition) of objects (resp., functors) by parallel vertical strings.
For simplicity, sometimes we will not label all of the strings (with any object or functor) emanating from a box (labelled with a morphism or a natural transformation) when it can be read off from the context.
We urge the reader to get used to the various picture moves which are induced by relations satisfied by operations, such as, composition, tensor product, etc. between objects, morphisms, functors and natural transformations.
In fact, the main purpose of introducing this graphical language is because of the ease of working with these moves instead of long equations.\\

\begin{fact}
If $ \rho^\pm : \t{id}_{\mcal M_\pm} \ra F_\pm F_\mp $ is a solution to the conjugate equation for $ F_\pm $, then for each $ (v_+,v_-) \in V_+ \times V_- $, there exists an orthonormal basis $ E_{v_+,v_-} $ of $ \mcal M_+( v_+ , F_+ v_-) $ and a `weight' function $ \kappa_{v_+,v_-} : E_{v_+,v_-} \ra (0,\infty) $ and  satisfying the following:
\begin{equation}\label{sol2conj}
\left(\rho^-_{v_-}\right)^* \  F_- \left(\sigma \ \tau^*\right) \ \rho^-_{v_-}\ =\ \delta_{\sigma = \tau} \ \kappa_{v_+,v_-} (\sigma) \  1_{v_-}\ \t{ for all } \ \sigma , \tau \in E_{v_+,v_-}, \ v_\pm\in V_\pm.
\end{equation}
Conversely, to every such family of orthonormal bases and weight functions, one can associate a solution to the conjugate equations implementing the duality of $ F_\pm $ satisfying \Cref{sol2conj}.
\end{fact}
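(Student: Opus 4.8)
The plan is to reduce \Cref{sol2conj} to a statement about a single sesquilinear form attached to $\rho^-$, and to recognize the whole fact as the assertion that passing from a solution to this form is a bijection onto positive-definite forms, with a simultaneous diagonalization converting forms into orthonormal-basis-plus-weight data. Fix $v_\pm\in V_\pm$ and define, for $\sigma,\tau\in\mcal M_+(v_+,F_+v_-)$, a scalar $B(\sigma,\tau)$ by $\left(\rho^-_{v_-}\right)^* F_-(\sigma\tau^*)\,\rho^-_{v_-}=B(\sigma,\tau)\,1_{v_-}$; this lands in $\t{End}(v_-)=\C\,1_{v_-}$ since $v_-$ is simple, and $B$ is sesquilinear. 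Equation \Cref{sol2conj} is then exactly the statement that the orthonormal basis $E_{v_+,v_-}$ diagonalizes $B$ with eigenvalues $\kappa_{v_+,v_-}(\sigma)$, so the forward direction amounts to showing $B$ is positive-definite, and the converse to realizing a prescribed positive-definite diagonal form by a solution.

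For the forward direction, positivity is immediate: since $F_-$ is $*$-linear, $F_-(\sigma\sigma^*)=F_-(\sigma)F_-(\sigma)^*\ge 0$, whence $B(\sigma,\sigma)\,1_{v_-}=\left(\rho^-_{v_-}\right)^* F_-(\sigma)F_-(\sigma)^*\rho^-_{v_-}\ge 0$. To see nondegeneracy I would introduce the mate map $\Phi:\mcal M_+(v_+,F_+v_-)\to\mcal M_-(F_-v_+,v_-)$, $\Phi(\sigma):=\left(\rho^-_{v_-}\right)^* F_-(\sigma)$. A direct check gives $\Phi(\sigma)\Phi(\tau)^*=B(\sigma,\tau)\,1_{v_-}$, so $B$ is the pullback along $\Phi$ of the genuine inner product $g,h\mapsto gh^*$ on $\mcal M_-(F_-v_+,v_-)$, which is positive-definite because $v_-$ is simple and the categories are C*. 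It remains to show $\Phi$ is a linear isomorphism, and here I would use the conjugate equations: the candidate inverse is $\Psi(g):=F_+(g)\,\rho^+_{v_+}$, and using naturality of $\rho^+$ together with the conjugate equation $F_+\!\left(\left(\rho^-_{v_-}\right)^*\right)\rho^+_{F_+v_-}=1_{F_+v_-}$ one computes $\Psi\circ\Phi=\t{id}$, with $\Phi\circ\Psi=\t{id}$ following symmetrically. Hence $B$ is positive-definite, and simultaneously diagonalizing $B$ against the standard inner product $\lab\cdot,\cdot\rab_{v_+,F_+v_-}$ produces an orthonormal basis $E_{v_+,v_-}$ and strictly positive eigenvalues $\kappa_{v_+,v_-}$ for which \Cref{sol2conj} holds by construction.

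For the converse, given the orthonormal bases and weights I would define the coevaluation explicitly by
\[
\rho^-_{v_-}\ :=\ \us{v_+\in V_+}\sum\ \us{\sigma\in E_{v_+,v_-}}\sum \sqrt{\kappa_{v_+,v_-}(\sigma)}\ \sigma\otimes\sigma,
\]
viewed inside the $v_-$-graded component $\us{v_+\in V_+}\oplus \mcal M_+(v_+,F_+v_-)^{\otimes 2}$ of $F_-F_+v_-$, with each $\sigma$ regarded as the corresponding basis vector, and define $\rho^+$ by the symmetric formula. A short computation --- identical to the one verifying $\Phi(\sigma)\Phi(\tau)^*=B(\sigma,\tau)\,1_{v_-}$ above, but run in reverse --- shows \Cref{sol2conj} holds on the nose, the orthonormality of $E_{v_+,v_-}$ collapsing the double sum to the diagonal and producing the scalar $\delta_{\sigma=\tau}\,\kappa_{v_+,v_-}(\sigma)$. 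One then verifies that $(\rho^-,\rho^+)$ solves the conjugate equations; equivalently, since the associated form $B$ is now diagonal with strictly positive entries, it is positive-definite, so $\Phi$ is invertible and $\rho^-$ is a nondegenerate coevaluation that completes to a genuine solution.

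I expect the main obstacle to be the verification of the conjugate equations in the converse. The positivity half of the forward direction is formal, and the nondegeneracy half is a clean consequence of the snake identities, but checking that the explicitly prescribed $\rho^\pm$ satisfy both conjugate equations requires evaluating $\rho^\pm$ on the non-simple objects $F_\mp v_\pm$ (which decompose with multiplicities read off from the edge sets) and carefully tracking how $F_\pm$ distribute over the direct sums and tensor factors while fixing edge vectors; it is precisely here that the strict positivity of the weights $\kappa_{v_+,v_-}$ is used, guaranteeing the invertibility needed to close the snake. The cleanest route is likely to carry out this bookkeeping in the graphical calculus set up above, where the conjugate equations become isotopies and the weights appear as scalar factors on the cups and caps.
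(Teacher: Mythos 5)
Your proposal follows essentially the same route as the paper: the paper's entire proof is the observation that the Fact follows from the spectral decomposition of the faithful positive functional $\alpha \mapsto \left(\rho^-_{v_-}\right)^* F_-(\alpha)\, \rho^-_{v_-}$ on $\t{End}(F_+ v_-)$, and your form $B$ is exactly this functional evaluated on the block spanned by the $\sigma\tau^*$ with $\sigma,\tau \in \mcal M_+(v_+, F_+ v_-)$. Your forward direction is complete and correct, and in fact supplies the detail the paper elides: faithfulness of the functional, which you obtain from invertibility of the mate map $\Phi(\sigma) = \left(\rho^-_{v_-}\right)^* F_-(\sigma)$ via the snake identities; diagonalizing $B$ against $\lab\cdot,\cdot\rab_{v_+,F_+v_-}$ is then precisely the spectral decomposition the paper invokes.

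One concrete warning about the converse. If ``define $\rho^+$ by the symmetric formula'' means $\rho^+_{v_+} = \sum_{v_-}\sum_{\sigma \in E_{v_+,v_-}} \sqrt{\kappa_{v_+,v_-}(\sigma)}\ \sigma\otimes\sigma$, i.e.\ the same weights as in $\rho^-$, then the pair $(\rho^-,\rho^+)$ does \emph{not} solve the conjugate equations: tracing either zigzag through your coordinates produces the scalar $\kappa_{v_+,v_-}(\sigma)$ on the diagonal instead of $1$, so the snake closes only when $\kappa \equiv 1$. The correct explicit partner uses the reciprocal weights, $\rho^+_{v_+} = \sum_{v_-}\sum_{\sigma} \kappa_{v_+,v_-}(\sigma)^{-1/2}\ \sigma\otimes\sigma$, consistent with the paper's remark immediately after the Fact that the dual basis carries weight function $\kappa_{v_-,v_+} = 1/\kappa_{v_+,v_-}$. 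Your fallback argument does rescue the converse, but note it is not ``equivalent'' to verifying the equations for your explicit $\rho^+$; it is the argument that actually works: $B$ is diagonal with strictly positive entries, hence positive-definite, hence each mate map is injective, hence bijective since $\dim_\C \mcal M_+(v_+,F_+v_-) = \dim_\C \mcal M_-(v_-,F_-v_+)$, and a natural transformation $\t{id}_{\mcal M_-} \ra F_-F_+$ whose mates are all bijective is the unit of an adjunction, whose (unique) counit has adjoint serving as the required $\rho^+$ --- and that $\rho^+$ is the reciprocal-weight one, satisfying \Cref{sol2conj} by construction.
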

The above easily follows from the spectral decomposition of the faithful positive functional $\left[\left(\rho^-_{v_-}\right)^* \  F_- \left( \bullet \right) \ \rho^-_{v_-}\right] : \t{End} (F_+ v_- ) \os{ }{\longmapsto} \t{End} (v_-) = \C 1_{v_-} $.
Thus, from the adjoint pair of $ * $-linear faithful functors, we not only get a bipartite graph, the solution to the conjugate equations puts a positive scalar weight on each edge.
Further, the set $E_{v_-,v_+ } \coloneqq  \left\{ \left(\kappa_{v_+,v_-} (\sigma)\right)^{- \frac 1 2} \ \left[F_-\ \sigma^* \right]\ \rho^-_{v_-} \ : \ \sigma \in E_{v_+, v_-} \right\} $ turns out to be an orthonormal basis of $ \mcal M_- (v_- , F_- v_+) $ and satisfies an equation analogous to \Cref{sol2conj} with weight function $ \kappa_{v_- , v_+} \coloneqq \displaystyle \frac 1 {\kappa_{v_+ , v_-}}  $.

The solution $ \rho^\pm $ will be called `\textit{tracial}' if the weight function is constant on edges for every fixed pair of vertices.
Indeed, for tracial solution $ \rho^\pm $, \Cref{sol2conj} becomes
\begin{equation}\label{trsol2conj}
\begin{split}
\left(\rho^-_{v_-}\right)^* \  F_- \left(\sigma \ \tau^*\right) \ \rho^-_{v_-}\ =\ \kappa_{v_+,v_-} \ \lab \sigma , \tau \rab_{v_+,F_+ v_-} \  1_{v_-}\ \t{ for all } \ \sigma , \tau \in \mcal M_+ (v_+ , F_+ v_-)\\
\left(\rho^+_{v_+}\right)^* \  F_+ \left(\sigma \ \tau^*\right) \ \rho^+_{v_+}\ =\ \kappa_{v_-,v_+} \ \lab \sigma , \tau \rab_{v_-,F_- v_+} \  1_{v_+}\ \t{ for all } \ \sigma , \tau \in \mcal M_- (v_- , F_- v_+)
\end{split}
\end{equation}
and the map $\left[\left(\rho^-_{v_-}\right)^* \  F_- \left( \bullet \right) \ \rho^-_{v_-}\right] $ is tracial and so is $\left[\left(\rho^+_{v_+}\right)^* \  F_+ \left( \bullet \right) \ \rho^+_{v_+}\right]$.
We also get a conjugate linear unitaries
\[
\begin{split}
\mcal M_+ (v_+ , F_+ v_- ) \ni \sigma \ \os {\displaystyle J_{v_+,v_-} } {\longmapsto}\  {\sqrt{\kappa_{v_-,v_+} }} \ \left[F_-\ \sigma^* \right]\ \rho^-_{v_-} \in \mcal M_- (v_- , F_- v_+) \ , \\
\mcal M_- (v_- , F_- v_+ ) \ni \sigma \ \os {\displaystyle J_{v_-,v_+} } {\longmapsto}\  {\sqrt{\kappa_{v_+,v_-} }} \ \left[F_+\ \sigma^* \right]\ \rho^+_{v_+} \in \mcal M_+ (v_+ , F_+ v_-) \ .
\end{split}
\]
The two `loops' are given by:
\begin{equation}\label{loop-anticlock}
\left(\rho^+_\bullet \right)^* \circ \rho^+_\bullet \ =\   \left( \left\{\sum_{v_- \in V_-} \ N_{v_+,v_-} \ \kappa_{v_- , v_+}\right\} \ 1_{v_+} \right)_{v_+ \in V_+} \ \in \t{End} (\t{id}_{\mcal M_+}),
\end{equation}
\begin{equation}\label{loop-clock}
\left(\rho^-_\bullet \right)^* \circ \rho^-_\bullet \ =\   \left( \left\{\sum_{v_+ \in V_+} \ N_{v_+,v_-} \ \kappa_{v_+ , v_-}\right\} \ 1_{v_-} \right)_{v_- \in V_-} \ \in \t{End} (\t{id}_{\mcal M_-}),
\end{equation}
where $ N_{v_+,v_-} \coloneqq \dim_\C \left( \mcal M_+ (v_+ , F_+  v_-) \right)  = \dim_\C \left( \mcal M_- (v_- , F_-  v_+) \right)$ (that is, the number of edges between $ v_+ $ and $ v_- $ in the bipartite graph). (Note that a natural linear transformation between *-linear functors from one semisimple C*-category to another, is captured fully by its components corresponding to the simple objects.)

\subsection{Trace on natural transformations}\label{NaTrace}$ \ $

In this article, we will be working with the $ 2 $-category of \textit{weighted semisimple C*-categories}, denoted by WSSC*Cat, whose $ 0 $-cells are finite semisimple C*-categories along with a weight function on it (that is, a positive real valued map from the isomorphism classes of simple objects, as considered in \Cref{CaTr}), $ 1 $-cells are $ * $-linear bi-faithful functors and $ 2 $-cells are natural linear transformations.
Further, for the duality of the adjoint pair of $ 1 $-cells
$ \left(\mcal M_- ,  \ul \mu^-\right)
\stackrel{\os{\displaystyle F_+}{\displaystyle\lra}}{\us {\displaystyle F_-}{ _{\displaystyle \longleftarrow}}} \left(\mcal M_+ ,  \ul \mu^+\right)$, we will consider tracial solution $ \rho^\pm : \t{id}_{\mcal M_\pm} \ra F_\pm F_\mp $ to the conjugate equations associated to the constant weight on edges given by $ \kappa_{v_+ , v_-} = \displaystyle \frac{\mu^+_{v_+}}{\mu^-_{v_-}} $ for $ (v_+,v_-)  \in V_+ \times V_-$; we refer such a solution to be \textit{commensurate with the weight functions (on the simple objects) $ \left(\ul \mu^- , \ul \mu^+\right) $}.
Using the categorical trace Tr associated to the weight function $ \ul \mu^\pm $, one may obtain the relation:
\begin{equation}\label{TrCondExp}
	\t{Tr}_x \left(\left(\rho^\pm_x \right)^* F_\pm (\alpha) \rho^\pm_x \right) \ = \ \t{Tr}_{F_\mp (x)} (\alpha) \ \t{ for all } x \in \t{ob} (\mcal M_\pm), \alpha \in \t{End} (F_\mp (x)).
\end{equation}

We will now exhibit a similar categorial trace on the endomorphism space of every $ 1 $-cell between two `finite' $ 0 $-cells (that is, there is finitely many isomorphism classes of simple objects in the semisimple C*-category of the $ 0 $-cell); further, this trace will be compatible with the tracial solution  commensurate with the weight function in the $ 0 $-cells.
\begin{prop}\label{NTTr}
Let $ \left(\mcal M , \ul \mu \right)$, $\left( \mcal N , \ul \nu\right)$, $\left( \mcal Q , \ul \pi \right)$ be finite $ 0 $-cells in {\normalfont WSSC*Cat}, and $ \Lambda : \mcal M \ra \mcal N $, $ \Sigma: \mcal N \ra \mcal Q $ be $ * $-linear bi-faithful functors.
Suppose $ V_{\mcal M} $, $ V_{\mcal N} $, $ V_{\mcal Q} $ are maximal sets of mutually non-isomorphic simple objects in $ \mcal M $, $ \mcal N $, $ \mcal Q $ respectively.
	
(a)	The map
	\[
	\normalfont\t{End} (\Lambda) \ni \eta \os{\t{Tr}^\Lambda} \longmapsto \sum_{u \in V_{\mcal M}} \mu_u \; \t{Tr}^{\;{\ul \nu}}_{\Lambda u} \left( \eta _u \right) \; \in \C
	\]
	is a positive faithful trace.

\noindent We will refer $\normalfont \t{Tr}^\Lambda $ as the `trace on $ \normalfont \t{End} (\Lambda) $ commensurate with $ \left(\ul \mu , \ul \nu\right) $'.
	
	(b) If $ \left(\t{id}_{\mcal M} \os{\displaystyle \rho}\ra \ol \Lambda \Lambda\, , \, \t{id}_{\mcal N} \os{\displaystyle \ol  \rho}\ra \Lambda \ol  \Lambda\right) $ (resp., $ \left(\t{id}_{\mcal N} \os{\displaystyle \beta}\ra \ol \Sigma \Sigma\, , \, \t{id}_{\mcal Q} \os{\displaystyle \ol  \beta}\ra \Sigma \ol  \Sigma\right) $)  is a solution to conjugate equations for the duality of $ \Lambda $ (resp., $ \Sigma $) commensurate with $ (\ul \mu , \ul \nu) $ (resp., $ (\ul \nu , \ul \pi) $), then
	\[
	\normalfont \t{Tr}^{\, \Lambda} \left(\beta^*_\Lambda\; \ol \Sigma (\eta)\; \beta_\Lambda\right) = \t{Tr}^{\, \Sigma \Lambda} (\eta) = \t{Tr}^{\, \Sigma} \left( \Sigma \left(\ol \rho^*\right)\; \eta_{\, \ol \Lambda} \;  \Sigma \left(\ol \rho\right) \right) \; \t{ for } \; \eta \in  \t{End}(\Sigma \Lambda) \;.
	\]
\end{prop}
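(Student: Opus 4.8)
The plan is to reduce everything to the pointwise categorical traces $\t{Tr}^{\underline{\nu}}_{\Lambda u}$ on the finite-dimensional algebras $\t{End}(\Lambda u)$, and then to exploit \Cref{TrCondExp} together with the conjugate-linear unitaries of \Cref{GraFunc}. Part (a) is routine: since a natural transformation between $*$-linear functors out of a semisimple C*-category is determined by its components at the simple objects, and $(\eta\xi)_u = \eta_u\xi_u$, the positivity, faithfulness and trace property of $\t{Tr}^\Lambda$ all descend componentwise from the corresponding properties of each categorical trace $\t{Tr}^{\underline{\nu}}_{\Lambda u}$ established in \Cref{CaTr}. Positivity holds because $\mu_u>0$ and each summand $\mu_u\,\t{Tr}^{\underline{\nu}}_{\Lambda u}(\eta_u^*\eta_u)\ge 0$; faithfulness because a sum of nonnegative terms vanishes iff each $\eta_u=0$; and the trace identity because each $\t{Tr}^{\underline{\nu}}_{\Lambda u}$ is cyclic on $\t{End}(\Lambda u)$.

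For the first equality in (b), I would apply \Cref{TrCondExp} to the adjoint pair $(\Sigma,\overline{\Sigma})$ with its solution $(\beta,\overline{\beta})$ commensurate with $(\underline{\nu},\underline{\pi})$, at the object $x=\Lambda u\in\t{ob}(\mcal N)$ and the endomorphism $\alpha=\eta_u\in\t{End}(\Sigma\Lambda u)$. This gives, for each $u\in V_{\mcal M}$,
\[
\t{Tr}^{\underline{\nu}}_{\Lambda u}\!\left(\beta_{\Lambda u}^*\,\overline{\Sigma}(\eta_u)\,\beta_{\Lambda u}\right)=\t{Tr}^{\underline{\pi}}_{\Sigma\Lambda u}(\eta_u).
\]
Multiplying by $\mu_u$, summing over $u$, and noting that the $u$-component of $\beta_\Lambda^*\,\overline{\Sigma}(\eta)\,\beta_\Lambda$ is exactly $\beta_{\Lambda u}^*\,\overline{\Sigma}(\eta_u)\,\beta_{\Lambda u}$, the left side becomes $\t{Tr}^\Lambda(\beta_\Lambda^*\,\overline{\Sigma}(\eta)\,\beta_\Lambda)$ and the right side becomes $\t{Tr}^{\Sigma\Lambda}(\eta)$, so the first equality follows directly.

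The second equality is where the real work lies, since $\t{Tr}^{\Sigma\Lambda}$ is summed over $V_{\mcal M}$ while $\t{Tr}^\Sigma$ is summed over $V_{\mcal N}$, so the summation index must be transported across the duality of $\Lambda$. I would run two parallel computations. First, inserting $1_{\Sigma\Lambda u}=\sum_{w\in V_{\mcal N}}\sum_{s\in\t{ONB}(w,\Lambda u)}\Sigma(s)\Sigma(s)^*$ into each $\t{Tr}^{\underline{\pi}}_{\Sigma\Lambda u}(\eta_u)$ and using cyclicity of the categorical trace gives
\[
\t{Tr}^{\Sigma\Lambda}(\eta)=\sum_{u\in V_{\mcal M}}\mu_u\sum_{w\in V_{\mcal N}}\sum_{s\in\t{ONB}(w,\Lambda u)}\t{Tr}^{\underline{\pi}}_{\Sigma w}\!\left(\Sigma(s)^*\,\eta_u\,\Sigma(s)\right).
\]
Second, expanding $\t{Tr}^\Sigma(\Sigma(\overline{\rho}^*)\,\eta_{\overline{\Lambda}}\,\Sigma(\overline{\rho}))=\sum_{w}\nu_w\,\t{Tr}^{\underline{\pi}}_{\Sigma w}(\Sigma(\overline{\rho}_w)^*\,\eta_{\overline{\Lambda}w}\,\Sigma(\overline{\rho}_w))$, decomposing $1_{\overline{\Lambda}w}=\sum_{u}\sum_{t\in\t{ONB}(u,\overline{\Lambda}w)}tt^*$, and using naturality of $\eta$ to write $\eta_{\overline{\Lambda}w}=\sum_{u,t}\Sigma\Lambda(t)\,\eta_u\,\Sigma\Lambda(t)^*$, the crux becomes the identity $\Lambda(t)^*\,\overline{\rho}_w=\sqrt{\mu_u/\nu_w}\; s_t$, where $s_t\in\mcal N(w,\Lambda u)$ is the image of $t$ under the conjugate-linear unitary of \Cref{GraFunc} attached to the tracial solution $\overline{\rho}$ commensurate with $(\underline{\mu},\underline{\nu})$, and where $\kappa_{w,u}=\nu_w/\mu_u$ is the commensurability constant. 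Since that unitary carries $\t{ONB}(u,\overline{\Lambda}w)$ bijectively onto $\t{ONB}(w,\Lambda u)$, substituting makes the factors $\nu_w$ and $(\sqrt{\mu_u/\nu_w})^2$ collapse to $\mu_u$, and after relabeling $t\mapsto s_t$ the sum reproduces exactly the display for $\t{Tr}^{\Sigma\Lambda}(\eta)$.

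The main obstacle is precisely the bookkeeping in this last step. One must verify that the conjugate-linear unitary of \Cref{GraFunc} genuinely sends an orthonormal basis to an orthonormal basis, so that no normalization correction survives the change of summation, and that the commensurability weight $\kappa_{w,u}=\nu_w/\mu_u$ cancels the explicit $\nu_w$ in the definition of $\t{Tr}^\Sigma$ against the $\mu_u$ in $\t{Tr}^{\Sigma\Lambda}$. Everything surrounding this identity is then naturality of $\eta$, cyclicity of the categorical trace, and functoriality and $*$-preservation of $\Sigma$.
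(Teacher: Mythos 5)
Your proof is correct and follows essentially the same route as the paper's: part (a) is the same routine verification (you argue componentwise where the paper exhibits matrix units, but both rest on the pointwise positivity, faithfulness and cyclicity of the categorical traces), and the first equality in (b) is exactly the paper's application of \Cref{TrCondExp} at each simple $u$. Your key step for the second equality — inserting ONB resolutions and transporting $\t{ONB}(u,\ol\Lambda w)$ to $\t{ONB}(w,\Lambda u)$ via the conjugate-linear unitary of \Cref{GraFunc}, with the weights collapsing as $\nu_w\cdot(\mu_u/\nu_w)=\mu_u$ — is precisely the algebraic transcription of the paper's pictorial proof (its second-to-third diagram step), so there is no gap.
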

\begin{proof}
	(a) To each $ \alpha \in \t{End} (\Lambda x) $ for $ x \in \t{Ob} (\mcal M) $, we associate the natural transformation $ \left[\alpha \right] \coloneqq \left( \us{u \in V_{\mcal M}}{\sum} \, \us{\substack{\sigma \in \t{ONB} (u, x) \\ \tau \in \t{ONB} (u,y) } }{\sum} \Lambda \left(\tau \, \sigma^*\right) \, \alpha \, \Lambda \left(\sigma \, \tau^*\right) \right)_{y \in \t{Ob} (\mcal M)} $.
	In terms of this association, we may express $ \t{End} (\Lambda) $ as a direct sum of full matrix algebras indexed by $ V_{\mcal M} \times V_{\mcal N} $, and a system of matrix units of the summand corresponding to $ (u,v) \in V_{\mcal M} \times V_{\mcal N}$ is given by $ \left\{ [\sigma \tau^*] : \sigma , \tau \in \t{ONB} (v, \Lambda u)  \right\} $.
	Note that $ \t{Tr}^{\Lambda} \left([\sigma \tau^*]\right) = \delta_{\sigma = \tau}\, \mu_u \, \nu_v$ which is positive and independent of the choice of $ \sigma  $ and $ \tau $.
	
	(b) From the definition, the left side turns out to be $ \us {u \in V_{\mcal M}}\sum \mu_u \; \t{Tr}^{\;{\ul \nu}}_{\Lambda u} \left( \beta^*_{\Lambda u }\; \ol \Sigma (\eta_u)\; \beta_{\Lambda u} \right) $ which is equal to (applying \Cref{TrCondExp}) $ \us {u \in V_{\mcal M}}\sum \mu_u \; \t{Tr}^{\;{\ul \pi}}_{\Sigma \Lambda u} \left( \eta_u \right) = \t{Tr}^{\, \Sigma \Lambda} (\eta)$.
	
Pictorially the right side can be expressed as
\[
\us{v \in V_{\mcal N}}{\sum} \nu_v\, {\t{Tr}}^{\ul \pi}_{\Sigma v}
\raisebox{-.7cm}{
\begin{tikzpicture}
\draw[dashed,thick] (.8,-.8)to (.8,.8);
\draw[->] (-.1,-.8) -- (-.1,-.3);
\draw[->] (-.1,-.3) -- (-.1,.8);
\draw[->] (.6,0) -- (.6,-.6) to[in=-90,out=-90] (.1,-.6) -- (.1,.6) to[in=90,out=90] (.6,.6) --  (.6,0);
\node[draw,thick,rounded corners, fill=white] at (0,0) {$\eta$};
\node at (-.25,.5) {$ \Sigma$};
\node at (-.25,-.6) {$ \Sigma$};
\node[right] at (0,.45) {$ \Lambda$};
\node at (.95,0) {$ v$};
\end{tikzpicture}
} = \us{\substack{u \in V_{\mcal M} \\ v \in V_{\mcal N} \\ \sigma \in \t{ONB} (u , \ol \Lambda v) }}{\sum} \nu_v\, \t{Tr}^{\ul \pi}_{\Sigma v} 
\raisebox{-1.1cm}{
\begin{tikzpicture}
\draw[dashed,thick] (.7,-.8)to (.7,.8);
\draw[dashed,thick] (.8,.6)to (.8,1.2);
\draw[dashed,thick] (.8,-.6)to (.8,-1.2);
\draw[->] (-.1,-1.2) -- (-.1,-.3);
\draw[->] (-.1,-.3) -- (-.1,1.2);
\draw[->] (.1,-.8) -- (.1,-.3);
\draw[->] (.1,-.3) -- (.1,.8);
\draw (.1,.8) to[in=90,out=90] (.6,.8);
\draw (.1,-.8) to[in=-90,out=-90] (.6,-.8);
\node[draw,thick,rounded corners, fill=white] at (0,0) {$\eta$};
\node[draw,thick,rounded corners, fill=white] at (.7,.6) {$\sigma$};
\node[draw,thick,rounded corners, fill=white] at (.7,-.6) {$\sigma^*$};
\node at (-.25,.7) {$ \Sigma$};
\node at (-.25,-.7) {$ \Sigma$};
\node[right] at (0,.45) {$ \Lambda$};
\node at (.85,0) {$u$};
\node at (.95,1) {$v$};
\node at (.95,-1.05) {$v$};
\end{tikzpicture}
} = \us{\substack{u \in V_{\mcal M} \\ v \in V_{\mcal N} \\ \tau \in \t{ONB} (v , \Lambda u) }}{\sum} \mu_u\, \t{Tr}^{\ul \pi}_{\Sigma v}
\raisebox{-1.35cm}{
	\begin{tikzpicture}
		\draw[dashed,thick] (.8,-.9)to (.8,.9);
		\draw[dashed,thick] (.7,.9)to (.7,1.5);
		\draw[dashed,thick] (.7,-.9)to (.7,-1.5);
\draw[->] (-.1,-1.5) -- (-.1,-.3);
\draw[->] (-.1,-.3) -- (-.1,1.5);
\draw[->] (.1,.3) to[in=-90,out=90] (.6,.6);
\draw[<-] (.1,-.3) to[in=90,out=-90] (.6,-.65);
		\node[draw,thick,rounded corners, fill=white] at (0,0) {$\eta$};
		\node[draw,thick,rounded corners, fill=white] at (.7,.9) {$\tau^*$};
		\node[draw,thick,rounded corners, fill=white] at (.7,-.9) {$\tau$};
		\node at (-.25,.7) {$ \Sigma$};
		\node at (-.25,-.7) {$ \Sigma$};
		\node at (0.45,.3) {$ \Lambda$};
		\node at (0.2,-.6) {$ \Lambda$};
		\node at (.95,0) {$u$};
		\node at (.85,1.35) {$v$};
		\node at (.85,-1.3) {$v$};
	\end{tikzpicture}
}
\]
	\[
	= \us{\substack{u \in V_{\mcal M} \\ v \in V_{\mcal N} \\ \sigma \in \t{ONB} (v , \Lambda u) }}{\sum} \mu_u\, \t{Tr}^{\ul \pi}_{\Sigma \Lambda  u} \left( \eta \; \Sigma \left(  \tau \, \tau^* \right)  \right) = \t{Tr}^{\Sigma \Lambda} (\eta)\; .
	\]
\end{proof}
\begin{rem}
	The trace in \Cref{NTTr} (a), is `categorical', that is, $ \tilde \Lambda : \mcal M \ra \mcal N $ is another functor with the same PF vectors as that of $ \Lambda $, then $ \t
	{Tr}^{\Lambda} \left(\gamma \, \eta \right) = \t
	{Tr}^{\tilde \Lambda} \left(\eta \, \gamma \right) $ for $ \eta \in \t{NT} (\Lambda , \tilde \Lambda) $, $ \gamma \in \t{NT}(\tilde \Lambda , \Lambda) $.
\end{rem}
\section{Unitary connections and right correspondences}\label{unicon}

Bratteli diagrams are incredibly useful tools for studying inductive limits of semisimple algebras (also called locally semisimple algebras). In this section we introduce a combinatorial 2-category whose objects are Bratteli diagrams and 1-cells are generalizations of Ocneanu's connections. Our perspective is that our 1-cells can naturally be viewed as ``Bratteli diagrams for bimodules" between locally semisimple algebras. Thus as we describe our 2-category $\textbf{UC}$, we will explain its relationship to algebras and bimodules. As a consequence, we build a fully faithful 2-functor $\textbf{UC}$ into the 2-category of algebras, bimodules, and intertwiners.

\vspace*{4mm}

\subsection{The $ 0 $-cells}\label{0cellspreC*}$ \ $

These are sequences consisting of finite bipartite graphs $\; V_0 \os {\displaystyle \Gamma_1} \lra V_1 \os {\displaystyle \Gamma_2} \lra V_2 \os {\displaystyle \Gamma_3} \lra V_3 \cdots  $ (where $ V_j$'s are the vertex sets) such that none of the vertices is isolated.
As described in the \Cref{GraFunc}, given such a data, we will often work with the corresponding $ * $-linear, bi-faithful functor $ \Gamma_k : \mcal M_{k-1} \ra \mcal M_k $ (where $ \mcal M_k $ is a semisimple C*-category whose isomorphism classes of the simple objects are indexed by the vertex set $ V_k $).
We will denote such a $ 0 $-cell by $ \left\{ \mcal M_{k-1}  \os {\displaystyle \Gamma_k} \lra \mcal M_k \right\}_{k\geq 1}$ or sometimes simply $ \Gamma_\bullet $.

Given such a $ 0 $-cell, we fix an object $ m_0 \coloneqq \us{v \in V_0} \bigoplus v \in \t{ob} (\mcal M_0)$.
Consider the sequence of finite dimensional C*-algebras $ \left\{ A_k \coloneqq \t{End} (\Gamma_k \cdots \Gamma_1 m_0) \right\}_{k\geq 0}$ (assuming $ A_0 = \t{End}(m_0) $) along with the unital $ * $-algebra inclusions given by $ A_{k-1} \ni \alpha \hookrightarrow \Gamma_k \, \alpha \in A_{k} $.
Indeed, the Bratteli diagram of $ A_{k-1} $ inside $ A_{k} $ is given by the graph $ \Gamma_k $.
To the $ 0 $-cell $ \Gamma_\bullet $, we associate the $ * $-algebra $ A_\infty \coloneqq \us{k\geq 0}{\cup} A_k $.

\vspace*{4mm}

\subsection{The $ 1 $-cells}\label{1cellspreC*}$ \ $

\begin{defn}
A $ 1 $-cell from the $ 0 $-cell $ \left\{\mcal M_{k-1}  \os{\displaystyle \Gamma_k}\lra \mcal M_k \right\}_{k\geq 1}$ to the $ 0 $-cell $ \left\{\mcal N_{k-1} \os{\displaystyle \Delta_k}\lra \mcal N_k \right\}_{k\geq 1}$ consists of a sequence of $ * $-linear bi-faithful functors $ \left\{\Lambda_k : \mcal M_k \ra \mcal N_k\right\}_{k\geq 0} $ and natural unitaries $ W_k: \Delta_k \Lambda_{k-1} \ra \Lambda_k \Gamma_k $ for $ k\geq 1 $. Such a $ 1 $-cell will be denoted by $ \left(\Lambda_\bullet, W_\bullet \right)$ or simply by $ \Lambda_\bullet $, and $ W_\bullet $ will be referred as a \textit{unitary connection associated to $ \Lambda_\bullet $}.
Denote the set of $ 1 $-cells from $ \Gamma_\bullet$ to $\Delta_\bullet $ by $ \textbf{UC}_1 \left(\Gamma_\bullet , \Delta_\bullet\right) $.
\end{defn}
We will abuse the notation $ \Lambda_k $ to denote the functor $ \Lambda_k : \mcal M_k \ra \mcal N_k $ as well as its associated adjacency matrix ($ V_{\mcal N_k} \times V_{\mcal M_k}$), and the same will be done for $ \Gamma_k $'s and $ \Delta_k $'s.
From the context, it will be clear whether we are using it as a functor or a matrix.
Pictorially, the natural unitary $ W_k $ appearing in the $ 1 $-cell will be represented by
\raisebox{-5mm}{
	\begin{tikzpicture}
		\draw[white,line width=1mm,out=-90,in=90] (-.25,.5) to (.25,-.5);
		\draw[out=-90,in=90,red] (-.25,.5) to (.25,-.5);
		\begin{scope}[on background layer]
			\draw[out=90,in=-90] (-.25,-.5) to (.25,.5);
		\end{scope}
		\node[right] at (0.15,-0.4) {$\Lambda_{k-1}$};
		\node[right] at (0.15,0.3) {$\Gamma_k$};
		\node[left] at (-0.15,-0.4) {$\Delta_k$};
		\node[left] at (-0.15,0.3) {$\Lambda_k$};
	\end{tikzpicture}
}
and $ W^*_k $ by
\raisebox{-5mm}{
	\begin{tikzpicture}
		\draw[white,line width=1mm,out=-90,in=90] (.25,.5) to (-.25,-.5);
		\draw[out=-90,in=90,red] (.25,.5) to (-.25,-.5);
		\begin{scope}[on background layer]
			\draw[out=90,in=-90] (.25,-.5) to (-.25,.5);
		\end{scope}
		\node[right] at (0.15,-0.4) {$\Gamma_k$};
		\node[right] at (0.15,0.3) {$\Lambda_{k-1}$};
		\node[left] at (-0.15,-0.4) {$\Lambda_k$};
		\node[left] at (-0.15,0.3) {$\Delta_k$};
	\end{tikzpicture}
}.

\vspace*{4mm}

To each such $ 1 $-cell $ (\Lambda_\bullet , W_\bullet)$, we will associate an $ A_\infty $-$ B_\infty $ right correspondence where $ n_0 $ and $ B_k $'s are related to $ \left\{\mcal N_{k-1} \os{\displaystyle \Delta_k}\lra \mcal N_k \right\}_{k\geq 1}$ exactly the way $ m_0 $ and $ A_k $'s are related to $ \left\{ \mcal M_{k-1} \os{\displaystyle \Gamma_k}\lra \mcal M_k \right\}_{k\geq 1}$ respectively.
For $ k\geq 0 $, set $ H_k \coloneqq \mcal N_k \left(\Delta_k \cdots \Delta_1 n_0 , \Lambda_k \Gamma_k \cdots \Gamma_1 m_0\right) $.
We have an obvious $ A_k $-$ B_k $-bimodule structure on $ H_k $ in the following way:
\[
A_k \times H_k \times B_k \ni (\alpha , \xi , \beta) \longmapsto \Lambda_k (\alpha) \circ \xi \circ \beta \ \in H_k \ .
\]
Again, there is a $ B_k $-valued inner product on $ H_k $ given by
\[
H_k \times H_k \ni (\xi,\zeta ) \os{\displaystyle \left\lab \cdot , \cdot \right \rab_{B_k}} \longmapsto \left\lab \xi , \zeta  \right \rab_{B_k} \coloneqq \zeta^* \circ \xi \ \in B_k \ .
\]
Next, observe that $ H_k $ sits inside $ H_{k+1} $ via the map
\[
H_k \ni \xi \longmapsto \left[\left(W_{k+1}\right)_{\Gamma_k \cdots \Gamma_1 m_0}\right] \circ \left[\Delta_{k+1} \xi \right] \ =
\raisebox{-.9cm}{
\begin{tikzpicture}
\draw[thick,dashed] (.5,-.7) to (.5,1);
		\draw[in=-90,out=90] (-.9,-.7) to (-.9,.3) to (-.6,1);
		\draw[in=-90,out=90] (-.3,-.7)to (-.3,1);
		\draw[in=-90,out=90] (.3,-.7)to (.3,1);
		\draw[white,line width=1mm,out=-90,in=90] (-.6,0.3) to (-.9,1);
		\draw[red,in=-90,out=90] (-.6,0.3) to (-.9,1);
		\node[draw,thick,rounded corners, fill=white,minimum width=45] at (0,0) {$\xi$};
		\node[right] at (-.4,.6) {$ \cdots$};
		\node[right] at (-.4,-.6) {$ \cdots$};
		\node[left] at (-.7,.8) {$ \Lambda_{k+1}$};
		\node[left] at (-.8,-.3) {$ \Delta_{k+1}$};
		\node[right] at (.4,-.6) {$ n_0$};
		\node[right] at (.4,.6) {$ m_0$};
	\end{tikzpicture}
} \in H_{k+1} \ .
\]
\begin{lem}\label{inclusionactioncompatible}
	The inclusions $ H_k \hookrightarrow H_{k+1} $, $ A_k \hookrightarrow A_{k+1} $, $ B_k \hookrightarrow B_{k+1} $ and the corresponding actions are compatible in the obvious sense.
\end{lem}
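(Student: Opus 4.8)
The plan is to unpack ``compatible in the obvious sense'' into three separate statements and verify each directly, since the whole lemma is a bookkeeping exercise in categorical axioms. Writing $X_k \coloneqq \Gamma_k \cdots \Gamma_1 m_0 \in \t{ob}(\mcal M_k)$ and $Y_k \coloneqq \Delta_k \cdots \Delta_1 n_0 \in \t{ob}(\mcal N_k)$, we have $A_k = \t{End}(X_k)$, $B_k = \t{End}(Y_k)$, $H_k = \mcal N_k(Y_k, \Lambda_k X_k)$, and the three inclusions are $\alpha \mapsto \Gamma_{k+1}(\alpha)$, $\beta \mapsto \Delta_{k+1}(\beta)$, and $\iota(\xi) \coloneqq (W_{k+1})_{X_k} \circ \Delta_{k+1}(\xi)$. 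The three things to check are that $\iota$ intertwines the left $A$-action, the right $B$-action, and the $B$-valued inner product with the respective inclusions. Each has a transparent reading in the graphical calculus set up above, where $\iota(\xi)$ is the picture of the $\xi$-box capped by the red $W_{k+1}$-crossing, so I would do each computation once algebraically and note the corresponding picture move.

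First I would dispose of the right action, which is immediate from functoriality of $\Delta_{k+1}$: since $\Delta_{k+1}(\xi \circ \beta) = \Delta_{k+1}(\xi) \circ \Delta_{k+1}(\beta)$, the element $\iota(\xi \circ \beta)$ equals $\iota(\xi) \circ \Delta_{k+1}(\beta)$, i.e. $\iota(\xi)$ acted on by the inclusion of $\beta$. The left action is the one genuinely interesting step and is governed by the naturality of $W_{k+1}$. Applying the naturality square of the natural transformation $W_{k+1}: \Delta_{k+1}\Lambda_k \Rightarrow \Lambda_{k+1}\Gamma_{k+1}$ to the endomorphism $\alpha$ of $X_k$ gives $(W_{k+1})_{X_k} \circ \Delta_{k+1}(\Lambda_k(\alpha)) = \Lambda_{k+1}(\Gamma_{k+1}(\alpha)) \circ (W_{k+1})_{X_k}$, which is exactly the identity needed to slide the included action $\Lambda_{k+1}(\Gamma_{k+1}(\alpha))$ past the crossing and identify $\Lambda_{k+1}(\Gamma_{k+1}(\alpha)) \circ \iota(\xi)$ with $\iota(\Lambda_k(\alpha) \circ \xi)$. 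Pictorially this is precisely pulling the $\alpha$-box through the $W$-crossing.

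Finally, for the inner product I would compute $\lab \iota(\xi), \iota(\zeta) \rab_{B_{k+1}} = \iota(\zeta)^* \circ \iota(\xi) = \Delta_{k+1}(\zeta)^* \circ (W_{k+1})_{X_k}^*\,(W_{k+1})_{X_k} \circ \Delta_{k+1}(\xi)$; here unitarity of $W_{k+1}$ collapses the middle factor to the identity, and then $*$-linearity together with functoriality of $\Delta_{k+1}$ give $\Delta_{k+1}(\zeta^* \circ \xi) = \Delta_{k+1}\!\left(\lab \xi, \zeta \rab_{B_k}\right)$, the inclusion of the inner product. Graphically the two oppositely oriented crossings coming from $\iota(\zeta)^*$ and $\iota(\xi)$ cancel. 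The only step that is not pure functorial bookkeeping is the naturality move used for the left action, so that is where I would concentrate the write-up; the right-action and inner-product identities follow mechanically from $\Delta_{k+1}$ being a $*$-functor and $W_{k+1}$ being a natural unitary.
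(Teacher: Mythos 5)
Your proposal is correct and takes essentially the same route as the paper: the paper's entire proof is the single displayed identity
\[
\Lambda_{k+1} \Gamma_{k+1} \alpha \ \circ\ \left[\left(W_{k+1}\right)_{\Gamma_k \cdots \Gamma_1 m_0} \ \circ \  \Delta_{k+1} \xi \right]\ \circ\ \Delta_{k+1} \beta
\ = \ \left(W_{k+1}\right)_{\Gamma_k \cdots \Gamma_1 m_0} \ \circ\ \Delta_{k+1} \left( \Lambda_k \alpha \circ \xi \circ \beta\right),
\]
which is exactly your naturality-of-$W_{k+1}$ step for the left action combined with functoriality of $\Delta_{k+1}$ for the right action. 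Your extra check that the $B_k$-valued inner products are also compatible (using unitarity of $W_{k+1}$ and $*$-linearity of $\Delta_{k+1}$) is a harmless and useful supplement that the paper leaves implicit.
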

\begin{proof}
Naturality of $ W $ implies
\[
\begin{split}
&\Lambda_{k+1} \Gamma_{k+1} \alpha \ \circ\ \left[\left(W_{k+1}\right)_{\Gamma_k \cdots \Gamma_1 m_0} \ \circ \  \Delta_{k+1} \xi \right]\ \circ\ \Delta_{k+1} \beta \\
= &\left[\left(W_{k+1}\right)_{\Gamma_k \cdots \Gamma_1 m_0} \right]\ \circ\ \Delta_{k+1} \left( \Lambda_k \alpha \circ \xi \circ \beta\right)
\end{split}
\]
for all $ \xi \in H_k $, $ \alpha \in A_k $, $ \beta \in B_k $.
\end{proof}

Set $ H_\infty \coloneqq \us {k \geq 0} \cup H_k $ which clearly becomes an $ A_\infty $-$ B_\infty $ right correspondence.
Further, we will exhibit a Pimsner-Popa (PP) basis of the right-$ B_\infty $-module $ {H_\infty }$ with respect to the $ B_\infty $-valued inner product.
\begin{lem}\label{PPbasispreC*}
There exists a subset $ \mscr S $ of $ H_0 $ such that $ \displaystyle \sum_{\sigma \in \mscr S} \sigma \circ  \sigma^* \  = \ 1_{\Lambda_0 m_0}$; moreover, any such $ \mscr S $ is a PP-basis for the right $ B_\infty $-module $ H_\infty $.
\end{lem}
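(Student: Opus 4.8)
The plan is to prove the two assertions in turn, obtaining the Pimsner--Popa property as a formal consequence of the single relation $\sum_{\sigma\in\mscr S}\sigma\,\sigma^*=1_{\Lambda_0 m_0}$ once we know that this relation propagates up the tower $H_0\hookrightarrow H_1\hookrightarrow\cdots$.

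\emph{Existence.} Since $\mcal N_0$ is semisimple, $1_{\Lambda_0 m_0}=\sum_{w}\sum_{\iota\in\t{ONB}(w,\Lambda_0 m_0)}\iota\,\iota^*$, where $w$ runs over the simple objects of $\mcal N_0$ and $\iota\colon w\to\Lambda_0 m_0$ are isometries onto the simple constituents. Writing $p_w\colon n_0\to w$ for the coisometry onto the $w$-summand of $n_0$ (so that $p_w p_w^*=1_w$), the elements $\sigma_\iota:=\iota\circ p_w\in H_0=\mcal N_0(n_0,\Lambda_0 m_0)$ satisfy $\sigma_\iota\sigma_\iota^*=\iota\,p_w p_w^*\,\iota^*=\iota\,\iota^*$, so $\mscr S:=\{\sigma_\iota\}$ gives $\sum_{\sigma\in\mscr S}\sigma\sigma^*=1_{\Lambda_0 m_0}$. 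The point here is that $n_0$ contains every simple object of $\mcal N_0$, so each isotypic component of $\Lambda_0 m_0$ is reached; finite-dimensionality of $H_0$ makes $\mscr S$ finite.

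\emph{Propagation.} Now fix any $\mscr S\subseteq H_0$ with $\sum_{\sigma\in\mscr S}\sigma\sigma^*=1_{\Lambda_0 m_0}$. I claim that after including $\mscr S$ into $H_k$ one has $\sum_{\sigma\in\mscr S}\sigma\sigma^*=1_{\Lambda_k\Gamma_k\cdots\Gamma_1 m_0}$ for every $k$, and I would prove this by induction on $k$. Writing $x=\Gamma_k\cdots\Gamma_1 m_0$ and $\iota_k$ for the inclusion $H_k\hookrightarrow H_{k+1}$, the defining formula $\iota_k(\xi)=(W_{k+1})_x\circ\Delta_{k+1}\xi$ together with $*$-linearity and functoriality of $\Delta_{k+1}$ gives
\[
\iota_k(\sigma)\,\iota_k(\tau)^*=(W_{k+1})_x\circ\Delta_{k+1}\!\left(\sigma\tau^*\right)\circ (W_{k+1})_x^*\,.
\]
Summing over $\sigma\in\mscr S$ and using the inductive hypothesis $\sum_\sigma\sigma\sigma^*=1_{\Lambda_k x}$, functoriality yields $\Delta_{k+1}(1_{\Lambda_k x})=1_{\Delta_{k+1}\Lambda_k x}$, and the unitarity of the connection gives $(W_{k+1})_x(W_{k+1})_x^*=1_{\Lambda_{k+1}\Gamma_{k+1}x}$; hence $\sum_{\sigma\in\mscr S}\iota_k(\sigma)\iota_k(\sigma)^*=1_{\Lambda_{k+1}\Gamma_{k+1}\cdots\Gamma_1 m_0}$, closing the induction. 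This is the crux of the argument, and the only place where the defining data of a $1$-cell---namely the \emph{unitarity} of $W_\bullet$, compatibly with \Cref{inclusionactioncompatible}---is actually used.

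\emph{Reconstruction.} Finally, let $\xi\in H_\infty$ and choose $k$ with $\xi\in H_k$. Identifying each $\sigma\in\mscr S$ with its image in $H_k$ and recalling that $\langle\xi,\sigma\rangle_{B_k}=\sigma^*\circ\xi\in B_k$, the propagated relation gives
\[
\sum_{\sigma\in\mscr S}\sigma\cdot\langle\xi,\sigma\rangle_{B_k}=\sum_{\sigma\in\mscr S}\sigma\circ\sigma^*\circ\xi=\left(\sum_{\sigma\in\mscr S}\sigma\sigma^*\right)\circ\xi=1_{\Lambda_k\Gamma_k\cdots\Gamma_1 m_0}\circ\xi=\xi\,.
\]
Since this holds for all $\xi\in H_\infty$, $\mscr S$ is a Pimsner--Popa basis for the right $B_\infty$-module $H_\infty$. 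I expect the only real difficulty to be the notational bookkeeping in the inductive step---keeping track of which hom-space each morphism lives in and checking that the single-level inclusion formula composes correctly along the tower---since the essential mathematical input is simply the unitarity of the connection together with semisimplicity.
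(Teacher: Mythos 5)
Your proof is correct and follows essentially the same route as the paper: existence of $\mscr S$ via a resolution of $1_{\Lambda_0 m_0}$ factoring through $n_0$ (possible since $n_0$ contains every simple of $\mcal N_0$), followed by the Pimsner--Popa reconstruction formula. The paper compresses the second half into ``condition (ii) and the definition of the $B_\infty$-valued inner product directly imply the PP-basis property''; your inductive propagation of $\sum_\sigma \sigma\sigma^* = 1$ up the tower using unitarity of $W_\bullet$ is exactly the step being glossed over there, spelled out correctly.
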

\begin{proof}
Since $ n_0 $ contains every simple object of $ \mcal N_0 $ as a subobject, therefore expressing the identity of $ \t{End} (\Lambda_0 m_0) $ as a sum of minimal projections, we have a resolution of identity $ 1_{\Lambda_0 m_0}  $ factoring through $ n_0 $, that is, there exists a subset $ \mscr S$ of $\mcal N (n_0, \Lambda_0 m_0) = H_0 $ satisfying:

(i) $ \sigma^* \sigma $ is a minimal projection of $ \t{End} (n_0) $, and

(ii) $ \displaystyle \sum_{\sigma \in \mscr S} \sigma \sigma^* \  = \ 1_{\Lambda_0 m_0}$ .

Condition (ii) and the definition of $ B_\infty $-valued inner product directly imply that $ \mscr S $ is indeed a PP-basis for the right $ B_\infty $-module $ H_\infty $.
\end{proof}

\vspace*{4mm}
\subsection{The $ 2 $-cells}\label{2cellspreC*}$ \ $

Let $ \Lambda_\bullet $ and $ \Omega_\bullet$ be two $ 1 $-cells from the $ 0 $-cell $ \left\{\mcal M_{k-1} \os{\displaystyle \Gamma_k}\lra \mcal M_k \right\}_{k\geq 1}$ to $ \left\{\mcal N_{k-1} \os{\displaystyle \Delta_k}\lra \mcal N_k \right\}_{k\geq 1}$.
The natural way to define a $ 2 $-cell will be considering a sequence of natural linear transformations from $ \Lambda_k $ to $ \Omega_k $ which are compatible with the natural unitaries $ W^\Gamma_k $ and $ W^\Omega_k $ for $ k \geq 1 $.
We define such compatibility in the following way.

\begin{defn}\label{XreldefpreC*}
A pair $ (\eta , \kappa) \in \t{NT}(\Lambda_k , \Omega_k) \times \t{NT}(\Lambda_{k+1} , \Omega_{k+1}) $ is said to satisfy \textit{exchange relation} if the condition
\raisebox{-1cm}{
	\begin{tikzpicture}
		\draw[in=-90,out=90] (0,0) to (0,1.2);
		\draw[in=-90,out=90] (0,1.2) to (0.6,2);
		\draw[in=-90,out=90] (3.4,1) to (3.4,2);
		\draw[in=-90,out=90] (2.6,0) to (3.4,1);
		\draw[red,in=-90,out=90] (.6,0) to (0.6,1.2);
		\draw[white,line width=1mm,in=-90,out=90] (0.6,1.2) to (0,2);
		\draw[red,in=-90,out=90] (0.6,1.2) to (0,2);
		\draw[red,in=-90,out=90] (2.6,1) to (2.6,2);
		\draw[white,line width=1mm,in=-90,out=90] (3.4,0) to (2.6,1);
		\draw[red,in=-90,out=90] (3.4,0) to (2.6,1);
		\node[draw,thick,rounded corners, fill=white] at (0.6,0.8) {$\eta$};
		\node[draw,thick,rounded corners, fill=white] at (2.6,1.4) {$\kappa$};
		\node[right] at (.5,0.2) {$ \Lambda_k $};
		\node[right] at (.5,1.35) {$\Omega_k $};
		\node[left] at (0.2,1.8) {$\Omega_{k+1} $};
		\node[right] at (3.3,0.2) {$ \Lambda_k $};
		\node[left] at (2.8,.75) {$\Lambda_{k+1} $};
		\node[left] at (2.75,1.85) {$\Omega_{k+1} $};
		\node at (1.5,1) {$ = $};
	\end{tikzpicture}
}
holds.
\end{defn}
\begin{rem}\label{xrelunique}
The exchange relation pair is unique separately in each variable, that is, if $ (\eta, \kappa_1) $ and $ (\eta , \kappa_2) $ (resp., $ (\eta_1, \kappa) $ and $ (\eta_2 , \kappa) $) both satisfy exchange relation, then $ \kappa_1 = \kappa_2 $ (resp., $ \eta_1 = \eta_2 $); this is because the connections are unitary and the functors $ \Gamma_k $ and $ \Delta_k $ are bi-faithful.
\end{rem}
We only require that the 2-cells satisfy this exchange relation \textit{eventually}. To make this precise, we let $$ \text{Ex}(\Lambda_{\bullet}, \Omega_{\bullet})$$ denote the space of sequences  $\{\eta^{(k)} \in \t{NT} \left(\Lambda_k , \Omega_k\right)\}_{k\geq 0}$ such that there exists an $N$ such that $(\eta_{k},\eta_{k+1})$ satifies the exchange relation for all $k\ge N$.
Consider the subspace
$$
\text{Ex}_{0}(\Lambda_{\bullet}, \Omega_{\bullet}):=\left\{\{\eta_{k}\}_{k\geq 0}  \in \text{Ex}(\Lambda_{\bullet}, \Omega_{\bullet}) :\ \eta_{k}=0 \ \text{ for all }\ k\ge N \  \t{  for some }  \ N \in \N  \right\}
$$
\begin{defn}\label{2cellpreC*}
Let $ \Lambda_\bullet  , \Omega_\bullet \in \textbf{UC}_1  \left(\Gamma_\bullet , \Delta_\bullet\right)$.
We define the space of $ 2 $-cells $$\textbf{UC}_2 \left(\Lambda_\bullet , \Omega_\bullet \right):=  \frac {\text{Ex}(\Lambda_{\bullet}, \Omega_{\bullet})}  {\text{Ex}_{0}(\Lambda_{\bullet}, \Omega_{\bullet})} $$
\end{defn}
For notational convenience, instead of denoting a $ 2 $-cell by an equivalence class of sequences, we simply use a sequence in the class and truncate upto a level after which the exchange relation holds for every consecutive pair, namely, $ \left\{ \eta^{(k)}\right\}_{k \geq N} \in \textbf{UC}_2 \left(\Lambda_\bullet , \Omega_\bullet \right) $ where $(\eta_{k},\eta_{k+1})$ satifies the exchange relation for all $k\ge N$.

If $ \ul \eta = \left\{ \eta^{(k)}\right\}_{k \geq K} \in \textbf{UC}_2 \left(\Lambda_\bullet , \Omega_\bullet \right) $ and $ \ul \kappa = \left\{ \kappa^{(k)}\right\}_{k \geq L} \in \textbf{UC}_2 \left( \Omega_\bullet , \Xi_\bullet \right) $, then define the `\textit{vertical}' composition of $ 2 $-cells by $\ul \kappa \cdot  \ul \eta  \coloneqq \left\{ \left(\kappa^{(k)} \circ \eta^{(k)} \right) \right\}_{k\geq \max\{K,L\}} $.
It is easy to check that  $\ul \kappa \cdot  \ul \eta \in\textbf{UC}_2 \left( \Lambda_\bullet , \Xi_\bullet \right) $ is well defined and the composition is associative.

Given two $ 0 $-cells $ \Gamma_\bullet  $ and $ \Delta_\bullet $, we have obtained a category whose object space consists of $ 1 $-cells $ \Lambda_\bullet $, and morphisms are given by $ 2 $-cells.
We call this the \textit{category of unitary connections from $ \Gamma_\bullet  $ to $ \Delta_\bullet $} and denote by $\textbf{UC}\,_{ \Gamma_\bullet, \Delta_\bullet   } $.

Following with the structure in the previous subsections, we will see that 2-cells uniquely define bimodule intertwiners between the bimodules associated to the 1-cells.
We will borrow the notations $ H_k$, $ H_\infty $ $\mscr S$, etc.  (arising out of $ \Lambda_\bullet  $) from previous subsections, and for those arising out of $ \Omega_\bullet $, we will use the notation $ G_k$, $ G_\infty$, $\mscr T$, etc. and we will also work with the pictures as before.
For each $ k \geq 0 $, we define $ \mcal N_k \left(\Lambda_k \Gamma_k \cdots \Gamma_1 m_0 \ , \ \Omega_k \Gamma_k \cdots \Gamma_1 m_0\right) \ni \gamma \os{\displaystyle \Phi}{\longmapsto} \Phi_\gamma \in \mcal L \left( H_\infty , G_\infty \right)$ (the space of adjointable operators with respect to the $ B_\infty $-valued inner product) in the following way
\begin{equation}\label{Phidef}
	H_\infty \supset H_{k+l} \ni \alpha \os{\displaystyle \Phi_\gamma}{\longmapsto} \ 
	\raisebox{-12mm}{
		\begin{tikzpicture}
			\node[right] at (-.1,0.1) {$ n_0 $};
			\node[right] at (-.1,1.1) {$ m_0 $};
			\node[right] at (-.1,2.1) {$ m_0 $};
			\node[left] at (-2.2,2.3) {$ \Omega_{k+l} $};
			\node[left] at (-2.1,1.1) {$ \Lambda_{k+l} $};
			\node[draw,thick,rounded corners, fill=white,minimum width=80] at (-1.1,.65) {$\alpha$};
			\node[draw,thick,rounded corners, fill=white,minimum width=40] at (-.5,1.6) {$\gamma$};
			\node[left] at (-.1,2.1) {$ \cdots $};
			\node[left] at (-1.3,1.6) {$ \cdots $};
			\node[left] at (-.1,1.1) {$ \cdots $};
			\node[left] at (-1,.2) {$ \cdots $};
			\node[left] at (-.1,.2) {$ \cdots $};
			\begin{scope}[on background layer]
			\draw[thick,dashed] (0,0) to (0,2.3);
				\draw[in=-90,out=90] (-.2,0) to (-.2,2.3);
				\draw[in=-90,out=90] (-.9,0) to (-.9,2.3);
				\draw[in=-90,out=90] (-1.1,0) to (-1.1,.9);
				\draw[in=-90,out=90] (-1.1,.9) to (-1.4,1.3);
				\draw[in=-90,out=90] (-1.4,1.3) to (-1.4,1.9);
				\draw[in=-90,out=90] (-1.4,1.9) to (-1.1,2.3);
				\draw[in=-90,out=90] (-1.8,0) to (-1.8,.9);
				\draw[in=-90,out=90] (-1.8,.9) to (-2.2,1.3);
				\draw[in=-90,out=90] (-2.2,1.3) to (-2.2,1.9);
				\draw[in=-90,out=90] (-2.2,1.9) to (-1.8,2.3);
				\draw[white,line width=1mm,in=-90,out=90] (-1.1,1.9) to (-2.2,2.3);
				\draw[red,in=-90,out=90] (-1.1,1.9) to (-2.2,2.3);
				\draw[red,in=-90,out=90] (-1.1,1.3) to (-1.1,1.9);
				\draw[white,line width=1mm,in=-90,out=90] (-2.2,.9) to (-1.1,1.3);
				\draw[red,in=-90,out=90] (-2.2,.9) to (-1.1,1.3);
			\end{scope}
		\end{tikzpicture}
	} \in G_{k+l} \subset G_\infty 
\end{equation}
for $ l\geq 0 $.
It is easy to check that $ \Phi_\gamma $ is well-defined and adjointable.
We list a few basic properties of $ \Phi $ in the following lemma.
\begin{lem}\label{PhiproppreC*} For all $ k \geq 0 $ and $ \gamma \in \mcal N_k \left(\Lambda_k \Gamma_k \cdots \Gamma_1 m_0 \ , \ \Omega_k \Gamma_k \cdots \Gamma_1 m_0\right)  $, the following conditions hold
	
	(i) $ \Phi_{\gamma^*} = (\Phi_\gamma)^* $,
	
	(ii) $ \Phi_\gamma (H_l) \subset G_l$ for all $ l \geq k $,
	
	(iii) the map $ \gamma \longmapsto \left. \Phi_\gamma \right|_{H_k}  $ is one-to-one, and
	
	(iv) $ \Phi_\gamma \in \mcal L_{B_\infty} (H_\infty , G_\infty) $.
\end{lem}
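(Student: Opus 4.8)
The plan is to first unwind the pictorial definition \eqref{Phidef} into a single algebraic formula and then read off each of (i)--(iv) from it. Concretely, for $\alpha\in H_{k+l}$ (with $l\geq 0$) the picture says that $\Phi_\gamma(\alpha)$ is obtained by post-composing $\alpha$ with the morphism
\[
T_{k+l}(\gamma)\ \coloneqq\ U^\Omega_{k,l}\circ\left[\Delta_{k+l}\cdots\Delta_{k+1}\,\gamma\right]\circ\left(U^\Lambda_{k,l}\right)^{-1}\ :\ \Lambda_{k+l}\Gamma_{k+l}\cdots\Gamma_1 m_0\ \lra\ \Omega_{k+l}\Gamma_{k+l}\cdots\Gamma_1 m_0,
\]
where $U^\Lambda_{k,l}$ (resp. $U^\Omega_{k,l}$) is the unitary $\Delta_{k+l}\cdots\Delta_{k+1}\Lambda_k\Gamma_k\cdots\Gamma_1 m_0\to\Lambda_{k+l}\Gamma_{k+l}\cdots\Gamma_1 m_0$ (resp. with $\Omega$ in place of $\Lambda$) assembled from the connection unitaries $W_{k+1},\dots,W_{k+l}$ of $\Lambda_\bullet$ (resp. of $\Omega_\bullet$); these are precisely the red crossings in \eqref{Phidef}. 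Thus $\Phi_\gamma(\alpha)=T_{k+l}(\gamma)\circ\alpha$, and in particular on $H_k$ (the case $l=0$, where no connection unitary appears) one has simply $\Phi_\gamma|_{H_k}(\alpha)=\gamma\circ\alpha$. Compatibility of this formula with the inclusions $H_{k+l}\hookrightarrow H_{k+l+1}$ is the already-granted well-definedness and follows from naturality of the $W$'s.

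With this form in hand, (i), (ii) and (iv) are formal. For (i) I would first check that $T_{k+l}(\gamma^*)=T_{k+l}(\gamma)^*$: since each $W$ (hence each $U$) is unitary and each $\Delta_j$ is a $*$-functor, taking adjoints reverses the three factors, turns each $U$ into $U^{-1}$, and turns $\Delta_{k+l}\cdots\Delta_{k+1}\gamma$ into $\Delta_{k+l}\cdots\Delta_{k+1}\gamma^*$, which is exactly the formula for $T_{k+l}(\gamma^*)$. Then for $\alpha\in H_{k+l}$, $\zeta\in G_{k+l}$ (chosen at a common level),
\[
\lab\Phi_\gamma\alpha,\zeta\rab_{B_\infty}=\zeta^*\circ T_{k+l}(\gamma)\circ\alpha=\left(T_{k+l}(\gamma^*)\circ\zeta\right)^*\circ\alpha=\lab\alpha,\Phi_{\gamma^*}\zeta\rab_{B_\infty},
\]
giving (i). Property (ii) is immediate: for $\alpha\in H_l$ with $l\geq k$ the formula lives at level $l$, so $\Phi_\gamma(\alpha)=T_l(\gamma)\circ\alpha\in G_l$. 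Property (iv) splits in two. Right $B_\infty$-linearity holds because the right action is precomposition at the source, so $\Phi_\gamma(\alpha\circ\beta)=T_{k+l}(\gamma)\circ\alpha\circ\beta=\left(\Phi_\gamma\alpha\right)\circ\beta$, the point being that $\beta\in B_\infty$ and $T_{k+l}(\gamma)$ act on the disjoint source and target ends; adjointability with adjoint $\Phi_{\gamma^*}$ is exactly (i). Hence $\Phi_\gamma\in\mcal L_{B_\infty}(H_\infty,G_\infty)$.

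The only part carrying genuine content is the injectivity (iii), and this is where I expect the main (if modest) obstacle. Since $\Phi_\gamma|_{H_k}(\alpha)=\gamma\circ\alpha$, I must produce enough $\alpha\in H_k=\mcal N_k(\Delta_k\cdots\Delta_1 n_0,\Lambda_k\Gamma_k\cdots\Gamma_1 m_0)$ to detect a nonzero $\gamma$. The key observation is that $d_k\coloneqq\Delta_k\cdots\Delta_1 n_0$ contains every simple object of $\mcal N_k$: indeed $n_0$ contains every simple of $\mcal N_0$, and since no vertex is isolated, each bi-faithful $\Delta_j$ sends an object containing all simples of $\mcal N_{j-1}$ to one containing all simples of $\mcal N_j$. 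Consequently, exactly as in \Cref{PPbasispreC*}, the identity $1_{\Lambda_k\Gamma_k\cdots\Gamma_1 m_0}$ admits a resolution $\sum_i\alpha_i\circ\alpha_i^*=1$ with $\alpha_i\in H_k$ factoring through $d_k$. Then $\gamma=\gamma\circ 1=\sum_i(\gamma\circ\alpha_i)\circ\alpha_i^*$, so $\gamma\circ\alpha=0$ for all $\alpha\in H_k$ forces $\gamma=0$, proving that $\gamma\mapsto\Phi_\gamma|_{H_k}$ is one-to-one and completing the lemma. The two things to be careful about are reading \eqref{Phidef} correctly into the post-composition form $T_{k+l}(\gamma)\circ(-)$, and verifying that $d_k$ sees every simple of $\mcal N_k$; everything else is bookkeeping with the unitarity of $W$ and the $*$-functoriality of $\Delta$.
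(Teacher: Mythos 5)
Your proof is correct and follows essentially the same route as the paper: (i), (ii), (iv) are treated as formal consequences of unwinding \eqref{Phidef} into the post-composition form, and (iii) is proved by a Pimsner--Popa-type resolution of the identity $1_{\Lambda_k\Gamma_k\cdots\Gamma_1 m_0}$ factoring through the source object, giving the recovery formula $\gamma=\sum_i\left(\Phi_\gamma\alpha_i\right)\circ\alpha_i^*$. The only difference is that you build the resolving family inside $H_k$ directly (via the observation that $\Delta_k\cdots\Delta_1 n_0$ contains every simple of $\mcal N_k$), whereas the paper transports the level-zero PP-basis $\mscr S\subset H_0$ of \Cref{PPbasispreC*} up to level $k$ through the connection unitaries; that variant costs nothing extra and yields the slightly stronger conclusion, recorded in the paper and exploited later (e.g.\ in \Cref{flatthm}, where $K_T$ is defined through $TH_0$), that $\gamma$ is already detected by $\left.\Phi_\gamma\right|_{H_0}$.
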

\begin{proof}
The only nontrivial part is to prove (iii).
This easily follows from the equality $  \gamma = \displaystyle \sum_{\sigma \in \mscr S}
\raisebox{-14mm}{
	\begin{tikzpicture}
		\node[draw,thick,rounded corners, fill=white,minimum width=60] at (-.8,2.1) {$\Phi_\gamma \, \sigma$};
		\node[draw,thick,rounded corners, fill=white] at (-.15,.9) {$\sigma^*$};
		\node[left] at (-.6,.9) {$ \cdots $};
		\node[right] at (-.1,2.7) {$ m_0 $};
		\node[right] at (-.1,.2) {$ m_0 $};
		\node[right] at (-.1,1.4) {$ n_0 $};
		\node[left] at (-1.5,2.7) {$ \Omega_k $};
		\node[left] at (-1.5,.2) {$ \Lambda_k $};
		\node[left] at (-.3,2.7) {$ \cdots $};
		\draw[white,line width=1mm,in=-90,out=90] (-1.6,0) to (-.3,.6);
		\draw[red,in=-90,out=90] (-1.6,0) to (-.3,.6);
		\begin{scope}[on background layer]
			\draw[in=-90,out=90] (-.5,0) to (-.8,1.75);
			\draw[in=-90,out=90] (-1.3,0) to (-1.6,1.75);
			\draw[thick,dashed] (0,0) to (0,3);
			\draw[red,in=-90,out=90] (-1.6,2.4) to (-1.6,3);
			\draw (-1.3,2.4) to (-1.3,3);
			\draw (-.3,2.4) to (-.3,3);
		\end{scope}
	\end{tikzpicture}
}$.
In fact, we have deduced a stronger statement, namely, $ \gamma $ is nonzero if and only if $ \left. \Phi_\gamma \right|_{H_0} $ is nonzero.
\end{proof}
\begin{lem}\label{AkcommNTpreC*}
	For each $ k \geq 0 $, the space $ \mcal N_k \left(\Lambda_k \Gamma_k \cdots \Gamma_1 m_0 \ , \ \Omega_k \Gamma_k \cdots \Gamma_1 m_0\right) $ gets an $ A_k $-$ A_k $-bimodule structure via
\[
\begin{split}
(\alpha_1 , \gamma &, \alpha_2) \in A_k \times  \mcal N_k \left(\Lambda_k \Gamma_k \cdots \Gamma_1 m_0 \ , \ \Omega_k \Gamma_k \cdots \Gamma_1 m_0\right) \times A_k \\
\rotatebox{270}{$ \longmapsto $}& \\
\Omega_k \alpha_1 \circ \gamma &\circ \Lambda_k \alpha_2 \in \mcal N_k \left(\Lambda_k \Gamma_k \cdots \Gamma_1 m_0 \ , \ \Omega_k \Gamma_k \cdots \Gamma_1 m_0\right)
\end{split}\]
	and the space $\normalfont \t{NT}(\Lambda_k , \Omega_k) $ of natural linear transformations is isomorphic to the space of $ A_k $-central vectors in $ \mcal N_k \left(\Lambda_k \Gamma_k \cdots \Gamma_1 m_0 \ , \ \Omega_k \Gamma_k \cdots \Gamma_1 m_0\right) $ via $ \eta \longmapsto \eta_{\Gamma_k\cdots \Gamma_1 m_0} $.
\end{lem}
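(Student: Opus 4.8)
The bimodule structure in the first assertion is a formal consequence of the functoriality and $*$-linearity of $\Lambda_k$ and $\Omega_k$: one checks only that $\Omega_k\alpha_1\circ\gamma\circ\Lambda_k\alpha_2$ again lies in $\mcal N_k(\Lambda_k\Gamma_k\cdots\Gamma_1 m_0,\Omega_k\Gamma_k\cdots\Gamma_1 m_0)$ and that the left and right actions commute and respect products, all of which is immediate. So the content is the second assertion. Write $x:=\Gamma_k\cdots\Gamma_1 m_0$ and $A_k=\t{End}(x)$. The first thing I would record is the structural fact that drives everything: $x$ contains every simple object of $\mcal M_k$ as a subobject. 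Indeed, $m_0=\us{v\in V_0}\bigoplus v$ contains every vertex of $V_0$, and since no vertex of any $V_j$ is isolated, an easy induction on $k$ shows that the multiplicity of each $v_k\in V_k$ in $x$ is at least one.

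Next I would check that $\Psi:\eta\mapsto\eta_x$ maps $\t{NT}(\Lambda_k,\Omega_k)$ into the $A_k$-central vectors and is injective. Centrality is exactly naturality of $\eta$ evaluated at the endomorphisms $\alpha\in\t{End}(x)=A_k$, i.e. $\Omega_k\alpha\circ\eta_x=\eta_x\circ\Lambda_k\alpha$. For injectivity, fix a simple $v_k$ and an isometric inclusion $\iota:v_k\hookrightarrow x$ (available by the structural fact); naturality gives $\eta_x\circ\Lambda_k\iota=\Omega_k\iota\circ\eta_{v_k}$, and composing on the left with $\Omega_k\iota^*$ (using $\Omega_k(\iota^*\iota)=1$) recovers $\eta_{v_k}=\Omega_k\iota^*\circ\eta_x\circ\Lambda_k\iota$. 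Hence $\eta_x=0$ forces $\eta_{v_k}=0$ for every simple $v_k$; since a natural transformation between $*$-linear functors on a semisimple C*-category is determined by its components on simples, this gives $\eta=0$.

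The substance is surjectivity, where I would construct the inverse of $\Psi$ from a central vector $\gamma$. Guided by the formula above, set $\eta_{v_k}:=\Omega_k\iota^*\circ\gamma\circ\Lambda_k\iota$ for an isometric inclusion $\iota:v_k\hookrightarrow x$. Independence of the choice of $\iota$ is where $A_k$-centrality enters: for two such isometries $\iota,\iota'$ the element $\iota\iota'^*\in\t{End}(x)=A_k$ satisfies $\Omega_k(\iota\iota'^*)\circ\gamma=\gamma\circ\Lambda_k(\iota\iota'^*)$, and compressing by $\Omega_k\iota^*$ on the left and $\Lambda_k\iota'$ on the right (using $\iota^*\iota=\iota'^*\iota'=1_{v_k}$) yields $\Omega_k\iota^*\gamma\Lambda_k\iota=\Omega_k\iota'^*\gamma\Lambda_k\iota'$. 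Assigning these components on simples extends uniquely to an $\eta\in\t{NT}(\Lambda_k,\Omega_k)$. Finally I would verify $\eta_x=\gamma$: decomposing $1_x=\sum_{v_k,i}P_{v_k,i}$ into the minimal projections $P_{v_k,i}=p^i_{v_k}(p^i_{v_k})^*$ onto the copies of the simples, one computes $\eta_x=\sum_{v_k,i}\Omega_k(P_{v_k,i})\circ\gamma\circ\Lambda_k(P_{v_k,i})$, and centrality $\Omega_k(P)\circ\gamma=\gamma\circ\Lambda_k(P)$ together with $\Lambda_k(P_{v_k,i})^2=\Lambda_k(P_{v_k,i})$ and $\sum_{v_k,i}\Lambda_k(P_{v_k,i})=1_{\Lambda_k x}$ collapses this to $\gamma$.

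The main obstacle is precisely this surjectivity step: one must see that $A_k$-centrality is exactly the compatibility needed both to glue the compressions $\Omega_k\iota^*\gamma\Lambda_k\iota$ into a genuine natural transformation and to recover $\gamma$ from it. Everything rests on the structural fact that $x=\Gamma_k\cdots\Gamma_1 m_0$ meets every simple object of $\mcal M_k$, which is what makes the relevant isometric inclusions available and the inverse well-defined.
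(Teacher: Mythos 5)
Your proof is correct and follows essentially the same route as the paper: both invert $\eta\mapsto\eta_{\Gamma_k\cdots\Gamma_1 m_0}$ by compressing an $A_k$-central vector $\gamma$ down to the simple summands of $\Gamma_k\cdots\Gamma_1 m_0$ and rebuilding a natural transformation from those components, with centrality providing the needed consistency and the fact that $\Gamma_k\cdots\Gamma_1 m_0$ contains every simple of $\mcal M_k$ making the compressions available. The only difference is presentational: the paper writes one averaged formula over pairs of orthonormal bases, normalized by $\left[\dim_\C\left(\mcal M_k(v,\Gamma_k\cdots\Gamma_1 m_0)\right)\right]^{-1}$, whereas you fix a single isometric inclusion per simple and prove independence of the choice — on central vectors the two constructions coincide.
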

\begin{proof}
The map
\begin{align*}
&\hspace*{90mm}\gamma \in \mcal N_k \left(\Lambda_k \Gamma_k \cdots \Gamma_1 m_0 \ , \ \Omega_k \Gamma_k \cdots \Gamma_1 m_0\right) \\
&\hspace*{90mm}\rotatebox{270}{$ \longmapsto  $}  \\
&\left(\hspace*{-1mm} \sum_{v \in V_{\mcal M_k}} \hspace*{-2mm} \left[\dim_\C \left( \mcal M_k (v , \Gamma_k \cdots \Gamma_1 m_0) \right)\right]^{-1} \hspace*{-6mm} \sum_{\substack{\sigma \in \t{ONB} \left(v, x \right)\\ \tau \in \t{ONB} \left(v, \Gamma_k \cdots \Gamma_1 m_0 \right) } } \hspace*{-6mm} \Omega_k (\sigma \tau^*) \circ \gamma  \circ \Lambda_k (\tau \sigma^*) \right)_{\hspace*{-2mm}x \in \t{Ob} (\mcal M_k)} \hspace*{-.5cm} \in \t{NT}(\Lambda_k , \Omega_k)
\end{align*}
when restricted to the $ A_k $-central vectors, turns out to be the inverse of the map in the statement of the lemma (since $ m_0 $ contains every simple as a subobject and $ \Gamma_j $'s are bi-faithful).
\end{proof}

\begin{lem}\label{xrelphi}
The pair $ (\eta , \kappa) \in \normalfont \t{NT}(\Lambda_k , \Omega_k) \times \normalfont \t{NT}(\Lambda_{k+1} , \Omega_{k+1}) $ satisfies exchange relation if and only if $ \Phi_{\eta_{\Gamma_k \cdots \Gamma_1 m_0}} = \Phi_{\kappa_{\Gamma_{k+1} \cdots \Gamma_1 m_0}} $
\end{lem}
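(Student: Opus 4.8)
The plan is to translate the exchange relation of \Cref{XreldefpreC*} into a single identity of morphisms, to identify that identity with a one-step ``lift'' operation, and then to show that $\Phi$ is insensitive to this lift; the equivalence with equality of the operators $\Phi$ then drops out from injectivity of $\Phi$.

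First I would read off from \Cref{XreldefpreC*} that, writing $W^{\Lambda}_\bullet$ and $W^{\Omega}_\bullet$ for the connections of $\Lambda_\bullet$ and $\Omega_\bullet$, the pair $(\eta,\kappa)$ satisfies the exchange relation precisely when
\[
W^{\Omega}_{k+1}\circ\Delta_{k+1}(\eta)\ =\ (\kappa\,\Gamma_{k+1})\circ W^{\Lambda}_{k+1}
\]
as natural transformations $\Delta_{k+1}\Lambda_k\Rightarrow\Omega_{k+1}\Gamma_{k+1}$ out of $\mcal M_k$. Since every simple object of $\mcal M_k$ is a subobject of $\Gamma_k\cdots\Gamma_1 m_0$ (because $m_0$ contains every simple of $\mcal M_0$ and each $\Gamma_j$ is bi-faithful) and both sides are natural transformations of $*$-functors, this equality holds in general if and only if it holds after evaluating at the single object $\Gamma_k\cdots\Gamma_1 m_0$. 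Setting $\gamma_\eta:=\eta_{\Gamma_k\cdots\Gamma_1 m_0}$ and $\gamma_\kappa:=\kappa_{\Gamma_{k+1}\cdots\Gamma_1 m_0}$ (the $A_k$- and $A_{k+1}$-central vectors attached to $\eta$ and $\kappa$ by \Cref{AkcommNTpreC*}) and using unitarity of $(W^{\Lambda}_{k+1})_{\Gamma_k\cdots\Gamma_1 m_0}$ to move it to the right, the exchange relation becomes exactly
\[
\gamma_\kappa\ =\ (W^{\Omega}_{k+1})_{\Gamma_k\cdots\Gamma_1 m_0}\circ\Delta_{k+1}(\gamma_\eta)\circ\big((W^{\Lambda}_{k+1})_{\Gamma_k\cdots\Gamma_1 m_0}\big)^{*}\ =:\ (\gamma_\eta)^{\uparrow},
\]
the one-step lift of the level-$k$ morphism $\gamma_\eta$ to a level-$(k+1)$ morphism.

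The key step is the level-shift identity $\Phi_{\gamma^{\uparrow}}=\Phi_{\gamma}$ for every level-$k$ morphism $\gamma$. Both operators are right $B_\infty$-linear by \Cref{PhiproppreC*}(iv), so by the PP-basis $\mscr S\subset H_0$ of \Cref{PPbasispreC*} it suffices to check agreement on $\sigma\in\mscr S$. Evaluating \Cref{Phidef} at $l=0$ gives $\Phi_\gamma(\sigma)=\gamma\circ\sigma_{(k)}$ and $\Phi_{\gamma^{\uparrow}}(\sigma)=\gamma^{\uparrow}\circ\sigma_{(k+1)}$, where $\sigma_{(k)}\in H_k$ and $\sigma_{(k+1)}\in H_{k+1}$ denote the images of $\sigma$ under the inclusions. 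Substituting the definition of $\gamma^{\uparrow}$ together with the formula $\sigma_{(k+1)}=(W^{\Lambda}_{k+1})\circ\Delta_{k+1}(\sigma_{(k)})$ for the inclusion $H_k\hookrightarrow H_{k+1}$ (suppressing the object subscript), the two copies of $W^{\Lambda}_{k+1}$ cancel by unitarity and we are left with $\Phi_{\gamma^{\uparrow}}(\sigma)=(W^{\Omega}_{k+1})\circ\Delta_{k+1}(\gamma\circ\sigma_{(k)})$. This is precisely the image of $\Phi_\gamma(\sigma)\in G_k$ under the inclusion $G_k\hookrightarrow G_{k+1}$, so the two elements coincide in $G_\infty$; hence $\Phi_{\gamma^{\uparrow}}=\Phi_\gamma$.

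Finally I would assemble the equivalence. If $(\eta,\kappa)$ satisfies the exchange relation then $\gamma_\kappa=(\gamma_\eta)^{\uparrow}$, whence $\Phi_{\gamma_\kappa}=\Phi_{(\gamma_\eta)^{\uparrow}}=\Phi_{\gamma_\eta}$ by the level-shift identity. Conversely, if $\Phi_{\gamma_\eta}=\Phi_{\gamma_\kappa}$ then $\Phi_{(\gamma_\eta)^{\uparrow}}=\Phi_{\gamma_\kappa}$, and since $(\gamma_\eta)^{\uparrow}$ and $\gamma_\kappa$ are both level-$(k+1)$ morphisms, injectivity of $\gamma\mapsto\Phi_\gamma|_{H_{k+1}}$ from \Cref{PhiproppreC*}(iii) forces $(\gamma_\eta)^{\uparrow}=\gamma_\kappa$, i.e. the exchange relation holds. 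I expect the only genuine subtlety to be the bookkeeping of the object at which the natural transformations are evaluated and making the cancellation of the connection unitaries precise; conceptually, the statement is just that $\Phi$ cannot see the level-shift that is already built into its own definition.
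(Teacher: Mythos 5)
Your proof is correct and follows essentially the same route as the paper: the forward direction is the observation that $\Phi$ is blind to the built-in level shift (what the paper dismisses as ``direct from the definitions,'' and what you isolate as the identity $\Phi_{\gamma^{\uparrow}}=\Phi_{\gamma}$), and the converse combines the injectivity of $\gamma\mapsto\Phi_\gamma|_{H_{k+1}}$ from \Cref{PhiproppreC*}(iii) with the fact that $\Gamma_k\cdots\Gamma_1 m_0$ contains every simple of $\mcal M_k$ as a subobject, exactly as in the paper. Your extra bookkeeping (the explicit lift $\gamma^{\uparrow}$ and the PP-basis verification of the level-shift identity) is sound and simply makes the paper's terse argument explicit.
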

\begin{proof}
The `only if' part direct follows from the definitions.\\
For the `if' part, let $ \vphantom{\eta}^\times \eta $ and $ \kappa_\times $ denote the left and the right sides of the exchange relation equation.
Applying \Cref{PhiproppreC*} (iii) on the equation in our hypothesis, we deduce $ \left(\vphantom{\eta}^\times \eta\right)_{\Gamma_k \cdots \Gamma_1 m_0} = \left(\kappa_\times\right)_{\Gamma_k \cdots \Gamma_1 m_0} $.
Now, by bi-faithfulness, $ {\Gamma_k \cdots \Gamma_1 m_0} $ contains all the simples of $ \mcal M_{k} $ as subobjects, and thereby $ \vphantom{\eta}^\times \eta = \kappa_\times $.
\end{proof}
\begin{thm}\label{flatthm}
Starting from a $ 2 $-cell $ \left\{\eta^{(k)} \in \normalfont \t{NT}\left(\Lambda_k , \Omega_k \right)\right\}_{k\geq K} $, we have an intertwiner $ \Phi_{\eta^{(k)}_{\Gamma_k \cdots \Gamma_1 m_0}} \in \vphantom{\mcal L}_{A_\infty} \mcal L_{B_\infty} (H_\infty, G_\infty) $ which is independent of $ k \geq K $.

Conversely, for every $ T \in \vphantom{\mcal L}_{A_\infty} \mcal L_{B_\infty} (H_\infty, G_\infty) $ ($ = $ the space of $ A_\infty $-$ B_\infty $-linear adjointable operator) and for all $ k \geq K_T\coloneqq \min \left\{l : T H_0 \subset G_l \right\} $, there exists unique $ \eta^{(k)} \in \normalfont \t{NT} (\Lambda_k , \Omega_k) $ such that $ T  =  \Phi_{\eta^{(k)}_{\Gamma_k \cdots \Gamma_1 m_0}} $.
Further, $ \left(\eta^{(k)} , \eta^{(k+1)}\right) $ satisfies exchange relation for all $ k \geq K_T $.
\end{thm}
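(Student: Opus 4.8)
\emph{Proof proposal.} The plan is to run the two directions separately, using the dictionary between the morphism spaces $\mcal N_k\left(\Lambda_k\Gamma_k\cdots\Gamma_1 m_0,\ \Omega_k\Gamma_k\cdots\Gamma_1 m_0\right)$ and the adjointable operators $\Phi_\gamma$ of \Cref{Phidef}, together with \Cref{PhiproppreC*}, \Cref{AkcommNTpreC*}, \Cref{PPbasispreC*} and \Cref{xrelphi}. For the forward direction, let $\{\eta^{(k)}\}_{k\geq K}$ represent the $2$-cell, so that $(\eta^{(k)},\eta^{(k+1)})$ satisfies the exchange relation for all $k\geq K$. By \Cref{xrelphi} this is exactly $\Phi_{\eta^{(k)}_{\Gamma_k\cdots\Gamma_1 m_0}}=\Phi_{\eta^{(k+1)}_{\Gamma_{k+1}\cdots\Gamma_1 m_0}}$, so the operator $\Phi:=\Phi_{\eta^{(k)}_{\Gamma_k\cdots\Gamma_1 m_0}}$ is independent of $k\geq K$. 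Adjointability and right $B_\infty$-linearity are \Cref{PhiproppreC*}(i),(iv). For left $A_\infty$-linearity I would use that, being a component of a natural transformation, $\gamma=\eta^{(k)}_{\Gamma_k\cdots\Gamma_1 m_0}$ is an $A_k$-central vector in the sense of \Cref{AkcommNTpreC*}, i.e.\ $\gamma\circ\Lambda_k(\beta)=\Omega_k(\beta)\circ\gamma$ for all $\beta\in A_k$. Sliding a left multiplication past the red connection strings of \Cref{Phidef} — precisely the naturality of the unitaries $W$ for $\Lambda_\bullet$ and $\Omega_\bullet$ through the levels $k+1,\dots,k+l$ — converts it into pre/post-composition of $\gamma$ with $\Lambda_k(\beta)$ and $\Omega_k(\beta)$, and centrality yields $\Phi(\beta\cdot\xi)=\beta\cdot\Phi(\xi)$. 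Since every $a\in A_\infty$ lies in some $A_j$ and $\Phi$ may be computed from the representative at any level $\geq\max\{j,K\}$, this gives left $A_\infty$-linearity and hence $\Phi\in\vphantom{\mcal L}_{A_\infty}\mcal L_{B_\infty}(H_\infty,G_\infty)$.

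For the converse, fix $T$ and $k\geq K_T$, and note $T\sigma\in TH_0\subseteq G_{K_T}\subseteq G_k$ for $\sigma\in\mscr S$, so the reconstruction formula from the proof of \Cref{PhiproppreC*}(iii) is meaningful with $T\sigma$ substituted for $\Phi_\gamma\sigma$; I define $\gamma\in\mcal N_k\left(\Lambda_k\Gamma_k\cdots\Gamma_1 m_0,\ \Omega_k\Gamma_k\cdots\Gamma_1 m_0\right)$ by that formula. I then claim $T=\Phi_\gamma$. As both operators are right $B_\infty$-linear and $\mscr S\subseteq H_0$ is a PP-basis of $H_\infty$ by \Cref{PPbasispreC*}, it suffices to check $\Phi_\gamma\sigma=T\sigma$ on $\mscr S$; expanding $\Phi_\gamma\sigma$ via the reconstruction formula and collapsing the resulting sum with the completeness relation $\sum_{\tau\in\mscr S}\tau\circ\tau^*=1_{\Lambda_0 m_0}$ and right $B_\infty$-linearity of $T$ gives $\Phi_\gamma\sigma=\sum_{\tau\in\mscr S}(T\tau)\circ(\tau^*\circ\sigma)=T\!\left(\sum_{\tau\in\mscr S}\tau\circ\tau^*\circ\sigma\right)=T\sigma$.

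It remains to upgrade $\gamma$ to a natural transformation and check the exchange relation. Since $T=\Phi_\gamma$ is left $A_k$-linear, the general form of the forward ``sliding'' computation gives $\Phi_\gamma(\beta\cdot\xi)-\beta\cdot\Phi_\gamma(\xi)=\Phi_{\gamma\circ\Lambda_k(\beta)-\Omega_k(\beta)\circ\gamma}(\xi)$ for all $\beta\in A_k$; the vanishing of the left side forces $\Phi_{\gamma\circ\Lambda_k(\beta)-\Omega_k(\beta)\circ\gamma}=0$, whence $\gamma\circ\Lambda_k(\beta)=\Omega_k(\beta)\circ\gamma$ by the injectivity in \Cref{PhiproppreC*}(iii). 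Thus $\gamma$ is $A_k$-central, and \Cref{AkcommNTpreC*} produces a unique $\eta^{(k)}\in\t{NT}(\Lambda_k,\Omega_k)$ with $\gamma=\eta^{(k)}_{\Gamma_k\cdots\Gamma_1 m_0}$, so $T=\Phi_{\eta^{(k)}_{\Gamma_k\cdots\Gamma_1 m_0}}$; uniqueness follows from the injectivity of \Cref{PhiproppreC*}(iii) together with the bijectivity of the correspondence in \Cref{AkcommNTpreC*}. Finally, applying this construction at levels $k$ and $k+1$ both recovers $T$, so $\Phi_{\eta^{(k)}_{\Gamma_k\cdots\Gamma_1 m_0}}=T=\Phi_{\eta^{(k+1)}_{\Gamma_{k+1}\cdots\Gamma_1 m_0}}$, which is the exchange relation for $(\eta^{(k)},\eta^{(k+1)})$ by \Cref{xrelphi}.

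I expect the genuine obstacle to be the two ``sliding'' computations through the connection strings — the left $A_\infty$-linearity in the forward direction and the commutator identity forcing centrality in the converse — since both must carefully propagate the naturality of the unitaries $W$ for $\Lambda_\bullet$ and $\Omega_\bullet$ across all intermediate levels in \Cref{Phidef}. Once these are in place, the identification $T=\Phi_\gamma$ and the exchange relation are formal consequences of the PP-basis formalism and of \Cref{PhiproppreC*}(iii) and \Cref{xrelphi}.
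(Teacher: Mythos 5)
Your proposal is correct and follows essentially the same route as the paper's proof: the forward direction via \Cref{xrelphi}, and the converse via the PP-basis reconstruction $\zeta_k=\sum_{\sigma\in\mscr S}(T\sigma)\circ(\text{inclusion of }\sigma)^*$, the identity $T|_{H_k}=\Phi_{\zeta_k}|_{H_k}$, injectivity from \Cref{PhiproppreC*}(iii) to force $A_k$-centrality, \Cref{AkcommNTpreC*} to produce $\eta^{(k)}$, and \Cref{xrelphi} again for the exchange relation. The only difference is that you spell out the naturality ("sliding") computations and the reduction to checking on $\mscr S$, which the paper compresses into "obvious"/"straightforward"; your handling of the level bookkeeping for left $A_\infty$-linearity (computing from a representative at level $\geq\max\{j,K\}$) is exactly the point that makes that claim true.
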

\begin{proof}
The forward direction trivially follows from \Cref{xrelphi} and the $ A_\infty $-$ B_\infty $-linearity is obvious.
For the converse, set
\[
\zeta_k \coloneqq  \displaystyle \sum_{\sigma \in \mscr S}
\raisebox{-14mm}{
	\begin{tikzpicture}
		\node[draw,thick,rounded corners, fill=white,minimum width=60,minimum height=20] at (-.8,2.1) {$T\sigma$};
		\node[draw,thick,rounded corners, fill=white] at (-.15,.9) {$\sigma^*$};
		\node[left] at (-.6,.9) {$ \cdots $};
		\node[right] at (-.1,2.7) {$ m_0 $};
		\node[right] at (-.1,.2) {$ m_0 $};
		\node[right] at (-.1,1.4) {$ n_0 $};
		\node[left] at (-1.5,2.7) {$ \Omega_k $};
		\node[left] at (-1.5,.2) {$ \Lambda_k $};
		\node[left] at (-.3,2.7) {$ \cdots $};
		\draw[white,line width=1mm,in=-90,out=90] (-1.6,0) to (-.3,.6);
		\draw[red,in=-90,out=90] (-1.6,0) to (-.3,.6);
		\begin{scope}[on background layer]
			\draw[in=-90,out=90] (-.5,0) to (-.8,1.75);
			\draw[in=-90,out=90] (-1.3,0) to (-1.6,1.75);
			\draw[thick,dashed] (0,0) to (0,3);
			\draw[red,in=-90,out=90] (-1.6,2.4) to (-1.6,3);
			\draw (-1.3,2.4) to (-1.3,3);
			\draw (-.3,2.4) to (-.3,3);
		\end{scope}
	\end{tikzpicture}
}	
\in \mcal N \left(\Lambda_k \Gamma_k \cdots \Gamma_1 m_0 , \Omega_k \Gamma_k \cdots \Gamma_1 m_0\right) \t{ for } k\geq K_T 
\] where $ T \sigma $ is treated as an element of $ G_k $ and $ \mscr S $ ($ \subset H_0 $) is a PP-basis for the right $ B_\infty  $-module $ H_\infty $.
Using the right $ B_\infty $-valued inner product, the PP-basis and right $ B_k $-linearity of $ T $, one can easily conclude $ T \xi = \Phi_{\zeta_k} \xi$ for all $ \xi \in H_k $; moreover, this equation uniquely determines $ \zeta_k $ by \Cref{PhiproppreC*} (iii).
Further, the left side of the equation is $ A_k $-linear; then so is the right side.
Again by \Cref{PhiproppreC*} (iii), $ \zeta_k $ becomes $ A_k $-central.
Applying \Cref{AkcommNTpreC*}, we get a unique $ \eta^{(k)} \in \t{NT}(\Lambda_k , \Omega_k)$ satisfying $ \zeta_k = \eta^{(k)}_{\Gamma_k \cdots \Gamma_1 m_0} $.
The rest of the proof is straight forward.
\end{proof}
\begin{rem}\label{UCRcorrfunctor}
If $ \mcal C_{B_\infty,A_\infty} $ denotes the category of $ A_\infty $-$ B_\infty $ right correspondences where $ A_\infty $ and $ B_\infty $ are the unital filtration of finite dimesnional C*-algebras associated to the $ 0 $-cells $ \Gamma_\bullet$ and $ \Delta_\bullet $ respectively, then combining \Cref{flatthm}, \Cref{2cellpreC*} and  the definition of the vertical composition of $ 2 $-cells, we have a fully faithful $ * $-linear functor from
\[
\Psi_{\Gamma_\bullet , \Delta_\bullet}	: \textbf{UC}_{ \Gamma_\bullet , \Delta_\bullet} \lra \mcal C_{B_\infty,A_\infty} \; .
\]
\end{rem}
\vspace*{4mm}

\subsection{The horizontal structure}$ \ $

This is the final step of constructing a $ * $-linear $ 2 $-category of unitary connections, denoted by $\textbf{UC}$ whose $ 0 $-, $ 1 $- and $ 2 $-cells are already defined in Sections \ref{0cellspreC*}, \ref{1cellspreC*} and \ref{2cellspreC*} respectively.
For $ 0 $-cells $\Gamma_\bullet , \Delta_\bullet, \Sigma_\bullet $, we will define a bifunctor
\[ \btimes\, : \, \textbf{UC}_{\Delta_\bullet ,  \Sigma_\bullet } \times \textbf{UC}_{\Gamma_\bullet ,\Delta_\bullet } \lra \textbf{UC}_{ \Gamma_\bullet, \Sigma_\bullet }
\]
in such a way that it corresponds to the  reverse relative tensor product of the associated right correspondences.
For $\Omega_\bullet\in \textbf{UC}_1 \left(\Delta_\bullet ,\Sigma_\bullet\right) $ and $\Lambda_\bullet  \in  \textbf{UC}_1 \left(\Gamma_\bullet ,\Delta_\bullet \right) $, define
\begin{equation}\label{tensor1cell}
\Omega_\bullet \btimes \Lambda_\bullet \coloneqq \left( \left\{\Omega_k \, \Lambda_k\right\}_{k\geq 0} \, , \,  \left\{\raisebox{-1.35cm}{
\begin{tikzpicture}
\draw[in=-90,out=90] (0,0) to (1,2);
\draw[white,line width=1mm,in=-90,out=90] (0.5,0) to (0,2);
\draw[red,in=-90,out=90] (0.5,0) to (0,2);
\draw[white,line width=1mm,in=-90,out=90] (1,0) to (0.5,2);
\draw[red,in=-90,out=90] (1,0) to (0.5,2);
\node[left] at (0.1,0.1) {$ \Sigma_k $};
\node[left] at (1.2,-0.1) {$\Omega_{k-1} $};
\node[right] at (.9,0.1) {$ \Lambda_{k-1} $};
\node[right] at (.9,1.8) {$ \Gamma_k$};
\node[right] at (.2,2.15) {$\Lambda_k$};
\node[left] at (.1,1.8) {$ \Omega_k$};
\end{tikzpicture}
}\right\}_{k\geq 1} \right) \ .
\end{equation}
\begin{prop}\label{confusionbimodpreC*}
The bimodule $\Psi_{ \Gamma_\bullet , \Sigma_\bullet }\left( \Omega_\bullet \btimes \Lambda_\bullet  \right)$ is isomorphic to the relative tensor product of the right correspondences $\Psi_{ \Gamma_\bullet , \Delta_\bullet } \left(\Lambda_\bullet\right)  $ and $\Psi_{ \Delta_\bullet , \Sigma_\bullet } \left(\Omega_\bullet \right) $.
\end{prop}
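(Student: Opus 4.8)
The plan is to write down an explicit level-wise map and pass to the inductive limit. Adopt the notation of \Cref{1cellspreC*}: let $\mcal P_\bullet$, $C_\infty$, $p_0$ be attached to the $0$-cell $\Sigma_\bullet$ exactly as $\mcal M_\bullet$, $A_\infty$, $m_0$ are attached to $\Gamma_\bullet$. Then $H_\infty := \Psi_{\Gamma_\bullet,\Delta_\bullet}(\Lambda_\bullet)$ has $H_k = \mcal N_k(\Delta_k\cdots\Delta_1 n_0,\ \Lambda_k\Gamma_k\cdots\Gamma_1 m_0)$ as an $A_\infty$-$B_\infty$ correspondence, $K_\infty := \Psi_{\Delta_\bullet,\Sigma_\bullet}(\Omega_\bullet)$ has $K_k = \mcal P_k(\Sigma_k\cdots\Sigma_1 p_0,\ \Omega_k\Delta_k\cdots\Delta_1 n_0)$ as a $B_\infty$-$C_\infty$ correspondence, and $L_\infty := \Psi_{\Gamma_\bullet,\Sigma_\bullet}(\Omega_\bullet\btimes\Lambda_\bullet)$ has $L_k = \mcal P_k(\Sigma_k\cdots\Sigma_1 p_0,\ \Omega_k\Lambda_k\Gamma_k\cdots\Gamma_1 m_0)$. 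For $\xi\in H_k$, $\zeta\in K_k$ I would set $\Theta_k(\xi\ot\zeta) := \Omega_k(\xi)\circ\zeta$, which lands in $L_k$ since $\Omega_k(\xi):\Omega_k\Delta_k\cdots\Delta_1 n_0\ra\Omega_k\Lambda_k\Gamma_k\cdots\Gamma_1 m_0$ composes with $\zeta:\Sigma_k\cdots\Sigma_1 p_0\ra\Omega_k\Delta_k\cdots\Delta_1 n_0$.

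First I would verify the elementary module-theoretic properties. Because the right $B_k$-action on $H_k$ is $\xi\mapsto\xi\circ\beta$ while the left $B_k$-action on $K_k$ is $\zeta\mapsto\Omega_k(\beta)\circ\zeta$, functoriality of $\Omega_k$ gives $\Theta_k(\xi\beta\ot\zeta)=\Theta_k(\xi\ot\beta\zeta)$, so $\Theta_k$ descends to the balanced tensor product $H_k\ot_{B_k}K_k$. Left $A_k$-linearity (through $\Omega_k\Lambda_k$) and right $C_k$-linearity are immediate from functoriality and associativity of $\circ$. A direct computation with the inner product $\lab\cdot,\cdot\rab=(\cdot)^*\circ(\cdot)$ gives $\lab\Theta_k(\xi_1\ot\zeta_1),\Theta_k(\xi_2\ot\zeta_2)\rab_{C_k}=\zeta_2^*\,\Omega_k(\xi_2^*\xi_1)\,\zeta_1$, which is exactly the $C_k$-valued inner product of the relative tensor product in the paper's linear-in-first convention; hence $\Theta_k$ is isometric, in particular injective on the null-vector quotient. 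This matching of inner products is what the word ``reverse'' records.

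Next I would prove surjectivity using the Pimsner--Popa basis. By \Cref{PPbasispreC*} a set $\mscr S\subset H_0$ with $\sum_{\sigma\in\mscr S}\sigma\sigma^*=1_{\Lambda_0 m_0}$ remains a PP-basis after including into $H_k$, so $\sum_{\sigma}\tilde\sigma\tilde\sigma^*=1_{\Lambda_k\Gamma_k\cdots\Gamma_1 m_0}$, where $\tilde\sigma\in H_k$ is the image of $\sigma$. Applying the functor $\Omega_k$ and precomposing with an arbitrary $\lambda\in L_k$ gives $\lambda=\sum_\sigma\Omega_k(\tilde\sigma)\circ\big(\Omega_k(\tilde\sigma)^*\circ\lambda\big)$; each factor $\Omega_k(\tilde\sigma)^*\circ\lambda$ lies in $K_k$, so $\lambda=\Theta_k\big(\sum_\sigma\tilde\sigma\ot(\Omega_k(\tilde\sigma)^*\lambda)\big)$. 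Thus each $\Theta_k$ is an isomorphism of $A_k$-$C_k$ correspondences $H_k\ot_{B_k}K_k\xrightarrow{\sim}L_k$.

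Finally I would check compatibility with the inclusions and pass to the limit; this is the step I expect to be the main obstacle. Expanding $\Theta_{k+1}$ on the images of the inclusions $H_k\hookrightarrow H_{k+1}$ and $K_k\hookrightarrow K_{k+1}$, using functoriality of $\Omega_{k+1}$ and then the naturality of the connection $W^\Omega_{k+1}:\Sigma_{k+1}\Omega_k\ra\Omega_{k+1}\Delta_{k+1}$ to slide $\Omega_{k+1}\Delta_{k+1}(\xi)$ across it, one is left with $\Omega_{k+1}\big((W^\Lambda_{k+1})_{\Gamma_k\cdots\Gamma_1 m_0}\big)\circ(W^\Omega_{k+1})_{\Lambda_k\Gamma_k\cdots\Gamma_1 m_0}$ precomposed with $\Sigma_{k+1}(\Theta_k(\xi\ot\zeta))$. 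Recognizing the first factor as the component at $\Gamma_k\cdots\Gamma_1 m_0$ of the composite connection $\Omega_{k+1}(W^\Lambda_{k+1})\circ(W^\Omega_{k+1})_{\Lambda_k}=W^{\Omega\btimes\Lambda}_{k+1}$ from \Cref{tensor1cell} identifies the result with the inclusion of $\Theta_k(\xi\ot\zeta)$ into $L_{k+1}$. This is the only place where the precise crossing defining $\Omega_\bullet\btimes\Lambda_\bullet$ is used, and it is cleanest in the graphical calculus, where it amounts to sliding a box through the two crossings. Granting it, since the algebraic relative tensor product commutes with the filtered unions $H_\infty=\bigcup_k H_k$ and $K_\infty=\bigcup_k K_k$ over $B_\infty=\bigcup_k B_k$, the compatible family $\{\Theta_k\}$ assembles into an isometric $A_\infty$-$C_\infty$-bilinear isomorphism $H_\infty\ot_{B_\infty}K_\infty\xrightarrow{\sim}L_\infty$, i.e. $\Psi_{\Gamma_\bullet,\Sigma_\bullet}(\Omega_\bullet\btimes\Lambda_\bullet)\cong\Psi_{\Gamma_\bullet,\Delta_\bullet}(\Lambda_\bullet)\ot_{B_\infty}\Psi_{\Delta_\bullet,\Sigma_\bullet}(\Omega_\bullet)$, as desired.
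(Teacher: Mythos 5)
Your proof is correct and follows essentially the same route as the paper: the same canonical map $\xi \ot \zeta \mapsto \Omega_k(\xi)\circ\zeta$, the same matching of the $C$-valued inner products to handle the null-space quotient, and Pimsner--Popa bases for surjectivity. The only cosmetic differences are that the paper works directly with $H_\infty \ot G_\infty$ and gets surjectivity by exhibiting $\left\{\Omega_0(\sigma)\circ\tau\right\}_{\sigma \in \mscr S, \tau \in \mscr T}$ as a PP-basis of the target correspondence, whereas you argue level-by-level with the single basis $\mscr S$ (using functoriality of $\Omega_k$) and then pass to the filtered colimit, spelling out the inclusion-compatibility step that the paper leaves as ``follows directly from the definition.''
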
 
\begin{proof}
We first consider the following notations:\\
$ \left\{\mcal M_{k-1} \os{\displaystyle \Gamma_k} \lra \mcal M_k \right\}_{k\geq 1}  \leadsto \;\;\; \cdots \subset A_k = \t{End} (\Gamma_k \cdots \Gamma_1 m_0) \subset \cdots \subset \us{k\geq 0} \cup A_k =  A_\infty  $\\
$\left\{\mcal N_{k-1} \os{\displaystyle \Delta_k} \lra \mcal N_k \right\}_{k\geq 1}  \leadsto \;\;\; \cdots \subset B_k = \t{End} (\Delta_k \cdots \Delta_1 n_0) \subset \cdots \subset \us{k\geq 0} \cup B_k = B_\infty $\\
$\left\{\mcal Q_{k-1} \os{\displaystyle \Sigma_k} \lra \mcal Q_k \right\}_{k\geq 1} \leadsto \;\;\; \cdots \subset C_k = \t{End} (\Sigma_k \cdots \Sigma_1 q_0) \subset \cdots \subset \us{k\geq 0} \cup C_k = C_\infty $\\
$ \Lambda_\bullet \leadsto  \cdots \subset H_k =\! \mcal N_k (\Delta_k \cdots \Delta_1 n_0 , \Lambda_k \Gamma_k \cdots \Gamma_1 m_0) \subset \cdots \subset \us{k\geq 0} \cup H_k = H_\infty $\\
$ \Omega_\bullet \leadsto  \cdots \subset G_k = \mcal Q_k (\Sigma_k \cdots \Sigma_1 q_0 , \Omega_k \Delta_k \cdots \Delta_1 n_0) \subset \cdots \subset \us{k\geq 0} \cup G_k = G_\infty $\\
$ \Omega_\bullet \btimes \Lambda_\bullet \leadsto \cdots \subset  F_k = \mcal Q (\Sigma_k \cdots \Sigma_1 q_0 , \Omega_k \Lambda_k \Gamma_k \cdots \Gamma_1 m_0) \subset \cdots \subset \us{k\geq 0} \cup F_k = F_\infty $\\
Consider the linear map
\[
H_k \otimes G_k \ni \xi \otimes \zeta \lra \Omega_k (\xi ) \circ \zeta =
\raisebox{-1.5cm}{
\begin{tikzpicture}
\draw[dashed,thick] (1,0)to (1,2.9);
\draw[in=-90,out=90] (0,0)to (0,2.9);
\draw[in=-90,out=90] (.8,0)to (.8,2.9);
\draw[red,in=-90,out=90] (-.2,2.3)to (-.2,2.9);
\draw[red,in=-90,out=90] (-.2,1.1) to (-1,2.5) to (-1,2.9);
\node[draw,thick,rounded corners, fill=white,minimum width=45] at (.4,2) {$\xi$};
\node[draw,thick,rounded corners, fill=white,minimum width=45] at (.4,.8) {$\zeta$};
\node[left] at (-.1,2.6) {$ \Lambda_k$};
\node[left] at (-.9,2.6) {$\Omega_k$};
\node[right] at (.9,.2) {$ q_0$};
\node[right] at (.9,1.35) {$ n_0$};
\node[right] at (.9,2.6) {$ m_0$};
\node[right] at (0,.2) {$ \cdots$};
\node[right] at (0,1.35) {$ \cdots$};
\node[right] at (0,2.6) {$ \cdots$};
\end{tikzpicture}
} \in F_k.
\]
It follows directly from the definition that the above map is $ A_k $-$ C_k $-linear and compatible with the inclusions $ H_k \hookrightarrow H_{k+1}$,  $G_k \hookrightarrow G_{k+1} $ and $F_k \hookrightarrow F_{k+1}  $; as a result, it extends to a $ A_\infty  $-$ C_\infty  $-linear map $ f: H_\infty \otimes G_\infty \lra F_\infty $.
Consider the $ C_\infty $-valued sesquilinear form on $ H_\infty \otimes G_\infty$ defined by
\[
\left(H_\infty \otimes G_\infty\right) \times \left(H_\infty \otimes G_\infty \right) \ni \left(\xi_1\otimes \zeta_1 \ , \ \xi_2 \otimes \zeta_2 \right) \longmapsto \left\lab \left\lab \xi_1 , \xi_2 \right\rab_{B_\infty} \zeta_1 ,\zeta_2 \right\rab_{C_\infty}  \in C_\infty
\]
which after applying $ f $, clearly goes to the desired one on the right correspondence $ G_\infty $.
Moreover, kernel of $ f $ matches exactly with the null space of the form (using non-degeneracy of the $ C_\infty $-valued inner product on $ F_\infty $).
Thus $ f $ factors through the relative tensor product and induces an injective $ A_\infty $-linear map.

Finally, we need to show that it is surjective as well.
For this, consider the PP-basis $ \mscr S $ (resp., $ {\mscr T} $)
sitting inside $ H_0 $ (resp., $ G_0 $)
for the right $ B_\infty $- (resp., $ C_\infty $- ) module $ H_\infty $ (resp., $ {G_\infty}$) considered in \Cref{PPbasispreC*}.
Note that $ \us{\sigma \in \mscr S} \sum \; \us{\tau \in \mscr {T}} \sum \Omega_0 (\sigma ) \circ \tau \circ \tau^* \circ \Omega_0 (\sigma^* ) = 1_{\Omega_0 \Lambda_0 m_0}$.
Thus by \Cref{PPbasispreC*}, $  \left\{\Omega_0 (\sigma ) \circ \tau \right\}_{(\sigma , \tau )\in \mscr S\times \mscr {T}} $ turns out to be a PP-basis for the right $ C_\infty $-module $ F_\infty $.
\end{proof}
We next proceed towards defining $ \btimes $ at the level of $ 2 $-cells.
\begin{defn}
For $ \Omega^i_\bullet \in \textbf{UC}_1 \left( \Delta_\bullet , \Sigma_\bullet \right) $ and $ \Lambda^i_\bullet \in \textbf{UC}_1 \left( \Gamma_\bullet , \Delta_\bullet \right) $
where $ i=1,2 $, and $ 2 $-cells $ \ul \eta = \left\{\eta^{(k)}\right\}_{k\geq K} \in \textbf{UC}_2 \left(\Lambda_\bullet^1  , \Lambda_\bullet^2 \right)$ and $ \ul \kappa = \left\{\kappa^{(k)}\right\}_{k\geq L} \in \textbf{UC}_2 \left(\Omega_\bullet^1  , \Omega_\bullet^2   \right)$, define $ \ul \kappa \btimes \ul \eta \in\textbf{UC}_2 \left( \Omega^1_\bullet  \btimes \Lambda^1_\bullet   \ , \ \Omega^2_\bullet  \btimes \Lambda^2_\bullet \right) $ by
\[
\left(\ul \kappa \btimes \ul \eta  \right)_k \coloneqq \  \Omega_{k}^2 \left( \eta^{(k)} \right) \circ \kappa^{(k)}_{\Lambda_{k}^1} \ = \ \kappa^{(k)}_{\Lambda_{k}^2} \circ  \Omega_{k}^1 \left( \eta^{(k)} \right) \ =
\raisebox{-10mm}{
\begin{tikzpicture}
\draw[red,in=-90,out=90] (0,-.35)to (0,1);
\draw[red,in=-90,out=90] (0,-1)to (0,-.35);
\draw[red,in=-90,out=90] (1.1,-.4)to (1.1,1);
\draw[red,in=-90,out=90] (1.1,-1)to (1.1,-.4);
\node[draw,thick,rounded corners, fill=white] at (0,0) {$\kappa^{(k)}$};
\node[draw,thick,rounded corners, fill=white] at (1.1,0) {$\eta^{(k)}$};
\node[left] at (0.1,-.7) {$ \Omega^1_k $};
\node[left] at (0.1,.65) {$ \Omega^2_k $};
\node[right] at (1.05,-.7) {$ \Lambda^1_k $};
\node[right] at (1.05,.65) {$ \Lambda^2_k $};
\end{tikzpicture}
}
\]
for $ k \geq \max\{K,L\} $.
(It is easy to check that every pair of consecutive terms in $ \ul \kappa \btimes \ul \eta $ satisfies the exchange relation, which is a requirement for being a $ 2 $-cell.)
\end{defn}
The compatibility of the vertical and horizontal compositions $ \cdot $ and $ \btimes $ between $ 2 $-cells follows easily from the pictures.
\begin{prop}
Continuing with the above set up,  $ \Psi_{\Gamma_\bullet,\Sigma_\bullet} \left(\ul \kappa\btimes  \ul \eta  \right) $ corresponds to the operator $ \Psi_{\Gamma_\bullet,\Delta_\bullet} \left(\ul \eta \right) \us {B_\infty} \otimes \Psi_{\Delta_\bullet , \Sigma_\bullet} \left(\ul \kappa \right)$ via the isomorphism of bimodules in \Cref{confusionbimodpreC*}.
\end{prop}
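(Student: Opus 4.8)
The plan is to exploit that $\Psi$ is fully faithful (\Cref{UCRcorrfunctor}) and that both operators in question are $A_\infty$-$C_\infty$-linear adjointable maps $F^1_\infty \to F^2_\infty$, where I write $H^i_\infty := \Psi_{\Gamma_\bullet,\Delta_\bullet}(\Lambda^i_\bullet)$, $G^i_\infty := \Psi_{\Delta_\bullet,\Sigma_\bullet}(\Omega^i_\bullet)$ and $F^i_\infty := \Psi_{\Gamma_\bullet,\Sigma_\bullet}(\Omega^i_\bullet \btimes \Lambda^i_\bullet)$ for $i=1,2$. Put $S := \Psi_{\Gamma_\bullet,\Delta_\bullet}(\ul\eta)$ and $T := \Psi_{\Delta_\bullet,\Sigma_\bullet}(\ul\kappa)$, and let $f^i \colon H^i_\infty \ot_{B_\infty} G^i_\infty \to F^i_\infty$ be the isomorphism from \Cref{confusionbimodpreC*}, which on level $k$ sends $\xi \ot \zeta \mapsto \Omega^i_k(\xi)\circ\zeta$. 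Since $f^1$ is surjective, the elements $\Omega^1_k(\xi)\circ\zeta$ (with $\xi \in H^1_k$, $\zeta \in G^1_k$, $k \geq 0$) span $F^1_\infty$; as adjointable operators are determined by their values on a spanning set, it suffices to check that the two operators agree on each such element. Abbreviating $x_k := \Gamma_k\cdots\Gamma_1 m_0$ and $y_k := \Delta_k\cdots\Delta_1 n_0$, I fix a level $k$ large enough that all the exchange relations defining $\ul\eta$, $\ul\kappa$, and $\ul\kappa\btimes\ul\eta$ hold, so that $S$, $T$, and $\Psi_{\Gamma_\bullet,\Sigma_\bullet}(\ul\kappa\btimes\ul\eta)$ are represented by $\Phi_{\eta^{(k)}_{x_k}}$, $\Phi_{\kappa^{(k)}_{y_k}}$, and $\Phi_{(\ul\kappa\btimes\ul\eta)^{(k)}_{x_k}}$ respectively (\Cref{flatthm}); level-independence of $\Phi$ lets me evaluate everything at this single level.

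For the left-hand side I use that on $F^1_k$ (the case $l=0$ of \Cref{Phidef}) the operator $\Phi_{(\ul\kappa\btimes\ul\eta)^{(k)}_{x_k}}$ acts simply by precomposition with $(\ul\kappa\btimes\ul\eta)^{(k)}_{x_k}$. By the definition of $\btimes$ on $2$-cells, $(\ul\kappa\btimes\ul\eta)^{(k)} = \Omega^2_k(\eta^{(k)})\circ\kappa^{(k)}_{\Lambda^1_k}$, so evaluating at $x_k$ gives $(\ul\kappa\btimes\ul\eta)^{(k)}_{x_k} = \Omega^2_k(\eta^{(k)}_{x_k})\circ\kappa^{(k)}_{\Lambda^1_k x_k}$. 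Hence the left-hand side applied to $\Omega^1_k(\xi)\circ\zeta$ equals $\Omega^2_k(\eta^{(k)}_{x_k})\circ\kappa^{(k)}_{\Lambda^1_k x_k}\circ\Omega^1_k(\xi)\circ\zeta$. Now $\xi \in \mcal N_k(y_k,\Lambda^1_k x_k)$, so naturality of $\kappa^{(k)}\colon\Omega^1_k\to\Omega^2_k$ at the morphism $\xi$ yields $\kappa^{(k)}_{\Lambda^1_k x_k}\circ\Omega^1_k(\xi) = \Omega^2_k(\xi)\circ\kappa^{(k)}_{y_k}$. Substituting and collecting the $\Omega^2_k$-terms gives $\Omega^2_k(\eta^{(k)}_{x_k}\circ\xi)\circ\kappa^{(k)}_{y_k}\circ\zeta$.

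For the right-hand side, $(S\ot_{B_\infty} T)(\xi\ot\zeta) = S\xi \ot T\zeta$, where on level $k$ one has $S\xi = \eta^{(k)}_{x_k}\circ\xi \in H^2_k$ and $T\zeta = \kappa^{(k)}_{y_k}\circ\zeta \in G^2_k$. Transporting through $f^2$ gives $\Omega^2_k(\eta^{(k)}_{x_k}\circ\xi)\circ\kappa^{(k)}_{y_k}\circ\zeta$, which is exactly the expression obtained above; thus the two operators agree on the spanning set and hence coincide under the identifications $f^1, f^2$ of \Cref{confusionbimodpreC*}. Adjointability and boundedness require no separate argument, since both sides are by construction intertwiners of the (pre-completion) correspondences, and $S\ot_{B_\infty} T$ is well defined because $S$ is right and $T$ is left $B_\infty$-linear. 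The only genuine content is the single naturality move sliding $\kappa^{(k)}$ past $\Omega^1_k(\xi)$, together with the bookkeeping of which functor and which level each morphism inhabits; this is precisely the algebraic shadow of the evident isotopy of the defining pictures, and I expect the main point demanding care to be the verification that the chosen level-$k$ representatives are the correct ones, i.e. consistency with the level-independence in \Cref{flatthm}.
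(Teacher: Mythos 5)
Your proposal is correct and takes essentially the same approach as the paper: both proofs evaluate the transported operator $\Psi_{\Gamma_\bullet,\Delta_\bullet}(\underline{\eta}) \us{B_\infty}\otimes \Psi_{\Delta_\bullet,\Sigma_\bullet}(\underline{\kappa})$ and $\Psi_{\Gamma_\bullet,\Sigma_\bullet}(\underline{\kappa}\btimes\underline{\eta})$ on the spanning elements $\Omega^1_k(\xi)\circ\zeta$ at a level $k \geq \max\{K,L\}$ and match the two results. The only difference is cosmetic: where you slide $\kappa^{(k)}$ past $\Omega^1_k(\xi)$ by an explicit naturality computation, the paper performs the identical move in the graphical calculus.
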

\begin{proof}
Let ${H^i_\infty}, G^i_\infty $ and $ F_\infty^i$ denote the $ A_\infty $-$ B_\infty $-,  $ B_\infty $-$ C_\infty $- and $ A_\infty $-$ C_\infty $-right correspondences $ \Psi_{ \Gamma_\bullet ,   \Delta_\bullet } (\Lambda_\bullet^i)$, $  \Psi_{ \Delta_\bullet  ,  \Sigma_\bullet } \left(\Omega_\bullet^i \right)$ and $ \Psi_{\Gamma_\bullet , \Sigma_\bullet } \left(\Omega_\bullet^i \btimes \Lambda_\bullet^i \right)$ for $ i = 1,2 $ respectively.
	
\noindent Set $ T \coloneqq \Psi_{ \Gamma_\bullet , \Delta_\bullet } (\ul \eta) \in \vphantom{\mcal L}_{A_\infty} \mcal L_{B_\infty} (H^1_\infty , H^2_\infty) $ and $ S \coloneqq \Psi_{ \Delta_\bullet  ,  \Sigma_\bullet } (\ul \kappa) \in \vphantom{\mcal L}_{B_\infty} \mcal L_{C_\infty} (G^1_\infty , G^2_\infty)$.
Suppose $ X $ denote the intertwiner in $ \vphantom{\mcal L}_{A_\infty} \mcal L_{C_\infty} (F^1_\infty,F^2_\infty) $ induced by $ T\us {B_\infty} \otimes S $ under the isomorphism in \Cref{confusionbimodpreC*}.
For $ k \geq \max \{K,L \}$ and $ \xi \in H^1_k,\ \zeta \in G^1_k $, applying $ X $ on the element corresponding to the basic tensor $ \xi \us {B_\infty} \otimes \zeta $ (via \Cref{confusionbimodpreC*}), we get
\[
X \left(\Omega_k^1 (\xi) \circ \zeta\right)
= \Omega^2_k \left( \Phi_{\eta^{(k)}_{\Gamma_k \cdots \Gamma_1 m_0} } (\xi)\right) \circ \left[ \Phi_{\kappa^{(k)}_{\Delta_k \cdots \Delta_1 q_0} } (\zeta) \right]
=\raisebox{-20mm}{
\begin{tikzpicture}
\draw[dashed,thick] (1,0)to (1,4);
\draw[in=-90,out=90] (0,0)to (0,4);
\draw[in=-90,out=90] (.8,0)to (.8,4);
\draw[red,in=-90,out=90] (-.2,2.3) to (-.6,2.8) ;
\draw[red,in=-90,out=90]  (-.6,2.8) to (-.6,4);
\draw[red,in=-90,out=90] (-.2,1.1) to (-1.7,2.85) ;
\draw[red,in=-90,out=90] (-1.7,2.85) to (-1.7,4);
\node[draw,thick,rounded corners, fill=white,minimum width=45] at (.4,2) {$\xi$};
\node[draw,thick,rounded corners, fill=white,minimum width=45] at (.4,.8) {$\zeta$};
\node[draw,thick,rounded corners, fill=white] at (-.6,3.2) {$\eta^{(k)}$};
\node[draw,thick,rounded corners, fill=white] at (-1.7,3.2) {$\kappa^{(k)}$};
\node[left] at (-.4,2.4) {$ \Lambda^1_k$};
\node[left] at (-.55,3.85) {$ \Lambda^2_k$};
\node[left] at (-1.2,2) {$\Omega^1_k$};
\node[left] at (-1.65,3.8) {$\Omega^2_k$};
		\node[right] at (.9,.2) {$ q_0$};
		\node[right] at (.9,1.35) {$ n_0$};
		\node[right] at (.9,3.2) {$ m_0$};
		\node[right] at (0,.2) {$ \cdots$};
		\node[right] at (0,1.35) {$ \cdots$};
		\node[right] at (0,3.2) {$ \cdots$};
	\end{tikzpicture}
}
\]
$  = \Psi_{\Gamma_\bullet , \Sigma_\bullet} \left(\ul \kappa \btimes \ul \eta \right) \left(\Omega_k^1 (\xi) \circ \zeta\right)$.
\end{proof}
We summarize the above finding in the following theroem.
\begin{thm}
$ \Psi $ is a $ * $-linear, fully faithful, tensor-reversing $ 2 $-functor from the $ 2 $-category of unitary connections $ \normalfont \textbf{UC} = \{\normalfont \textbf{UC}_{\Gamma_\bullet  , \Delta_\bullet} : \Gamma_\bullet, \Delta_\bullet \t{ are } 0\t{-cells} \} $ to the $ 2 $-category of right correspondence over pairs of AFD pre-C*-algebras.
\end{thm}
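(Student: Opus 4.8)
The plan is to read the four asserted properties off the structural results already proved in this section; the genuine content has been isolated in the earlier propositions, and the theorem asserts that these pieces assemble coherently. I would define $\Psi$ on $0$-cells by $\Gamma_\bullet \mapsto A_\infty$, noting that $A_\infty = \us{k \geq 0}{\cup}\, A_k$ is by construction an AFD pre-C*-algebra and so lies in the target; on each hom-category I take $\Psi$ to be the functor $\Psi_{\Gamma_\bullet, \Delta_\bullet}$ of \Cref{UCRcorrfunctor} (which already incorporates well-definedness on the equivalence classes of $2$-cells via \Cref{flatthm}); and for the horizontal compositor I take the isomorphism supplied by \Cref{confusionbimodpreC*}.

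First I would observe that the purely ``vertical'' assertions are immediate. \Cref{UCRcorrfunctor} already records that each $\Psi_{\Gamma_\bullet, \Delta_\bullet}$ is a fully faithful $*$-linear functor, and for a $2$-functor full faithfulness and $*$-linearity are precisely these statements applied to every hom-category. Thus nothing further is needed for those two properties, and the remaining work is entirely about the horizontal (monoidal) structure and its coherence.

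For the horizontal structure, I would take as compositor the natural isomorphism of \Cref{confusionbimodpreC*}, which for composable $1$-cells $\Omega_\bullet \in \textbf{UC}_1(\Delta_\bullet, \Sigma_\bullet)$ and $\Lambda_\bullet \in \textbf{UC}_1(\Gamma_\bullet, \Delta_\bullet)$ gives
\[
\Psi_{\Gamma_\bullet, \Sigma_\bullet}(\Omega_\bullet \btimes \Lambda_\bullet) \;\cong\; \Psi_{\Gamma_\bullet, \Delta_\bullet}(\Lambda_\bullet) \us{B_\infty}{\otimes} \Psi_{\Delta_\bullet, \Sigma_\bullet}(\Omega_\bullet).
\]
This is exactly the tensor-reversing behaviour: the left input of $\btimes$ becomes the right tensor factor. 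Naturality of this compositor in $2$-cells is the content of the proposition immediately preceding the theorem, which intertwines $\Psi(\ul \kappa \btimes \ul \eta)$ with $\Psi(\ul \eta) \us{B_\infty}{\otimes} \Psi(\ul \kappa)$. For the unit I would check directly that the identity $1$-cell on $\Gamma_\bullet$ (the sequence of identity functors with trivial connection) is sent to $A_\infty$ regarded as the identity $A_\infty$-$A_\infty$ correspondence: indeed $H_k = \mcal M_k(\Gamma_k \cdots \Gamma_1 m_0, \Gamma_k \cdots \Gamma_1 m_0) = A_k$, so $H_\infty = A_\infty$ on the nose.

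The principal obstacle will be verifying the coherence axioms for this compositor, namely the pentagon expressing compatibility with the associators of the relative tensor product and the triangle relating the compositor to the left and right unitors. I expect this to be routine rather than deep. The reason is that the compositor of \Cref{confusionbimodpreC*} is induced by the entirely explicit map $\xi \otimes \zeta \mapsto \Omega_k(\xi) \circ \zeta$; consequently, under $\Psi$ both bracketings of a triple horizontal composite $\Xi_\bullet \btimes \Omega_\bullet \btimes \Lambda_\bullet$ unwind to the same unbracketed expression obtained by composing the relevant morphisms in the hom-spaces of $\mcal Q_k$, so the pentagon reduces to associativity of composition of morphisms in the target categories, together with the strict associativity of functor composition underlying $\btimes$ on the source side. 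The triangle identities collapse in the same fashion once the identification $H_\infty = A_\infty$ is substituted for the unit. Collecting the vertical statements from \Cref{UCRcorrfunctor}, the tensor-reversing compositor of \Cref{confusionbimodpreC*} with its $2$-cell naturality, and these coherence checks establishes that $\Psi$ is a $*$-linear, fully faithful, tensor-reversing $2$-functor.
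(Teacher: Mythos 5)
Your proposal is correct and follows essentially the same route as the paper: the theorem there is stated as a summary ("We summarize the above finding"), with the proof being exactly the assembly of \Cref{UCRcorrfunctor} (hom-level $*$-linearity and full faithfulness), \Cref{confusionbimodpreC*} (the tensor-reversing compositor), and the proposition immediately preceding the theorem (naturality of the compositor on $2$-cells). Your additional checks of the unit ($H_\infty = A_\infty$ for the identity $1$-cell) and of the coherence axioms, reduced to associativity of composition via the explicit form $\xi \otimes \zeta \mapsto \Omega_k(\xi)\circ\zeta$ of the compositor, correctly fill in details the paper leaves implicit.
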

\begin{rem}\label{completion}
The $ * $-algebras $ A_\infty $, $ B_\infty $ associated to $ 0 $-cells $ \Gamma_\bullet $, $ \Delta_\bullet $ can be completed using their unique C*-norm, and obtain the C*-algebras $ A $, $ B $ respectively.
Then, the $ A $-$ B $ right correspondence associated to the $ 1 $-cell $ \Lambda_\bullet  $ will be the completion $ H $ of the space $ H_\infty $ with respect to the norm $ \norm{\xi}_{\t{C*}} \coloneqq \displaystyle \sqrt{\norm{\left\lab \xi, \xi \right \rab_B }} $.
The PP-basis $ \mscr S $ for the right $ B_\infty  $-module $ H_\infty$ continue to be so for the right $ B $-module $ H $.
As a result, $ H $ as a $ B $-module becomes isomorphic to $ q \left[ \C^{\mscr S} \otimes B \right] $ where the right $ B $-action on the latter module is the diagonal one and $ q  $ is the projection $\displaystyle \sum_{\sigma_1 , \sigma_2 \in \mscr S} E_{\sigma_1 , \sigma_2}  \otimes \left\lab \sigma_2 , \sigma_1\right\rab_B $ in the C*-algebra $ M_{\mscr S \times \mscr S} \otimes B$.
The left $ A $-action on $ H $ will translate into a $ * $-homomorphism $\Pi: A \lra q \left[ M_{\mscr S \times \mscr S} \otimes B \right]q $ giving rise to an $ A $-action on $ q \left[\C^{\mscr S} \otimes B\right] $.
Now, at the level of $ 2 $-cells from the $ 1 $-cell $ \Lambda_\bullet $ to $ \Omega_\bullet $, the obvious candidates that come up are the adjointable $ A $-$ B $ intertwiners; these are in one-to-one correspondence with elements in $ s \left[ M_{\mscr T \times \mscr S} \otimes B \right] q $ which intertwines $ \Pi (a) $ and $ \widetilde \Pi (a) $ for $ a \in A $ (where $ \mscr T, s, \widetilde \Pi $ are related to the $ 1 $-cell $ \Omega_\bullet$ in the same way as $ \mscr S , q ,\Pi $ are related to $ \Lambda_\bullet$ respectively).
However, there is no apparent interpretation of such $ 2 $-cells in terms of natural transformations between $ \Lambda_k $'s and $ \Omega_k $'s compatible with the $ W_k $'s as shown in \Cref{flatthm}.
\end{rem}
\section{The tracial case}\label{unicontr}
Locally semisimple algebras equipped with a tracial state, extend to finite von Neumann algebras. Hyperfinite subfactor reconstruction works by passing from the algebraic category described in the previous section to von Neumann algebras, and showing that for special cases arising in finite index subfactor theory, it is fully faithful.
A natural question is to figure out what happens in our more general setting.

To make this question precise, we consider modifications of the 2-category $\textbf{UC}$ at every level.
At the level of $ 0$-cells, we will be considering Bratteli diagrams with extra tracial data, and make necessary adjustments to the $ 1 $- and $ 2 $-cells in order to ``preserve" this extra structure.
Our particular choice of adjustments admittedly appears ad hoc, but it is the condition that was required to make our functor work in the forthcoming proofs.
We define the $2$-category $\textbf{UC}^{\t{tr}}$ as follows:
\begin{itemize}
\item
\textbf{0-cells.} The $ 0 $-cells are given by pairs $ \left( \Gamma_\bullet , \ul \mu^\bullet \right) $ where $ \Gamma_\bullet $ is a $ 0 $-cell $ \left\{ \mcal M_{k-1} \os{\displaystyle \Gamma_k}{\lra} \mcal M_{k} \right\}_{k\geq 0} $
in \textbf{UC}, and $ \ul \mu^\bullet $ denotes weight vectors $ \ul \mu^k = (\mu^k_v)_{v \in V_{\mcal M_k} }  $ with positive entries satisfying $ \us {v\in V_{\mcal M_0} } \sum  \mu^0_v = 1$ and $ \left(\Gamma_k\right)' \ul \mu^k =\ul \mu^{k-1}$ all $ k\geq 1 $ (where we use the same symbol for the functor and its adjacency matrix).
In other words (recasting in terms of semisimple categories and functors),
the data of a $ 0 $-cell in $\textbf{UC}^{\t{tr}}$ is a sequence of weighted finitely semisimple C*-categories $ \left\{\left(\mcal M_k , \ul \mu^k \right) \right\}_{k\geq 0} $
along with a sequence of $ * $-linear, bi-faithful functors $ \Gamma_k : \mcal M_{k-1} \ra \mcal M_k $ such that the tracial solution (say $ \left(\rho_k : \t{id}_{\mcal M_k}\ra\Gamma_k\ \Gamma'_k \ , \ \rho'_k : \t{id}_{\mcal M_{k-1}}\ra\Gamma'_k\ \Gamma_k  \right)$ where $ \Gamma'_k $ is adjoint to $ \Gamma_k $) to the conjugate equations commensurate with the weight functions $ \left(\ul \mu^{k-1} , \ul \mu^k \right) $ satisfies
\begin{equation}\label{trivial-loop-anticlock}
	\left(\rho'_k\right)^*_\bullet \circ  \left(\rho'_k\right)_\bullet = 1_\bullet
\end{equation}
which is equivalent to the matrix equation $ \left(\Gamma_k\right)' \ul \mu^k =\ul \mu^{k-1}$ (via \Cref{loop-anticlock}). The purpose of the equation $ \us {v\in V_{\mcal M_0}} \sum  \mu^0_v = 1$ is to normalize scaling.
For simplicity, we will denote the $ 0 $-cell of $\textbf{UC}^{\t{tr}}$ by $ \Gamma_\bullet $.
\vspace*{4mm}
\item
\textbf{1-cells.} A $ 1 $-cell in $\textbf{UC}^{\t{tr}}$ from the $ 0 $-cell $ \left\{ \left(\mcal M_{k-1}, \ul{\mu}^{k-1}\right) \os{\displaystyle \Gamma_k}{\lra} \left(\mcal M_{k}, \ul{\mu}^{k}\right)  \right\}_{k\geq 0} $ to $ \left\{ \left(\mcal N_{k-1}, \ul{\nu}^{k-1}\right) \os{\displaystyle \Delta_k}{\lra} \left(\mcal N_{k}, \ul{\nu}^{k}\right)  \right\}$ is given by a $ 1 $-cell $ \Lambda_\bullet $ in $ \textbf{UC}_1 \left(\Gamma_\bullet , \Delta_\bullet \right) $ such that there exists $ \epsilon , M >0 $ satisfying the boundedness condition:
\begin{equation}\label{bndonwt}
\epsilon \  {\mu^{k}_v} \ \leq \ {\left[\Lambda'_k\ \ul \nu^{k}\right]_v} \ \leq \ M \ {\mu^{k}_v} \ \t{ for all } \ {k\geq 0 }  ,  \ {v \in V_{\mcal M_k} } \ .
\end{equation}
\vspace*{4mm}
\item \textbf{Tensor of $ 1 $-cells.}
For $ 0 $-cells $\Gamma_\bullet , \Delta_\bullet, \Sigma_\bullet$, we will define a map
\[ \btimes\, : \, \textbf{UC}^\t{tr}_1 \left(\Delta_\bullet, \Sigma_\bullet\right) \times \textbf{UC}^\t{tr}_1 \left(\Gamma_\bullet, \Delta_\bullet\right) \lra \textbf{UC}^\t{tr}_1 \left(\Gamma_\bullet, \Sigma_\bullet\right)
\]
exactly the same as that for $ \textbf{UC}_1 $ given by \Cref{tensor1cell}; however, we have to check whether \Cref{bndonwt} is satisfied by $\Omega_\bullet \boxtimes \Lambda_\bullet$, where $\Omega_\bullet \in \textbf{UC}^\t{tr}_1(\Delta_\bullet, \Sigma_\bullet)$ and $\Lambda_\bullet \in \textbf{UC}^\t{tr}_1(\Gamma_\bullet,\Delta_\bullet)$.
Suppose we have,
\[\epsilon \mu^k_u \leq [\Lambda'_{k} \, \ul{\nu}^k]_u \leq M \mu^k_u  \ \t{ and } \ \delta \nu^k_v \leq [\Omega'_{k} \, \ul{\pi}^k]_v \leq N \nu^k_v \  \t{ for each } \ k \geq 0, u \in V_{\mcal M_k} ,v \in V_{\mcal N_k} . \]
Applying $ \Lambda'_k $ on the second set of inequalities, we get $\epsilon \delta \mu^k_u \leq [\Lambda'_{k} \, \Omega'_{k} \, \ul{\pi}^k]_u \leq MN \mu^k_u$ for each $k \geq 0, u \in V_{\mcal M_k}$.
Thus, \Cref{bndonwt} is satisfied for $\Omega_\bullet \boxtimes \Lambda_\bullet$.
\vspace*{4mm}
\item
\textbf{2-cells.}
Consider two $ 1 $-cells $ \Lambda_\bullet , \Omega_\bullet \in \textbf{UC}^{\t{tr}}_1 \left( \left(\Gamma_\bullet , \ul \mu^\bullet  \right) , \left(\Delta_\bullet , \ul \nu^\bullet  \right) \right)$.
The $ 2 $-cells in $ \textbf{UC}_2 \left( \Lambda_\bullet , \Omega_\bullet \right) $, that is, the sequences eventually satisfying the exchange relation at every level, do not use the extra data of $ \ul \mu^\bullet $ and $ \ul \nu^\bullet $.
We introduce the following tool which will generalize the exchange relation.
\begin{defn}
	The \textit{loop operator from} $ \Lambda_\bullet $ \textit{to} $\Omega_\bullet $ is the sequence of linear maps $ \left\{S_k : \t{NT}  (\Lambda_k , \Omega_k) \ra \t{NT}  (\Lambda_{k-1} , \Omega_{k-1}) \right\}_{k\geq 1} $ defined by
	\[
	\t{NT}  (\Lambda_k , \Omega_k) \ni \eta \ \os{\displaystyle S_k}{\longmapsto}
	\raisebox{-1.5cm}{
		\begin{tikzpicture}
			\draw (-.3,0) ellipse (7mm and 12mm);
			\draw[white,line width=1mm,in=-90,out=90] (.6,-1.5) to (0,-.29);
			\draw[red,in=-90,out=90] (.6,-1.5) to (0,-.29);
			\draw[white,line width=1mm,in=-90,out=90] (0,.29) to (.6,1.5);
			\draw[red,in=-90,out=90] (0,.29) to (.6,1.5);
			\node[left] at (.1,.55) {$ \Omega_k $};
			\node[right] at (.4,1.1) {$ \Omega_{k-1} $};
			\node[left] at (.2,-.55) {$\Lambda_k $};
			\node[right] at (.45,-1.2) {$ \Lambda_{k-1} $};
			\node[draw,thick,rounded corners, fill=white] at (0,0) {$\eta$};
			\node[right] at (.5,0) {$ \Gamma_k $};
			\node[right] at (-.3,-1.3) {$ \Delta_k $};
			\node[right] at (-.3,1.3) {$ \Delta_k $};
		\end{tikzpicture}
	}
	\in \t{NT}  (\Lambda_{k-1} , \Omega_{k-1}) \ .
	\]
where the cap and the cup come from tracial solution to the conjugate equation for the duality of $ \Delta_k : \mcal N_{k-1} \ra \mcal N_k $ commensurate with $ \left(\ul \nu^{k-1} , \ul \nu^k\right) $.
\end{defn}
We will encounter equations and inequalities involving multiple loop operators all of which might not have the same source $ 1 $-cell or the same target $ 1 $-cell in $ \textbf{UC}^{\textbf{tr}}_1 $; for notational convenience, we will simply use $ S_\bullet $, and from the context, it will be clear what the source and the targets are.
\begin{rem}\label{Sprop}
	The loop operator satisfies the following properties which are easy to derive:
	
	(i) $ S_k $ is unital when $ \Lambda_\bullet  = \Omega_\bullet $, (which follows from \Cref{trivial-loop-anticlock}),
	
	(ii) $ S_k \eta^* = (S_k \eta)^* $,
	
	(iii) $ S_k \eta^* \, \circ S_k\eta \leq S_k (\eta^* \eta) $ in the C*-algebra NT$ (\Lambda_{k-1},\Lambda_{k-1}) $ and the loop operator is a contraction.
\end{rem}
\begin{defn}\label{BQFS}
A sequence $\ul \eta =  \left\{\eta^{(k)} \in \t{NT} \left(\Lambda_k , \Omega_k\right) \right\}_{k\geq 0} $ will be referred as:
	
	(a) \textit{quasi-flat sequence from $ \Lambda_\bullet $ to $ \Omega_\bullet $} if it satisfies $ S_{k+1} \eta^{(k+1)} = \eta^{(k)} $ for all $ k\geq 0 $,
	
	(b) \textit{flat sequence from $ \Lambda_\bullet $ to $ \Omega_\bullet $} if it is quasi-flat and there exists $ K\in \N $ such that $ \left(\eta^{(k)} , \eta^{(k+1)}\right) $ satisfies the exchange relation (\Cref{XreldefpreC*}) for every $ k \geq K $.
\end{defn}
\begin{rem}\label{BQFSflat}
There is a one-to-one correspondence between flat sequences from $ \Lambda_\bullet $ to $ \Omega_\bullet $ , and the $ 2 $-cells in $ \textbf{UC}_2  \left(\Lambda_\bullet , \Omega_\bullet\right)$.
	Note that the exchange relation	
	\raisebox{-1.1cm}{
		\begin{tikzpicture}
			\draw[in=-90,out=90] (0,0) to (0,1.2);
			\draw[in=-90,out=90] (0,1.2) to (0.6,2);
			\draw[in=-90,out=90] (3.4,1) to (3.4,2);
			\draw[in=-90,out=90] (2.6,0) to (3.4,1);
			\draw[red,in=-90,out=90] (.6,0) to (0.6,1.2);
			\draw[white,line width=1mm,in=-90,out=90] (0.6,1.2) to (0,2);
			\draw[red,in=-90,out=90] (0.6,1.2) to (0,2);
			\draw[red,in=-90,out=90] (2.6,1) to (2.6,2);
			\draw[white,line width=1mm,in=-90,out=90] (3.4,0) to (2.6,1);
			\draw[red,in=-90,out=90] (3.4,0) to (2.6,1);
			\node[draw,thick,rounded corners, fill=white] at (0.6,0.8) {$\eta^{(k)}$};
			\node[draw,thick,rounded corners, fill=white] at (2.6,1.4) {$\eta^{(k+1)}$};
			\node[right] at (.5,0.1) {$ \Lambda_k $};
			\node[right] at (.5,1.45) {$\Omega_k $};
			\node[left] at (0.2,1.8) {$\Omega_{k+1} $};
			\node[right] at (3.3,0.1) {$ \Lambda_k $};
			\node[left] at (2.8,.75) {$\Lambda_{k+1} $};
			\node[left] at (2.65,2.05) {$\Omega_{k+1} $};
			\node at (1.5,1) {$ = $};
		\end{tikzpicture}
	}, unitarity of the connection and \Cref{trivial-loop-anticlock} yield the equation $ S_{k+1} \eta^{(k+1)} = \eta^{(k)} $.
	Thus every $ 2 $-cell $ \left\{ \eta^{(k)} \right\}_{k\geq K} \in \textbf{UC}_2 \left( \Lambda_\bullet  ,  \Omega_\bullet \right)$ extends to a unique quasi-flat sequence from $ \Lambda_\bullet $ to $ \Omega_\bullet $ by setting $ \eta^{(k)} \coloneqq S_{k+1} \cdots S_K\ \eta^{(K)} $ for $ k< K $.
Further, a flat sequence is bounded in C*-norm.
\end{rem}
\begin{defn}
A $ 2 $-cell in $ \textbf{UC}^\t{tr}_2 \left( \Lambda_\bullet , \Omega_\bullet\right) $ is given by a bounded (in C*-norm) quasi-flat sequence (abbreviated as `BQFS') from $ \Lambda_\bullet $ to $ \Omega_\bullet $.
\end{defn}
\vspace*{4mm}
\item
\textbf{Horizontal and vertical compositions of $ 2 $-cells.}
\begin{defn}\label{2cellcompdef}
(a) The \textit{vertical composition of the $ 2 $-cells} $ \ul \kappa \in \textbf{UC}^\t{tr}_2 \left( \Omega_\bullet , \Xi_\bullet \right)$ and $ \ul \eta \in \textbf{UC}^\t{tr}_2 \left( \Lambda_\bullet  , \Omega_\bullet \right)$ is defined as
\[
\left(\ul \kappa \cdot \ul \eta \right) \coloneqq \left\{\left(\ul \kappa \cdot \ul \eta \right)^{(k)} \coloneqq \displaystyle \lim_{l \to \infty} S_{k+1} \cdots S_{k+l} \left(\kappa^{(k+l)}  \circ\, \eta^{(k+l)}\right)\right\}_{ k\geq 0} \in \textbf{UC}^\t{tr}_2 \left( \Lambda_\bullet  , \Xi_\bullet \right) \ .
\]
(b) Let $ \Lambda_\bullet^i \in \textbf{UC}^\t{tr}_1 {\left(\Gamma_\bullet, \Delta_\bullet \right)}  $ and $ \Omega_\bullet^i \in \textbf{UC}^\t{tr}_1 {\left(\Delta_\bullet, \Sigma_\bullet\right)}$) for $ i =1,2 $.
Then, the \textit{horizontal composition} (or the \textit{tensor product}) \textit{of the $ 2 $-cells} $ \ul \eta = \left\{\eta^{(k)}\right\}_{k\geq 0} \in \textbf{UC}^\t{tr}_2 \left(\Lambda_\bullet^1 , \Lambda_\bullet^2  \right)$ and $ \ul \kappa = \left\{\kappa^{(k)}\right\}_{k\geq 0} \in \textbf{UC}^\t{tr}_2 \left(\Omega_\bullet^1 , \Omega_\bullet^2  \right)$ is given by
\[
\ul \kappa \btimes \ul \eta  \coloneqq 
\left\{ \left(\ul \kappa \btimes \ul \eta \right)^{(k)} \coloneqq \us{l \to \infty} \lim S_{k+1} \cdots S_{k+l} \left( \Omega_{k+l}^2 \left( \eta^{(k+l)} \right) \circ \kappa^{(k+l)}_{\Lambda_{k+l}^1} \right) \right\}_{k \geq 0} \!\!\!\! \in \textbf{UC}^\t{tr}_2 \left(\Omega_\bullet^1  \btimes \Lambda_\bullet^1    ,  \Omega_\bullet^2 \btimes \Lambda_\bullet^2   \right) \ .
\]
\end{defn}
\end{itemize}
\begin{rem}
A natural question to ask is why the limits in \Cref{2cellcompdef} exists and even if they all exist, why the sequences built by these limit will yield a $ 2 $-cell in $ \textbf{UC}^\t{tr} $.
One way to settle this issue is by viewing the loop operator $ S $ as a UCP operator and express the compositions as a certain Chois-Effros product (along the lines of Izumi's Poisson boundary approach in \cite{Izm}).
However, we will not take this route.
Instead we will make $ \textbf{UC}^\t{tr} $ sit inside the $ 2 $-category of von Neumann algebras, bimodules and intertwiners in a fully faithful way.
The benefit of this approach is that the details which are left out in defining $ \textbf{UC}^\t{tr} $ as a W*-2-category, namely, unit object, associativity of the two types of compositions, etc. will be automatically verified.
\end{rem}
\begin{rem}
In the passage from $ \textbf{UC} $ to $ \textbf{UC}^\t{tr} $, we are imposing restriction at the level of $ 1 $-cells but the $ 2 $-cell spaces have been generalized.
So, on the nose, neither we have a forgetful functor nor one turns out as a subcategory of the other.
However, we do have a subcategory of $ \textbf{UC}^\t{tr} $ which we call its \textit{flat part} and denote by $ \textbf{UC}^\t{flat} $ where everything is the same as that of $ \textbf{UC}^\t{tr} $ at the level of $ 0 $- and $ 1 $-cells but the $ 2 $-cells in $ \textbf{UC}^\t{flat} $ are only flat sequences (and not all BQFS).
Indeed compositions of the $ 2 $-cells in $ \textbf{UC}^\t{flat} $ correspond to exactly to those in $ \textbf{UC} $; this easily follows from \Cref{BQFSflat}.
\end{rem}


\section{A concrete realization of $\normalfont \textbf{UC}^\t{tr} $ }
The goal of this section is to build a fully faithful 2-functor $\mathcal{PB}:\textbf{UC}^\t{tr}\rightarrow \textbf{vNAlg}$ where \textbf{vNAlg} is the $ 2 $-category of von Neumann algebras, bimodules and intertwiners (as stated in \Cref{mainthm}).
Our starting point will be the pre-C* algebras and right correspondences produced from $ 0 $- and $ 1 $-cells in $ \textbf{UC}^\t{tr} $ viewed as those in $ \textbf{UC} $ as descibed in \Cref{unicon}, and then take their appropriate completions.
At this point, it might seem it is enough to build the functor starting from the flat part $ \textbf{UC}^\t{flat} $; however, in that case, the functor may not be fully faithful at the level of 2-cells (which are only flat sequences).
In this section, we will be analyzing ``the kernel" of the 2-functor from $ \textbf{UC}^\t{flat} $; as a consequence, we justify the need of generalizing the $ 2 $-cells in $ \textbf{UC}^\t{flat} $ to those in $ \textbf{UC}^\t{tr} $.
\vspace*{4mm}
\subsection{$\mathcal{PB}$ on 0-cells}\label{0cells}$ \ $

Given a $0$-cell $\left(\Gamma_{\bullet} , \ul \mu^\bullet \right)$ in $\textbf{UC}^\t{tr}$, we consider $ m_0 $, $ A_k $'s and their inclusions as in the non-tracial case \Cref{unicon}.
Using the categorical trace $\t{Tr}= (\t{Tr}_x)_{x \in \t{ob}(\mcal M_k)} $ associated to the weight vector $ \ul \mu^k $, we define $ \t{Tr}_{A_k} \coloneqq \t{Tr}_{\Gamma_k \cdots \Gamma_1 m_0} \ : A_k \ra \C $ which turns out to be a faithful tracial state which by \Cref{TrCondExp}, turns out to be compatible with the inclusion.
Thus, we have a faithful tracial state $ \t{Tr}_{A_\infty} $ on the $ * $-algebra $ A_\infty
$.
Note that the action of an element of $ A_\infty $ on the GNS Hilbert $ L^2 (A_\infty , \t{Tr}_{A_\infty}) $ is bounded.
Let $ A $ denote the type $ II_1 $ von Neumann algebra obtained by taking the WOT closure of $ A_\infty $ acting on $ L^2 (A_\infty , \t{Tr}_{A_\infty}) $.

\begin{defn}
We define $\mathcal{PB}\left(\Gamma_{\bullet} , \ul \mu^\bullet \right):=A=A^{\prime \prime}_{\infty}\subseteq \mcal L (L^{2}(A,\t{Tr}(A_{\infty}))$.
\end{defn}
\vspace*{4mm}
\subsection{$ \mcal {PB} $ on $ 1 $-cells}\label{1cells} $ \ $

Let $ \Lambda_\bullet \in \textbf{UC}^\t{tr}_1 \left(\left(\Gamma_\bullet , \ul \mu^\bullet\right) , \left(\Delta_\bullet , \ul \nu^\bullet\right)\right) $.
Consider the $ A_\infty $-$ B_\infty $ right correspondence $ H_\infty $ associated to the $ 1 $-cell $ \Lambda_\bullet $ treated as a $ 1 $-cell in \textbf{UC}.
Let $ H $ be the completion of $ H_\infty $ with respect to the scalar inner product $ \left\lab \xi , \zeta  \right \rab \coloneqq \t{Tr}_{B_\infty} \left(\left\lab \xi , \zeta  \right \rab_{B_\infty}\right) $ for $ \xi , \zeta \in H_\infty $.
$ A_\infty , B_\infty $ being locally semisimple $ * $-algebras, must have the action of their elements on $ H_\infty $ bounded, and hence extend to action on $ H $.

To obtain a right $ B $-action on $ H $, we work with the Pimsner-Popa basis $ \mscr S $ for the right-$ B_\infty $-module $ {H_\infty }$ with respect to the $ B_\infty $-valued inner product obtained in \Cref{PPbasispreC*}.
Observe that the map
\[
H \supset H_\infty \ni \xi \longmapsto \displaystyle \sum_{\sigma \in \mscr S} \sigma \otimes \left \lab \xi , \sigma \right \rab_{B_\infty} \in q \left[  \ell^2 \left(\mscr S\right) \otimes L^2 \left( B_\infty , \t{Tr}_{B_\infty} \right) \right] \eqqcolon K \ \t{ (say)}
\]
extends to an isometric isomorphism preserving the right $ B_\infty  $-action where $ q $ is the projection $ \displaystyle \sum_{\sigma , \tau \in \mscr S} E_{\sigma, \tau} \otimes \left \lab \tau , \sigma \right \rab_{B_\infty} $.
Clearly, the $ B_\infty $ action on $ K $ extends to a normal action of $ B $ and hence, the same holds for the Hilbert space $ H $.

In order to extend the $ A_\infty $-action on $ H $ (which is clearly bounded) to a normal action of $ A $, we first analyse the commutant of $ B $ in $ \mcal L (H) $.
For $ k\geq 0 $, define $ C_k \coloneqq \t{End} (\Lambda_k \Gamma_k \cdots \Gamma_1 m_0) $.
The $ * $-homomorphism $C_k \ni \gamma \longmapsto \left.  \Phi_\gamma \right|_{H_k}  = \gamma \circ \bullet \in \mcal L (H_k)$ is faithful by \Cref{PhiproppreC*}, and hence an isometry.
Thus, $ \Phi_\gamma $ extends to the whole of $ H$ as a bounded operator commuting with the right action of $ B_\infty $ (and thereby $ B $).
Consider the unital $ * $-algebra inclusion
\begin{align*}
	C_k \ni \gamma \longmapsto \left(W_k\right)_{\Gamma_k \cdots \Gamma_1 m_0}\ \circ \ \Delta_{k+1} \gamma\ \circ\ \left(W_k\right)^*_{\Gamma_k \cdots \Gamma_1 m_0} =
	\raisebox{-8mm}{
		\begin{tikzpicture}
			\node[right] at (-.1,1.1) {$ m_0 $};
			\node[right] at (-.1,2.1) {$ m_0 $};
			\node[left] at (-1.3,2.2) {$ \Lambda_{k+1} $};
			\node[left] at (-1.3,1.1) {$ \Lambda_{k+1} $};
			\node[draw,thick,rounded corners, fill=white,minimum width=40] at (-.5,1.6) {$\gamma$};
			\node[left] at (-.1,2.1) {$ \cdots $};
			\node[left] at (-.1,1.1) {$ \cdots $};
			\begin{scope}[on background layer]
				\draw[thick,dashed] (0,0.9) to (0,2.3);
				\draw[in=-90,out=90] (-.2,0.9) to (-.2,2.3);
				\draw[in=-90,out=90] (-.9,0.9) to (-.9,2.3);
				\draw[in=-90,out=90] (-1.1,.9) to (-1.4,1.3);
				\draw[in=-90,out=90] (-1.4,1.3) to (-1.4,1.9);
				\draw[in=-90,out=90] (-1.4,1.9) to (-1.1,2.3);
				\draw[white,line width=1mm,in=-90,out=90] (-1.1,1.9) to (-1.4,2.3);
				\draw[red,in=-90,out=90] (-1.1,1.9) to (-1.4,2.3);
				\draw[red,in=-90,out=90] (-1.1,1.3) to (-1.1,1.9);
				\draw[white,line width=1mm,in=-90,out=90] (-1.4,.9) to (-1.1,1.3);
				\draw[red,in=-90,out=90] (-1.4,.9) to (-1.1,1.3);
			\end{scope}
		\end{tikzpicture}
	} \in C_{k+1} \ .
\end{align*}
Note that $ \Phi_\gamma $ is compatible with the above inclusion.
Indeed, $ C_\infty \ni \gamma  \os {\displaystyle \Phi} \longmapsto \Phi_\gamma \in \mcal L_B (H) $ becomes a unital faithful $ * $-algebra homomorphism where $ C_\infty \coloneqq \us{k\geq 0} \cup C_k $.
\begin{prop}\label{C=B'}
	$ \mcal L_B (H) = \left\{ \Phi_\gamma : \gamma \in C_\infty\right\}''$ .
\end{prop}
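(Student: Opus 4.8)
The plan is to prove the nontrivial inclusion $\mcal L_B(H)\subseteq\{\Phi_\gamma:\gamma\in C_\infty\}''$ by working inside the concrete right-$B$-module model constructed just above. First I would dispose of the easy direction: $\mcal L_B(H)$ is the commutant of the right $B$-action, hence weakly closed, and each $\Phi_\gamma$ is an adjointable $B$-linear operator, so $\{\Phi_\gamma:\gamma\in C_\infty\}''\subseteq\mcal L_B(H)$ for free. For the converse I would use the isometric isomorphism $H\cong q\left[\ell^2(\mscr S)\ot L^2(B)\right]$ of right $B$-modules, which identifies $\mcal L_B(H)$ with $q\left[M_{\mscr S\times\mscr S}\ot B\right]q$ (the commutant of $1\ot\rho(B)$, compressed by $q$; note $M_{\mscr S\times\mscr S}$ is finite since $\mscr S$ is finite). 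Under this identification, a short computation using the Pimsner--Popa expansion $\xi=\us{\sigma\in\mscr S}{\sum}\sigma\,\langle\xi,\sigma\rangle_{B_\infty}$ shows that, for $\gamma\in C_k$, the operator $\Phi_\gamma$ becomes the matrix $\left(\sigma^*\circ\gamma\circ\tau\right)_{\sigma,\tau\in\mscr S}$ with entries in $B_k$ (where $\sigma,\tau$ are included into $H_k$), acting through the left-multiplication copy of $B$. Thus it suffices to show that, as $\gamma$ ranges over $C_\infty$, these matrices fill out a weakly dense $*$-subalgebra of $q\left[M_{\mscr S\times\mscr S}\ot B\right]q$.

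The heart of the argument is a finite-level surjectivity statement. For fixed $k$, set $P_k=\Lambda_k\Gamma_k\cdots\Gamma_1 m_0$ and $Q_k=\Delta_k\cdots\Delta_1 n_0$, so $C_k=\t{End}(P_k)$, $B_k=\t{End}(Q_k)$, and $\t{End}\!\left(\us{\sigma\in\mscr S}{\bigoplus} Q_k\right)=M_{\mscr S\times\mscr S}\ot B_k$. I would assemble the images in $H_k$ of the basis $\mscr S$ into a single morphism $\iota:\us{\sigma\in\mscr S}{\bigoplus} Q_k\to P_k$, whose $\sigma$-th component is the inclusion of $\sigma$; then $\iota^*\iota=q$ and the map $\gamma\longmapsto\iota^*\gamma\,\iota$ is exactly $\gamma\mapsto\left(\sigma^*\gamma\tau\right)_{\sigma,\tau}$. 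The decisive point is that $\iota$ is a coisometry, i.e.\ $\iota\iota^*=1_{P_k}$: at level $0$ this is the defining relation $\us{\sigma\in\mscr S}{\sum}\sigma\sigma^*=1_{\Lambda_0 m_0}$ of \Cref{PPbasispreC*}, and the inductive step is precisely where unitarity of the connection enters --- conjugating $\Delta_k\!\left(\us{\sigma\in\mscr S}{\sum}\sigma\sigma^*\right)=1_{\Delta_k\Lambda_{k-1}\cdots}$ by the natural unitary $W_k$ produces $1_{P_k}$. Once $\iota\iota^*=1_{P_k}$ is known, $\gamma\mapsto\iota^*\gamma\iota$ maps $\t{End}(P_k)$ onto $q\left[M_{\mscr S\times\mscr S}\ot B_k\right]q$ (with explicit inverse $x\mapsto\iota x\iota^*$), giving the desired surjectivity at each level.

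Finally I would take the union over $k$: the finite-level surjections identify $\{\Phi_\gamma:\gamma\in C_\infty\}$ with $q\left[M_{\mscr S\times\mscr S}\ot B_\infty\right]q$, and since $B_\infty$ is weakly dense in $B$ and $M_{\mscr S\times\mscr S}$ is finite-dimensional, compression by $q$ keeps this weakly dense in $q\left[M_{\mscr S\times\mscr S}\ot B\right]q=\mcal L_B(H)$; hence $\{\Phi_\gamma:\gamma\in C_\infty\}''=\mcal L_B(H)$. The hard part will be the passage from the finite levels to the von Neumann completion: the finite-dimensional subspaces $H_k$ are only $B_k$-invariant and are \emph{not} $B$-submodules, so one cannot simply compress by the projections onto $H_k$. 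Routing the whole argument through the module model $q\left[\ell^2(\mscr S)\ot L^2(B)\right]$ and the single coisometry identity $\iota\iota^*=1_{P_k}$ (equivalently, the fact that $\mscr S$ remains a Pimsner--Popa basis at every level, \Cref{PPbasispreC*}) is what lets the finite-level surjectivity climb to weak density. The remaining checks --- that $\Phi_\gamma$ genuinely corresponds to the asserted matrix and that compression by $q$ is weakly continuous --- are routine.
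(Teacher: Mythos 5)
Your proof is correct, but it takes a genuinely different route from the paper's. The paper argues by direct approximation: for $T \in \mcal L_B(H)$ it compresses by the projections $p_k$ onto the finite-dimensional levels $H_k$ (which \emph{are} usable, despite your caveat, because $p_k$ is right $B_k$-linear and $T$ is right $B$-linear), uses the Pimsner--Popa expansion to produce $\zeta_k \in C_k$ with $p_k T p_k = \Phi_{\zeta_k} p_k$, notes $\norm{\zeta_k} \le \norm{T}$, and concludes $\Phi_{\zeta_k} \to T$ in SOT since $p_k \nearrow \t{id}_H$; thus each $T$ comes with an explicit approximating sequence from $C_\infty$. You instead prove an exact structural statement: via the module model $H \cong q\left[\ell^2(\mscr S) \otimes L^2(B)\right]$ you identify $\mcal L_B(H)$ with the corner $q\left[M_{\mscr S\times \mscr S} \otimes B\right]q$, and the coisometry $\iota$ (where $\iota\iota^* = 1_{P_k}$ is precisely the persistence of the PP basis under the unitary connection, and $\iota^*\iota = q$) gives a $*$-isomorphism of $C_k$ \emph{onto} $q\left[M_{\mscr S\times \mscr S} \otimes B_k\right]q$; the result then follows from weak density of $B_\infty$ in $B$ and WOT-continuity of compression by $q$. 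Both arguments hinge on the same two ingredients (the level-$k$ PP basis coming from unitarity of $W_\bullet$, and right $B$-linearity), but your finite-level surjectivity is a stronger and cleaner statement than the paper extracts, and it buys a soft, essentially computation-free passage to the von Neumann completion; the paper's hands-on compression trick, on the other hand, is the template it reuses later (the same manoeuvre with $q_k T p_k$ appears in \Cref{BQFSprop} and \Cref{flatthm}), so it integrates more directly with the rest of the development. All the steps you label routine (the matrix form of $\Phi_\gamma$, $\iota^*\iota = q$ via compatibility of the $B$-valued inner products with the inclusions, and the commutant identification $\left(1 \otimes \rho(B)\right)' = M_{\mscr S \times \mscr S} \otimes B$ compressed by $q$) do check out.
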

\begin{proof}
	Consider the projection $ p_k \in \mcal L (H) $ such that $ \t{Range} (p_k) = H_k $.
	Since $ A_k\cdot H_k \cdot B_k = H_k $, therefore $ p_k $ must be $ A_k $-$ B_k $-linear.
	
	Let $ T \in \mcal L_B (H) $.
	Set  $ \zeta_k \coloneqq \us {\sigma \in \mscr S}  \sum
	\raisebox{-14mm}{
		\begin{tikzpicture}
			\node[draw,thick,rounded corners, fill=white,minimum width=60] at (-.8,2.1) {$p_k (T\sigma)$};
			\node[draw,thick,rounded corners, fill=white] at (-.15,.9) {$\sigma^*$};
			\node[left] at (-.6,.9) {$ \cdots $};
			\node[right] at (-.1,2.7) {$ m_0 $};
			\node[right] at (-.1,.2) {$ m_0 $};
			\node[right] at (-.1,1.4) {$ n_0 $};
			\node[left] at (-1.5,2.7) {$ \Lambda_k $};
			\node[left] at (-1.5,.2) {$ \Lambda_k $};
			\node[left] at (-.3,2.7) {$ \cdots $};
			\draw[white,line width=1mm,in=-90,out=90] (-1.6,0) to (-.3,.6);
			\draw[red,in=-90,out=90] (-1.6,0) to (-.3,.6);
			\begin{scope}[on background layer]
				\draw[in=-90,out=90] (-.5,0) to (-.8,1.75);
				\draw[in=-90,out=90] (-1.3,0) to (-1.6,1.75);
				\draw[thick,dashed] (0,0) to (0,3);
				\draw[red,in=-90,out=90] (-1.6,2.4) to (-1.6,3);
				\draw (-1.3,2.4) to (-1.3,3);
				\draw (-.3,2.4) to (-.3,3);
			\end{scope}
		\end{tikzpicture}
	}
	\in C_k$ where $ \mscr S $ ($ \subset H_0 $) is a PP-basis of the right $ B $-module $ H $ as constructed in \Cref{PPbasispreC*}.
	Since $ T $ (resp. $ p_k $) is right $ B $- (resp. $ B_k $-) linear, one may deduce the relation $ p_k T p_k = \Phi_{\zeta_k}\, p_k$.
	Clearly, $ p_k $ converges to $ \t{id}_H $ in SOT as $ k $ goes to $ \infty $, and $ \{\Phi_{\zeta_k}\}_{k\geq 0} $ is a norm bounded subset of $ \mcal L (H) $.
	Hence, $ T \in \ol{\left\{ \Phi_\gamma : \gamma \in C_\infty\right\}}^{\t{SOT}}  = \left\{ \Phi_\gamma : \gamma \in C_\infty\right\}''$.
\end{proof}
\begin{rem}\label{pk-picture}
Using \Cref{trivial-loop-anticlock}, we may represent the projection $ p_k $ in the following way
\[
H \supset H_{k+l} \ni \xi \os{\displaystyle p_k}{\longmapsto} 
\raisebox{-18mm}{
	\begin{tikzpicture}
		\node[left] at (-.1,1.475) {$ \cdots $};
		\node[left] at (-.1,-.6) {$ \cdots $};
		\node[left] at (-3.2,.65) {$ \cdots $};
		\node[left] at (-1.1,.15) {$ \cdots $};
		\node[left] at (-1.1,1.1) {$ \cdots $};
		\draw[thick,dashed] (0,-1.15) to (0,2.3);
		\draw[in=-90,out=90] (-.2,-1.15) to (-.2,2.3);
		\draw[in=-90,out=90] (-.9,-1.15) to (-.9,2.3);
		\draw (-2.55,.65) ellipse (7mm and 12mm);
		\draw (-2.55,.65) ellipse (15mm and 16mm);
		\draw[white,line width=1mm,in=-90,out=90] (-2.2,.9) to (-1.1,2.3);
		\draw[red,in=-90,out=90] (-2.2,.9) to (-1.1,2.3);
		\node[left] at (-1.05,2.2) {$ \Lambda_{k} $};
		\node[left] at (-2.05,1.2) {$ \Lambda_{k+l} $};
		\node[left] at (-1.85,.15) {$ \Delta_{k+l} $};
		\node[left] at (-.9,-.95) {$ \Delta_{k+1} $};
		\node[draw,thick,rounded corners, fill=white,minimum width=80] at (-1.1,.65) {$\xi$};
		\node[right] at (-.1,1.475) {$ m_0 $};
		\node[right] at (-.1,-.6) {$ n_0 $};
	\end{tikzpicture}
} \in H_k \subset H
\]
where the local maxima and minima are given by the natural transformation appearing in the tracial solution to conjugate equation for the duality of the functors $ \Delta_i
$'s associated to the positive weights $ \ul \nu^{i-1} $ and $ \ul \nu ^i $ on the vertices (that is, the simple objects of $ \mcal N_{i-1} $ and $ \mcal N_i  $).
Clearly, $ p_k $ is $ A_k $-$ B_k $-linear.
\end{rem}

Consider the unital $ * $-algebra inclusion 
$ 
A_k \ni \alpha \os {\displaystyle \Lambda_k} \longmapsto \Lambda_k \alpha \in C_k \ .
 $ Again, this inclusion is compatible with $ C_k \hookrightarrow C_{k+1} $ and $ A_k \hookrightarrow A_{k+1} $; thus, $ A_\infty $ sits as a unital $ * $-subalgebra inside $ C_\infty $.
Observe that if $ \gamma \in C_k$ comes from $ A_k $, that is, $ \gamma = \Lambda_k \alpha $ for some $ \alpha \in A_k $, then $ \Phi_\gamma $ matches exactly with the action of $ \alpha $ on $ H_\infty $.
Now, the functional
\[
\t{Tr}' \coloneqq \left[ d_B (H)\right]^{-1} \us{\sigma \in \mscr S} \sum \left\lab \bullet \, \sigma , \sigma \right\rab :  \mcal L_B (H) \ra \C
\]
is a faithful normal tracial state where $ \mscr S $ is a PP-basis for the module $ H_B $ and $ d_B(H) \coloneqq \us {\sigma \in \mscr S} \sum \norm{\sigma}^2 $; however, its restriction on $ A_\infty $ may not match with that of $ \t{Tr}_{A_\infty} $.
\begin{prop}\label{vNext}
	The above inclusion of $ A_\infty $ inside $ C_\infty $ extends to a normal inclusion of $ A $ inside $ \mcal L_B (H) $, and thereby $ H $ becomes a `von Neumann' $ A $-$ B $-bimodule.
\end{prop}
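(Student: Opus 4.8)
The plan is to show that the canonical trace $\t{Tr}'$ on $N:=\mcal L_B(H)$ pulls back along the left action to a trace on $A_\infty$ that is \emph{equivalent} to $\t{Tr}_{A_\infty}$, and then to invoke the classical criterion that an $L^2$-bounded $*$-homomorphism out of a weakly dense subalgebra of a finite von Neumann algebra extends normally. First I would record that the left action $\pi\colon A_\infty\to N$, $\alpha\mapsto\Phi_{\Lambda_k\alpha}$, is a unital $*$-homomorphism into the finite von Neumann algebra $N=\mcal L_B(H)$ carrying the faithful normal tracial state $\t{Tr}'$ (by \Cref{C=B'} and the discussion preceding it). Set $\tau:=\t{Tr}'\circ\pi$; since $\t{Tr}'$ is a trace and $\pi$ a homomorphism, $\tau$ is a positive faithful trace on $A_\infty$, and $\tau(\alpha^*\alpha)=\t{Tr}'\!\left(\pi(\alpha)^*\pi(\alpha)\right)$.

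The crux is to evaluate $\tau$ on each $A_k$. For this I would fix, for every $k$, a Pimsner--Popa basis $\mscr S_k\subset H_k$ resolving $1_{\Lambda_k\Gamma_k\cdots\Gamma_1 m_0}$ through $\Delta_k\cdots\Delta_1 n_0$ (such a basis exists exactly as in \Cref{PPbasispreC*}, since $\Delta_k\cdots\Delta_1 n_0$ contains every simple object of $\mcal N_k$), and use that the canonical trace $\t{Tr}'$ is independent of the choice of PP-basis. For $\alpha\in A_k$ one then unwinds, using $\pi(\alpha)\sigma=\Lambda_k(\alpha)\circ\sigma$ for $\sigma\in H_k$, the trace property of the categorical trace, and $\sum_{\sigma\in\mscr S_k}\sigma\sigma^*=1$,
\[
\tau(\alpha)=[d_B(H)]^{-1}\!\!\sum_{\sigma\in\mscr S_k}\!\!\t{Tr}_{B_k}\!\left(\sigma^*\,\Lambda_k(\alpha)\,\sigma\right)=[d_B(H)]^{-1}\,\t{Tr}^{\,\ul\nu^k}_{\Lambda_k\Gamma_k\cdots\Gamma_1 m_0}\!\left(\Lambda_k\alpha\right).
\]
By the transfer of categorical traces through the functor $\Lambda_k$ (the weight-transfer identity underlying \Cref{TrCondExp}), the right-hand side equals $[d_B(H)]^{-1}\,\t{Tr}^{\,\Lambda'_k\ul\nu^k}_{\Gamma_k\cdots\Gamma_1 m_0}(\alpha)$, whereas $\t{Tr}_{A_k}(\alpha)=\t{Tr}^{\,\ul\mu^k}_{\Gamma_k\cdots\Gamma_1 m_0}(\alpha)$. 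Since categorical traces are monotone in the weight vector on positive elements, the boundedness condition \Cref{bndonwt}, namely $\epsilon\,\ul\mu^k\le\Lambda'_k\ul\nu^k\le M\,\ul\mu^k$, gives
\[
[d_B(H)]^{-1}\epsilon\;\t{Tr}_{A_k}(\alpha)\ \le\ \tau(\alpha)\ \le\ [d_B(H)]^{-1}M\;\t{Tr}_{A_k}(\alpha)\qquad(\alpha\in A_k,\ \alpha\ge 0),
\]
uniformly in $k$. This is precisely where \Cref{bndonwt} enters, and I expect it to be the main technical obstacle: making the level-$k$ trace computation and the basis-independence of $\t{Tr}'$ fully rigorous.

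Granting this two-sided bound, the upper estimate reads $\t{Tr}'\!\left(\pi(\alpha)^*\pi(\alpha)\right)\le [d_B(H)]^{-1}M\,\t{Tr}_{A_\infty}(\alpha^*\alpha)$ for all $\alpha\in A_\infty$, which is exactly the $L^2$-domination hypothesis of the standard lemma: a unital $*$-homomorphism from a weakly dense unital $*$-subalgebra of a finite von Neumann algebra $(A,\t{Tr}_{A_\infty})$ into a finite von Neumann algebra $(N,\t{Tr}')$ which is bounded for the two $L^2$-norms extends uniquely to a normal unital $*$-homomorphism $A\to N$. Concretely, the bound produces a bounded operator $V\colon L^2(A,\t{Tr}_{A_\infty})\to L^2(N,\t{Tr}')$ intertwining the left actions, from which both the normal extension and the fact that $\tau$ defines a normal trace on $A$ follow.

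Finally I would extract faithfulness and the bimodule conclusion. The lower bound makes $\tau$ a \emph{faithful} normal trace on $A$, so the normal extension $A\hookrightarrow N=\mcal L_B(H)$ is injective: if $\bar\pi(a)=0$ with $a\ge 0$ then $\t{Tr}_{A_\infty}(a)$ is comparable to $\tau(a)=\t{Tr}'(\bar\pi(a))=0$, forcing $a=0$. Since $N=\mcal L_B(H)$ is by definition the commutant of the already-normal right $B$-action, the extended normal left $A$-action automatically commutes with the right $B$-action; hence $H$ carries commuting normal actions of $A$ and $B$, i.e. it is a von Neumann $A$-$B$-bimodule, as asserted.
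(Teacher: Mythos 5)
Your argument is correct, but it reaches the conclusion by a genuinely different mechanism than the paper's proof. The shared core is the level-$k$ computation of $\t{Tr}'\circ\pi$ on $A_k$ via a Pimsner--Popa basis, the trace property of the categorical trace, and the weight-transfer identity, with \Cref{bndonwt} as the essential input. The paper pushes this computation to an \emph{exact} identity rather than your two-sided estimate: it inserts the density $\theta^k=\left(\mu^k_v/\left[\Lambda'_k\,\ul\nu^k\right]_v\right)_{v\in V_{\mcal M_k}}$, proves $\psi\left(\Phi_{\Lambda_k(\bullet)}\,T_k\right)=\t{Tr}_{A_k}$ for $T_k=\Phi_{\Lambda_k(\theta^k_{\Gamma_k\cdots\Gamma_1 m_0})}$ and $\psi:=d_B(H)\,\t{Tr}'$, and then uses the uniform bounds $M^{-1}\le\theta^k\le\epsilon^{-1}$ (again \Cref{bndonwt}) together with WOT-compactness to extract a subsequential limit $T_0\in A''_\infty$ that is central, positive and invertible and satisfies $\t{Tr}'\left(\Phi_{\Lambda_\bullet\alpha}\,T_0\right)=\t{Tr}_{A_\infty}(\alpha)$; in other words, $\t{Tr}_{A_\infty}$ extends to a faithful normal trace on $A''_\infty$, which identifies $A''_\infty$ with $A$ and gives the normal inclusion. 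You instead keep only the inequalities $[d_B(H)]^{-1}\epsilon\,\t{Tr}_{A_\infty}\le\t{Tr}'\circ\pi\le[d_B(H)]^{-1}M\,\t{Tr}_{A_\infty}$ on positives and conclude via the $L^2$-bounded extension lemma, the upper bound producing the bounded intertwiner $V$ (hence the normal extension) and the lower bound giving injectivity. What each approach buys: the paper's route constructs an explicit Radon--Nikodym density, at the price of a compactness/subsequence argument; yours avoids any limiting procedure and only needs trace \emph{equivalence}, at the price of outsourcing the analysis to the folklore extension lemma, whose proof (closure of the range of $V$ equals $L^2\left(\pi(A_\infty)''\right)$, Kaplansky density, SOT-continuity on bounded sets) you only sketch and which should be written out for a self-contained argument, along with the extension of your trace inequalities from $(A_\infty)_+$ to $A_+$ needed for faithfulness. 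One small attribution fix: the identity $\t{Tr}^{\,\ul\nu^k}_{\Lambda_k x}\left(\Lambda_k\alpha\right)=\t{Tr}^{\,\Lambda'_k\ul\nu^k}_{x}(\alpha)$ is an elementary weight-transfer computation straight from the definition in \Cref{CaTr}; it is not really a consequence of \Cref{TrCondExp}, which concerns the solutions to conjugate equations.
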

\begin{proof}
	By construction, $ A $ is the von Neumann algebra obtained from the GNS of $ A_\infty $ with respect to $ \t{Tr}_{A_\infty} $.
Let $ A''_\infty $ denote the double commutant of $ A_\infty $ sitting inside $ \mcal L (H) $ via the inclusions $ A_\infty \ni \alpha \os {\displaystyle \Lambda_\bullet} \longmapsto \Lambda_\bullet \alpha \in C_\infty $ and $ C_\infty \os{\displaystyle \Phi} \hookrightarrow  \mcal L (H)$.
It is enough to produce a central positive invertible element $ T $ in $ A''_\infty $ satisfying $ \t{Tr}' \left( \Phi_{\Lambda_\bullet \alpha} \ T \right)  = \t{Tr}_{A_\infty} (\alpha)  $ for $ \alpha \in A_\infty $ (that is, $ \t{Tr}_{A_\infty} $ extends to a faithful normal trace on $ A''_\infty $).
	
Consider the natural transformation $ \theta^{k} \coloneqq \left( \displaystyle \frac {\mu^{k}_v} {\left[\Lambda'_k\ \ul \nu^{k}\right]_v}  \, 1_v \right)_{v\in V_{\mcal M_k}} \in \t{End} (\t{id}_{\mcal M_k}) $.
Set $ T_k \coloneqq \Phi_{\Lambda_k \left( \theta^{k}_{\Gamma_k \cdots \Gamma_1 m_0} \right)} \in A''_\infty $ and $ \psi \coloneqq \us{\sigma \in \mscr S} \sum \left\lab \bullet \, \sigma , \sigma \right\rab = d_B (H) \ \t{Tr}'$.
	\vspace*{4mm} 
	
	\noindent \textit{Assertion:} $ \psi \left( \Phi_{\Lambda_k (\bullet)} \, T_k \right) = \t{Tr}_{A_k} $ for all $ k\geq 0 $.
	
	\noindent\textit{Proof of the assertion.} Let $ \alpha \in A_k $.
	Then, $ \psi \left( \Phi_{\Lambda_k (\alpha)} \, T_k \right) = \us{\sigma \in \mscr S} \sum \left\lab \Phi_{\Lambda_k \left(\alpha  \ \theta^{k}_{\Gamma_k \cdots \Gamma_1 m_0}  \right)} \, \sigma , \sigma \right\rab $
	\[
	= \us{\sigma \in \mscr S} \sum \t{Tr}_{B_k} \left(\left\lab \Phi_{\Lambda_k \left(\alpha  \ \theta^{k}_{\Gamma_k \cdots \Gamma_1 m_0}  \right)} \, \sigma , \sigma \right\rab_{B_k}\right)
	= \us{\sigma \in \mscr S} \sum \t{Tr}_{\Delta_k \cdots \Delta_1 m_0} \left(
	\raisebox{-18mm}{
		\begin{tikzpicture}
			\draw[thick,dashed] (1.8,1.8) to (1.8,-1.8);
			\draw[in=-90,out=90] (.7,1) to (.7,1.8);
			\draw[in=-90,out=90] (1.6,.3) to (.7,1);
			\draw[in=-90,out=90] (1.6,-.3) to (1.6,.3);
			\draw[in=-90,out=90] (.7,-1) to (1.6,-.3);
			\draw[in=-90,out=90] (.7,-1.8) to (.7,-1);
			\draw[in=-90,out=90] (-.1,1) to (-.1,1.8);
			\draw[in=-90,out=90] (.8,.3) to (-.1,1);
			\draw[in=-90,out=90] (.8,-.3) to (.8,.3);
			\draw[in=-90,out=90] (-.1,-1) to (.8,-.3);
			\draw[in=-90,out=90] (-.1,-1.8) to (-.1,-1);
			\draw[white,line width=1mm,in=-90,out=90] (1,-1) to (-.6,-.3);
			\draw[red,in=-90,out=90] (1,-1) to (-.6,-.3);
			\draw[white,line width=1mm,in=-90,out=90] (-.6,.3) to (1,.9);
			\draw[red,in=-90,out=90] (-.6,.3) to (1,.9);
			\draw[red,in=-90,out=90] (-.6,-.3) to (-.6,.3);
			\node[draw,thick,rounded corners, fill=white] at (0.1,0) {$\theta^{k}$};
			\node[draw,thick,rounded corners, fill=white,minimum width=30] at (1.4,-1.2) {$\sigma$};
			\node[draw,thick,rounded corners, fill=white,minimum width=30] at (1.4,1.2) {$\sigma^*$};
			\node[draw,thick,rounded corners, fill=white,minimum width = 35] at (1.3,0) {$\alpha$};
			\node[right] at (.8,-.4) {$ \cdots $};
			\node[right] at (.8,.4) {$ \cdots $};
			\node[right] at (-.1,1.2) {$ \cdots $};
			\node[right] at (-.1,-1.2) {$ \cdots $};
			\node[left] at (-.5,0) {$ \Lambda_k $};
			\node[right] at (1.7,-1.65) {$ n_0 $};
			\node[right] at (1.7,1.6) {$ n_0 $};
			\node[right] at (1.7,-.7) {$ m_0 $};
			\node[right] at (1.7,.6) {$ m_0 $};
		\end{tikzpicture}
	}\right) \ .
	\]
	Using the property of the categorical trace, the equation in \Cref{PPbasispreC*} satisfied by the set $ \mscr S $ and the natural unitaries (namely, the crossings), we may rewrite the last expression as
	\[
	\t{Tr}_{\Lambda_k \Gamma_k \cdots \Gamma_1 m_0} \left(
	\raisebox{-6mm}{
		\begin{tikzpicture}
			\draw[thick,dashed] (1.8,.7) to (1.8,-.7);
			\draw[in=-90,out=90] (1.6,-.7) to (1.6,.7);
			\draw[in=-90,out=90] (.8,-.7) to (.8,.7);
			\draw[red,in=-90,out=90] (-.6,-.7) to (-.6,.7);
			\node[draw,thick,rounded corners, fill=white] at (0.1,0) {$\theta^{k}$};
			\node[draw,thick,rounded corners, fill=white,minimum width = 35] at (1.3,0) {$\alpha$};
			\node[right] at (.8,-.4) {$ \cdots $};
			\node[right] at (.8,.4) {$ \cdots $};
			\node[left] at (-.5,0) {$ \Lambda_k $};
			\node[right] at (1.7,-.5) {$ m_0 $};
			\node[right] at (1.7,.4) {$ m_0 $};
		\end{tikzpicture}
	}\right)
	= \t{Tr}_{\Gamma_k \cdots \Gamma_1 m_0} \left(
	\raisebox{-6mm}{
		\begin{tikzpicture}
			\draw[thick,dashed] (1.8,.7) to (1.8,-.7);
			\draw[in=-90,out=90] (1.6,-.7) to (1.6,.7);
			\draw[in=-90,out=90] (.8,-.7) to (.8,.7);
			\draw[red] (-1,0) ellipse (4mm and 6mm);
			\draw[red,in=-90,out=90] (-.6,0) to (-.6,0);
			\node[draw,thick,rounded corners, fill=white] at (0.1,0) {$\theta^{k}$};
			\node[draw,thick,rounded corners, fill=white,minimum width = 35] at (1.3,0) {$\alpha$};
			\node[right] at (.8,-.4) {$ \cdots $};
			\node[right] at (.8,.4) {$ \cdots $};
			\node[left] at (-.45,0) {$ \Lambda_k $};
			\node[right] at (1.7,-.5) {$ m_0 $};
			\node[right] at (1.7,.4) {$ m_0 $};
		\end{tikzpicture}
	}\right)
	\] 
	by \Cref{TrCondExp} where the red cap and cup correspond to tracial solution to conjugate equation for the duality of the functor $ \Lambda_k $ with respect to weight vectors $ \ul \mu^{k} $ and $ \ul \nu^k $ on the vertex sets $ V_{\mcal M_k} $ and $ V_{\mcal N_k} $ respectively.
	Now, it is a matter of routine verification that the red loop appearing above is indeed the inverse of $ \theta^k $ in the algebra $ \t{End} (\t{id}_{\mcal M_k}) $.
	Cancelling the two, we get $ \t{Tr}_{A_k} (\alpha) $.
	\vspace*{4mm}
	
	\Cref{bndonwt} implies that C*-norm of $ \theta^k $ is uniformly bounded by $ \epsilon^{-1} $ for $ k\geq 0 $, and thereby $ \left\{T_k\right\}_{k \geq 0} $ is norm-bounded sequence in $A''_\infty \subset \mcal L (H) $.
	By compactness, there exists a subsequence $ \left\{T_{k_l}\right\}_{l} $ which converges in WOT to $ T_0 \in A''_\infty $ (say).
	Clearly, $ \psi (\Phi_{\Lambda_\bullet} (\alpha) \ T_0) = \t{Tr}_{A_\infty} (\alpha)$ for all $ \alpha \in A_\infty $.
Observe that $ T_k $ commutes with $ \Phi_{\Lambda_k \alpha} $ for all $ \alpha \in A_k $, $ k\geq 0 $; this implies $ T_0 $ must be central in $ A''_\infty $.
Again, $ T_k $ is a positive element in $ A''_\infty $ satisfying $ T_k \geq M^{-1} $ (using \Cref{bndonwt}); thus, the subsequential WOT-limit $ T_0 $ also satisfies the same.
\end{proof}
\begin{defn}
Define
\[
\textbf{UC}^\t{tr}_1 \left(\left(\Gamma_\bullet , \ul \mu^\bullet\right) , \left(\Delta_\bullet , \ul \nu^\bullet\right)\right)  \ni \Lambda_\bullet \os{\displaystyle \mcal{PB}}{\longmapsto} \mcal{PB} \left( \Lambda_\bullet \right) \coloneqq \vphantom{H}_A H_B \in \textbf{vNalg}_1 \left(\mcal{PB} \left(\Delta_\bullet , \ul \nu^\bullet\right) , \mcal{PB} \left(\Gamma_\bullet , \ul \mu^\bullet\right) \right) \ .
\]
\end{defn}
\comments{
In \Cref{vNext}, we saw that the boundedness condition \Cref{bndonwt} serves as a sufficient condition for the von Neumann extension of the right correspondence $ \vphantom{H_\infty}_{A_\infty} {H_\infty}_{B_\infty}  $.
In fact, it is necessary as well.
To see this, first observe that the existence of PP-basis ensures that $ H $ is a right von Neumann module over $ B $ (for which we do not need \Cref{bndonwt}).
Now, suppose the left $ A_\infty $ action on $ H $ extends to a normal action of $ A $.
}
\vspace*{4mm}
\subsection{$ \mcal{PB} $ on $ 2 $-cells}\label{2cells}$ \ $

Let $ \Lambda_\bullet, \Omega_\bullet \in \textbf{UC}^\t{tr}_1 \left(\left(\Gamma_\bullet , \ul \mu^\bullet\right) , \left(\Delta_\bullet , \ul \nu^\bullet\right)\right) $ and $ \mcal {PB} \left(\Gamma_\bullet, \ul \mu^\bullet\right) = A$, $ \mcal {PB} \left(\Delta_\bullet, \ul \nu^\bullet\right) = B$.
We will borrow the notations $ H_k$,  $H$, $p_k$, $\mscr S$, etc.  (arising out of $ \Lambda_\bullet  $) from previous subsections, and for those arising out of $ \Omega_\bullet $, we will use
$ G_k$,  $G$, $q_k$, $\mscr T$, etc. respectively, and we will also work with the pictures as before.
For $ \gamma \in \mcal N_k \left(\Lambda_k \Gamma_k \cdots \Gamma_1 m_0 \ , \ \Omega_k \Gamma_k \cdots \Gamma_1 m_0\right) $, we will consider the unique bounded extension of $ \Phi_\gamma \in \mcal L_{B_\infty} \left( H_\infty ,  G_\infty \right) $ (defined in \Cref{Phidef}) and denote it with the same symbol $ \Phi_\gamma \in \mcal L_B \left(H , G\right) $.
\begin{prop}\label{Spropiv}
(a) $ q_{k-1} \ \Phi_{\eta_{\Gamma_{k}\cdots \Gamma_1 m_0}} \ p_{k-1} = \Phi_{(S_k \eta)_{\Gamma_{k-1}\cdots \Gamma_1 m_0}} \ p_{k-1}  $ for all $ \eta \in \normalfont \t{NT} (\Lambda_k, \Omega_k) $ and $ k\geq 0 $.

(b) If $ \ul \eta = \left\{ \eta^{(k)}\right\}_{k\geq 0} \in \normalfont \textbf{UC}^\t{tr}_2 \left(\Lambda_\bullet , \Omega_\bullet \right) $ (that is, a BQFS from $ \Lambda_\bullet $ to $ \Omega_\bullet $), then it gives rise to a unique bounded operator $ T \in \vphantom{\mcal L}_A \mcal L_B \left(H,G\right)$ satisfying
\begin{equation}\label{TPhieqn}
q_k\ T \ p_k = \Phi_{\eta^{(k)}_{\Gamma_k \cdots \Gamma_1 m_0}} \, p_k \ \t{ for all }\ k \geq 0 \ .
\end{equation}
\end{prop}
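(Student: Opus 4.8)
The plan is to establish (a) by a diagrammatic computation and then derive (b) from it through a martingale-type convergence argument, with the loop operator playing the role of the conditional expectations $q_k(\cdot)p_k$.

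For (a) (which is meaningful for $k\ge 1$), note that both sides are of the form $X\,p_{k-1}$, so they agree on all of $H$ if and only if they agree on $\t{Range}(p_{k-1})=H_{k-1}$. Fix $\xi\in H_{k-1}$ and expand $q_{k-1}\,\Phi_{\eta_{\Gamma_k\cdots\Gamma_1 m_0}}\,\xi$ pictorially: the inclusion $H_{k-1}\hookrightarrow H_k$ glues $\xi$ to the connection $W_k$, the operator $\Phi_{\eta_{\Gamma_k\cdots\Gamma_1 m_0}}$ stacks $\eta$ on top (landing in $G_k$ by \Cref{PhiproppreC*}(ii)), and $q_{k-1}$ caps off the outermost $\Delta_k$-strand with the tracial solution to the conjugate equations, as recorded in \Cref{pk-picture}. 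By naturality of $W_k$ the connection slides past $\xi$, and the resulting (cup)$\circ\,\eta\,\circ$(cap) configuration around the $\Gamma_k$-strand is, by definition, exactly $S_k\eta$; the normalization \Cref{trivial-loop-anticlock} guarantees that the auxiliary $\Delta_k$-loop produced by the inclusion straightens to the identity with no stray scalar. Hence the diagram equals $(S_k\eta)_{\Gamma_{k-1}\cdots\Gamma_1 m_0}$ stacked on $\xi$, i.e. $\Phi_{(S_k\eta)_{\Gamma_{k-1}\cdots\Gamma_1 m_0}}\,\xi$.

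For (b), set $T_k:=\Phi_{\eta^{(k)}_{\Gamma_k\cdots\Gamma_1 m_0}}\in\mcal L_B(H,G)$. Since $\Phi$ is a faithful $*$-representation (\Cref{PhiproppreC*}(i), (iii)) it is isometric, so $\|T_k\|=\|\eta^{(k)}_{\Gamma_k\cdots\Gamma_1 m_0}\|\le\|\eta^{(k)}\|_{C^*}$, which is bounded uniformly by some $C$ because $\ul\eta$ is a BQFS. Combining (a) with quasi-flatness, $S_{k+1}\eta^{(k+1)}=\eta^{(k)}$, gives the compatibility
\[
q_k\,T_{k+1}\,p_k \;=\; \Phi_{(S_{k+1}\eta^{(k+1)})_{\Gamma_k\cdots\Gamma_1 m_0}}\,p_k \;=\; T_k\,p_k .
\]
Thus for $\xi\in H_j$ and $k\ge j$ we have $T_k\xi\in G_k$ and $q_k\,T_{k+1}\xi=T_k\xi$, so $(T_{k+1}-T_k)\xi\in\ker q_k=G_k^{\perp}$ is orthogonal to $T_k\xi\in G_k$, whence the Pythagorean identity $\|T_{k+1}\xi\|^2=\|T_k\xi\|^2+\|(T_{k+1}-T_k)\xi\|^2$. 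The norms $\|T_k\xi\|$ therefore increase and stay bounded by $C\|\xi\|$, the increments are square-summable, and $\{T_k\xi\}$ is Cauchy. Define $T\xi:=\lim_k T_k\xi$ on the dense subspace $H_\infty$; the uniform bound gives $\|T\|\le C$, so $T$ extends to an element of $\mcal L_B(H,G)$, with $B$-linearity inherited from each $T_k$.

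Finally, iterating the compatibility and using $q_kq_m=q_k$, $p_mp_k=p_k$ for $m\ge k$ yields the martingale identity $q_k\,T_j\,\xi=T_k\,\xi$ for all $j\ge k$ and $\xi\in H_k$; letting $j\to\infty$ with $q_k$ continuous gives $q_k\,T\,p_k=\Phi_{\eta^{(k)}_{\Gamma_k\cdots\Gamma_1 m_0}}\,p_k$, which is \Cref{TPhieqn}. Uniqueness follows at once: any admissible $T'$ satisfies $q_kT'\xi=T_k\xi$ on $H_k$, and since $q_k\to\t{id}_G$ strongly this forces $T'\xi=\lim_k T_k\xi=T\xi$. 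For the $A$-linearity needed to conclude $T\in\vphantom{\mcal L}_A\mcal L_B(H,G)$, \Cref{AkcommNTpreC*} shows each $\eta^{(k)}_{\Gamma_k\cdots\Gamma_1 m_0}$ is $A_k$-central, so $T_k$ commutes with the left action of $A_k$; passing to the limit, $T$ commutes with every $A_j$, and by normality of the $A$-actions (\Cref{vNext}) together with the strong density of $A_\infty$ in $A$, $T$ is $A$-linear. The main obstacle is precisely this convergence step: it is the martingale structure forced by (a) and quasi-flatness that promotes a merely bounded (rather than eventually flat) sequence to a genuine intertwiner, and it is here that the BQFS boundedness hypothesis is indispensable.
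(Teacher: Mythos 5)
Your proof is correct and follows essentially the same route as the paper's: part (a) is exactly the diagrammatic content of \Cref{pk-picture}, and part (b) reproduces the paper's martingale argument — the compatibility $q_k\,T_{k+1}\,p_k = T_k\,p_k$ obtained from (a) plus quasi-flatness, monotone bounded norms via the Pythagorean identity, SOT-convergence on the dense subspace $H_\infty$, then \Cref{TPhieqn}, with uniqueness and $A$-$B$-linearity recovered from $p_k, q_k \nearrow \t{id}$ in SOT. The only step to make explicit is that ``square-summable increments imply Cauchy'' is not true in general; here it holds because the increments $(T_{k+1}-T_k)\xi$ lie in the mutually orthogonal subspaces $G_{k+1}\ominus G_k$ (equivalently, one has $\norm{T_m\xi - T_l\xi}^2 = \norm{T_m\xi}^2 - \norm{T_l\xi}^2$ for $m\geq l$, which is the form the paper uses), a fact your own martingale identity $q_l\,T_m\,\xi = T_l\,\xi$ supplies.
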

\begin{proof}
Part (a) directly follows from \Cref{pk-picture}.

For (b), set $ T_k \coloneqq \Phi_{\eta^{(k)}_{\Gamma_k \cdots \Gamma_1 m_0}} $ for each $ k \geq 0 $; it is easy to see that $ T_k \in \vphantom{\mcal L}_{A_k} \mcal L_B (H, G)  $ and $ \norm{T_k} = \norm{\eta^{(k)}} $.
For all $ \gamma \in H_k $, using part (a) followed by quasi-flat condition of $ \ul \eta $, we have
\[
q_k  T_{k+1}\, \gamma
= \Phi_{\left(S_{k+1} \ \eta^{(k+1)} \right)_{\Gamma_k \cdots \Gamma_1 m_0}}  \gamma
=\Phi_{\eta^{(k)}_{\Gamma_k \cdots \Gamma_1 m_0}}  \gamma
= T_k\, \gamma \ .
\]
In other words, $ q_k\, T_{k+1}\, p_k = T_k\, p_k$.
Applying this iteratively, we get $  q_k\, T_{k+l}\, p_k = T_k\, p_k = q_k\, T_k\, p_k $ for all $ k,l \geq 0 $. 
This again implies 
\begin{equation}\label{Tk+l-Tk}
	\norm{T_{l+m}\, \gamma - T_l\, \gamma}^2 = \norm{T_{l+m}\, \gamma }^2 - \norm{T_l\, \gamma}^2 \ \t{ for all } k\leq l, \, \gamma \in H_k \ .
\end{equation}

Now, fix $ \gamma \in H_k $.
\Cref{Tk+l-Tk} tell us that the sequence $ \left\{ \norm{T_l\, \gamma} \right\}_{l\geq k} $ must be increasing; also, it is bounded by $ \left[\sup_{m\geq 0} \norm{\eta^{(m)}}\right] \norm \gamma$ and hence convergent.
Letting $ l $ tend towards $ \infty $ in \Cref{Tk+l-Tk}, we find that $ \left\{ T_k \right\}_{k\geq 0} $ converges pointwise on $ H_\infty $ (because of completeness of $ G $).
Since $ H_\infty $ is dense in $ H $ and $ \left\{ T_k\right\}_{k\geq 0} $ is norm-bounded by $ \sup_{k\geq 0} \norm{\eta^{(k)}} $, we may conclude that $ \left\{ T_k\right\}_{k\geq 0} $ converges in SOT to some $ T \in \mcal L (H , G) $.

To prove \Cref{TPhieqn}, consider $ q_k\, T\, p_k = \t{SOT-}{\displaystyle\lim_{l\to \infty}}\, q_k\, T_{k+l}\, p_k = q_k\, T_k\, p_k = \Phi_{\eta^{(k)}_{\Gamma_k \cdots \Gamma_1 m_0}}\, p_k$.
Since the right side of condition (b) is $ A_k \t{-} B_k $-linear, so is the other side namely, $ q_k T p_k $.
Since $ \left\{ q_k T p_k\right\}_{k\geq 0} $ converges in SOT to $ T $, therefore, $ T $ must be $ A_k $-$ B_k $-linear, and therby $ A_\infty $-$ B_\infty $-linear, and finally $ A $-$ B $-linear.

If $ T_1 $ is any other operator satisfying \Cref{TPhieqn}, then $ q_k\, (T - T_1)\, p_k = 0$.
Now, $ p_k $ and $ q_k $ increase to $ \t{id}_H $ and $ \t{id}_{G} $ respectively.
This forces $ T $ and $ T_1 $ to be identical.
\end{proof}
\begin{defn}
For $ \ul \eta = \left\{ \eta^{(k)}\right\}_{k\geq 0} \in \textbf{UC}^\t{tr}_2 \left(\Lambda_\bullet , \Omega_\bullet \right) $, define
\[
\mcal {PB} \left( \ul \eta \right) \coloneqq \t{SOT-}\displaystyle \lim_{k \to \infty} \Phi_{\eta^{(k)}_{\Gamma_k \cdots \Gamma_1 m_0}} \in \vphantom{\mcal L}_A \mcal L_B \left(H,G\right) = \textbf{vNAlg}_{2} \left( \mathcal{PB}\left (\Lambda_{\bullet}\right ), \mathcal{PB}\left (\Omega_{\bullet} \right ) \right ) \ .
\]
\end{defn}


\begin{prop}\label{BQFSprop}
	For every $ T \in \vphantom{\mcal L}_A \mcal L_B (H, G) $ and $ k \geq 0 $, there exists unique $ \eta^{(k)} \in \t{NT} (\Lambda_k , \Omega_k) $ such that $ q_k\, T\, p_k = \Phi_{\eta^{(k)}_{\Gamma_k \cdots \Gamma_1 m_0}} \, p_k$ (which is the same as \Cref{TPhieqn}).
\end{prop}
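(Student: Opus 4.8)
The plan is to run, at each fixed level $k$, the reconstruction argument already used in the converse direction of \Cref{flatthm} and in \Cref{C=B'}, now adapted to the completed bimodules $H$, $G$ and to an operator $T$ that goes from $H$ to $G$ rather than being an endomorphism. Concretely, I would set $\zeta_k$ to be the element of $\mcal N_k\left(\Lambda_k \Gamma_k \cdots \Gamma_1 m_0\, , \, \Omega_k \Gamma_k \cdots \Gamma_1 m_0\right)$ obtained from the very same diagram as the $\zeta_k$ in the proofs of \Cref{flatthm} and \Cref{C=B'}, but with the box $q_k\, T\, \sigma$ (the projection onto $G_k$ of $T\sigma \in G$) replacing $\Phi_\gamma\,\sigma$, summed over the PP-basis $\mscr S \subset H_0$ of \Cref{PPbasispreC*}. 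Since $T$ is right $B$-linear and both $p_k$ and $q_k$ are $A_k$-$B_k$-linear (\Cref{C=B'}, \Cref{pk-picture}), the truncation $q_k\, T\, p_k$ restricts to a right $B_k$-linear map $H_k \to G_k$.

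First I would verify the identity $q_k\, T\, p_k = \Phi_{\zeta_k}\, p_k$. This is the same computation as in \Cref{flatthm}: using the resolution $\sum_{\sigma \in \mscr S} \sigma \circ \sigma^* = 1_{\Lambda_0 m_0}$ from \Cref{PPbasispreC*} together with right $B_k$-linearity, any $\xi \in H_k$ is reconstructed from its $B_k$-valued pairings against $\mscr S$, and feeding this through the definition \Cref{Phidef} of $\Phi$ shows that $\Phi_{\zeta_k}$ agrees with $q_k\, T$ on $H_k$; well-definedness and adjointability of $\Phi_{\zeta_k}$ are guaranteed by the discussion surrounding \Cref{Phidef}.

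Next I would upgrade $\zeta_k$ to a natural transformation. Because $T$ is left $A$-linear, in particular left $A_k$-linear, and $p_k$, $q_k$ are $A_k$-linear, the map $q_k\, T\, p_k$ is left $A_k$-linear; transporting this through the injective correspondence $\gamma \mapsto \left.\Phi_\gamma\right|_{H_k}$ of \Cref{PhiproppreC*} (iii) forces $\zeta_k$ to be $A_k$-central for the $A_k$-$A_k$-bimodule structure of \Cref{AkcommNTpreC*}, i.e. $\Omega_k \alpha \circ \zeta_k = \zeta_k \circ \Lambda_k \alpha$ for all $\alpha \in A_k$. \Cref{AkcommNTpreC*} then yields a unique $\eta^{(k)} \in \t{NT}(\Lambda_k , \Omega_k)$ with $\zeta_k = \eta^{(k)}_{\Gamma_k \cdots \Gamma_1 m_0}$, which is precisely the asserted equation. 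Uniqueness of $\eta^{(k)}$ is immediate: if two natural transformations satisfy the equation, their $(\Gamma_k \cdots \Gamma_1 m_0)$-components give the same $\Phi$ on $H_k$, hence coincide by \Cref{PhiproppreC*} (iii), and then the natural transformations themselves coincide by injectivity of $\eta \mapsto \eta_{\Gamma_k \cdots \Gamma_1 m_0}$ in \Cref{AkcommNTpreC*}.

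The step I expect to demand the most care is the first one — confirming that the finite truncation $q_k\, T\, p_k$ of the genuinely von-Neumann-level operator $T$ is captured exactly by the finite-level element $\zeta_k \in \mcal N_k(\cdots)$. The subtlety is that $T$ is only right $B$- and left $A$-linear for the \emph{completed} algebras, so I must check that right $B_k$-linearity alone suffices to run the PP-basis reconstruction at level $k$ (it does, since $\mscr S \subset H_0 \subset H_k$ and the relevant inner products land in $B_k$), and that $q_k\, T\, \sigma$ genuinely lies in $G_k$, so that $\zeta_k$ is an honest morphism in $\mcal N_k$ and not merely an operator between completions.
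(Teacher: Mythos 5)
Your proposal is correct and follows essentially the same route as the paper's proof: the paper also forms $\zeta_k$ by summing the diagram with the box $q_k(T\sigma)$ over the PP-basis $\mscr S$, establishes $q_k\, T\, p_k = \Phi_{\zeta_k}\, p_k$, deduces $A_k$-centrality of $\zeta_k$ from $A_k$-linearity of the left-hand side via \Cref{PhiproppreC*}(iii), and then invokes \Cref{AkcommNTpreC*} for existence and uniqueness of $\eta^{(k)}$. Your extra care about $q_k(T\sigma)$ landing in $G_k$ and about right $B_k$-linearity sufficing for the reconstruction is implicit in the paper's appeal to ``similar reasoning as before,'' so nothing is missing.
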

\begin{proof}
We will use a modified version of a trick which we have already seen twice before, namely, in the proofs of \Cref{PhiproppreC*} and \Cref{flatthm}.
Set $  \zeta_k \coloneqq  \displaystyle \sum_{\sigma \in \mscr S}
\raisebox{-14mm}{
	\begin{tikzpicture}
		\node[draw,thick,rounded corners, fill=white,minimum width=60] at (-.8,2.1) {$q_k (T\sigma)$};
		\node[draw,thick,rounded corners, fill=white] at (-.15,.9) {$\sigma^*$};
		\node[left] at (-.6,.9) {$ \cdots $};
		\node[right] at (-.1,2.7) {$ m_0 $};
		\node[right] at (-.1,.2) {$ m_0 $};
		\node[right] at (-.1,1.4) {$ n_0 $};
		\node[left] at (-1.5,2.7) {$ \Omega_k $};
		\node[left] at (-1.5,.2) {$ \Lambda_k $};
		\node[left] at (-.3,2.7) {$ \cdots $};
		\draw[white,line width=1mm,in=-90,out=90] (-1.6,0) to (-.3,.6);
		\draw[red,in=-90,out=90] (-1.6,0) to (-.3,.6);
		\begin{scope}[on background layer]
			\draw[in=-90,out=90] (-.5,0) to (-.8,1.75);
			\draw[in=-90,out=90] (-1.3,0) to (-1.6,1.75);
			\draw[thick,dashed] (0,0) to (0,3);
			\draw[red,in=-90,out=90] (-1.6,2.4) to (-1.6,3);
			\draw (-1.3,2.4) to (-1.3,3);
			\draw (-.3,2.4) to (-.3,3);
		\end{scope}
	\end{tikzpicture}
}	
\in \mcal N_k \left(\Lambda_k \Gamma_k \cdots \Gamma_1 m_0 , \Omega_k \Gamma_k \cdots \Gamma_1 m_0\right) $.
	With similar reasoning as before, one can easily conclude $ q_k\, T\, p_k = \Phi_{\zeta_k} \, p_k $; moreover, this equation uniquely determines $ \zeta_k $ by \Cref{PhiproppreC*} (iii).
	Further, the left side of the equation is $ A_k $-linear; then so is the right side.
	Again by \Cref{PhiproppreC*} (iii), $ \zeta_k $ becomes an $ A_k $-central vector of $ \mcal N_k \left(\Lambda_k \Gamma_k \cdots \Gamma_1 m_0 , \Omega_k \Gamma_k \cdots \Gamma_1 m_0\right)  $.
	Applying \Cref{AkcommNTpreC*}, we get a unique $ \eta^{(k)} \in \t{NT}(\Lambda_k , \Omega_k)$ satisfying $ \zeta_k = \eta^{(k)}_{\Gamma_k \cdots \Gamma_1 m_0} $.
	This completes the proof.
\end{proof}
\begin{thm}\label{BQFSthm}
The following is an isomorphism 
\[
\normalfont\textbf{UC}^\t{tr}_2 \left(\Lambda_\bullet , \Omega_\bullet \right) \ni  \ul \eta  \os{\displaystyle \mcal {PB}}{\longmapsto} \mcal {PB} \left( \ul \eta \right) \in  \textbf{vNAlg}_{2} \left( \mathcal{PB}\left (\Lambda_{\bullet}\right ), \mathcal{PB}\left (\Omega_{\bullet} \right ) \right ) \ .
\]
(This will eventually imply that the $ 2 $-functor $ \mcal{PB} $ is fully faithful.)
\end{thm}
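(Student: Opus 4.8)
The plan is to show that $\mcal{PB}$ at the level of $2$-cells is a linear bijection whose inverse is furnished by \Cref{BQFSprop}. The two propositions \Cref{Spropiv} and \Cref{BQFSprop} already do essentially all of the work, so what remains is to assemble them and to verify that the natural transformations produced from an arbitrary bimodule intertwiner genuinely form a BQFS.

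First I would record that $\mcal{PB}$ is well defined and linear: by \Cref{Spropiv}(b) every $\ul\eta\in\textbf{UC}^\t{tr}_2(\Lambda_\bullet,\Omega_\bullet)$ yields the unique $T=\mcal{PB}(\ul\eta)\in\vphantom{\mcal L}_A\mcal L_B(H,G)$ with $q_k\,T\,p_k=\Phi_{\eta^{(k)}_{\Gamma_k\cdots\Gamma_1 m_0}}\,p_k$ for all $k$, and linearity is immediate from the linearity of $\Phi$ and of the SOT-limit. For injectivity, suppose $\mcal{PB}(\ul\eta)=0$. Then $\Phi_{\eta^{(k)}_{\Gamma_k\cdots\Gamma_1 m_0}}\,p_k=q_k\,0\,p_k=0$, so $\Phi_{\eta^{(k)}_{\Gamma_k\cdots\Gamma_1 m_0}}$ vanishes on $H_k$; by \Cref{PhiproppreC*}(iii) this forces $\eta^{(k)}_{\Gamma_k\cdots\Gamma_1 m_0}=0$, and since $\Gamma_k\cdots\Gamma_1 m_0$ contains every simple object of $\mcal M_k$, bi-faithfulness (via \Cref{AkcommNTpreC*}) gives $\eta^{(k)}=0$ for every $k$, i.e.\ $\ul\eta=0$.

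The core is surjectivity. Given $T\in\vphantom{\mcal L}_A\mcal L_B(H,G)$, \Cref{BQFSprop} produces for each $k$ a unique $\eta^{(k)}\in\t{NT}(\Lambda_k,\Omega_k)$ with $q_k\,T\,p_k=\Phi_{\eta^{(k)}_{\Gamma_k\cdots\Gamma_1 m_0}}\,p_k$, and I must check that $\ul\eta=\{\eta^{(k)}\}$ is a BQFS. For quasi-flatness I would use $p_{k-1}\le p_k$ and $q_{k-1}\le q_k$ (coming from $H_{k-1}\subset H_k$ and $G_{k-1}\subset G_k$) to write $q_{k-1}\,T\,p_{k-1}=q_{k-1}\,(q_k\,T\,p_k)\,p_{k-1}=q_{k-1}\,\Phi_{\eta^{(k)}_{\Gamma_k\cdots\Gamma_1 m_0}}\,p_{k-1}$, which by \Cref{Spropiv}(a) equals $\Phi_{(S_k\eta^{(k)})_{\Gamma_{k-1}\cdots\Gamma_1 m_0}}\,p_{k-1}$; comparing with the defining equation of $\eta^{(k-1)}$ and invoking the uniqueness in \Cref{BQFSprop} (ultimately \Cref{PhiproppreC*}(iii)) gives $S_k\eta^{(k)}=\eta^{(k-1)}$, i.e.\ the quasi-flat relation $S_{k+1}\eta^{(k+1)}=\eta^{(k)}$. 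Once $\ul\eta$ is known to be a BQFS, $\mcal{PB}(\ul\eta)$ is defined and satisfies the same system $q_k\,\mcal{PB}(\ul\eta)\,p_k=\Phi_{\eta^{(k)}_{\Gamma_k\cdots\Gamma_1 m_0}}\,p_k=q_k\,T\,p_k$; the uniqueness clause of \Cref{Spropiv}(b) then forces $\mcal{PB}(\ul\eta)=T$, so $\mcal{PB}$ is onto and the two assignments are mutually inverse.

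The one genuinely analytic point, which I expect to be the main obstacle, is the uniform boundedness $\sup_k\|\eta^{(k)}\|<\infty$ required to certify $\ul\eta$ as a BQFS. Here I would argue that $\|\eta^{(k)}\|=\|\Phi_{\eta^{(k)}_{\Gamma_k\cdots\Gamma_1 m_0}}\,p_k\|$. Indeed $\Phi_{\eta^{(k)}_{\Gamma_k\cdots\Gamma_1 m_0}}\,p_k$ has the same norm as the restriction $\Phi_{\eta^{(k)}_{\Gamma_k\cdots\Gamma_1 m_0}}\big|_{H_k}$, and viewing $\eta^{(k)}_{\Gamma_k\cdots\Gamma_1 m_0}$ as an off-diagonal corner of the C*-algebra $\t{End}(\Lambda_k\Gamma_k\cdots\Gamma_1 m_0\oplus\Omega_k\Gamma_k\cdots\Gamma_1 m_0)$, the representation $\gamma\mapsto\gamma\circ\bullet$ into $\mcal L(H_k\oplus G_k)$ is a faithful $*$-homomorphism by \Cref{PhiproppreC*}(iii), hence isometric; restricting this isometry to the corner yields $\|\Phi_{\eta^{(k)}_{\Gamma_k\cdots\Gamma_1 m_0}}\big|_{H_k}\|=\|\eta^{(k)}_{\Gamma_k\cdots\Gamma_1 m_0}\|=\|\eta^{(k)}\|$, the last equality because $\Gamma_k\cdots\Gamma_1 m_0$ contains every simple. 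Consequently $\|\eta^{(k)}\|=\|q_k\,T\,p_k\|\le\|T\|$ uniformly in $k$, so $\ul\eta$ is bounded (and, since $q_k,p_k\to\t{id}$ in SOT forces $\|q_k T p_k\|\to\|T\|$, the assignment is in fact isometric). With boundedness established the surjectivity argument above goes through, completing the proof that $\mcal{PB}$ is an isomorphism on $2$-cells.
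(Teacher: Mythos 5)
Your proposal is correct and follows essentially the same route as the paper's proof: injectivity via \Cref{PhiproppreC*}(iii) and \Cref{AkcommNTpreC*}, surjectivity by feeding \Cref{BQFSprop} into \Cref{Spropiv}(a) to get quasi-flatness, and boundedness from the isometry of $\gamma\mapsto\Phi_\gamma|_{H_k}$ together with $\norm{q_k T p_k}\leq\norm T$. Your only additions are harmless elaborations the paper leaves implicit, namely the $2\times 2$-corner justification of that isometry and the explicit final appeal to the uniqueness clause of \Cref{Spropiv}(b) to conclude $\mcal{PB}(\ul\eta)=T$.
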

\begin{proof}
Suppose $ \mcal {PB} \left( \ul \eta \right) = 0 $.
Then, by \Cref{TPhieqn}, we have $ \left. \Phi_{\eta^{(k)}_{\Gamma_k\cdots \Gamma_1 m_0}} \right|_{H_k} = 0 $ which (by \Cref{PhiproppreC*} (iii)) implies $ \eta^{(k)}_{\Gamma_k\cdots \Gamma_1 m_0} = 0 $.
Now, \Cref{AkcommNTpreC*} ensures that $ \eta^{(k)} $ must be zero for all $ k $.
	
For surjectivity, pick $ T \in \vphantom{\mcal L}_A \mcal L_B (H, G) $.
We only need to show that the unique sequence $ \left\{ \eta^{(k)} \in \t{NT} (\Lambda_k , \Omega_k)  \right\}_{k\geq 0} $ associated to $ T $ obtained in \Cref{BQFSprop}, is quasi-flat and bounded in C*-norm.
Note that for all $ \gamma \in H_k = \mcal N_k (\Delta_k \cdots \Delta_1 n_0, \Lambda_k \Gamma_k \cdots \Gamma_1 m_0)$, we apply \Cref{TPhieqn} twice and obtain
	\[
	\Phi_{\eta^{(k)}_{\Gamma_k\cdots \Gamma_1 m_0}} \, \gamma 
	= q_k T p_k\ \gamma
	=  q_k q_{k+1} T p_{k+1}\ \gamma
	=   q_k  \Phi_{\eta^{(k+1)}_{\Gamma_{k+1}\cdots \Gamma_1 m_0}} p_{k+1} \, \gamma 
	= \Phi_{\left(S_{k+1} \ \eta^{(k+1)} \right)_{\Gamma_k\cdots \Gamma_1 m_0}} \, \gamma
	\]
where the last equality follows from \Cref{Spropiv} (a).
By \Cref{PhiproppreC*} (iii), we must have $ \eta^{(k)}_{\Gamma_k \cdots \Gamma_1 m_0} = \left[S \ \eta^{(k+1)} \right]_{\Gamma_k \cdots \Gamma_1 m_0} $ which via the isomorphism in \Cref{AkcommNTpreC*},
	implies $ \eta^{(k)} = S_{k+1} \ \eta^{(k+1)} $.
For boundedness, we apply the norm on both sides of \Cref{TPhieqn};
note that the map in \Cref{PhiproppreC*} (iii) is actually an isometry (with respect to the C*-norms) which yeilds the inequality $ \norm T \geq \norm{\eta^{(k)}_{\Gamma_k \cdots \Gamma_1 m_0}} = \norm{\eta^{(k)}}$ where the last equality holds because $ \Gamma_k \cdots \Gamma_1 m_0 $ contains every simple of $ \mcal M_k $ as a subobject.
\end{proof}
\vspace*{4mm}
\subsection{$ \mcal {PB} $ preserves tensor product of $ 1 $-cells and compostions of $ 2 $-cells}$ \ $

Our goal here is clear from the title of this section.
As a by product of achieving this goal, we will prove the existence of the limits appearing in

\begin{prop}\label{2cellcomplim}
For	$ \ul \eta = \left\{ \eta^{(k)}\right\}_{k \geq 0} \in \normalfont\textbf{UC}^\t{tr}_2 \left(\Lambda_\bullet , \Omega_\bullet\right) $ and $ \ul \kappa = \left\{ \kappa^{(k)}\right\}_{k \geq 0} \in \normalfont\textbf{UC}^\t{tr}_2 \left( \Omega_\bullet, \Xi_\bullet \right) $,

(a)  the sequence $ \left\{ S_{k+1} \cdots S_{k+l} \left(\kappa^{(k+l)} \circ \eta^{(k+l)} \right) \right\}_{l\geq 0} $ converges (in NT$ (\Lambda_k, \Xi_k) $) for every $ k\geq 0 $, and

(b) $ \mcal{PB} \left( \ul \kappa \cdot \ul \eta\right) = \mcal{PB} \left( \ul \kappa \right) \circ \mcal{PB} \left( \ul \eta\right)$.
\end{prop}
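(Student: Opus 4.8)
The plan is to sidestep a direct analysis of the limit in \Cref{2cellcompdef}(a) and instead identify $\ul\kappa\cdot\ul\eta$ with the BQFS that \Cref{BQFSprop} attaches to the already-constructed composite operator. Write $T:=\mcal{PB}(\ul\eta)\in{}_A\mcal L_B(H,G)$ and $R:=\mcal{PB}(\ul\kappa)\in{}_A\mcal L_B(G,F)$, where $F$ together with its level projections $r_k$ is built from $\Xi_\bullet$ exactly as $H,p_k$ and $G,q_k$ were built from $\Lambda_\bullet$ and $\Omega_\bullet$. The composite $RT\in{}_A\mcal L_B(H,F)$ is again an $A$-$B$ intertwiner, so \Cref{BQFSprop} furnishes, for each $k$, a unique $\zeta^{(k)}\in\t{NT}(\Lambda_k,\Xi_k)$ with $r_k\,RT\,p_k=\Phi_{\zeta^{(k)}_{\Gamma_k\cdots\Gamma_1 m_0}}\,p_k$. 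The whole proof amounts to showing that $\zeta^{(k)}$ is precisely the limit asserted in (a); then (b) follows at once.

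To pin down $\zeta^{(k)}$ I would compute $r_k\,RT\,\xi$ for $\xi\in H_k$ by inserting the intermediate projections. Since $q_{k+l}\to\t{id}_G$ strongly, $RT\xi=\lim_{l}R\,q_{k+l}\,T\xi$ in norm. For fixed $l$, \Cref{TPhieqn} for $T$ gives $q_{k+l}T\xi=\Phi_{\eta^{(k+l)}_{\Gamma_{k+l}\cdots\Gamma_1 m_0}}\,\xi\in G_{k+l}$; then, using $r_k=r_k r_{k+l}$ and \Cref{TPhieqn} for $R$, one gets $r_k R\,q_{k+l}T\xi=r_k\,\Phi_{\kappa^{(k+l)}_{\,\cdots}}\,\Phi_{\eta^{(k+l)}_{\,\cdots}}\,\xi$. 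The composition law $\Phi_{\kappa^{(k+l)}}\circ\Phi_{\eta^{(k+l)}}=\Phi_{(\kappa^{(k+l)}\circ\eta^{(k+l)})}$ (immediate from the defining picture of $\Phi$ in \Cref{Phidef}) together with $l$ iterations of \Cref{Spropiv}(a) collapses this to $\Phi_{(S_{k+1}\cdots S_{k+l}(\kappa^{(k+l)}\circ\eta^{(k+l)}))_{\Gamma_k\cdots\Gamma_1 m_0}}\,\xi$. Letting $l\to\infty$ and comparing with the defining relation for $\zeta^{(k)}$, I obtain, for every $\xi\in H_k$,
\[
\Phi_{\zeta^{(k)}_{\Gamma_k\cdots\Gamma_1 m_0}}\,\xi=\lim_{l\to\infty}\Phi_{\gamma_l}\,\xi,\qquad \gamma_l:=\big(S_{k+1}\cdots S_{k+l}(\kappa^{(k+l)}\circ\eta^{(k+l)})\big)_{\Gamma_k\cdots\Gamma_1 m_0}.
\]

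The step I expect to require the most care is upgrading this pointwise (strong) convergence of the operators $\Phi_{\gamma_l}$ to honest norm convergence of the natural transformations $\gamma_l$ inside the \emph{finite-dimensional} space $\t{NT}(\Lambda_k,\Xi_k)$, which is exactly what (a) asks for. The clean way is to test only against the finite Pimsner–Popa basis $\mscr S\subset H_0\subset H_k$: by \Cref{PhiproppreC*}(iii) the linear map $\gamma\mapsto(\Phi_\gamma\sigma)_{\sigma\in\mscr S}$ is injective on the (finite-dimensional) morphism space, hence a topological embedding, so convergence of the finitely many vectors $\Phi_{\gamma_l}\sigma$ forces $\gamma_l\to\zeta^{(k)}$ in C*-norm. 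This proves (a) and simultaneously yields $(\ul\kappa\cdot\ul\eta)^{(k)}=\zeta^{(k)}$.

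Finally, for (b): since $\{\zeta^{(k)}\}$ is by construction the BQFS that \Cref{BQFSprop}/\Cref{BQFSthm} associate to $RT$, \Cref{Spropiv}(b) shows it is quasi-flat and bounded and that $\mcal{PB}(\{\zeta^{(k)}\})$ is the unique operator satisfying \Cref{TPhieqn}, namely $RT$ itself. Combined with the identification $\ul\kappa\cdot\ul\eta=\{\zeta^{(k)}\}$ from (a), this gives $\mcal{PB}(\ul\kappa\cdot\ul\eta)=RT=\mcal{PB}(\ul\kappa)\circ\mcal{PB}(\ul\eta)$, as required.
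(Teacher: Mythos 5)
Your proposal is correct and follows essentially the same route as the paper's proof: both associate to the composite operator $RT$ (the paper's $Z=YX$) its BQFS via \Cref{BQFSprop}, insert the projections $q_{k+l}$ and apply \Cref{TPhieqn} twice together with iterated use of \Cref{Spropiv}(a) to identify $r_k\,RT\,p_k$ with $\Phi_{[S_{k+1}\cdots S_{k+l}(\kappa^{(k+l)}\circ\,\eta^{(k+l)})]_{\Gamma_k\cdots\Gamma_1 m_0}}\,p_k$ in the limit, and then use finite-dimensionality of the morphism space with the injectivity/isometry of \Cref{PhiproppreC*}(iii) (plus \Cref{AkcommNTpreC*}) to upgrade strong convergence of the operators to convergence of the natural transformations. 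Your test against the Pimsner--Popa basis is just a concrete implementation of the paper's finite-dimensionality argument, so the two proofs coincide in substance.
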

\begin{proof}
We continue using the previous notations and let us denote the Hilbert spaces and the projection corresponding to $ \Xi_\bullet $ by $ F_k $, $ F $ and $ s_k $; the intertwiners corresponding to $\ul \eta $ and $\ul \kappa $ will be denoted by $ X \in \vphantom{\mcal L}_A \mcal L_B (H,G) $ and $ Y \in \vphantom{\mcal L}_A \mcal L_B (G , F) $ respectively.
	Set $ Z \coloneqq YX \in \vphantom{\mcal L}_A \mcal L_B (H , F) $ whose corresponding BQFS will be $ \left\{ \psi^{(k)} \right\}_{k \geq 0} $.
	
	For fixed $ k \geq 0 $, using \Cref{BQFSprop} and \Cref{Spropiv} (a), we obtain
	\begin{align*}
& \Phi_{\psi^{(k)}_{\Gamma_k \cdots \Gamma_1 m_0}} p_k = s_k YX p_k = \t{SOT-}\lim_{l \to \infty} \, s_k Y q_{k+l} X p_k\\
& = \t{SOT-}\lim_{l \to \infty} \, s_k \Phi_{\kappa^{(k+l)}_{\Gamma_{k+l} \cdots \Gamma_1 m_0}} \Phi_{\eta^{(k+l)}_{\Gamma_{k+l}\cdots \Gamma_1 m_0}} p_k
= \t{SOT-}\lim_{l \to \infty} \Phi_{\left[S_{k+1} \cdots S_{k+l} \left(\kappa^{(k+l)} \circ\, \eta^{(k+l)}\right) \right]_{\Gamma_{k} \cdots \Gamma_1 m_0}} p_k
	\end{align*}
	Since $ \mcal N_k \left( \Lambda_k \Gamma_k \cdots \Gamma_1 m_0 , \Xi_k \Gamma_k \cdots \Gamma_1 m_0  \right) $ is finite dimensional, by the isometry in \Cref{PhiproppreC*}(iii), we may conclude that $ \left[S_{k+1} \cdots S_{k+l} \left(\kappa^{(k+l)}  \circ\, \eta^{(k+l)}\right)\right]_{\Gamma_k \cdots \Gamma_1 m_0} $ converges as $ l $ approaches $ \infty $ which again implies convergence of $ \left\{S_{k+1} \cdots S_{k+l} \left(\kappa^{(k+l)}  \circ\, \eta^{(k+l)}\right)\right\}_{l\geq 0} $ via \Cref{AkcommNTpreC*}.
\end{proof}

\vspace*{4mm}

Next, we deal with tensor product of $ 1 $-cells.
We will show that $ \mcal {PB} $ preserves it in the reverse order.
\begin{prop}\label{confusionbimod}
For $ 0 $-cells $\Gamma_\bullet , \Delta_\bullet, \Sigma_\bullet$ in $ \normalfont\textbf{UC}^\t{tr}_0 $, and $\Omega_\bullet \in \normalfont\textbf{UC}^\t{tr}_1(\Delta_\bullet, \Sigma_\bullet)$ and $\Lambda_\bullet \in \normalfont\textbf{UC}^\t{tr}_1(\Gamma_\bullet,\Delta_\bullet)$, the bimodule $\mcal {PB} \left( \Omega_\bullet \btimes \Lambda_\bullet  \right)$ is isomorphic to the Connes fusion $\mcal {PB}(\Lambda_\bullet)  \otimes \mcal {PB} \left(\Omega_\bullet \right) $.
\end{prop}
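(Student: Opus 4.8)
The plan is to promote the algebraic isomorphism of right correspondences already established in \Cref{confusionbimodpreC*} to an isomorphism of von Neumann bimodules, by verifying that the explicit map from that proposition is isometric for the Connes fusion inner product and has dense range. Recall the $A_\infty$-$C_\infty$-linear map $f : H_\infty \us{B_\infty}{\otimes} G_\infty \to F_\infty$ sending $\xi \otimes \zeta \mapsto \Omega_k(\xi)\circ \zeta$, which intertwines the $C_\infty$-valued inner products and for which $\{\Omega_0(\sigma)\circ\tau\}_{(\sigma,\tau)\in\mscr S\times\mscr T}$ is a PP-basis of the right $C_\infty$-module $F_\infty$. Every vector of $H_\infty$ is right-$B$-bounded, so $f$ restricts to a linear map $\Theta : H_\infty \odot G_\infty \to F_\infty \subset F$ between the algebraic tensor product and the Hilbert space $F$, and it is this $\Theta$ that I would extend.

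First I would check that $\Theta$ is isometric for the defining inner product of the Connes fusion. Writing $\pi_G$ for the normal left $B$-action on $G$ (so that $\Omega_k(\beta) = \pi_G(\beta)$ for $\beta \in B_\infty$), a direct computation using functoriality, $\ast$-linearity, and $\xi_2^*\circ\xi_1 = \langle \xi_1,\xi_2\rangle_{B}$ gives, for $\xi_i \in H_\infty$ and $\zeta_i \in G_\infty$,
\[
\langle \Theta(\xi_1\otimes\zeta_1), \Theta(\xi_2\otimes\zeta_2)\rangle_F
= \t{Tr}_{C_\infty}\!\left(\zeta_2^* \circ \Omega_k(\xi_2^*\circ\xi_1)\circ\zeta_1\right)
= \left\langle \pi_G\!\left(\langle \xi_1,\xi_2\rangle_{B}\right)\zeta_1,\ \zeta_2\right\rangle_G .
\]
The right-hand side is exactly the Connes fusion inner product of the elementary tensors, because $G$ is the trace completion of $G_\infty$ (so its scalar inner product is $\t{Tr}_{C_\infty}$ applied to the $C_\infty$-valued form) and $\langle\xi_1,\xi_2\rangle_B = \xi_2^*\circ\xi_1 \in B_\infty \subset B$ acts through the normal representation $\pi_G$. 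Thus $\Theta$ is isometric from $H_\infty \odot G_\infty$, equipped with the Connes fusion pre-inner product, into $F$.

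Next I would establish density of $H_\infty \odot G_\infty$ in $H \us{B}{\otimes} G$ using the PP-basis $\mscr S \subset H_0 \subset H_\infty$. For any $\xi \in H$ one has the expansion $\xi = \sum_{\sigma\in\mscr S}\sigma\cdot\langle\xi,\sigma\rangle_B$, so for $\eta \in G$,
\[
\xi \us{B}{\otimes} \eta = \us{\sigma\in\mscr S}{\sum}\ \sigma \us{B}{\otimes} \pi_G\!\left(\langle\xi,\sigma\rangle_B\right)\eta ,
\]
which lies in the closed span of $\{\sigma \us{B}{\otimes}\eta' : \sigma \in \mscr S,\ \eta'\in G\}$; since each map $\eta' \mapsto \sigma\us{B}{\otimes}\eta'$ is bounded and $G_\infty$ is dense in $G$, the subspace $H_\infty \odot G_\infty$ is dense in $H\us{B}{\otimes}G$. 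Hence $\Theta$ extends to an isometry $\ol\Theta : H\us{B}{\otimes}G \to F$. Its range contains $f(H_\infty \otimes G_\infty)=F_\infty$, which is dense in $F$, so $\ol\Theta$ is a unitary onto $F$. Finally, $f$ is $A_\infty$-$C_\infty$-linear and all actions extend normally, so by boundedness $\ol\Theta$ intertwines the full $A$-$C$ bimodule structures, giving $\mcal{PB}(\Omega_\bullet \btimes \Lambda_\bullet) = {}_{A}F_{C} \cong \mcal{PB}(\Lambda_\bullet)\us{B}{\otimes}\mcal{PB}(\Omega_\bullet)$.

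I expect the main obstacle to be the isometry identity of the second paragraph: the content lies in matching the scalar inner product of $F$ (namely $\t{Tr}_{C_\infty}$ of the $C_\infty$-valued form) with the Connes fusion inner product, which rests on the compatibility $\Omega_k(\xi_2^*\circ\xi_1) = \pi_G(\langle\xi_1,\xi_2\rangle_B)$ together with the fact that the ambient trace $\t{Tr}_C$ restricts to $\t{Tr}_{C_\infty}$ on $C_\infty$. The density argument via the PP-basis is the secondary technical point; everything else reduces to the algebraic isomorphism already proved in \Cref{confusionbimodpreC*} and the normality of the extended actions.
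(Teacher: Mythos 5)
Your proposal is correct and follows essentially the same route as the paper: both take the $A_\infty$-$C_\infty$-linear, inner-product-preserving isomorphism of \Cref{confusionbimodpreC*}, observe that composing the $C_\infty$-valued inner product with $\t{Tr}_{C_\infty}$ identifies it with the Connes fusion (scalar) inner product, and extend to an $A$-$C$-linear unitary. The only difference is that you spell out the density of $H_\infty \odot G_\infty$ in $H \us{B}{\otimes} G$ via the Pimsner--Popa basis, a step the paper leaves implicit in the word ``extends.''
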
 
\begin{proof}
	We borrow the notations in \Cref{confusionbimodpreC*} where we have already obtained an $ A_\infty $-$ C_\infty $-linear isomorphism from the relative tensor product of the dense subspace of  $\mcal {PB} (\Lambda_\bullet)  $ and $\mcal {PB} \left(\Omega_\bullet \right) $ to that of $\mcal {PB} \left( \Omega_\bullet \btimes \Lambda_\bullet  \right)$.
	Moreover, this isomorphism preserves the right $ C_\infty $-valued inner product; composing with $ \t{Tr}_{C_\infty} $, it preserves the scalar inner product as well, and hence extends to a $ A $-$ C $-linear unitary.
\end{proof}

We next proceed towards the horizontal composition of $ 2 $-cells.

\begin{prop}
Let $ \Lambda_\bullet^i \in \normalfont\textbf{UC}^\t{tr}_1 {\left(\Gamma_\bullet, \Delta_\bullet \right)}  $ and $\normalfont \Omega_\bullet^i \in \textbf{UC}^\t{tr}_1 {\left(\Delta_\bullet, \Sigma_\bullet\right)}$) for $ i =1,2 $, and $\normalfont \ul \eta = \left\{\eta^{(k)}\right\}_{k\geq 0} \in \textbf{UC}^\t{tr}_2 \left(\Lambda_\bullet^1 , \Lambda_\bullet^2  \right)$ and $\normalfont \ul \kappa = \left\{\kappa^{(k)}\right\}_{k\geq 0} \in \textbf{UC}^\t{tr}_2 \left(\Omega_\bullet^1 , \Omega_\bullet^2  \right)$.
	Then,
	
	(a) for each $ k\geq 0 $, the sequence $ \left\{ S_{k+1} \cdots S_{k+l} \left( \Omega_{k+l}^2 \left( \eta^{(k+l)} \right) \circ \kappa^{(k+l)}_{\Lambda_{k+l}^1} \right) \right\}_{l\geq 0} $ converges in $ NT \left(\Omega_k^1 \Lambda_k^1 , \Omega_k^2 \Lambda_k^2\right) $ where $ S $ is the loop operator from $ \Omega_\bullet^1 \btimes \Lambda_\bullet^1 $ to $ \Omega_\bullet^2 \btimes \Lambda_\bullet^2 $,
and indeed the sequence $\ul \kappa \btimes \ul \eta  \coloneqq 
	\left\{ \left(\ul \kappa \btimes \ul \eta \right)^{(k)} \coloneqq \us{l \to \infty} \lim S_{k+1} \cdots S_{k+l} \left( \Omega_{k+l}^2 \left( \eta^{(k+l)} \right) \circ \kappa^{(k+l)}_{\Lambda_{k+l}^1} \right) \right\}_{k \geq 0}  
	$ is a BQFS from $\Omega_\bullet^1  \btimes \Lambda_\bullet^1  $ to $\Omega_\bullet^2 \btimes \Lambda_\bullet^2 $,
	
	(b) $ \mcal {PB} \left(\ul \kappa \btimes \ul \eta \right) $ corresponds to the operator $ \mcal {PB} \left(\ul \eta\right)  \otimes \mcal {PB} \left(\ul \kappa \right)$ via the isomorphism of bimodules in \Cref{confusionbimod}.
\end{prop}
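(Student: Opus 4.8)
The plan is to realize the horizontal composite as the BQFS attached to a single honest intertwiner, and to read off both assertions from that identification. Write $A=\mcal{PB}(\Gamma_\bullet)$, $B=\mcal{PB}(\Delta_\bullet)$, $C=\mcal{PB}(\Sigma_\bullet)$, and let $X:=\mcal{PB}(\ul\eta)\in\vphantom{\mcal L}_A\mcal L_B(H^1,H^2)$ and $Y:=\mcal{PB}(\ul\kappa)\in\vphantom{\mcal L}_B\mcal L_C(G^1,G^2)$ be the intertwiners of \Cref{BQFSthm}, where $H^i=\mcal{PB}(\Lambda_\bullet^i)$ and $G^i=\mcal{PB}(\Omega_\bullet^i)$. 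Let $Z\in\vphantom{\mcal L}_A\mcal L_C(F^1,F^2)$ be the operator induced by the Connes fusion $X\otimes Y$ through the unitary of \Cref{confusionbimod}, where $F^i=\mcal{PB}(\Omega_\bullet^i\btimes\Lambda_\bullet^i)$, with level-$k$ projections $r_k$ on $F^1$ and $s_k$ on $F^2$. By the surjectivity half of \Cref{BQFSthm}, $Z=\mcal{PB}(\ul\psi)$ for a unique BQFS $\ul\psi=\{\psi^{(k)}\}_{k\ge 0}$ from $\Omega_\bullet^1\btimes\Lambda_\bullet^1$ to $\Omega_\bullet^2\btimes\Lambda_\bullet^2$. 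This $\ul\psi$ is the candidate for $\ul\kappa\btimes\ul\eta$: once I show $\psi^{(k)}=\lim_l S_{k+1}\cdots S_{k+l}\bigl(\Omega^2_{k+l}(\eta^{(k+l)})\circ\kappa^{(k+l)}_{\Lambda^1_{k+l}}\bigr)$, part (a) follows (the limit exists and its value assembles into the already-known BQFS $\ul\psi$) and part (b) is immediate from $Z=\mcal{PB}(\ul\psi)$ together with the definition of $Z$.

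The bridge to the loop-operator formula is a single-level algebraic identity. Writing $\chi^{(m)}:=\Omega^2_m(\eta^{(m)})\circ\kappa^{(m)}_{\Lambda^1_m}\in\t{NT}(\Omega^1_m\Lambda^1_m,\Omega^2_m\Lambda^2_m)$, which is precisely the $\textbf{UC}$-level horizontal composite appearing inside the limit in \Cref{2cellcompdef}(b), the pointwise computation carried out in the proof of the analogous $\textbf{UC}$ statement shows that, under the unitary of \Cref{confusionbimodpreC*}, the operator $\Phi_{\eta^{(m)}_{\Gamma_m\cdots\Gamma_1 m_0}}\otimes\Phi_{\kappa^{(m)}_{\Delta_m\cdots\Delta_1 n_0}}$ is transported to $\Phi_{\chi^{(m)}_{\Gamma_m\cdots\Gamma_1 m_0}}$; this uses neither flatness nor the exchange relation, only the formula for $\chi^{(m)}$. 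Since $X=\t{SOT-}\lim_m\Phi_{\eta^{(m)}_{\Gamma_m\cdots\Gamma_1 m_0}}$ and $Y=\t{SOT-}\lim_m\Phi_{\kappa^{(m)}_{\Delta_m\cdots\Delta_1 n_0}}$ with uniformly bounded terms, and Connes fusion is jointly SOT-continuous on bounded sets, I obtain $Z=\t{SOT-}\lim_m\Phi_{\chi^{(m)}_{\Gamma_m\cdots\Gamma_1 m_0}}$.

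I then sandwich by projections and descend with the loop operator, exactly as in \Cref{2cellcomplim}. Iterating \Cref{Spropiv}(a) for the loop operator $S$ from $\Omega_\bullet^1\btimes\Lambda_\bullet^1$ to $\Omega_\bullet^2\btimes\Lambda_\bullet^2$, from level $m$ down to level $k$, gives $s_k\,\Phi_{\chi^{(m)}_{\Gamma_m\cdots\Gamma_1 m_0}}\,r_k=\Phi_{[S_{k+1}\cdots S_m\,\chi^{(m)}]_{\Gamma_k\cdots\Gamma_1 m_0}}\,r_k$, whence $s_k Z r_k=\t{SOT-}\lim_{m}\Phi_{[S_{k+1}\cdots S_m\,\chi^{(m)}]_{\Gamma_k\cdots\Gamma_1 m_0}}\,r_k$. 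On the other hand \Cref{BQFSprop} gives $s_k Z r_k=\Phi_{\psi^{(k)}_{\Gamma_k\cdots\Gamma_1 m_0}}\,r_k$. The finite-dimensional space $\mcal Q_k(\Omega^1_k\Lambda^1_k\Gamma_k\cdots\Gamma_1 m_0,\Omega^2_k\Lambda^2_k\Gamma_k\cdots\Gamma_1 m_0)$ embeds isometrically into $\mcal L(F^1_k,F^2_k)$ via $\gamma\mapsto\Phi_\gamma|_{F^1_k}$ (the isometry of \Cref{PhiproppreC*}(iii), as used in \Cref{BQFSthm}); since $F^1_k$ is finite-dimensional the displayed SOT-limit is a genuine limit of restrictions to $F^1_k$, and the isometry forces $[S_{k+1}\cdots S_m\chi^{(m)}]_{\Gamma_k\cdots\Gamma_1 m_0}$ to converge, with limit $\psi^{(k)}_{\Gamma_k\cdots\Gamma_1 m_0}$. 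Passing back through the isomorphism of \Cref{AkcommNTpreC*} yields convergence of $S_{k+1}\cdots S_m\chi^{(m)}$ to $\psi^{(k)}$, which is part (a); and since $\ul\psi$ is already a BQFS with $\mcal{PB}(\ul\psi)=Z$, this gives $\mcal{PB}(\ul\kappa\btimes\ul\eta)=Z$, proving (b).

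I expect the main obstacle to be the clean identification $Z=\t{SOT-}\lim_m\Phi_{\chi^{(m)}_{\Gamma_m\cdots\Gamma_1 m_0}}$: this needs both the single-level algebraic transport identity and the joint SOT-continuity of Connes fusion on uniformly bounded sequences (applied to $\Phi_{\eta^{(m)}_{\Gamma_m\cdots\Gamma_1 m_0}}$ and $\Phi_{\kappa^{(m)}_{\Delta_m\cdots\Delta_1 n_0}}$), the latter being the only genuinely analytic input. Everything downstream is the same finite-dimensional isometry-plus-loop-operator mechanism already exploited in the proof of \Cref{2cellcomplim}.
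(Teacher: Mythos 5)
Your overall architecture is sound and in fact ends exactly where the paper's proof ends (sandwich $s_k Z r_k$ with \Cref{BQFSprop} and \Cref{Spropiv}(a), then use finite-dimensionality of $F^1_k$ and the isometry $\gamma\mapsto\Phi_\gamma|_{F^1_k}$ to extract convergence of $S_{k+1}\cdots S_m\chi^{(m)}$ and identify the limit with $\psi^{(k)}$). But the step you yourself flag as the ``only genuinely analytic input'' is a genuine gap, not an off-the-shelf fact. The principle ``Connes fusion is jointly SOT-continuous on bounded sets'' is a statement about sequences of \emph{bimodule} intertwiners, and the operators you feed into it are not bimodule intertwiners: $\Phi_{\kappa^{(m)}_{\Delta_m\cdots\Delta_1 n_0}}$ is left $B_m$-linear but \emph{not} left $B_\infty$-linear (let alone left $B$-linear) unless $\ul\kappa$ is flat, and similarly $\Phi_{\eta^{(m)}}$ is only left $A_m$-linear. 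Consequently the expression $\Phi_{\eta^{(m)}}\otimes_B \Phi_{\kappa^{(m)}}$ is not a well-defined operator on the Connes fusion $H^1\otimes_B G^1$ at all: the assignment $\xi\otimes\zeta\mapsto \Phi_{\eta^{(m)}}\xi\otimes\Phi_{\kappa^{(m)}}\zeta$ is not $B$-balanced, so it does not descend. (This is precisely the difference between the flat setting of \Cref{confusionbimodpreC*}, where the single-level operators are genuinely $A_\infty$-$B_\infty$- and $B_\infty$-$C_\infty$-linear and your transport identity is the paper's $\textbf{UC}$-level proposition, and the BQFS setting, where it fails.) So your ``single-level transport identity'' only makes sense as a pointwise identity $\Phi_{\chi^{(m)}}\bigl(\Omega^1_m(\xi)\circ\zeta\bigr)=\Omega^2_m\bigl(\Phi_{\eta^{(m)}}\xi\bigr)\circ\Phi_{\kappa^{(m)}}\zeta$ on level-$m$ vectors (which is fine, by naturality of $\kappa^{(m)}$), and the limit statement $Z=\t{SOT-}\lim_m\Phi_{\chi^{(m)}}$ cannot be obtained by citing continuity of the fusion operation.

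What is true, and what you must prove directly, is exactly that SOT-limit: $\Phi_{\chi^{(m)}}$ \emph{is} well defined (since $\chi^{(m)}$ is a natural transformation $\Omega^1_m\Lambda^1_m\to\Omega^2_m\Lambda^2_m$), it is uniformly bounded, and testing against $\Omega^1_k(\xi_1)\circ\zeta_1$ one must control terms like $\bigl\|(\Phi_{\eta^{(m)}}-X)\xi_1\otimes_B \Phi_{\kappa^{(m)}}\zeta_1\bigr\|^2=\bigl\lab \pi_{G^2}(b_m)\,\Phi_{\kappa^{(m)}}\zeta_1\,,\,\Phi_{\kappa^{(m)}}\zeta_1\bigr\rab$ with $b_m=\lab(\Phi_{\eta^{(m)}}-X)\xi_1,(\Phi_{\eta^{(m)}}-X)\xi_1\rab_B$. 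Norm convergence $(\Phi_{\eta^{(m)}}-X)\xi_1\to 0$ only yields \emph{ultraweak} convergence $b_m\to 0$ in $B$, and one must then invoke normality of the left $B$-action on $G^2$ to conclude. This ultraweak-convergence-plus-normality device is not a formality; it is the crux of the proposition, and it is exactly what the paper supplies with its explicit sequence $b_l\in B_{k+l}$ converging ultraweakly to $\lab T\xi_1,\xi_2\rab_B$, run at the level of matrix coefficients $\lab\lab T\xi_1,\xi_2\rab_B\,T'\zeta_1,\zeta_2\rab_{G^2}$ followed by the graphical reduction to the loop-operator expression. Once you replace your appeal to fusion-continuity by this direct argument, the rest of your proof goes through and coincides with the paper's.
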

\begin{proof}
Continuing with the notations used in \Cref{confusionbimodpreC*}, we set $\vphantom{H^i}_A H^i_B \coloneqq \mcal {PB} (\Lambda_\bullet^i)$, $\vphantom{G^i}_B G^i_C \coloneqq \mcal {PB} \left(\Omega_\bullet^i  \right)$, $\vphantom{F^i}_A F^i_C \coloneqq \mcal {PB} \left(\Omega_\bullet^i \btimes \Lambda_\bullet^i  \right)$ for $ i = 1,2 $.

\noindent Set $ T \coloneqq \mcal {PB} (\ul \eta) \in \vphantom{\mcal L}_A \mcal L_B (H^1 , H^2) $ and $ T' \coloneqq \mcal {PB} (\ul \kappa) \in \vphantom{\mcal L}_B \mcal L_C (G^1 , G^2)$.
	Suppose $ X $ denote the intertwiner in $ \vphantom{\mcal L}_A \mcal L_C (F^1,F^2) $ induced by $ T\us B \otimes T' $ under the isomorphism in \Cref{confusionbimod}.
We need to prove that $ \ul \kappa \btimes \ul \eta  $ is the unique BQFS which gets mapped to $ X $ under the functor $ \mcal {PB} $.
	For $ \xi_i \in H^i_k $, $ \zeta_i \in G^i_k $, $ i = 1,2 $, $ k\geq 0 $, applying \Cref{BQFSprop} and using the isomorphism of bimodules in \Cref{confusionbimod} (in fact, \Cref{confusionbimodpreC*}), we get
	\begin{align*}
		I \coloneqq \left\lab \Phi_{\left(\ul \kappa \btimes \ul \eta \right)^{(k)}_{\Gamma_k \cdots \Gamma_1 m_0}} \left(\Omega_k^1 (\xi_1) \circ \zeta_1\right) \; , \; \Omega_k^2 (\xi_2) \circ \zeta_2 \right\rab_{F^2_k}
		&= \left\lab X \left(\Omega_k^1 (\xi_1) \circ \zeta_1\right) \; , \; \Omega_k^2 (\xi_2) \circ \zeta_2 \right\rab_{F^2} \\
		& = \left\lab  \left\lab T \xi_1 , \xi_2 \right \rab_B \; T' \zeta_1\; ,\; \zeta_2  \right \rab_{G^2}\; .
	\end{align*}
	We will now express $\left\lab T \xi_1 , \xi_2 \right \rab_B $ as a limit.
	Consider the sequence 
	\[
	\left\{b_l \coloneqq \raisebox{-2.1cm}{
		\begin{tikzpicture}
			\draw[in=-90,out=90] (.2,1.2) to (.2,2.2);
			\draw[in=-90,out=90] (.8,.4) to (.2,1.2);
			\draw[in=-90,out=90] (.8,-.4) to (.8,.4);
			\draw[in=-90,out=90] (.2,-1.2) to (.8,-.4);
			\draw[in=-90,out=90] (.2,-2.2) to (.2,-1.2);
			\draw[in=-90,out=90] (.8,1.2) to (.8,2.2);
			\draw[in=-90,out=90] (1.4,.4) to (.8,1.2);
			\draw[in=-90,out=90] (1.4,-.4) to (1.4,.4);
			\draw[in=-90,out=90] (.8,-1.2) to (1.4,-.4);
			\draw[in=-90,out=90] (.8,-2.2) to (.8,-1.2);
			\draw[in=-90,out=90] (1.6,-2.2) to (1.6,2.2);
			\draw[in=-90,out=90] (2.2,-2.2) to (2.2,2.2);
			\draw[dashed,thick] (2.4,-2.2) to (2.4,2.2);
			\draw[white,line width=1mm,in=-90,out=90]  (.2,.4)to (1.4,1.0);
			\draw[red,in=-90,out=90,] (.2,.4)to (1.4,1.0);
			\draw[white,line width=1mm,in=-90,out=90]  (1.4,-1.05) to (.2,-.4);
			\draw[red,in=-90,out=90] (1.4,-1.05) to (.2,-.4);
			\node[right] at (.1,1.4) {$ \cdots $};
			\node[right] at (.1,-1.4) {$ \cdots $};
			\node[right] at (.7,0) {$ \cdots $};
			\node[right] at (1.5,1.9) {$ \cdots $};
			\node[right] at (1.5,0) {$ \cdots $};
			\node[right] at (1.5,-1.9) {$ \cdots $};
			\node[draw,thick,rounded corners, fill=white,minimum height=20,minimum width=45] at (1.8,-1.4) {$\xi_1$};
			\node[draw,thick,rounded corners, fill=white,minimum height=20,minimum width=45] at (1.8,1.4) {$\xi^*_2$};
			\node[draw,thick,rounded corners, fill=white] at (0,0) {$\eta^{(k+l)}$};
			\node[left] at (0.3,.6) {$ \Lambda^2_{k+l} $};
			\node[left] at (0.35,-.65) {$ \Lambda^1_{k+l} $};
			\node[right] at (2.3,0) {$ m_0 $};
			\node[right] at (2.3,1.9) {$ n_0 $};
			\node[right] at (2.3,-2) {$ n_0 $};
		\end{tikzpicture}
	} \in B_{k+l} \subset B \right\}_{l\geq 0} \; .
	\]
	Observe that $ \left\lab b_l\, b'\; , \; b''\right\rab_{L^2 (B)} $ eventually becomes $ \left\lab \left\lab T \xi_1 , \xi_2 \right \rab_B \, b'\; , \; b''\right\rab_{L^2 (B)} $ as $ l $ grows bigger for $ b' , b'' \in B_\infty $.
	Since $ \{b_l\}_{l\geq 0} $ is bounded, it converges ultraweakly to $ \left\lab T \xi_1 , \xi_2 \right \rab_B $.
	Thus,
	\[
	I = \lim_{l\to \infty}  \left\lab  T' (b_l\, \zeta_1)\; ,\; \zeta_2  \right \rab_{G^2}
	= \lim_{l\to \infty}  \left\lab  \tilde \Phi_{\kappa^{(k+l)}_{\Sigma_{k+l} \cdots \Sigma_1 q_0}} (b_l\, \zeta_1)\; ,\; \zeta_2  \right \rab_{G^2_{k+l}}
	\]
	\[
=\lim_{l\to \infty}  \t{Tr}_{C_{k+l}}
\raisebox{-3.35cm}{
\begin{tikzpicture}
\draw[dashed,thick] (3.9,-3.5) to (3.9,3.5);
\draw (1,-3.5) to (1,-2.8) to[in=-90,out=90] (2.2,-.4) to (2.2,.4) to[in=-90,out=90] (1,2.8) to (1,3.5);
\draw (1.6,-3.5) to (1.6,-2.8) to[in=-90,out=90] (2.8,-.4) to (2.8,.4) to[in=-90,out=90] (1.6,2.8) to (1.6,3.5);
\draw (3.1,-3.5) to (3.1,3.5);
\draw (3.7,-3.5) to (3.7,3.5);
\draw[white,line width=1mm,in=-90,out=90] (2.9,-1.05) to[in=-90,out=90] (1.4,-.4) to (1.4,.4) to[in=-90,out=90] (2.9,1);
\draw[red] (2.9,-1.05) to[in=-90,out=90] (1.4,-.4) to (1.4,.4) to[in=-90,out=90] (2.9,1);
\draw[white,line width=1mm,in=-90,out=90] (2.9,-2.45) to[in=-90,out=90] (0,-.4) to (0,.4) to[in=-90,out=90] (2.9,2.4);
\draw[red] (2.9,-2.45) to[in=-90,out=90] (0,-.4) to (0,.4) to[in=-90,out=90] (2.9,2.4);
\draw[red] (2.9,1) to (2.9,1);
\draw[red] (1.4,-.4) to (1.4,-.4);
\draw[red] (0,-.4) to (0,-.4);
\draw[red] (2.9,2.4);
\node[right] at (2.1,0) {$ \cdots $};
\node[right] at (3,0) {$ \cdots $};
\node[right] at (3,2.1) {$ \cdots $};
\node[right] at (3,-2.1) {$ \cdots $};
\node[right] at (3,3.3) {$ \cdots $};
\node[right] at (3,-3.3) {$ \cdots $};
\node[right] at (.9,3) {$ \cdots $};
\node[right] at (.9,-3) {$ \cdots $};
\node[draw,thick,rounded corners, fill=white,minimum height=20,minimum width=0] at (0,0) {$\kappa^{(k+l)}$};
\node[draw,thick,rounded corners, fill=white,minimum height=20,minimum width=0] at (1.4,0) {$\eta^{(k+l)}$};
\node[draw,thick,rounded corners, fill=white,minimum height=20,minimum width=40] at (3.4,-1.4) {$\xi_1$};
\node[draw,thick,rounded corners, fill=white,minimum height=20,minimum width=40] at (3.4,1.4) {$ \xi^*_2 $};
\node[draw,thick,rounded corners, fill=white,minimum height=20,minimum width=40] at (3.4,-2.8) {$\zeta_1$};
\node[draw,thick,rounded corners, fill=white,minimum height=20,minimum width=40] at (3.4,2.8) {$ \zeta^*_2 $};
\end{tikzpicture}
}
= \lim_{l\to \infty} \;  \t{Tr}_{C_{k}}
\raisebox{-3.35cm}{
	\begin{tikzpicture}
		\draw[dashed,thick] (3.9,-3.5) to (3.9,3.5);
\draw (-.7,0) to[in=180,out=-90] (1,-2) to[in=-90,out=0] (2.2,-.4) to (2.2,.4) to[in=0,out=90] (1,2) to[in=90,out=180] (-.7,0);
\draw (-1.3,0) to[in=180,out=-90] (1,-2.6) to[in=-90,out=0] (2.8,0) to[in=0,out=90] (1,2.6) to[in=90,out=180] (-1.3,0);
		\draw (3.1,-3.5) to (3.1,3.5);
		\draw (3.7,-3.5) to (3.7,3.5);
		\draw[white,line width=1mm,in=-90,out=90] (2.9,-1.05) to[in=-90,out=90] (1.4,-.4) to (1.4,.4) to[in=-90,out=90] (2.9,1);
		\draw[red] (2.9,-1.05) to[in=-90,out=90] (1.4,-.4) to (1.4,.4) to[in=-90,out=90] (2.9,1);
		\draw[white,line width=1mm,in=-90,out=90] (2.9,-2.45) to[in=-90,out=90] (0,-.4) to (0,.4) to[in=-90,out=90] (2.9,2.4);
		\draw[red] (2.9,-2.45) to[in=-90,out=90] (0,-.4) to (0,.4) to[in=-90,out=90] (2.9,2.4);
		\draw[red] (2.9,1) to (2.9,1);
		\draw[red] (1.4,-.4) to (1.4,-.4);
		\draw[red] (0,-.4) to (0,-.4);
		\draw[red] (2.9,2.4);
		\node[right] at (2.1,0) {$ \cdots $};
		\node[right] at (3,0) {$ \cdots $};
		\node[right] at (3,2.1) {$ \cdots $};
		\node[right] at (3,-2.1) {$ \cdots $};
		\node[right] at (3,3.3) {$ \cdots $};
		\node[right] at (3,-3.3) {$ \cdots $};
		\node[left] at (-.55,0) {$ \cdots $};
		\node[draw,thick,rounded corners, fill=white,minimum height=20,minimum width=0] at (0,0) {$\kappa^{(k+l)}$};
		\node[draw,thick,rounded corners, fill=white,minimum height=20,minimum width=0] at (1.4,0) {$\eta^{(k+l)}$};
		\node[draw,thick,rounded corners, fill=white,minimum height=20,minimum width=40] at (3.4,-1.4) {$\xi_1$};
		\node[draw,thick,rounded corners, fill=white,minimum height=20,minimum width=40] at (3.4,1.4) {$ \xi^*_2 $};
		\node[draw,thick,rounded corners, fill=white,minimum height=20,minimum width=40] at (3.4,-2.8) {$\zeta_1$};
		\node[draw,thick,rounded corners, fill=white,minimum height=20,minimum width=40] at (3.4,2.8) {$ \zeta^*_2 $};
	\end{tikzpicture}
}
\]
	\[
	= \lim_{l\to \infty} \;  \left\lab \Phi_{\left[S_{k+1} \cdots S_{k+l} \left( \Omega_{k+l}^2 \left( \eta^{(k+l)} \right) \circ \kappa^{(k+l)}_{\Lambda^1_{k+l}} \right)\right]_{\Gamma_k \cdots \Gamma_1 m_0}} \left(\Omega_k^1 (\xi_1) \circ \zeta_1\right) \; , \; \Omega_k^2 (\xi_2) \circ \zeta_2 \right\rab_{F^2_k} \; .
	\]
	Since $ NT \left(\Omega_k^1 \Lambda_k^1  , \Omega_k^2 \Lambda_k^2 \right) $ has finite dimension and sits injectively in $ \mcal L (F^1_k , F^2_k) $ via $ \left. \Phi_\bullet \right|_{F^1_k}  $, the limit in part (a) indeed converges.
The rest is already taken care by the construction.
\end{proof}

\section{Flatness}\ 

We have seen that a BQFS depends solely on the loop operator.
In order to understand when a BQFS turns out to be flat,
analyzing the loop operator becomes crucial.
We take on this job next.

Let $ \Lambda_\bullet \ \t{and} \ \Omega_\bullet  $ be two $ 1 $-cells from the $ 0 $-cell $ \Gamma_\bullet $ to $ \Delta_\bullet $ in ${\textbf{UC}^\t{tr}}$.
We will work with the adjoints of the functors $ \Gamma_k $'s, $ \Delta_k $'s, $ \Lambda_k $'s, and solution to conjugate equations commensurate with the given weight functions associated to the objects in WSSC*Cat (defined in \Cref{NaTrace}).
\begin{prop}\label{Xrelandloop}
(a) If the spaces $\normalfont \t{NT}\left(\Lambda_k , \Omega_k \right) $ and $\normalfont \t{NT}\left(\Lambda_{k-1} , \Omega_{k-1} \right) $ are equipped with the inner product induced by the categorical traces $\normalfont \t{Tr}^{\Lambda_k} $ and $ \normalfont\t{Tr}^{\Lambda_{k-1}}$ (as defined in \Cref{NTTr}(a) ) respectively, then the adjoint of the loop operator $ S_k : \normalfont\t{NT}\left(\Lambda_k , \Omega_k \right) \lra \t{NT}\left(\Lambda_{k-1} , \Omega_{k-1} \right)$ is given by
\[
\normalfont\t{NT}  (\Lambda_{k-1} , \Omega_{k-1}) \ni \kappa \ \os{\displaystyle S^*_k}{\longmapsto}
\raisebox{-1.6cm}{
	\begin{tikzpicture}
		\draw (-.3,0) ellipse (7mm and 12mm);
		\draw[white,line width=1mm,in=-90,out=90] (-1.2,-1.5) to (-0.6,-.25);
		\draw[red,in=-90,out=90] (-1.2,-1.5) to (-0.6,-.25);
		\draw[white,line width=1mm,in=-90,out=90] (-.6,.25) to (-1.2,1.5);
		\draw[red,in=-90,out=90] (-.6,.25) to (-1.2,1.5);
		\node[right] at (-.75,.5) {$ \Omega_{k-1} $};
		\node[right] at (-1.25,1.5) {$ \Omega_{k} $};
		\node[right] at (-.85,-.65) {$\Lambda_{k-1} $};
		\node[right] at (-1.3,-1.4) {$ \Lambda_{k} $};
		\node[draw,thick,rounded corners, fill=white] at (-.6,0) {$\kappa$};
		\node[left] at (-.9,0) {$ \Delta_k $};
	\end{tikzpicture}
}
\in \t{NT}  (\Lambda_{k} , \Omega_{k}) \ .
\]
(b) For $ \eta \in \normalfont\t{NT}\left( \Lambda_k , \Omega_k\right) $, the pair $ \left(S_k \eta , \eta\right) $ satisfy the exchange relation (as in \Cref{XreldefpreC*}) if and only if $ S^*_k S_k \eta = 
\raisebox{-.75cm}{
\begin{tikzpicture}
\draw[red,in=-90,out=90] (-.6,-.75) to (-.6,.75);
		\node[right] at (-.7,.45) {$ \Omega_{k} $};
		\node[right] at (-.7,-.55) {$\Lambda_{k} $};
\node[draw,thick,rounded corners, fill=white] at (-.6,0) {$\eta$};
\draw (0.7,0) circle (4mm);
\node[left] at (.5,-.2) {$ \Gamma_k$};
\node[right] at (1,-.2) {$ \Gamma'_k$};
\end{tikzpicture}
} $
where $ \Gamma'_k $ is an adjoint of $\Gamma_k$ and the loop is the natural transformation from $\normalfont \t{id}_{\mcal M_k} $ to $ \normalfont\t{id}_{\mcal M_k} $ coming from the solution to the conjugate equation commensurate with $ \left(\ul \mu^{k-1} , \ul \mu ^k\right) $. [cf. \cite{Jo1} Theorem 2.11.8]
\end{prop}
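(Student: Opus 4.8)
The plan is to prove (a) by a direct adjunction computation and then feed it into (b). For (a), writing $\lab a,b\rab_k := \t{Tr}^{\Lambda_k}(b^* a)$ for $a,b \in \t{NT}(\Lambda_k,\Omega_k)$, it suffices to verify $\lab S_k\eta , \kappa\rab_{k-1} = \lab \eta , S_k^*\kappa\rab_k$ for all $\eta \in \t{NT}(\Lambda_k,\Omega_k)$ and $\kappa \in \t{NT}(\Lambda_{k-1},\Omega_{k-1})$, where $S_k^*\kappa$ is the mirrored diagram in the statement. First I would draw the left-hand scalar $\t{Tr}^{\Lambda_{k-1}}(\kappa^* S_k\eta)$ as a single closed string diagram: $\kappa^*$ stacked on the loop-operator picture of $S_k\eta$ (that is, $\eta$ carrying the $\Gamma_k$ through-string, conjugated by the connections $W_k,W_k^\Omega$ and closed by the $\Delta_k$ cap and cup), all capped by the trace. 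The computation is then a planar-isotopy argument that slides the $\Delta_k$ closure to the opposite side of $\eta$. The two essential inputs are the commensurate partial-trace identity \Cref{TrCondExp} and the composite-trace formula \Cref{NTTr}(b): together they let me trade the $\Delta_k$ closure against the passage from $\t{Tr}^{\Lambda_{k-1}}$ to $\t{Tr}^{\Lambda_k}$, while unitarity of $W_k$ reabsorbs the crossings, producing exactly $\t{Tr}^{\Lambda_k}((S_k^*\kappa)^*\eta)$. I expect the only genuine difficulty here to be bookkeeping: confirming that the scalar weights $\kappa_{v_+,v_-} = \mu^+_{v_+}/\mu^-_{v_-}$ carried by the two commensurate solutions cancel, so that no stray constant survives and the two scalars agree on the nose.

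For (b) I would first name the two sides of the exchange relation for the pair $(S_k\eta,\eta)$: set $\vphantom{\eta}^\times\eta := W_k^\Omega \circ \Delta_k(S_k\eta)$ and $\eta_\times := (\eta\,\Gamma_k)\circ W_k$, both natural transformations $\Delta_k\Lambda_{k-1} \Rightarrow \Omega_k\Gamma_k$, so that the exchange relation of \Cref{XreldefpreC*} is precisely $\vphantom{\eta}^\times\eta = \eta_\times$. The heart of the argument is the single identity $\norm{\vphantom{\eta}^\times\eta - \eta_\times}^2 = \lab (L - S_k^* S_k)\eta , \eta\rab_k$, valid for every $\eta$, where $L \in \t{End}(\t{id}_{\mcal M_k})$ is the $\Gamma_k\Gamma'_k$ loop of the statement, acting by whiskering. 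To prove it I expand the square. Unitarity of $W_k^\Omega$ collapses $\norm{\vphantom{\eta}^\times\eta}^2$ to a trace of $\Delta_k((S_k\eta)^* S_k\eta)$, and since the left-whiskered $\Delta_k$ closes to the trivial loop (the $0$-cell axiom \Cref{trivial-loop-anticlock}) this equals $\norm{S_k\eta}^2_{k-1}$. Unitarity of $W_k$ turns $\norm{\eta_\times}^2$ into a trace of $(\eta^*\eta)\,\Gamma_k$, and the right-whiskered $\Gamma_k$ closes to the nontrivial loop $L$ of \Cref{loop-clock}, giving $\lab L\eta,\eta\rab_k$. Finally a short diagram move identifies the cross term $\t{Re}\,\lab \vphantom{\eta}^\times\eta , \eta_\times\rab$ with $\norm{S_k\eta}^2_{k-1} = \lab S_k^* S_k\eta , \eta\rab_k$, the last equality being exactly part (a).

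Granting this identity, both implications follow at once. Its left-hand side is $\geq 0$ for every $\eta$, hence $L - S_k^* S_k$ is a positive operator on $\t{NT}(\Lambda_k,\Omega_k)$. Therefore $(S_k\eta,\eta)$ satisfies the exchange relation $\iff \vphantom{\eta}^\times\eta = \eta_\times \iff \lab(L-S_k^*S_k)\eta,\eta\rab_k = 0 \iff (L-S_k^*S_k)\eta = 0$, the last step using positivity, i.e. $S_k^* S_k\eta = L\eta$. The main obstacle throughout is establishing the norm identity cleanly: in particular, correctly tracking the orientation asymmetry by which the left-whiskered $\Delta_k$ loop trivializes while the right-whiskered $\Gamma_k$ loop equals $L$, and pinning down the cross term via part (a). Once the identity is in hand the positivity argument is routine, in the spirit of the flatness criterion of \cite{Jo1}.
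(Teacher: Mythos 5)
Your part (a) coincides with the paper's proof: both verify the adjunction $\lab S_k\eta,\kappa\rab_{k-1}=\lab\eta,S_k^*\kappa\rab_k$ by repeated application of \Cref{NTTr}(b) together with unitarity of the connections, so only part (b) merits comparison. There your argument is correct but organized genuinely differently. Write $f(\eta)\coloneqq(W^{\Omega}_k)^*\circ(\eta\,\Gamma_k)\circ W^{\Lambda}_k$ and $g(\tau)\coloneqq\Delta_k(\tau)$, so that the exchange relation for $(S_k\eta,\eta)$ reads $f(\eta)=g(S_k\eta)$; your $\vphantom{\eta}^{\times}\eta$ and $\eta_{\times}$ differ from these only by post-composition with the unitary $W^{\Omega}_k$, which affects no norms or pairings. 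The paper proves ``only if'' pictorially, and proves ``if'' by checking, under the hypothesis $S_k^*S_k\eta=L\eta$, that $f(\eta)-g(S_k\eta)$ is orthogonal to every $f(\sigma)$ and every $g(\tau)$, hence to the subspace $Q$ containing it. You instead establish the unconditional identity $\norm{f(\eta)-g(S_k\eta)}^2=\lab(L-S_k^*S_k)\eta,\eta\rab_k$, whose three ingredients --- $\norm{f(\eta)}^2=\lab L\eta,\eta\rab_k$ (trace invariance under conjugation by $W^{\Lambda}_k$ plus \Cref{NTTr}(b)), $\norm{g(S_k\eta)}^2=\norm{S_k\eta}^2_{k-1}$ (via \Cref{trivial-loop-anticlock}), and cross term $=\norm{S_k\eta}^2_{k-1}=\lab S_k^*S_k\eta,\eta\rab_k$ (since $S_k\eta$ is by definition the left $\Delta_k$-closure of $f(\eta)$, plus part (a)) --- are exactly the paper's three pairings $\lab f(\eta),f(\sigma)\rab$, $\lab g(S_k\eta),g(\tau)\rab$, $\lab f(\eta),g(\tau)\rab$ specialized to $\sigma=\eta$ and $\tau=S_k\eta$. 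Nonnegativity of the left side for all $\eta$ then makes $L-S_k^*S_k$ a positive operator, so $\lab(L-S_k^*S_k)\eta,\eta\rab_k=0$ holds precisely when $(L-S_k^*S_k)\eta=0$, and both implications of (b) follow at once. Your packaging buys a uniform treatment of the two directions, needs only two test vectors instead of the whole of $Q$, and yields the operator inequality $S_k^*S_k\leq L$ as a by-product --- a quantitative measure of the failure of flatness in the spirit of the flatness estimates in \cite{Jo1}; the paper's version is a more pedestrian verification that never needs to invoke positivity. One cosmetic correction: the nontrivial loop produced by the right-hand $\Gamma_k$ closure is the one recorded in \Cref{loop-anticlock} (it lives in $\t{End}(\t{id}_{\mcal M_k})$), not \Cref{loop-clock}; this is harmless --- the paper's own cross-referencing of the two loops is itself inconsistent --- but worth fixing.
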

\begin{proof}
(a) Using \Cref{NTTr} multiple times, the inner product $ \left\lab S_k \eta , \kappa \right \rab $ turns out to be
\[
= \t{Tr}^{\Delta_{k-1} \Lambda_{k-1}}
\raisebox{-1.1cm}
{\begin{tikzpicture}
\draw[in=-90,out=90] (0,-.8) to (.6,-.3) to (.6, .3) to (0,.8) to (0,1.6);
\draw[white,line width=1mm,in=-90,out=90] (.6,-.8) to (0,-.3);
\draw[red,in=-90,out=90] (.6,-.8) to (0,-.3);
\draw[red,in=-90,out=90] (0,-.3) to (0,.3);
\draw[white,line width=1mm,in=-90,out=90] (0,.3) to (0.6,.75);
\draw[red,in=-90,out=90] (0,.3) to (0.6,.75);
\draw[red,in=-90,out=90] (0.6,.75) to (.6,1.6);
\node[draw,thick,rounded corners, fill=white] at (0,0) {$\eta$};
\node[draw,thick,rounded corners, fill=white] at (0.6,1.05) {$\kappa^*$};
\end{tikzpicture}
}= \t{Tr}^{\Lambda_k \Gamma_k}
\raisebox{-1.1cm}
{\begin{tikzpicture}
	\draw[in=-90,out=90] (0.6,-.7) to (.6,-.3) to (.6, .3) to (0,.8) to (0,1.3) to (.6,1.8);
	\draw[red,in=-90,out=90] (0,-.7) to (0,-.3);
	\draw[red,in=-90,out=90] (0,-.3) to (0,.3);
	\draw[white,line width=1mm,in=-90,out=90] (0,.3) to (0.6,.75);
	\draw[red,in=-90,out=90] (0,.3) to (0.6,.75);
	\draw[red,in=-90,out=90] (0.6,.75) to (.6,1.3);
\draw[white,line width=1mm,in=-90,out=90] (.6,1.3) to (0,1.8);
\draw[red,in=-90,out=90] (.6,1.3) to (0,1.8);
	\node[draw,thick,rounded corners, fill=white] at (0,0) {$\eta$};
	\node[draw,thick,rounded corners, fill=white] at (0.6,1.05) {$\kappa^*$};
\end{tikzpicture}
}= \t{Tr}^{\Lambda_k }
\raisebox{-1.1cm}
{\begin{tikzpicture}
	\draw (1.2, .8) to[in=-90,out=-90] (0,.8) to (0,1.3) to[in=90,out=90] (1.2,1.3) to (1.2,.8);
	\draw[red,in=-90,out=90] (0,-.7) to (0,-.3);
	\draw[red,in=-90,out=90] (0,-.3) to (0,.3);
	\draw[white,line width=1mm,in=-90,out=90] (0,.3) to (0.6,.75);
	\draw[red,in=-90,out=90] (0,.3) to (0.6,.75);
	\draw[red,in=-90,out=90] (0.6,.75) to (.6,1.3);
	\draw[white,line width=1mm,in=-90,out=90] (.6,1.3) to (0,1.8);
	\draw[red,in=-90,out=90] (.6,1.3) to (0,1.8);
	\node[draw,thick,rounded corners, fill=white] at (0,0) {$\eta$};
	\node[draw,thick,rounded corners, fill=white] at (0.6,1.05) {$\kappa^*$};
\end{tikzpicture}
} = \left\lab \eta , S^*_k \kappa \right\rab
\]
where $ \eta\in \t{NT}\left(\Lambda_k ,\Omega_k\right) $ and $ \kappa\in \t{NT}\left(\Lambda_{k-1} ,\Omega_{k-1}\right) $.

(b) The `only if' part easily follows from the pictorial relations.

\noindent\textit{if part:} Consider the maps
\begin{align*}
\t{NT} \left(\Lambda_k , \Omega_k\right) \ni \sigma \os{\displaystyle f} \longmapsto \raisebox{-.7cm}
{\begin{tikzpicture}
		\draw[in=-90,out=90] (0,-.75) to (.6,-.3) to (.6, .3) to (0,.75);
		\draw[white,line width=1mm,in=-90,out=90] (.6,-.75) to (0,-.25);
		\draw[red,in=-90,out=90] (.6,-.75) to (0,-.25);
		\draw[red,in=-90,out=90] (0,-.25) to (0,.3);
		\draw[white,line width=1mm,in=-90,out=90] (0,.3) to (0.6,.75);
		\draw[red,in=-90,out=90] (0,.3) to (0.6,.75);
		\node[draw,thick,rounded corners, fill=white] at (0,0) {$\sigma$};
	\end{tikzpicture}
} \in \t{NT} \left( \Delta_k \Lambda_{k-1} ,\Delta_k \Omega_{k-1} \right)\\
\t{NT} \left(\Lambda_{k-1} , \Omega_{k-1}\right) \ni \tau \os{\displaystyle g} \longmapsto \raisebox{-.75cm}
{\begin{tikzpicture}
		\draw[in=-90,out=90] (0.2,-.75) to (0.2,.75);
		\draw[red,in=-90,out=90] (.6,-.75) to (0.6,-.25);
		\draw[red,in=-90,out=90] (0.6,-.25) to (0.6,.75);
		\node[draw,thick,rounded corners, fill=white] at (0.6,0) {$\tau$};
	\end{tikzpicture}
} \in  \t{NT} \left( \Delta_k \Lambda_{k-1} ,\Delta_k \Omega_{k-1} \right)
\end{align*}
and the subspace $ Q $ of $ \t{NT} \left( \Delta_k \Lambda_{k-1} ,\Delta_k \Omega_{k-1} \right) $ generated by the ranges of $ f $ and $ g $.
Let $ \eta $ satisfy the hypothesis.
We need to establish the equation $ f(\eta) = g \left( S_k \eta \right) $.
It is enough to show that $ \left\lab f(\eta) , \chi\right \rab  = \left\lab g\left(S_k \eta\right) , \chi\right \rab  $ for all $ \chi \in Q $ where the inner product is induced by $ \t{Tr}^{\Delta_k \Lambda_{k-1} }$.

For $ \sigma \in \t{NT} \left(\Lambda_k , \Omega_k\right) $, we get (from the `categorical trace' property) $ \left\lab f(\eta), f(\sigma)\right\rab = \t{Tr}^{\Lambda_k \Gamma_k} \left(  [\sigma^* \eta  ]_{\Gamma_k} \right) $ which by \Cref{NTTr}(b) becomes
\[
\t{Tr}^{\Lambda_k} \left( \hspace*{-2.5mm}
\raisebox{-.75cm}{
	\begin{tikzpicture}
		\draw[red,in=-90,out=90] (-.6,-.75) to (-.6,.75);
		\node[right] at (-.7,.5) {$ \Lambda_{k} $};
		\node[right] at (-.7,-.6) {$\Lambda_{k} $};
		\node[draw,thick,rounded corners, fill=white] at (-.6,0) {$\sigma^* \eta$};
		\draw (0.9,0) circle (4mm);
		\node[left] at (.7,-.2) {$ \Gamma_k$};
		\node[right] at (1.2,-.2) {$ \Gamma'_k$};
	\end{tikzpicture}
} \hspace*{-3.5mm} \right) = \t{Tr}^{\Lambda_k} \left(\sigma^* \ S^*_k S_k (\eta) \right) = \t{Tr}^{\Lambda_{k-1}} \left(S_k \left(\sigma^*\right) \  S_k (\eta) \right)
\]
where the last equality follows from part (a).
Applying \Cref{NTTr}(b) and categorical trace property again on the the last expression, we get $ \t{Tr}^{\Delta_k \Lambda_{k-1}} \left( \left[f(\sigma)\right]^* \ g \left(S_k (\eta)\right) \right) = \left\lab g \left(S_k (\eta)\right) , f(\sigma) \right \rab$.

For $ \tau \in \t{NT}\left( \Lambda_{k-1} , \Omega_{k-1} \right) $, we use \Cref{NTTr}(b) and deduce
\[
\left\lab f(\eta), g(\tau) \right \rab = \t{Tr}^{\Delta_k \Lambda_k} \left( \left[g(\tau)\right]^* \ f(\eta)\right) = \t{Tr}^{\Lambda_k} \left( \tau^* \ S_k (\eta)  \right)\ .
\]
which by \Cref{trivial-loop-anticlock} along with \Cref{NTTr}(b) turns out to be $  \left\lab g\left(S_k(\eta)\right) , g(\tau) \right \rab  $.
\end{proof}
\begin{rem}
Similar to \Cref{Xrelandloop}, one can prove that for $ \kappa \in \t{NT} \left(\Lambda_{k-1}, \Omega_{k-1} \right) $, $ \left( \kappa \ , \ S^*_k \kappa \odot \left[
\raisebox{-.4cm}{
	\begin{tikzpicture}
		\draw (0,0) circle (4mm);
		\node[left] at (-.2,-.2) {$ \Gamma_k$};
		\node[right] at (.3,-.2) {$ \Gamma'_k$};
	\end{tikzpicture}
}\right]^{-1} \right) $ satisfy exchange relation if and only if 
\[
S_k \left( S^*_k \kappa \  \odot \left[
\raisebox{-.4cm}{
	\begin{tikzpicture}
		\draw (0,0) circle (4mm);
		\node[left] at (-.2,-.2) {$ \Gamma_k$};
		\node[right] at (.3,-.2) {$ \Gamma'_k$};
	\end{tikzpicture}
}\right]^{-1}\right) = \kappa
\]
where $ \odot $ stands for tensor product of natural transformations.
\end{rem}
\begin{rem}
If the $ 0 $-cell $ \left( \Gamma_\bullet , \ul \mu^\bullet\right) $ satisfy an extra condition that $ \raisebox{-.4cm}{
	\begin{tikzpicture}
		\draw (0,0) circle (4mm);
		\node[left] at (-.2,-.2) {$ \Gamma_k$};
		\node[right] at (.3,-.2) {$ \Gamma'_k$};
	\end{tikzpicture}
} $ is trivial (that is, $ \Gamma_k \ul \mu^{k-1} = d_k \ \ul \mu^{k} $ for some $ d_k > 0 $) eventually for all $ k $, then a BQFS $ \left\{\eta^{(k)}\right\}_{k\geq 0} $ is flat if and only if $ \eta^{(k)} $ is an eigenvector of $ S^*_k S_k $ with respect to the eigenvalue $ d_k $ eventually for all $ k $.
\end{rem}
\subsection{Periodic case}$  $

In this section, we focus on a particular case where the $ 0 $- and the $ 1 $-cells are periodic in nature, and investigate whether the $ 2 $-cells are flat.
To do that, we need a fact in linear algebra which could very well be standard; we give a proof nevertheless.
\begin{defn}
For $ X \in M_n (\C) $, a sequence $ \left\{ \ul x^{(k)} \right\}_{k\geq 0} $ in $ \C^n $ is called \textit{$ X $-harmonic} if $ X \ul x^{{(k+1)}} = \ul x^{(k)}$ for all $ k \geq 0 $.
\end{defn}
\begin{prop}\label{harmonicprop}
	If spectral radius of $ X \in M_n (\C) $ is at most $ 1 $, then the space of bounded $ X $-harmonic sequences are spanned by elements of the form $ \left\{ \lambda^{-k} \ul a \right\}_{k\geq 0} $ where $ \lambda $ is an eigenvalue of $ X $ and $ \ul a $ is a corresponding eigenvector such that $ \abs \lambda = 1 $.
\end{prop}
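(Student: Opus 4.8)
The plan is to reduce the statement to a growth estimate on Jordan blocks, organized via the generalized eigenspace decomposition of $X$. First I would dispose of the easy inclusion: if $X\ul a = \lambda\ul a$ with $\abs{\lambda}=1$, then $\ul x^{(k)}\coloneqq \lambda^{-k}\ul a$ satisfies $X\ul x^{(k+1)} = \lambda^{-(k+1)}\lambda\,\ul a = \ul x^{(k)}$ and $\norm{\ul x^{(k)}} = \norm{\ul a}$, so each such sequence is a bounded $X$-harmonic sequence, and hence so is any finite linear combination. For the converse, write $\C^n = \bigoplus_\mu V_\mu$ as the direct sum of generalized eigenspaces of $X$, with spectral projections $P_\mu$, which are polynomials in $X$ and therefore commute with $X$. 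Given a bounded $X$-harmonic sequence $\{\ul x^{(k)}\}$, each $\{P_\mu\ul x^{(k)}\}$ is again $X$-harmonic and bounded, so it suffices to analyze each $V_\mu$ separately.

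On the generalized eigenspace $V_0$ the restriction of $X$ is a nilpotent $N$ with $N^r=0$, and the relation $\ul x^{(k)} = X^r\ul x^{(k+r)} = 0$ shows the sequence vanishes there, contributing nothing. On $W\coloneqq \bigoplus_{\mu\neq 0}V_\mu$ the restriction $X|_W$ is invertible with inverse $Y$, the harmonic relation rewrites as $\ul x^{(k+1)} = Y\ul x^{(k)}$, giving $\ul x^{(k)} = Y^k\ul x^{(0)}$, and the vanishing on $V_0$ forces $\ul x^{(0)}\in W$. Since the spectral radius of $X$ is at most $1$, each eigenvalue $\mu\neq 0$ of $X$ has $\abs{\mu}\le 1$, so every eigenvalue $\theta = \mu^{-1}$ of $Y$ has $\abs{\theta}\ge 1$. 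Thus the whole problem reduces to the linear-algebra claim: for $Y$ with all eigenvalues of modulus at least $1$, the forward orbit $\{Y^k\ul v\}_{k\ge 0}$ is bounded if and only if $\ul v$ lies in $\bigoplus_{\abs{\theta}=1}\ker(Y-\theta I)$, the span of the genuine eigenvectors of $Y$ with unimodular eigenvalue.

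To prove this claim I would work inside a single generalized eigenspace of $Y$, writing $Y = \theta I + N'$ with $N'$ nilpotent, and expand $Y^k\ul v = \theta^k\sum_{j\ge 0}\binom{k}{j}\theta^{-j}(N')^j\ul v$. If $\ul v$ is not a genuine eigenvector, let $s\ge 1$ be maximal with $(N')^s\ul v\neq 0$; the vectors $\ul v, N'\ul v,\dots,(N')^s\ul v$ are linearly independent, and the coefficient of $(N')^s\ul v$ in $Y^k\ul v$ has modulus $\binom{k}{s}\abs{\theta}^{k-s}\to\infty$, so $\norm{Y^k\ul v}\to\infty$. If instead $\ul v$ is a genuine eigenvector, then $\norm{Y^k\ul v} = \abs{\theta}^k\norm{\ul v}$, which is unbounded when $\abs{\theta}>1$ and constant when $\abs{\theta}=1$. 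This settles the claim, and assembling the pieces gives $\ul x^{(0)} = \sum_i c_i\,\ul a_i$ with $X\ul a_i = \lambda_i\ul a_i$ and $\abs{\lambda_i}=1$, whence $\ul x^{(k)} = Y^k\ul x^{(0)} = \sum_i c_i\,\lambda_i^{-k}\ul a_i$, exactly the asserted form.

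The main obstacle is the unimodular case $\abs{\theta}=1$ of the orbit claim, where the eigenvalue alone produces no growth and one must show that any nontrivial nilpotent part $N'$ still drives the orbit to infinity. The quantitative input is precisely that the polynomial factor $\binom{k}{s}$ survives because $\abs{\theta}^{k-s}=1$, together with the linear independence of $\ul v, N'\ul v,\dots,(N')^s\ul v$, which guarantees that this diverging coefficient cannot be cancelled by the other terms. Everything else in the argument is bookkeeping with the $X$-invariant spectral decomposition and the boundedness of the projections $P_\mu$.
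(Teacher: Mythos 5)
Your proof is correct, and while it shares the paper's broad strategy (decompose by spectrum, expand powers of a scalar-plus-nilpotent matrix, let binomial growth force unboundedness), the execution is genuinely different in two places. The paper passes to the full Jordan canonical form and studies the backward iteration directly: for a Jordan block $J=\lambda I_m+N$ it expands $J^{-k}=\lambda^{-k}\sum_{l}\binom{k+l-1}{l}\left[-\lambda^{-1}N\right]^{l}$ and proves, by an explicit estimate on the first coordinate of $J^{-k}\ul{x}$, that the top binomial term dominates the sum of the lower-order ones unless $\ul{x}$ lies in the one-dimensional eigenspace $\C\,\ul{e}_1$ and $\abs{\lambda}=1$. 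You instead use only the primary (generalized eigenspace) decomposition, never normalizing to Jordan form: you annihilate the eigenvalue-$0$ component via $\ul{x}^{(k)}=X^{r}\ul{x}^{(k+r)}$, invert $X$ on the complementary invariant subspace $W$, and convert the problem to boundedness of forward orbits $Y^{k}\ul{v}$ for $Y=(X|_{W})^{-1}$, whose eigenvalues all have modulus at least $1$. Your key device --- the linear independence of $\ul{v}, N'\ul{v},\dots,(N')^{s}\ul{v}$, so that the coefficient $\binom{k}{s}\theta^{k-s}$ of $(N')^{s}\ul{v}$ in $Y^{k}\ul{v}$ is extracted exactly by a fixed linear functional --- replaces the paper's coordinate estimate and makes the divergence argument cancellation-free, with no need to compare competing binomial terms asymptotically. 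What the paper's route buys is a concrete, block-by-block description of all bounded harmonic sequences; what yours buys is a shorter and more robust argument that sidesteps both the Jordan normal form and the dominance estimate. Both proofs handle the nilpotent (eigenvalue $0$) part identically, and your final reassembly, $\ul{x}^{(k)}=Y^{k}\ul{x}^{(0)}=\sum_i c_i\lambda_i^{-k}\ul{a}_i$ with $X\ul{a}_i=\lambda_i\ul{a}_i$ and $\abs{\lambda_i}=1$, is exactly the asserted conclusion.
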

\begin{proof}
	Let $X = T Y T^{-1}$ where $ Y $ is in Jordan canonical form.
	Note that $ \left\{\ul y^{(k)}\right\}_{k\geq 0} $ is bounded $ Y $-harmonic if and only if $ \left\{T \ul y^{(k)}\right\}_{k\geq 0} $ is bounded $ X $-harmonic.
	Suppose $ \left\{p_s : 1\leq s \leq t   \right\} $ be projections in $ M_n $ such that $I_n =  \us{1\leq s \leq t}\sum p_s$, and for each $ s $,  $ p_s Y = Y p_s $ has exactly one nonzero Jordan block corresponding an eigen value, say, $ \lambda_s $.
	As a result, any bounded $ Y $-harmonic sequence $ \left\{\ul y^{(k)} \right\}_{k\geq 0} $ splits into the sum of $ \left\{p_s\, \ul y^{(k)} \right\}_{k\geq 0} $
	which is bounded $ Y $-harmonic as well as $ (p_s \, Y) $-harmonic.
	So, it becomes essential to find bounded harmonic sequences for a Jordan block.
	
	Suppose $ J = 
	\begin{bmatrix}
		\lambda & 1&0&\cdots &0&0\\
		0 &\lambda &1&\cdots&0&0\\
		0&0&\lambda&\cdots&0&0\\
		\cdot&\cdot&\cdot&\cdots&\cdot&\cdot\\
		0&0&0&\cdots&\lambda&1\\
		0&0&0&\cdots&0&\lambda\\
	\end{bmatrix}_{m\times m}
	= \lambda \, I_m +N $ where $ \abs \lambda \leq 1 $.
	If $ \lambda = 0 $, then the only $ J $-harmonic sequence would be the zero sequence.
	So, let us assume $ \lambda \neq 0 $.
	Any nonzero $ J $-harmonic sequence is of the form $ \left\{ J^{-k}\, \ul x\right\}_{k\geq 0} $ for some nonzero vector $ \ul x \in \C^m $; however, it may not always be bounded as $ k  $ varies.
	Now, $ J^{-k} = \lambda^{-k} \ous{m-1}{\sum}{l=0}  {k+l-1 \choose l}  \left[-\lambda^{-1} N\right]^l$ for $ k\geq 1 $.
	Fix nonzero $ \ul{x} \in \C^m$.
	Set $ t \coloneqq \max\{l:x_l \neq 0\} $ and $ C = \max\{\abs{x_l} : 1\leq l \leq m\} $.
	Hence
	\[
	\left[ J^{-k} \ul x\right]_1 = \lambda^{-k} \ous{t-1}{\sum}{l=0}  {k+l-1 \choose l}  \left(-\lambda^{-1} \right)^l x_{l+1} \; .
	\]
	Since $0 < \abs \lambda \leq 1 $ and $ {k+l-1 \choose l} $ increase as $ l $ increases, we have the following inequality if $ t > 1 $
	\[
	\abs{ \left[ J^{-k} \ul x\right]_1 } \; \geq \;  \abs{\lambda}^{-k} \left[{k+t-2 \choose t-1}\abs{\lambda}^{-(t-1)} \abs{x_t} \; - \; {k+t-3 \choose t-2}\abs{\lambda}^{-(t-2)} C t \right] \; .
	\]
	If $ t > 1 $, then
	\begin{align*}
		\abs{ \left[ J^{-k} \ul x\right]_1 } \; \geq \; 
		\abs{\lambda}^{-(k+t-1)} {k+t-3 \choose t-2} \left[\frac{k+t-2}{t-1}\abs{x_t} \; - \; \abs{\lambda} C t \right] \to \infty \t{ as } k \to \infty \; .
	\end{align*}
	Thus, in order to have a \textbf{bounded} $ J $-harmonic sequence, $\ul x$ must be in the unique one-dimensional eigen space of $ J $, that is, $ \C\, \ul e_1 $.
	On the other hand, if $ \abs \lambda < 1 $ and $ t=1 $, then
	\begin{align*}
		\abs{ \left[ J^{-k} \ul x\right]_1 } \; = \; 
		\abs{\lambda}^{-k} \abs{x_1}  \to \infty \t{ as } k \to \infty \; .
	\end{align*}
	So, a nonzero bounded $ J $-harmonic sequence exists only when $ \abs \lambda = 1 $, and then it is a scalar multiple of $ \left\{ \lambda^{-k} \ul e_1 \right\}_{k\geq 0} $.
	
	Getting back to the matrix $ Y $ (which is in Jordan canonical form) and applying the above result, we may conclude that all bounded $ Y $-harmonic sequences are linear combination of sequences of the form
	\[
	\left\{ \lambda^{-k} \ul y \right\}_{k\geq 0}
	\]
	where $ \lambda $ is an eigenvalue with absolute value $ 1 $ and $ \ul y $ is a corresponding eigenvector.
	By similarity, the same result holds for $ X $ too.
\end{proof}
\begin{prop}\label{periodicOcneanucompact}
Let $ \left(\Lambda_\bullet, W^\Lambda_\bullet\right) $ and $ \left(\Omega_\bullet  , W^{\Omega}_\bullet \right) $ be two $ 1 $-cells from the $ 0 $-cell $ \left(\Gamma_\bullet, \ul \mu^\bullet\right) $ to $ \left(\Delta_\bullet, \ul \nu^\bullet\right) $ in $\normalfont{\textbf{UC}^\t{tr}}$ such that there exists a `period' $ K\in \N $ and a `Perron-Frobenius (PF) value' $ d >0 $ satisfying:
\begin{itemize}
\item[(i)] the periodic condition: $ \Gamma_k $'s, $ \Delta_k $'s, $ \Lambda_k $'s, $ \Omega_k $'s, $ W^\Lambda_k $'s and $W^\Omega_k$'s repeat with a periodicity $ K $ eventually for all $ k $,\\
\item[(ii)] the PF condition:
\begin{align*}
d^{-1} \ \Gamma_{k+K} \cdots \Gamma_{k+1}\ \ul \mu^{k} = \ul \mu^k = d\ \ul \mu^{k+K}\\
d^{-1} \ \Delta_{k+K} \cdots \Delta_{k+1}\ \ul \nu^{k} = \ul \nu^k = d\ \ul \nu^{k+K}
\end{align*}
eventually for all $ k $.
\end{itemize}
Then, all {\normalfont BQFS} from $ \left(\Lambda_\bullet, W^\Lambda_\bullet\right) $ to $ \left(\Omega_\bullet  , W^\Omega_\bullet \right) $ are flat.
\end{prop}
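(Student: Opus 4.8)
The plan is to translate quasi-flatness into a harmonic-sequence condition, apply \Cref{harmonicprop}, and then upgrade ``bounded plus peripheral eigenvalue'' into the exchange relation by means of \Cref{Xrelandloop}. First I would exploit periodicity. Fix $N$ past which hypothesis (i) holds, and for a residue $r$ set $\Theta \coloneqq S_{k+1}\circ\cdots\circ S_{k+K}$; by (i), after the canonical identification $\t{NT}(\Lambda_{k+K},\Omega_{k+K})\cong\t{NT}(\Lambda_k,\Omega_k)$, this is a single linear endomorphism of the finite-dimensional space $V\coloneqq\t{NT}(\Lambda_{N+r},\Omega_{N+r})$ independent of the period. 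For a BQFS $\ul\eta=\{\eta^{(k)}\}$ the quasi-flat relation $S_{k+1}\eta^{(k+1)}=\eta^{(k)}$ telescopes over one period to $\Theta\,\eta^{(N+r+(m+1)K)}=\eta^{(N+r+mK)}$, so the subsequence $x^{(m)}\coloneqq\eta^{(N+r+mK)}$ is a $\Theta$-harmonic sequence, and it is bounded because $\ul\eta$ is bounded in C*-norm (a fixed norm on $V$). Since each $S_k$ is a contraction (\Cref{Sprop}(iii)), $\Theta$ is a contraction and hence has spectral radius at most $1$; \Cref{harmonicprop} then expresses $\{x^{(m)}\}$ as a linear combination of eigen-sequences $\{\lambda^{-m}\ul a\}$ with $\Theta\ul a=\lambda\ul a$ and $|\lambda|=1$. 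Because the exchange relation (\Cref{XreldefpreC*}) and quasi-flatness are both linear conditions, and because one residue class together with the single-step operators $S_j$ reconstructs the entire tail $\{\eta^{(j)}\}_{j\ge N}$, it suffices to prove that the sequence generated by one peripheral eigenvector $\ul a$ is flat.

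The heart of the argument is therefore the single-eigenvector claim: if $\Theta\ul a=\lambda\ul a$ with $|\lambda|=1$, then the associated quasi-flat sequence satisfies the exchange relation at every level of the period. By \Cref{Xrelandloop}(b) this is exactly the saturation identity $S_j^*S_j\,\eta^{(j)}=\eta^{(j)}\odot L_j$, where $L_j$ is the loop of $\Gamma_j$ coming from the solution commensurate with $\left(\ul\mu^{j-1},\ul\mu^j\right)$. The plan is to establish the operator inequality $S_j^*S_j\le\,\odot L_j$ on $\t{NT}(\Lambda_j,\Omega_j)$ (a Schwarz-type estimate issuing from \Cref{Sprop}(iii) and the tracial solution commensurate with the weights), and then to telescope it over one full period. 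Here the Perron--Frobenius hypothesis (ii) enters decisively: the weight relations $\ul\mu^k=d\,\ul\mu^{k+K}$ and $\ul\nu^k=d\,\ul\nu^{k+K}$ scale the categorical traces $\t{Tr}^{\Lambda_j}$, hence the inner products on the spaces $\t{NT}(\Lambda_j,\Omega_j)$, by a fixed power of $d$ over a period, while $\Gamma_{k+K}\cdots\Gamma_{k+1}\,\ul\mu^k=d\,\ul\mu^k$ pins the product of the loop scalars $L_{k+1},\dots,L_{k+K}$ to precisely the compensating scalar. Consequently the telescoped inequality becomes a comparison $\|\eta^{(k)}\|\le\|\eta^{(k+K)}\|$ of the correctly normalized Hilbert norms, in which a peripheral eigenvalue $|\lambda|=1$ forces equality; equality then propagates back through every factor $S_j$, forcing each step to saturate $S_j^*S_j\,\eta^{(j)}=\eta^{(j)}\odot L_j$, i.e.\ the exchange relation. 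Flatness of the eigen-sequence, and hence by linearity of $\ul\eta$, follows.

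I expect the last paragraph to be the main obstacle. Establishing the operator inequality $S_j^*S_j\le\,\odot L_j$ and, above all, the rigidity statement that a modulus-one eigenvalue forces equality at \emph{every} individual step of the period rather than only in the period-product, is the delicate point; it requires matching the trace normalizations across a period through the Perron--Frobenius scaling so that the telescoped estimate is saturated exactly on the peripheral eigenspace. Once this rigidity is in hand, the passage back to the exchange relation is immediate from \Cref{Xrelandloop}(b), and the bookkeeping of \Cref{BQFSflat} identifies the resulting flat sequence with the given $2$-cell.
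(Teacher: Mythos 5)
Your first paragraph reproduces the paper's own reduction: the paper likewise fixes a large $L$, forms the one-period composite loop operator $S$ (with a $d^{-1}$ normalization, so that it literally equals $S_{L+1}\cdots S_{L+K}$ under the periodic identification --- checking this normalization is where $\ul\nu^{L+K}=d^{-1}\,\ul\nu^{L}$ first enters), identifies BQFS with bounded $S$-harmonic sequences, verifies the spectral hypothesis, and invokes \Cref{harmonicprop} to reduce to a single eigenvector $\kappa$ with $S\kappa=\lambda\kappa$, $|\lambda|=1$. (A minor difference: you get the spectral radius bound from the C*-norm contraction in \Cref{Sprop}(iii), while the paper proves $\|S\kappa\|_2\le\|\kappa\|_2$ in the $\t{Tr}^{\Lambda}$-inner product; either suffices.) Where you genuinely diverge is the rigidity step. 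The paper does it in one stroke at the composite level: with $f,g$ the two sides of the exchange relation as in the proof of \Cref{Xrelandloop}(b), it shows that the orthogonal projection $E$ of $\t{NT}\left(\Delta\Lambda,\Delta\Omega\right)$ onto the subspace $\Delta\left(\t{NT}(\Lambda,\Omega)\right)$ satisfies $E(f(\kappa))=\Delta(S\kappa)=\lambda\,\Delta\kappa$ together with $\|f(\kappa)\|_2=\|\lambda\,\Delta\kappa\|_2$; since an orthogonal projection preserves the norm only of vectors in its range, $f(\kappa)=\lambda\,g(\kappa)$, i.e.\ the exchange relation across one whole period. Consecutive exchange relations are then extracted from the $K$-step ones by a short diagrammatic argument using quasi-flatness, unitarity of the connections and \Cref{trivial-loop-anticlock}, exploiting the freedom to increase $L$. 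Your plan instead aims at the consecutive relations directly via a per-step Schwarz inequality; this is a legitimately different mechanism, and the inequality you need is indeed available: \Cref{Sprop}(iii) combined with \Cref{NTTr}(b) gives $\|S_j\eta\|_2^2\le\langle\eta\odot L_j,\eta\rangle$ with $L_j\in\t{End}(\t{id}_{\mcal M_j})$ the $\Gamma_j$-loop, and saturation of it is, by positivity of the gap operator and \Cref{Xrelandloop}(b), exactly the exchange relation for the pair $\left(S_j\eta,\eta\right)$.

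The step that does not work as written is ``equality then propagates back through every factor $S_j$, forcing each step to saturate.'' After the first link, the inequalities in your telescoped chain are \emph{weighted}: the loop $L_{j+1}$ must be dragged through $S_{j+2}$, and it is not a scalar in mid-period (the PF hypothesis only makes the accumulated loop scalar after a full period). Equality in such a weighted Schwarz inequality yields the saturation identity for the element obtained by multiplying $\eta^{(j+i)}$ by the square root of the accumulated central weight, not for $\eta^{(j+i)}$ itself, so the exchange relation at intermediate levels cannot be read off from a single chain. The repair is cheap but essential: restart the chain at every level $j\ge L$, so that only the first, unweighted, link of each chain ever needs to saturate. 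For the two ends of each such chain to agree you need phase-periodicity of the entire tail, $\eta^{(j+K)}=\lambda^{-1}\eta^{(j)}$ for every $j$ and not only along the chosen residue class; this holds because rescaling both weight vectors by $d^{-1}$ leaves the commensurate tracial solutions, hence the single-step operators $S_k$, literally unchanged, so the tail generated by the eigenvector is $K$-periodic up to the phase $\lambda$. Granting this, the PF condition makes the loop accumulated over one period equal to the scalar $d^{2}$, while the level-$(j+K)$ trace is $d^{-2}$ times the level-$j$ one, so each chain begins and ends at $\|\eta^{(j)}\|^2$, every link is an equality, and the first link gives the exchange relation for $\left(\eta^{(j)},\eta^{(j+1)}\right)$. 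With this fix (and a proof of the intermediate weighted inequalities, which follow from the same Schwarz argument with central positive multipliers absorbed), your argument closes and is a genuine alternative; otherwise you can keep your setup through \Cref{harmonicprop} and finish with the paper's projection argument plus its diagrammatic passage from $K$-step to consecutive exchange relations.
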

\begin{proof}
Choose a level $ L\in \N $ large enough after which the ingredients in (i) keep repeating with periodicity $ K $, and the equations in (ii) hold.
Set\\
$ \mcal M \coloneqq \mcal M_L = \mcal M_{L+nK}$\\
$ \mcal N \coloneqq \mcal N_L = \mcal N_{L+nK}$\\
$ \Gamma \coloneqq \Gamma_{L+K} \cdots \Gamma_{L+1}  = \Gamma_{L+(n+1)K} \cdots \Gamma_{L+nK+1} : \mcal M \lra \mcal M$\\
$ \Delta \coloneqq \Delta_{L+K} \cdots \Delta_{L+1}  = \Delta_{L+(n+1)K} \cdots \Delta_{L+nK+1} : \mcal N \lra \mcal N$\\
$ \Lambda \coloneqq \Lambda_{L}  = \Lambda_{L+nK} : \mcal M \lra \mcal N$\\
$\Omega \coloneqq \Omega_{L}  = \Omega_{L+nK}  : \mcal M \lra \mcal N$\\
$W^\Lambda \coloneqq \raisebox{-1.1cm}{
\begin{tikzpicture}
\draw[in=-90,out=90] (0,0) to (1,1.5);
\draw[in=-90,out=90] (.75,0) to (1.75,1.5);
\draw[white,line width=1mm,in=-90,out=90] (1.75,0) to (0,1.5);
\draw[red,in=-90,out=90] (1.75,0) to (0,1.5);
\node[right] at (0,.1) {$ \cdots$};
\node[right] at (1,1.4) {$ \cdots$};
\node[right] at (1.65,.1) {$ \Lambda_{L+nK}$};
\node[left] at (.1,1.3) {$ \Lambda_{L+(n+1)K}$};
\node[right] at (1.65,1.3) {$ \Gamma_{L+nK+1}$};
\node[right] at (.25,1.9) {\rotatebox{70} {$ \Gamma_{L+(n+1)K}$}};
\node[right] at (.55,-.2) {$ \Delta_{L+nK+1}$};
\node[left] at (.1,.1) {$ \Delta_{L+(n+1)K}$};
\end{tikzpicture}
}$ and also denoted by \raisebox{-.8cm}{
\begin{tikzpicture}
	\draw[in=-90,out=90] (0,0) to (1,1.5);
	\draw[white,line width=1mm,in=-90,out=90] (1,0) to (0,1.5);
	\draw[red,in=-90,out=90] (1,0) to (0,1.5);
	\node[left] at (1.1,.2) {$ \Lambda$};
	\node[right] at (0,1.3) {$ \Lambda$};
	\node[left] at (1.1,1.3) {$ \Gamma$};
	\node[right] at (-.1,.2) {$ \Delta$};
\end{tikzpicture}
}\\
$W^\Omega \coloneqq \raisebox{-1.1cm}{
\begin{tikzpicture}
	\draw[in=-90,out=90] (0,0) to (1,1.5);
	\draw[in=-90,out=90] (.75,0) to (1.75,1.5);
	\draw[white,line width=1mm,in=-90,out=90] (1.75,0) to (0,1.5);
	\draw[red,in=-90,out=90] (1.75,0) to (0,1.5);
	\node[right] at (0,.1) {$ \cdots$};
	\node[right] at (1,1.4) {$ \cdots$};
	\node[right] at (1.65,.1) {$ \Omega_{L+nK}$};
	\node[left] at (.1,1.3) {$\Omega_{L+(n+1)K}$};
	\node[right] at (1.65,1.3) {$ \Gamma_{L+nK+1}$};
	\node[right] at (.25,1.9) {\rotatebox{70} {$ \Gamma_{L+(n+1)K}$}};
	\node[right] at (.55,-.2) {$ \Delta_{L+nK+1}$};
	\node[left] at (.1,.1) {$ \Delta_{L+(n+1)K}$};
\end{tikzpicture}
}$ and also denoted by \raisebox{-.8cm}{
\begin{tikzpicture}
	\draw[in=-90,out=90] (0,0) to (1,1.5);
	\draw[white,line width=1mm,in=-90,out=90] (1,0) to (0,1.5);
	\draw[red,in=-90,out=90] (1,0) to (0,1.5);
	\node[left] at (1.1,.2) {$ \Omega$};
	\node[right] at (0,1.3) {$\Omega$};
	\node[left] at (1.1,1.3) {$ \Gamma$};
	\node[right] at (-.1,.2) {$ \Delta$};
\end{tikzpicture}
}\\
$ \ul \mu \coloneqq \ul \mu^L  = d^n \ \ul \mu^{L+nK}  $\\
$ \ul \nu \coloneqq \ul \nu^L  = d^n \ \ul \nu^{L+nK}  $\\
for any $ n\geq 0 $.
Now condition (ii) and \Cref{trivial-loop-anticlock} imply the following relations:
\[
\Gamma \ul \mu = d\ \ul \mu = \Gamma' \ul \mu \ \t{ and } \ \Delta \ul \nu = d\ \ul \nu = \Delta' \ul \nu \ .
\]
Consider the loop operators given by
\[
S \coloneqq d^{-1}
\raisebox{-1.4cm}{
	\begin{tikzpicture}
		\draw (-.3,0) ellipse (7mm and 12mm);
		\draw[white,line width=1mm,in=-90,out=90] (.6,-1.5) to (0,-.29);
		\draw[red,in=-90,out=90] (.6,-1.5) to (0,-.29);
		\draw[white,line width=1mm,in=-90,out=90] (0,.29) to (.6,1.5);
		\draw[red,in=-90,out=90] (0,.29) to (.6,1.5);
		\node[left] at (.1,.5) {$ \Omega $};
		\node[right] at (.4,1.1) {$ \Omega$};
		\node[left] at (.15,-.5) {$\Lambda$};
		\node[right] at (.45,-1.2) {$ \Lambda $};
		\node[draw,thick,rounded corners, fill=white,minimum width=15] at (0,0) {$\vphantom{\eta}$};
		\node[right] at (.3,0) {$ \Gamma $};
		\node[right] at (-.45,-.9) {$ \Delta$};
		\node[right] at (-.45,.95) {$ \Delta $};
\node[right] at (-1.1,0) {$ \Delta' $};
	\end{tikzpicture}
}
\ \t{ and } \
S^* \coloneqq d^{-1}
\raisebox{-1.45cm}{
	\begin{tikzpicture}
		\draw (-.3,0) ellipse (7mm and 12mm);
		\draw[white,line width=1mm,in=-90,out=90] (-1.2,-1.5) to (-0.6,-.25);
		\draw[red,in=-90,out=90] (-1.2,-1.5) to (-0.6,-.25);
		\draw[white,line width=1mm,in=-90,out=90] (-.6,.25) to (-1.2,1.5);
		\draw[red,in=-90,out=90] (-.6,.25) to (-1.2,1.5);
		\node[right] at (-.7,.45) {$ \Omega $};
		\node[right] at (-1.25,1.4) {$ \Omega $};
		\node[right] at (-.7,-.45) {$\Lambda $};
		\node[right] at (-1.3,-1.3) {$ \Lambda $};
		\node[draw,thick,rounded corners, fill=white,minimum width=15] at (-.6,0) {$\vphantom{\kappa}$};
		\node[left] at (-.9,0) {$ \Delta$};
\node[left] at (.5,0) {$ \Gamma'$};
\node[right] at (-.75,.9) {$ \Gamma$};
\node[right] at (-.75,-.9) {$ \Gamma$};
\end{tikzpicture}
}
: \t{NT}  (\Lambda , \Omega) \lra \t{NT}  (\Lambda , \Omega)
\]
where we use tracial solution to the conjugate equation for $ \Gamma\in\t{End} \left(\mcal M\right) $ (resp., $ \Delta \in\t{End} \left(\mcal N\right)$) commensurate with the weight function $ \ul \mu $ on $ \mcal M $ (resp., $ \ul \nu $ on $ \mcal N $) for both source and target, and the crossings are given by $ W^\Lambda, W^{\Lambda^*}, W^\Omega ,  W^{\Omega^*} $.

Observe that $ S =  S_{L+1} \cdots S_{L+K} = S_{L+nK +1}  \cdots S_{L+(n+1)K} $ for all $ n \geq 0$.
To see this, note that an $ S_k $ in composition $ \left[S_{L+1} \cdots S_{L+K}\right] $ is defined using tracial solution to conjugate equation for $ \Delta_k $ commensurate with $ \ul \nu^{k-1} $ and $ \ul \nu^{k} $.
So, for the composition $ \left[S_{L+1} \cdots S_{L+K}\right] $, we are effectively using tracial solution to the conjugate equation for $  \Delta_{L+K} \cdots \Delta_{L+1}  = \Delta $ commensurate with the $ \ul \nu^L = \ul \nu$ and $ \ul \nu^{L+K} = d^{-1} \ \ul \nu$; let us denote this solution by $ \left(\t{id}_{\mcal N} \os{\displaystyle \rho} \lra \Delta \Delta' \  , \  \t{id}_{\mcal N} \os{\displaystyle \rho'} \lra \Delta' \Delta \right) $.
The solution to the conjugate equation for $ \Delta $ commensurate with $ \ul \nu $ for both source and target, is given by $ \left(d^{-\frac 1 2 } \rho , d^{\frac 1 2 } \rho' \right) $.
Only $ d^{\frac 1 2 } \rho' $ is used while defining $ S $.
Replacing the cap and the cup by $ [d^{\frac 1 2 } \rho']^* $ and $ d^{\frac 1 2 } \rho' $, we get the desired equation.

We next prove a one-to-one correspondence between bounded $ S $-harmonic sequnces and BQFS's.
Let $ \left\{\eta^{(k)}\right\}_{k\geq 0} $ be a BQFS.
Clearly, $ \left\{ \eta^{(L+nK)}\right\}_{n\in \N} $ becomes and $ S $-harmonic sequence.
Equip $ \t{NT} \left( \Lambda , \Omega \right) $ with the inner product induced by the trace $ \t{Tr}^\Lambda $ commensurate with $ \left(\ul \mu , \ul \nu\right) $.
Finite dimensionality of $ \t{NT} \left(\Lambda , \Omega \right) $ implies that boundedness of a subset in C*-norm is equivalent to that of the $ 2 $-norm.

Conversely, let $ \left\{\kappa_n\right\}_{n \in \N} $ be a bounded $ S $-harmonic sequence.
Set $ \eta^{(k)} \coloneqq S_{k+1} \cdots  S_{L+nK} (\kappa_n)$ for any $ n $ such that $ L+nK > k $.
Indeed $ \eta^{(k)} $ is well-defined and by construction $ \left\{\eta^{(k)}\right\}_{k\geq 0} $ is quasi-flat.
Again by finite dimensionality of $ \t{NT} \left(\Lambda , \Omega \right) $, $ \left\{\kappa_n\right\}_{n\in \N} $ is bounded in C*-norm, and by \Cref{Sprop}(iii), $ \eta^{(k)} $'s become uniformly bounded as well.

In order to apply \Cref{harmonicprop} on $ S $, it is enough to show that operator norm of $ S  $ (acting on the finite dimensional Hilbert space $ \t{NT} \left(\Lambda , \Omega \right) $) is at most $ 1 $.
Note that $ d^{-1} \raisebox{-.5cm}{
\begin{tikzpicture}
\draw[in=90,out=90,looseness=2] (0,0) to  (.5,0);
\draw[in=-90,out=-90,looseness=2] (0,1) to  (.5,1);
\node[right] at (.4,.1) {$ \Delta$};
\node[right] at (.4,.9) {$ \Delta$};
\node[left] at (.1,.1) {$ \Delta'$};
\node[left] at (.1,.9) {$ \Delta'$};
\end{tikzpicture}
}$ is a projection in $ \t{End} \left(\Delta' \Delta\right) $ and hence less than $ 1_{\Delta' \Delta} $.
Using this and applying \Cref{trsol2conj} multiple times, we have
\[
\norm{S \kappa}^2 = \t{Tr}^\Lambda \left(\left(S \kappa\right)^* S \kappa\right) \leq d^{-1}\t{Tr}^\Lambda \left(
\raisebox{-1.4cm}{
	\begin{tikzpicture}
		\draw (-.3,0) ellipse (7mm and 12mm);
		\draw[white,line width=1mm,in=-90,out=90] (.6,-1.5) to (0,-.29);
		\draw[red,in=-90,out=90] (.6,-1.5) to (0,-.29);
		\draw[white,line width=1mm,in=-90,out=90] (0,.29) to (.6,1.5);
		\draw[red,in=-90,out=90] (0,.29) to (.6,1.5);
		\node[left] at (.1,.5) {$ \Lambda $};
		\node[right] at (.4,1.1) {$\Lambda$};
		\node[left] at (.15,-.5) {$\Lambda$};
		\node[right] at (.45,-1.2) {$ \Lambda $};
		\node[draw,thick,rounded corners, fill=white] at (-.2,0) {$\kappa^* \kappa$};
		\node[right] at (.3,0) {$ \Gamma $};
		\node[right] at (-.45,-.9) {$ \Delta$};
		\node[right] at (-.45,.95) {$ \Delta $};
		\node[left] at (-.9,0) {$ \Delta' $};
	\end{tikzpicture}
}\right) = \norm{\kappa}^2 \ .
\]
Thus, by \Cref{harmonicprop}, every bounded $ S $-harmonic sequence turns out to be a linear combination of sequences of the form $ \left\{\lambda^{-n} \kappa \right\}_{n\geq 0} $ where $ \abs{\lambda}  = 1$ and $ \kappa $ is an eigenvector of $ S $ with respect to the eigenvalue $ \lambda $.
We will call such sequences \textit{elementary}.
We will also borrow the notion of flatness for sequences in $ \t{NT}\left(\Lambda , \Omega \right) $ from previous the section when every consecutive pair satisfy the exchange relation with respect to $ \Gamma , \Delta , \Lambda, \Omega, W^\Lambda $ and $ W^\Omega $.
Since flat sequences form a vector space, we may conclude every bounded $ S $-harmonic sequence will be flat if all elementary ones are so.
Consider the above elementary $ S $-harmonic sequence given by $ \lambda $ and $ \kappa $.
It is enough to show
\[
\raisebox{-.75cm}
{\begin{tikzpicture}
		\draw[in=-90,out=90] (0,-.75) to (.6,-.3) to (.6, .3) to (0,.75);
		\draw[white,line width=1mm,in=-90,out=90] (.6,-.75) to (0,-.25);
		\draw[red,in=-90,out=90] (.6,-.75) to (0,-.25);
		\draw[red,in=-90,out=90] (0,-.25) to (0,.3);
		\draw[white,line width=1mm,in=-90,out=90] (0,.3) to (0.6,.75);
		\draw[red,in=-90,out=90] (0,.3) to (0.6,.75);
		\node[draw,thick,rounded corners, fill=white] at (0,0) {$\kappa$};
\node[left] at (.1,-.65) {$ \Delta$};
\node[left] at (.1,.65) {$ \Delta$};
\node[right] at (.5,-.65) {$ \Lambda$};
\node[right] at (.5,.65) {$\Omega$};
\end{tikzpicture}
} = \lambda \ \raisebox{-.75cm}
{\begin{tikzpicture}
		\draw[in=-90,out=90] (0.2,-.75) to (0.2,.75);
		\draw[red,in=-90,out=90] (.6,-.75) to (0.6,-.25);
		\draw[red,in=-90,out=90] (0.6,-.25) to (0.6,.75);
		\node[draw,thick,rounded corners, fill=white] at (0.6,0) {$\kappa$};
\node[left] at (.3,0) {$ \Delta$};
\node[right] at (.5,-.65) {$ \Lambda$};
\node[right] at (.5,.65) {$\Omega$};
\end{tikzpicture}
} \ .
\]
Note that both sides of the above equation belongs to the space $ \t{NT} \left( \Delta\Lambda , \Delta\Omega \right) $.
We equip this space with inner product induced by $ \t{Tr}^{\Delta \Lambda} $ commensurate with $ \left(\ul \mu , \ul \nu \right) $.
Consider the subspace $ \Delta \left(\t{NT} \left(\Lambda , \Omega \right) \right)  \subset \t{NT} \left( \Delta\Lambda , \Delta\Omega \right) $.
It is routine to check that the orthogonal projection onto this subspace is given by
\[
E \coloneqq d^{-1}
\raisebox{-1cm}{
\begin{tikzpicture}
\draw (-.9,-1) to (-.9,1);
\draw[red,in=-90,out=90] (.6,-1) to (0.6,-.29);
\draw[red,in=-90,out=90] (0.6,.29) to (.6,1);
\node[right,scale=0.9] at (.48,-.7) {$ \Lambda $};
\node[right,scale=0.9] at (.48,.7) {$\Omega $};
\node[draw,thick,rounded corners, fill=white,minimum width=20] at (.5,0) {$\vphantom{\eta}$};
\draw[in=90, out=90,looseness=2] (0.4,.30) to (-0.2,.29);
\draw[in=-90, out=-90,looseness=2] (0.4,-.30) to (-0.2,-.29);
\draw (-.2,-.29) to (-.2,.29);
\node[right,scale=0.9] at (-.9,0) {$ \Delta' $};
\node[left,scale=0.9] at (-.8,0.2) {$ \Delta $};
\end{tikzpicture}

} :  \t{NT} \left( \Delta\Lambda , \Delta\Omega \right) \lra \Delta \left( \t{NT} \left( \Lambda , \Omega \right)\right)
\]
Now,
\[
E\left( \raisebox{-.75cm}
{\begin{tikzpicture}
		\draw[in=-90,out=90] (0,-.75) to (.6,-.3) to (.6, .3) to (0,.75);
		\draw[white,line width=1mm,in=-90,out=90] (.6,-.75) to (0,-.25);
		\draw[red,in=-90,out=90] (.6,-.75) to (0,-.25);
		\draw[red,in=-90,out=90] (0,-.25) to (0,.3);
		\draw[white,line width=1mm,in=-90,out=90] (0,.3) to (0.6,.75);
		\draw[red,in=-90,out=90] (0,.3) to (0.6,.75);
		\node[draw,thick,rounded corners, fill=white] at (0,0) {$\kappa$};
		\node[left] at (.1,-.65) {$ \Delta$};
		\node[left] at (.1,.65) {$ \Delta$};
		\node[right] at (.5,-.65) {$ \Lambda$};
		\node[right] at (.5,.65) {$\Omega$};
	\end{tikzpicture}
} \right) = \Delta \left(S \kappa \right) =\lambda \Delta \kappa \ \t{ and  } \ \norm{
\raisebox{-.75cm}
{\begin{tikzpicture}
		\draw[in=-90,out=90] (0,-.75) to (.6,-.3) to (.6, .3) to (0,.75);
		\draw[white,line width=1mm,in=-90,out=90] (.6,-.75) to (0,-.25);
		\draw[red,in=-90,out=90] (.6,-.75) to (0,-.25);
		\draw[red,in=-90,out=90] (0,-.25) to (0,.3);
		\draw[white,line width=1mm,in=-90,out=90] (0,.3) to (0.6,.75);
		\draw[red,in=-90,out=90] (0,.3) to (0.6,.75);
		\node[draw,thick,rounded corners, fill=white] at (0,0) {$\kappa$};
		\node[left] at (.1,-.65) {$ \Delta$};
		\node[left] at (.1,.65) {$ \Delta$};
		\node[right] at (.5,-.65) {$ \Lambda$};
		\node[right] at (.5,.65) {$\Omega$};
	\end{tikzpicture}
}}^2_2 = \norm{\lambda \Delta \kappa}^2_2 \ .
\]
So, the above equation must hold.

From correspondence between bounded $ S $-harmonic sequences and BQFS's and the flatness of the former, we may conculde that in a BQFS $ \left\{\eta^{(k)}\right\}_{k\geq 0} $, two terms which are $ K $ steps apart, must satisfy exchange relation starting from level $ L $ (namely, $ \eta^{(L)}, \eta^{(L+K)}, \eta^{(L+2K)},\ldots  $).
Now, we have the freedom of choosing higher $ L $'s; as a result, we obtain exchange relation of any two terms which are $ K $ steps apart after level $ L $.
To establish exchange relation for consecutive terms, pick a $ k > L $ and recall the maps $ f $ and $ g $ defined in the proof of \Cref{Xrelandloop}(b).
By quasi-flat property, we get
\[
f\left(\eta^{(k)}\right) = f\left(S_{k+1} \cdots S_{k+K-1} \left(\eta^{(k+K-1)}\right)\right) =\raisebox{-1.4cm}{
\begin{tikzpicture}
\draw (0,0) ellipse (1.2cm and .6cm);
\draw (0,0) ellipse (1.8cm and .9cm);
\draw (1,-1.4) to[in=-90,out=90] (2,0) to[in=-90,out=90] (1,1.4);
\draw[white,line width=1mm] (2,-1.4) to[in=-90,out=90] (0,-.4) to (0,.4) to[in=-90,out=90] (2,1.4);
\draw[red] (2,-1.4) to[in=-90,out=90] (0,-.4) to (0,.4) to[in=-90,out=90] (2,1.4);
\node at (1.5,0) {$ \cdots $};
\node at (-1.5,0) {$ \cdots $};
	\node[draw,thick,rounded corners, fill=white] at (0,0) {$\eta^{(k+K-1)}$};
	\end{tikzpicture}}\]
which (by \Cref{trivial-loop-anticlock}, unitarity of the connection and the $ K $-step exchange relation) turns out to be $ g \left( \eta^{(k-1)} \right)$.
Hence, $ \left(\eta^{(k-1)} , \eta^{(k)}\right) $ satisfies the exchange relation.
This ends the proof.
\end{proof}
\section{Examples}

\subsection{Subfactors}\label{subfactors} \

Subfactors, more specifically, their standard invariants constitute the initial source of examples generalizing which we arrived at our objects of interest, namely, \textbf{UC} and ${\textbf{UC}^\t{tr}}$.
Basically, we associate a $ 1 $-cell in ${\textbf{UC}^\t{tr}}$ to the subfactor which captures all the information of the associated planar algebra.

Let $ \vphantom{X}_{C} X_D  $ be an extremal bifinite bimodule over $ II_1 $ factors $ C, \ D $.
For any bifinite bimodule $ \vphantom{Y}_{A} Y_B $, let $ \left \lab \vphantom{Y}_{A} Y_B  \right \rab $ denote the category of bifinite $ A$-$ B$-bimodules which are direct sum of irreducibles appearing in $\vphantom{Y}_{A} Y_B $.
Set $ \mcal M_0 \coloneqq \mcal H ilb_{fd} $ (the category of finite dimesnional Hilbert spaces), and for $ k \geq 0 $, let

\noindent $ \mcal M_{2k+1} \coloneqq \left \lab { \vphantom{ \left(\ol X \us{C}{\otimes} X\right)^{\us{D}{\otimes} k } } }_{D} \left(\ol X \us{C}{\otimes} X\right)^{\us{D}{\otimes} k }_{D}  \right \rab $ and $ \mcal M_{2k+2} \coloneqq \left \lab { \vphantom{ \left(\ol X \us{C}{\otimes} X\right)^{\us{D}{\otimes} k }  \us{D} \otimes \ol X } }_{D} {\left(\ol X \us{C}{\otimes} X\right)^{\us{D}{\otimes} k } \us{D} \otimes \ol X }_{C}  \right \rab $

\noindent and functors $ \Gamma_k : \mcal M_{k-1} \ra \mcal M_k $ be defined by

\noindent $ \t{ob}(\mcal M_0) \ni \C \os{\displaystyle \Gamma_1}{\longmapsto} { \vphantom{ L^2 (D) } }_D L^2 (D)_D \in\t{ob} (\mcal M_1 )$, $ \Gamma_{2k+2} \coloneqq \left.\bullet \us D \otimes \ol X \right|_{\mcal M_{2k+1}} $ and $ \Gamma_{2k+3} \coloneqq \left.\bullet \us C  \otimes  X \right|_{\mcal M_{2k+2}} $.
The sequence $ \Gamma_\bullet $ will serve as the source $ 0 $-cell in \textbf{UC}.
For the target $ 0 $-cell in \textbf{UC}, define

\noindent $ \mcal N_{2k} \coloneqq \left \lab { \vphantom{ \left( X \us{D}{\otimes} \ol X\right)^{\us{D}{\otimes} k } } }_{C} \left(X \us{D}{\otimes} \ol X\right)^{\us{C}{\otimes} k }_{C}  \right \rab $ and $ \mcal N_{2k+1} \coloneqq \left \lab { \vphantom{ \left( X \us{D}{\otimes} \ol X\right)^{\us{C}{\otimes} k } \us{C} \otimes  X } }_C {\left( X \us{D}{\otimes} \ol X\right)^{\us{C}{\otimes} k } \us{C} \otimes  X }_{D}  \right \rab $

\noindent and $ \Delta_{2k+1} \coloneqq \left.\bullet \us C \otimes X \right|_{\mcal N_{2k}} $ and $ \Delta_{2k+2} \coloneqq \left.\bullet \us D  \otimes \ol X \right|_{\mcal N_{2k+1}} $.
Next, consider the $ 1 $-cell $ \Lambda_\bullet $ in \textbf{UC} given by

\noindent $ \t{ob}(\mcal M_0) \ni \C \os{\displaystyle \Lambda_0}{\longmapsto} { \vphantom{ L^2 (C) } }_C L^2 (C)_C \in\t{ob} (\mcal N_0 )$ and $ \Lambda_k \coloneqq \left. X \us D \otimes \bullet  \right|_{\mcal M_k} $ where the unitary connection for the squares
\[
\xymatrix{\mcal N_{2k-1} \ar[r]_{\Delta_{2k}}^{\bullet \us{D}{\otimes} \ol X }  & \mcal N_{2k} \ar[r]^{\bullet \us{C}{\otimes} X }_{\Delta_{2k+1}} & \mcal N_{2k+1}\\
\mcal M_{2k-1} \ar[r]^{\Gamma_{2k}}_{\bullet \us{D}{\otimes} \ol X } \ar[u]^{ X \us{D}{\otimes} \bullet }_{\Lambda_{2k-1}} & \mcal M_{2k}  \ar[u]^{\Lambda_{2k}}_{ X \us{D}{\otimes} \bullet } \ar[r]^{\Gamma_{2k+1}}_{\bullet \us{C}{\otimes} X } & \mcal M_{2k+1} \ar[u]^{\Lambda_{2k+1}}_{ X \us{D}{\otimes} \bullet }
}
\]
is induced by the associativity constraint of the bimodules.

In order to turn the \textbf{UC}-$ 0 $-cells $ \Gamma_\bullet $ and $ \Delta_\bullet $ into ${\textbf{UC}^\t{tr}}$ ones, we work with the statistical dimension (same as the square root of the index) of an extremal bimodule $ {\vphantom{Y}}_{A} Y_B  $, denoted by $ d (Y) $.
Set $ \delta \coloneqq d (X) $.
Let $ V_{\mcal M_k }$ and $ V_{\mcal N_k }$ denote maximal sets of mutually non-isomorphic family of irreducible bimodules in $ \mcal M_k $ and $ \mcal N_k $ respectively.

Now we define the weight functions $\ul{\mu}^k$ and $\ul{\nu}^k$ on $ V_{\mcal M_k }$ and $ V_{\mcal N_k }$ respectively as follows:
\[\mu^0_\C \coloneqq 1, \ \mu^k_Y \coloneqq \delta^{-(k-1)} d(Y), \ \nu^k_Z \coloneqq \delta^{-k} d(Z) \ \ \ \ \t{for} \ Y \in V_{\mcal M_k }, \ Z \in V_{\mcal N_k }   \]
Since the dimension function is a linear homomorphism with respect to direct sum and Connes fusion of bimodules, we get that $\ul{\mu}^k$ and $\ul{\nu}^k$ satisfy \Cref{trivial-loop-anticlock}.
For the same reason, the boundedness condition \Cref{bndonwt} holds with the inequalities replaced by equality where both the bounds are $ 1 $.

\subsubsection{Planar algebraic view of the associated bimodule}\ 

Let $ A \coloneqq \mcal{PB} \left( \Gamma_\bullet \right)$ and $ B \coloneqq \mcal {PB} \left(\Delta_\bullet \right) $ be the AFD's and $ H \coloneqq \mcal{PB} \left( \Lambda_\bullet \right) $ be the $ A $-$ B $-bimodule where $ \Gamma_\bullet, \Delta_\bullet $ and $ \Lambda_\bullet $ and $ 0 $- and $ 1 $-cells in $\textbf{UC}^\t{tr}$ associated to the extremal bifinite bimodule $ {\vphantom{X}}_C X_D $.
Denote the planar algebra associated to $ {\vphantom{X}}_C X_D $ by $ P = \left\{ P_{\pm k}\right\}_{k\geq 0} $ where the vector spaces are given by\\
$ P_{+k} = \t{End} \left( X \us D \otimes \ol X \us C \otimes \cdots \ k \t{ tensor components}\right)$ and\\
$ P_{-k} = \t{End} \left( \ol X \us C \otimes X \us D \otimes \cdots \ k \t{ tensor components}\right)$.
Immediately from the definitions, we get the following.

(a) $ A_{k+1} = P_{-k} $ and $ B_k = P_{+k} = H_k $.

(b) Both the inclusions $ B_k \hookrightarrow B_{k+1} $ and $ H_k \hookrightarrow H_{k+1} $ are the same as $ P_{+k} \hookrightarrow P_{+(k+1)} $, and $ A_k \hookrightarrow A_{k+1} $ is same as $ P_{-(k-1)} \hookrightarrow P_{-k} $, induced by the action of inclusion tangle by a string on the right.

(c) Action of $B_k$ on $H_k$ is given by right mutiplication of $ P_{+k} $ on itself whereas that of $A_k$ on $H_k$ is given by the left multiplation of the left inclusion $ P_{-(k-1)} \os {\displaystyle \t{LI}} \hookrightarrow P_{+k}$ induced the action of inclusion tangle by a string on the left.

(d) The trace on $ A_k $ and $ B_k $ turns out to be the normalized picture trace on $ P_{-(k-1)} $ and $ P_{+k} $ respectively.

\begin{rem}\label{PAflat}\
 Let $P_{\pm \infty}$ be the union $ \us{k \geq 0}{\cup} P_{\pm k}$ of the filtered unital algebras, and $ P_{\pm}$ be the von Neumann algebra generated by it acting on the GNS with respect to the canonical normalized picture trace $\t{tr}_{\pm}$.
Finally, the bimodule $ {\vphantom{H}}_A H_B $ turns out to be the same as $ {\vphantom{L^2 (P_{+ \infty}, \t{tr}_+)}}_{P_{-}} {L^2 (P_{+ \infty}, \t{tr}_+)}_{P_{+}} $ where the $ P_- $-action on left extends from treating $ P_{+ \infty} $ as a left-module over the subalgebra $ \t{LI} \left(P_{-\infty}\right) $.
As a result, the BQFS's from $ \Lambda_\bullet $ to $ \Lambda_\bullet $ are given by intertwiners in $ {\vphantom{\mcal L}_{P_-} {\mcal L}_{P_+}} \left(L^2 \left(P_+ , \t{tr}_{+}\right)\right) = \left[\t{LI}\left(P_-\right)\right]' \cap P_+ $ via \Cref{BQFSthm} and by \Cref{BQFSflat}, the flat sequences correspond to elements in $ \left[\t{LI}\left(P_{- \infty}\right)\right]' \cap P_{+\infty } $.
\end{rem}

\subsubsection{Loop operators and Izumi's Markov operator}\ 
                        
We provide a description of loop operators $\left\{S_{k} : \t{End}(\Lambda_{k}) \to \t{End}(\Lambda_{k-1})\right\}_{k\geq 1}$ in terms of maps between intertwiner spaces.
We continue to employ the graphical calculus pertaining to bimodules and intertwiners as well.
Let us analyze the odd ones first.
For $Y \in V_{\mcal M_{2k}}$ and $\eta \in \t{End}(\Lambda_{2k+1})$,
\[ \left[S_{2k+1}\eta \right]_Y \ = \raisebox{-.9cm}{
\begin{tikzpicture}
\draw (-.2,0) ellipse (6mm and 8mm);
\draw[dashed,thick] (.8,-.9) to (.8,.9);
\draw[white,line width=1mm,in=-90,out=90] (.6,-.9) to (0,-.29);
\draw[red,in=-90,out=90] (.6,-.9) to (0,-.29);
\draw[white,line width=1mm,in=-90,out=90] (0,.29) to (.6,.9);
\draw[red,in=-90,out=90] (0,.29) to (.6,.9);
\node at (1,0) {$ Y $};
\node[draw,thick,rounded corners, fill=white] at (0,0) {$\eta$};
\end{tikzpicture}
} = \sum_{Y_1 \in V_{\mcal M_{2k+1}} } \sum_{\alpha \in \t{ONB} \left( Y_1 , \, \Gamma_{2k+1} Y \right)  } \raisebox{-1.9cm}{
\begin{tikzpicture}
\draw[dashed,thick] (.4,1) to (.4,.7) to[in=90,out=-90] (.2,.3) to (.2,0);
\draw[dashed,thick] (.4,-1) to (.4,-.7) to[in=-90,out=90] (.2,-.3) to (.2,0);
\draw[dashed,thick] (.6,1) to (.6,1.9);
\draw[dashed,thick] (.6,-1) to (.6,-1.9);
\draw (.2,1) to (.2,1.3) to[out=90,in=0] (-.4,1.8) to[out=180,in=90] (-1,0) to[out=-90,in=180] (-.4,-1.8) to[out=0,in=-90] (.2,-1.3) to (.2,-1);
\draw[white,line width=1mm,in=-90,out=90] (.2,-1.9) to[out=90,in=-90] (-.2,-1)  to (-.2,1) to[out=90,in=-90] (.2,1.9);
\draw[red] (.2,-1.9) to[out=90,in=-90] (-.2,-1)  to (-.2,1) to[out=90,in=-90] (.2,1.9);
\node[draw,thick,rounded corners, fill=white] at (0,0) {$\eta_{Y_1}$};	
\node[draw,thick,rounded corners, fill=white,minimum width=20] at (.4,1) {$\alpha$};	
\node[draw,thick,rounded corners, fill=white] at (.4,-1) {$\alpha^*$};	
\node at (.9,1.6) {$ Y$};
\node at (.9,-1.6) {$ Y$};
\node at (.7,.5) {$ Y_1 $};
\node at (.65,-.55) {$ Y_1 $};
\end{tikzpicture}
} \ . \]
The last expression is in terms of the functors $\Gamma_n $'s, $ \Delta_n $'s and $ \Lambda_n $'s; to express it purely using bimodules and intertwiners, we prove the following relation.
\begin{lem}
\normalfont	For $Z_1,Z_2 \in \t{ob}(\mcal N_{2k}) \ , \ \gamma \in \mcal N_{2k+1}(\Delta_{2k+1}Z_1,\Delta_{2k+1}Z_2) $ we have
\[
\raisebox{-.7cm}{
\begin{tikzpicture}
\draw[dashed,thick] (.2,-.7) to (.2,.7);
\draw (-.4,-.4) to[out=0,in=-90] (-.2,-.3) to (-.2,.3) to[out=90,in=0] (-.4,.4) to[out=180,in=180] (-.4,-.4) ;
\node[draw,thick,rounded corners, fill=white, minimum width=20] at (0,0) {$\gamma$};
\node at (.5,-.5) {$ Z_1$};
\node at (.5,.5) {$ Z_2$};
\end{tikzpicture}
} =  \delta^{-1} \raisebox{-.8cm}{
\begin{tikzpicture}
\draw (-.2,-.7) to (-.2,.7);
\draw (.2,-.3) to (.2,.3) to[out=90,in=180] (.5,.6);
\draw[->] (.5,.6) to[out=0,in=90] (.8,0);
\draw (.8,0) to[out=-90,in=0] (.5,-.6) to[out=180,in=-90] (.2,-.3);
\node[draw,thick,rounded corners, fill=white, minimum width=20] at (0,0) {$\gamma$};	
\node at (-.5,-.5) {$ Z_1$};
\node at (-.5,.5) {$ Z_2$};
\node at (.1,.6) {$ X $};
\node at (.05,-.65) {$ X $};
\end{tikzpicture}
}
\]
where the cap and the cup on the right (resp. left) side come from balanced spherical solution (resp. solution) to the conjugate equations for the duality of $X$ (resp. $\Delta_{2k+1}$ commensurate with $\left(\ul{\nu}^{2k+2}, \ul{\nu}^{2k} \right)$).
\end{lem}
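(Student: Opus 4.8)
The plan is to exploit that the functor $\Delta_{2k+1}=\bullet\us{C}{\otimes}X$ is nothing but tensoring with the bimodule $_{C}X_{D}$, so that every solution to its conjugate equations is assembled from the intrinsic duality maps of $X$; the content of the lemma is then entirely a matter of identifying the normalizing scalar. Write $R_X:L^2(C)\to X\us{D}{\otimes}\ol X$ and $\ol R_X:L^2(D)\to\ol X\us{C}{\otimes}X$ for the balanced spherical solution to the conjugate equations of $_{C}X_{D}$, which exists by extremality of $X$ and is normalized so that $R_X^*R_X=\ol R_X^*\ol R_X=d(X)=\delta$. First I would observe that, since $\Delta_{2k+1}$ acts as $\bullet\us{C}{\otimes}X$ with adjoint $\bullet\us{D}{\otimes}\ol X$, the coevaluation $\rho'_Z:Z\to Z\us{C}{\otimes}X\us{D}{\otimes}\ol X$ of the functor must have the form $c\,(\t{id}_Z\us{C}{\otimes}R_X)$ for a single positive scalar $c$ independent of $Z$, the left $C$-action and the spectator object $Z$ riding along untouched.

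Next I would pin down $c$ using the weight-commensurability built into the statement. The matrix identity $\Delta'_{2k+1}\ul\nu^{2k+1}=\ul\nu^{2k}$ holds here as a direct consequence of multiplicativity of the statistical dimension, $d(Z\us{C}{\otimes}X)=d(Z)\,\delta$, together with $\nu^k_Z=\delta^{-k}d(Z)$; hence by \Cref{trivial-loop-anticlock} the commensurate coevaluation is isometric, $(\rho'_Z)^*\rho'_Z=1_Z$. Combining this with $R_X^*R_X=\delta$ forces $c^2\delta=1$, i.e. $c=\delta^{-1/2}$, so that $\rho'_Z=\delta^{-1/2}\,(\t{id}_Z\us{C}{\otimes}R_X)$ and, dually, the cap is $\delta^{-1/2}\,(\t{id}_Z\us{C}{\otimes}R_X)^*$. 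Equation \eqref{trsol2conj} is what guarantees that a single global $c$ suffices, with no per-object scalar intruding.

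Finally I would substitute these expressions into the left-hand closure, which is the categorical partial trace $(\rho'_{Z_2})^*\,\Delta'_{2k+1}(\gamma)\,\rho'_{Z_1}$. The two factors of $\rho'$ each contribute a $\delta^{-1/2}$, and sphericality ensures that cap and cup are rescaled by the same power, so the net scalar is $\delta^{-1/2}\cdot\delta^{-1/2}=\delta^{-1}$; what remains is precisely $(\t{id}_{Z_2}\us{C}{\otimes}R_X)^*\,(\gamma\us{D}{\otimes}\t{id}_{\ol X})\,(\t{id}_{Z_1}\us{C}{\otimes}R_X)$, the bimodule closure appearing on the right-hand side. I expect the main obstacle to be bookkeeping rather than any conceptual difficulty: one must match the functor's conjugate solution leg by leg with the duality maps of $X$, verify that the ambient $C$-action and the strings $Z_1,Z_2$ are genuinely untouched, and check that the balanced spherical normalization is applied symmetrically, so that the exponent of $\delta$ comes out to be exactly $-1$ rather than $0$ or $-2$.
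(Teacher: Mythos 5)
Your strategy is, at bottom, a repackaging of the lemma itself: the statement is equivalent to saying that the solution to the conjugate equations for $\Delta_{2k+1}=\bullet\us{C}{\otimes}X$ commensurate with the weights equals $\delta^{-1/2}(\mathrm{id}\otimes_C R_X)$, and your first step simply asserts this (``the coevaluation must have the form $c\,(\mathrm{id}_Z\otimes_C R_X)$ for a single positive scalar $c$ independent of $Z$''). That assertion is the gap. Solutions to the conjugate equations of a $*$-linear bi-faithful functor are not unique up to a global scalar: by the Fact in \Cref{GraFunc} they are parametrized by orthonormal bases together with a weight function, and the tracial ones by an arbitrary positive weight function $(Y,Z)\mapsto\kappa_{Y,Z}$ on pairs of simples. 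The commensurate solution is singled out by the condition $\kappa_{Y,Z}=\nu^{2k+1}_Y/\nu^{2k}_Z$ for \emph{every} pair $(Y,Z)$, whereas the normalization $(\rho'_Z)^*\rho'_Z=1_Z$ of \Cref{trivial-loop-anticlock}, which is all you use to pin down $c$, amounts (via \Cref{loop-clock}) to the single scalar equation $\sum_Y N_{Y,Z}\,\kappa_{Y,Z}=1$ for each simple $Z$. As soon as $Z\us{C}{\otimes}X$ has more than one irreducible summand --- the generic case --- infinitely many tracial solutions satisfy that normalization, and naturality adds nothing, since the components of a natural transformation at mutually non-isomorphic simples may be prescribed freely. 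Your remark that \Cref{trsol2conj} ``guarantees that a single global $c$ suffices'' is circular: to invoke it for $\mathrm{id}\otimes R_X$ you must already know the channel-by-channel weights of that particular solution, which is exactly what has to be proved.

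What closes the gap is the spherical partial-trace computation, and that is precisely the paper's proof. Reduce to $Z_1=Z_2=Z$ irreducible and $\gamma=\alpha\beta^*$ with $\alpha,\beta\in\mcal N_{2k+1}\left(Y,\Delta_{2k+1}Z\right)$, $Y$ irreducible; then traciality (sphericality, coming from extremality) of the standard solution of $X$ gives
\[
\delta^{-1}\,(\mathrm{id}_Z\otimes R_X)^*\bigl((\alpha\beta^*)\otimes\mathrm{id}_{\ol X}\bigr)(\mathrm{id}_Z\otimes R_X)
\;=\;\frac{d(Y)}{\delta\,d(Z)}\,\lab\alpha,\beta\rab\,1_Z\,,
\]
i.e.\ the solution induced by $R_X$ is tracial with channel weights $d(Y)/d(Z)$. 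Since $\nu^{2k+1}_Y/\nu^{2k}_Z=d(Y)/(\delta\,d(Z))$, this identifies the right-hand side of the lemma with the left-hand side by \Cref{trsol2conj}, and in particular it is what justifies, a posteriori, the proportionality you assumed at the outset. Your outline becomes a proof only after inserting this computation --- note that the quantity $d(Y)$, which carries the actual content, never appears in your argument.
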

\begin{proof}
Without loss of generality, we may assume $Z_1 = Z_2 = Z$ (say) is irreducible and $\gamma = 
\raisebox{-1.3cm}{
\begin{tikzpicture}
\draw[dashed,thick] (.2,.6) to (.2,1.3);
\draw[dashed,thick] (0,-.6) to (0,.6);
\draw[dashed,thick] (.2,-.6) to (.2,-1.3);
\draw (-.2,.6) to (-.2,1.3);
\draw (-.2,-.6) to (-.2,-1.3);
\node[draw,thick,rounded corners, fill=white, minimum width=20] at (0,.6) {$\alpha$};	
\node[draw,thick,rounded corners, fill=white, minimum width=20] at (0,-.6) {$\beta^*$};	
\node at (.2,0) {$ Y$};
\node at (.4,1.1) {$ Z$};
\node at (.4,-1.1) {$ Z$};
\node at (-.75,1.05) {$ \Delta_{2k+1}$};
\node at (-.75,-1.05) {$ \Delta_{2k+1}$};
\end{tikzpicture}}
= \raisebox{-1.3cm}{
\begin{tikzpicture}
		\draw (.2,.6) to (.2,1.3);
\draw (0,-.6) to (0,.6);
\draw (.2,-.6) to (.2,-1.3);
		\draw (-.2,.6) to (-.2,1.3);
		\draw (-.2,-.6) to (-.2,-1.3);
		\node[draw,thick,rounded corners, fill=white, minimum width=20] at (0,.6) {$\alpha$};	
		\node[draw,thick,rounded corners, fill=white, minimum width=20] at (0,-.6) {$\beta^*$};	
		\node at (.2,0) {$ Y $};
		\node at (.4,1.1) {$ X$};
		\node at (.4,-1.1) {$ X $};
		\node at (-.4,1.1) {$ Z$};
		\node at (-.4,-1.1) {$ Z$};
\end{tikzpicture}}$ where $ Y \in V_{\mcal N_{2k+1}} $.
So, the right side of the equation in the statement becomes
\[
\delta^{-1} \left[d(Z)\right]^{-1} \raisebox{-2cm}{
\begin{tikzpicture}
\draw (0,-.6) to (0,.6);
\draw (.2,-.6) to (.2,-1);
\draw (.2,-1) to[out=-90,in=-90] (.8,0);
\draw (.2,.6) to (.2,1);
\draw (.2,1) to[out=90,in=90] (.8,0);
\draw (-.2,-.6) to (-.2,-1.4);
\draw (-.2,-1.4) to[out=-90,in=-90] (1.1,0);
\draw (-.2,.6) to (-.2,1.4);
\draw (-.2,1.4) to[out=90,in=90] (1.1,0);
		\node[draw,thick,rounded corners, fill=white, minimum width=20] at (0,.6) {$\alpha$};	
		\node[draw,thick,rounded corners, fill=white, minimum width=20] at (0,-.6) {$\beta^*$};	
		\node at (.2,0) {$ Y $};
		\node at (0.05,1.1) {$ X$};
		\node at (.05,-1.15) {$ X $};
		\node at (-.4,1.1) {$ Z$};
		\node at (-.4,-1.15) {$ Z$};
\end{tikzpicture}} \  1_Z \  = \ \displaystyle \frac {d(Y)}{\delta \ d(Z)} \ \left \lab \alpha, \beta \right \rab\  1_Z \ = \ \displaystyle \frac{\nu^{2k}_Y}{\nu^{2k+1}_Z}\ \left \lab \alpha, \beta \right \rab\  1_Z
\]
where the first equality follows from traciality of spherical solutions.
The last term by \Cref{trsol2conj}, is same as the left side.
\end{proof}
Coming back to the loop operators, we apply the lemma on the blue box below and obtain
\begin{equation}\label{loop2Markov}
\left[S_{2k+1}\eta \right]_Y \ =  \sum_{\substack{{Y_1 \in V_{\mcal M_{2k+1}}}\\ {\alpha \in \t{ONB} \left( Y_1 , \, \Gamma_{2k+1} Y \right)  } } }  \raisebox{-1.9cm}{
\begin{tikzpicture}
		\draw[dashed,thick] (.4,1) to (.4,.7) to[in=90,out=-90] (.2,.3) to (.2,0);
		\draw[dashed,thick] (.4,-1) to (.4,-.7) to[in=-90,out=90] (.2,-.3) to (.2,0);
		\draw[dashed,thick] (.6,1) to (.6,1.9);
		\draw[dashed,thick] (.6,-1) to (.6,-1.9);
		\draw (.2,1) to (.2,1.3) to[out=90,in=0] (-.4,1.8) to[out=180,in=90] (-1,0) to[out=-90,in=180] (-.4,-1.8) to[out=0,in=-90] (.2,-1.3) to (.2,-1);
		\draw[white,line width=1mm,in=-90,out=90] (.2,-1.9) to[out=90,in=-90] (-.2,-1)  to (-.2,1) to[out=90,in=-90] (.2,1.9);
		\draw[red] (.2,-1.9) to[out=90,in=-90] (-.2,-1)  to (-.2,1) to[out=90,in=-90] (.2,1.9);
		\node[draw,thick,rounded corners, fill=white] at (0,0) {$\eta_{Y_1}$};	
		\node[draw,thick,rounded corners, fill=white,minimum width=20] at (.4,1) {$\alpha$};	
		\node[draw,thick,rounded corners, fill=white] at (.4,-1) {$\alpha^*$};	
		\node at (.9,1.5) {$ Y$};
		\node at (.9,-1.5) {$ Y$};
		\node at (.7,.5) {$ Y_1 $};
		\node at (.65,-.55) {$ Y_1 $};
\draw[blue] (-.5,-1.7) rectangle (1.2,1.7);
\end{tikzpicture}
} = \ \delta^{-1} \sum_{\substack{{Y_1 \in V_{\mcal M_{2k+1}}}\\ {\alpha \in \t{ONB} \left( Y_1 , \, Y \us C \otimes X \right)  } } } \raisebox{-2.15cm}{
\begin{tikzpicture}
\draw (.6,-1) to (.6,-1.4) to[out=-90,in=-90] (1.1,-1.7) to (1.1,1.7) to[out=90,in=90] (.6,1.4) to (.6,1);
\draw (.2,1) to (.2,1.9);
\draw (.2,-1) to (.2,-1.9);
\draw (.4,1) to (.4,.7) to[in=90,out=-90] (.2,.3) to (.2,0);
\draw (.4,-1) to (.4,-.7) to[in=-90,out=90] (.2,-.3) to (.2,0);
\draw (-.2,-1.9) to (-.2,1.9);
	\node[draw,thick,rounded corners, fill=white] at (0,0) {$\eta_{Y_1}$};	
	\node[draw,thick,rounded corners, fill=white,minimum width=20] at (.4,1) {$\alpha$};	
	\node[draw,thick,rounded corners, fill=white] at (.4,-1) {$\alpha^*$};	
\node at (.75,1.9) {$ X$};
\node at (.7,-1.9) {$ X$};
\node at (-.4,1) {$ X$};
\node at (-.4,-1) {$ X$};
	\node at (.4,1.7) {$ Y$};
	\node at (.4,-1.7) {$ Y$};
	\node at (.6,.4) {$ Y_1 $};
	\node at (.6,-.5) {$ Y_1 $};
\end{tikzpicture}
} \ .
\end{equation}
\begin{prop}
For all $\normalfont \eta \in \t{End} \left(\Lambda_k \right) $ and $ Y\in V_{\mcal M_{k-1}} $, the following equation holds
\[
\left[ S_k \eta \right]_Y = \displaystyle \sum_{\substack{{Y_1 \in V_{\mcal M_{k}}}\\ {\beta \in \t{ONB} \left( Y , \, Y_1  \otimes X_k \right)  } } } \displaystyle \frac{d(Y_1)}{\delta \ d(Y)} \raisebox{-1.95cm}{
	\begin{tikzpicture}
\draw (.2,-1) to (.2,1);
\draw (.8,-1) to (.8,1);
\draw (.5,1.1) to (.5,1.9);
\draw (.5,-1.1) to (.5,-1.9);
		\draw (-.2,-1.9) to (-.2,1.9);
		\node[draw,thick,rounded corners, fill=white] at (0,0) {$\eta_{Y_1}$};	
\node[draw,thick,rounded corners, fill=white,minimum width=25] at (.5,1.1) {$\beta^*$};	
\node[draw,thick,rounded corners, fill=white,minimum width=25] at (.5,-1.1) {$\beta$};	
\node at (1.1,0) {$ X_k$};
		\node at (-.4,1) {$ X$};
		\node at (-.4,-1) {$ X$};
		\node at (.7,1.7) {$ Y$};
		\node at (.7,-1.7) {$ Y$};
		\node at (.45,.45) {$ Y_1 $};
		\node at (.45,-.55) {$ Y_1 $};
	\end{tikzpicture}
}
\]
where $ X_k $ is $ X $ or $ \ol X $ according as $ k $ is even or odd.
\end{prop}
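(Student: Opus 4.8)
The plan is to run the same computation that produced \Cref{loop2Markov}, but to carry it one step further and in a parity-uniform fashion. First I would expand the loop operator exactly as in the paragraph preceding \Cref{loop2Markov}: for a simple $Y \in V_{\mcal M_{k-1}}$, insert a resolution of the identity on $\Gamma_k Y$ over orthonormal bases $\alpha \in \t{ONB}(Y_1 , \Gamma_k Y)$ indexed by the simples $Y_1 \in V_{\mcal M_k}$, so that $\left[S_k \eta\right]_Y$ becomes a sum of diagrams in which $\eta_{Y_1}$ is capped by $\alpha , \alpha^*$ and the outer cap/cup is the tracial solution to the conjugate equations for the duality of $\Delta_k$. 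This holds verbatim for both parities; the only parity-dependent input is that $\Gamma_k Y$ equals $Y \us C \otimes X$ when $k$ is odd and $Y \us D \otimes \ol X$ when $k$ is even.

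Next I would invoke the Lemma to trade the $\Delta_k$-duality cap/cup for the balanced spherical cap/cup of the bimodule duality. The Lemma is stated for $\Delta_{2k+1}$ and the duality of $X$; its proof applies word for word with $X$ replaced by $\ol X$ and the appropriate weight pair, covering the even levels. In either case this produces the scalar $\delta^{-1}$ and rewrites the diagram so that the outer strand is an $X_k$-strand, with $X_k = \ol X$ for $k$ odd and $X_k = X$ for $k$ even; for odd $k$ this is precisely \Cref{loop2Markov}.

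The final and only substantive step is a change of orthonormal basis effected by bending the outer $X_k$-strand. Bending this strand through the balanced spherical solution for the duality of $X_k$ converts each $\alpha \in \t{ONB}(Y_1 , \Gamma_k Y)$ into a morphism $Y \ra Y_1 \otimes X_k$, and Frobenius reciprocity identifies the span of these rotated maps with $\mcal M_{k-1}(Y , Y_1 \otimes X_k)$. Since the rotated family is orthogonal but not normalized, I would re-express the sum in terms of a genuine $\t{ONB}(Y , Y_1 \otimes X_k)$ using the standard rotation identity for rigid C*-tensor categories, which inserts the dimension ratio $d(Y_1)/d(Y)$ and yields the $\beta , \beta^*$ boxes as drawn. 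Combining this with the $\delta^{-1}$ from the Lemma produces the stated coefficient $\dfrac{d(Y_1)}{\delta \, d(Y)}$.

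The main obstacle is the bookkeeping of the scalar in this last step: one must verify, using sphericality of the balanced solution and the normalization $d(X) = d(\ol X) = \delta$, that bending the $X_k$-strand and renormalizing the rotated basis contributes exactly $d(Y_1)/d(Y)$, with no residual power of $\delta$ beyond the one already extracted in \Cref{loop2Markov}. Everything else is a routine diagrammatic manipulation of the kind already used in \Cref{loop2Markov} and its supporting Lemma.
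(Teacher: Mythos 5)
Your proposal is correct and takes essentially the same route as the paper: the paper's proof likewise starts from \Cref{loop2Markov} and substitutes $\beta \coloneqq \left(d(Y)/d(Y_1)\right)^{1/2}$ times the rotation (strand-bending) of $\alpha^*$, noting via Frobenius reciprocity that these form an orthonormal basis of the hom space into $Y_1 \otimes X_k$, so that the two normalization factors contribute precisely the ratio $d(Y_1)/d(Y)$ you identify as the key bookkeeping point. The even case is disposed of in the paper exactly as you suggest, by running the same argument with $X$ replaced by $\overline{X}$.
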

\begin{proof}
In \Cref{loop2Markov}, substituting $\beta \coloneqq {\left(\displaystyle \frac{d(Y)}{d(Y_1)} \right)}^{\frac 1 2}$
\raisebox{-6mm}{
\begin{tikzpicture}
\node[draw,thick,rounded corners,minimum width=20] at (0,0) {$\alpha^*$};
\draw (0,.3) to (0,.7);
\draw (-.2,-.3) to (-.2,-.7);
\draw[in=-90,out=-90,looseness=2] (.1,-.29) to (.6,-.29);
\draw (.6,-.29) to (.6,.7);
\node[left] at (0.1,.5) {$Y_1$};
\node[left] at (-.05,-.5) {$Y$};
\node[right] at (.6,0) {$\ol X$};
 \end{tikzpicture}}
(which yeilds an orthonormal basis of $ \mcal M_{2k+1} \left( Y \, , \, Y_1 \us D \otimes \ol X \right) $ as $\alpha$ varies over $ \t{ONB} \left( Y_1, Y\us C \otimes X \right) $), we get the desired equation for the odd case.
The proof of the even case is exactly similar.
\end{proof}
\vspace{4mm}
We now recall the Markov operator (that is, a UCP map) associated to an extremal finite index subactor / bifinite bimodule defined by Izumi in \cite{Izm}.
Consider the finite dimensional C*-algebra $ D_k \coloneqq \t{End} \left( \Lambda_k \right) \cong \us{Y \in V_{\mcal M_k} }{\oplus} {\vphantom{\mcal L}}_C \mcal L_{D} \left( X \us D \otimes Y \right) $ or $ \us{Y \in V_{\mcal M_k} }{\oplus} {\vphantom{\mcal L}}_C \mcal L_{C} \left( X \us D \otimes Y \right) $ according as $ k $ is even or odd.
Define the von Neumann algebra $ D \coloneqq \us{k\geq 0}\bigoplus {D_k} $.
Then, Izumi's Markov operator $ P : D \lra D$ is defined as
\[
D \ni \ul \eta = \left( \eta^{(k)} \right)_{k\geq 0} \os {\displaystyle P} \longmapsto P \ul \eta \coloneqq \left( S_{k+1} \eta^{(k+1)} \right)_{k\geq 0} \in D \ .
\]
By [Lemma 3.2, \cite{Izm}], the space of $ P $-harmonic elements $H^\infty \left(D,P\right)$ (that is, the fixed points of $P$) is precisely the space of bounded quasi-flat sequences corresponding to our loop operators $\left\{S_k \right\}_{k \geq 0}$.

\subsubsection{Temperley-Lieb - $ TL_\delta $ for $ \delta > 2 $}\ 

Continuing with the same set up, let us further assume $ X $ is symmetrically self-dual and tensor-generates the Temperley-Lieb category for a generic modulus $ \delta >2 $.
This example had already been investigated extensively, in particular, by Izumi in \cite{Izm} in our context.
Here, we address the question whether every UC-endormorphism of $ \Lambda_\bullet $ extends to a $\textbf{UC}^\t{tr}$-one.
\begin{prop}
The $ 1 $-cell in $\normalfont{\textbf{UC}^\t{tr}}$ corresponding to the TL-bimodule $ X $ possesses a BQFS to itself which is not flat.
\end{prop}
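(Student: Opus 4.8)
The plan is to reduce the question to the transience of a classical random walk attached to the loop operators, and then to locate a non-flat BQFS in the operator-valued part of $\t{End}(\Lambda_\bullet)$, the scalar part alone being insufficient. Writing $\delta=q+q^{-1}$ with $q>1$ and $[m]=(q^m-q^{-m})/(q-q^{-1})$, the simple objects of each $\mcal M_k$ are the Jones--Wenzl objects $X_0,X_1,\dots$ with $X\ot X_n\cong X_{n-1}\oplus X_{n+1}$ (and $X_{-1}=0$) and $d(X_n)=[n+1]$. On the scalar natural transformations $\eta_{X_n}=f(n)\,1_{X\ot X_n}$ the formula for $S_k$ proved in the last Proposition collapses to the transition operator $P$ of the birth--death walk on $\Z_{\ge 0}$ with
\[
p(n,n+1)=\frac{[n+2]}{[2][n+1]},\qquad p(n,n-1)=\frac{[n]}{[2][n+1]},
\]
which sum to $1$ by $[n]+[n+2]=[2][n+1]$. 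A short computation with $\t{Tr}^{\Lambda_k}$ shows $P$ is reversible for the measure $\omega_n=[n+1]^2$, that $S_k$ acts as $P$, and that $S_k^*$ acts as $\delta^2P$ (the level-$k$ and level-$(k-1)$ trace inner products differ by the factor $\delta^2$, matching $\Gamma_k\ul\mu^{k-1}=\delta^2\ul\mu^k$). Hence $S_k^*S_k=\delta^2P^2$, and since the $\Gamma$-loop is the scalar $\delta^2$, the remark following \Cref{Xrelandloop} identifies the flat BQFS as exactly those $\{\eta^{(k)}\}$ with $\eta^{(k)}\in\ker(P^2-1)$ eventually.

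Second, the transience input. The ratios $q_n/p_n=[n]/[n+2]$ give $\sum_{n\ge 1}\prod_{j=1}^{n}\tfrac{q_j}{p_j}=[2]\sum_{n}\tfrac{1}{[n+1][n+2]}<\infty$ for $\delta>2$, so $P$ is transient (it drifts to $+\infty$); this is precisely the regime in which Izumi's Markov operator for $TL_\delta$ has a non-trivial Poisson boundary \cite{Izm}. On the scalar part transience is not yet enough: imposing the boundary relation $a(1)=\lambda a(0)$ forced by the recursion at $n=0$, one checks that the only bounded solutions of $Pa=\lambda a$ with $|\lambda|=1$ occur at $\lambda=\pm 1$ (the constant and the alternating sequence), both lying in $\ker(P^2-1)$ and hence flat. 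Thus every non-flat BQFS must have a nonzero operator-valued component, where $\eta_{X_n}$ ranges over $\t{End}(X_{n-1}\oplus X_{n+1})\cong\C\oplus\C$.

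Finally, I would pass to the operator-valued loop operator---the two-wide matrix Jacobi operator obtained by carrying out the contraction of the last Proposition with the basis vectors $\beta\in\t{ONB}(X_m,X_{m\pm 1}\ot X)$ retained---and produce a bounded $S_\bullet$-harmonic sequence outside $\ker(S_k^*S_k-\delta^2)$. Concretely I expect a unimodular $\lambda\ne\pm1$ admitting a bounded operator-valued $\lambda$-eigenvector $a_\lambda$ (the extra internal dimension now permits the $n=0$ relation to be met without exciting the growing mode), so that $\{\lambda^{-k}a_\lambda\}$ is a BQFS; alternatively one takes a weak-$*$ subsequential limit of $S_{k+1}\cdots S_{k+l}$ applied to a fixed non-flat generator, uniformly bounded by \Cref{Sprop}(iii) and convergent by transience along the lines of Izumi's boundary construction. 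Non-flatness is then immediate from \Cref{Xrelandloop}(b): the pair $(S_k\eta^{(k)},\eta^{(k)})$ satisfies the exchange relation only when $S_k^*S_k\eta^{(k)}=\delta^2\eta^{(k)}$, i.e.\ when $\eta^{(k)}$ sits in the top eigenspace, which the constructed element does not. The main obstacle is exactly this last step: the scalar reduction recovers only flat sequences, so the weight of the proof rests on showing that the operator-valued loop operator---equivalently Izumi's Markov operator---has a Poisson boundary strictly larger than its $\delta^2$-eigenspace, and it is here that transience of the $A_\infty$ walk for $\delta>2$ is indispensable.
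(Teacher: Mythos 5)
Your proposal does not close: the existence of a non-flat BQFS---which is the entire content of the proposition---is never established. Everything before your last paragraph is reduction, and the last paragraph is explicitly conjectural (``I expect a unimodular $\lambda\neq\pm1$ admitting a bounded operator-valued eigenvector\dots'', ``alternatively one takes a weak-$*$ subsequential limit\dots''); you yourself flag this step as the main obstacle. Moreover, the eigenvector strategy cannot be run as stated: the loop operators $S_k$ act between the spaces $\t{End}(\Lambda_k)$, whose dimensions grow with $k$ (the number of simples of $\mcal M_k$ grows), so there is no single fixed operator to which a spectral argument such as \Cref{harmonicprop} applies; that machinery is reserved for the periodic situation of \Cref{periodicOcneanucompact}, and the TL $0$-cell is not periodic. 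There is also a flaw in your scalar analysis: the identity $S_k^*S_k=\delta^2P^2$ on scalar transformations presumes that $S_k^*$ preserves the scalar subspace, which is false in general---if $\kappa_{Y_1}=g(Y_1)1$, then $(S_k^*\kappa)_Y$ is a sum of the form $\sum_{Y_1}g(Y_1)p_{Y_1}$ of projections with different coefficients, not a multiple of the identity---so flatness must be tested by the criterion $S_k^*S_k\eta=\delta^2\eta$ inside the full space $\t{NT}(\Lambda_k,\Lambda_k)$, per \Cref{Xrelandloop}(b) and the remark following it. Your conclusion that scalar BQFS are all flat happens to be correct, but for a different reason: the bipartite grading of the TL Bratteli diagram (each $\mcal M_k$ contains Jones--Wenzl objects of only one parity) makes your $\lambda=-1$ sequence literally equal to $-1$ times the identity sequence at every level, so the scalar part contributes only one dimension.

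The paper fills the gap by a dimension count rather than a construction, and this is the ingredient your proposal is missing. First, it invokes the precise form of Izumi's result \cite{Izm}: the space $H^\infty(D,P)$ of harmonic elements of Izumi's Markov operator, which is identified with $\textbf{UC}^{\t{tr}}_2(\Lambda_\bullet,\Lambda_\bullet)$, i.e.\ with the space of \emph{all} BQFS, is exactly $2$-dimensional for $TL_\delta$ with $\delta>2$. Second, it computes the flat part: by \Cref{BQFSflat} and \Cref{PAflat}, flat sequences correspond to elements of $\left[\t{LI}\left(P_{-\infty}\right)\right]'\cap P_{+\infty}$ in the unshaded TL planar algebra, and a short diagrammatic argument (commuting an element $x\in P_{+k}$ past suitably chosen cup/cap elements and capping off, which costs a factor $\delta^{-k}$) forces $x$ into $P_1=\C$, so the flat part is $1$-dimensional. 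Since $2>1$, a non-flat BQFS exists. If you wish to salvage your framework, replace the attempted explicit construction by exactly these two inputs: Izumi's $2$-dimensionality supplies the second harmonic element, and the planar-algebra computation of the flat part (which your scalar analysis does not provide, since a priori flat sequences need not be scalar) shows that this second element cannot be flat.
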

\begin{proof}
In \cite{Izm}, Izumi showed that $ H^\infty (D,P) $ (and hence $ \t{End}_{\normalfont{\textbf{UC}^\t{tr}}} (\Lambda_\bullet) $) is $ 2 $ dimesional.
So, it is enough to show that $ \t{End}_{\textbf{UC}} \left( \Lambda_\bullet\right) $ is the one-dimensional space generated by the identity in it.
Again, by \Cref{PAflat} and \Cref{BQFSflat}, this boils down to showing that $[\t{LI} \left( P_{ \infty} \right)]' \cap P_{\infty}$ is 1-dimensional where $ P =\left\{P_k\right\}_{k\geq 0} $ denotes the unshaded planar algebra associated to the symmetrically self-dual bimodule $ X $.

Let $x \in [\t{LI} \left( P_{- \infty} \right)]' \cap P_{+ \infty} $.
Then there exists some $k \geq 0$ such that $x \in [\t{LI} \left( P_{- \infty} \right)]' \cap P_{+k}$,
equivalently
\begin{equation}\label{xycommmutative}
\raisebox{-13mm}{
\begin{tikzpicture}
\draw (.4,-.9) to (.4,.7);
\draw (-.5,-1.7) to (-.5,.7);
\draw (-.2,-1.7) to (-.2,.7);
\draw (1.3,.7) to (1.3,-1.7);
\draw (.7,.7) to (.7,-.9);
\node[draw,fill=white,thick,rounded corners,minimum width=35] at (-.05,0) {$x$};
\node[draw,fill=white,thick,rounded corners,minimum width=50] at (.55,-.9) {$y$};
\node at (.15,.3) {$\cdots$};
\node at (.15,-.45) {$\cdots$};
\node at (1.05,-.45) {$\cdots$};
\node at (1.05,-.1) {$l$};
\node at (.1,.55) {$k$};
\node at (.55,-1.3) {$\cdots$};
\node at (.5,-1.55) {$k+l$};
\end{tikzpicture}} \ = \ 
\raisebox{-13mm}{
	\begin{tikzpicture}
		\draw (.4,0) to (.4,-1.7);
		\draw (-.5,-1.7) to (-.5,.7);
		\draw (-.2,-1.7) to (-.2,.7);
		\draw (1.3,.7) to (1.3,-1.7);
		\draw (.7,-1.7) to (.7,0);
		\node[draw,fill=white,thick,rounded corners,minimum width=35] at (-.05,-.9) {$x$};
		\node[draw,fill=white,thick,rounded corners,minimum width=50] at (.55,0) {$y$};
		\node at (.15,-1.25) {$\cdots$};
		\node at (.15,-.45) {$\cdots$};
		\node at (1.05,-.45) {$\cdots$};
		\node at (1.05,-.7) {$l$};
		\node at (.1,-1.5) {$k$};
		\node at (.55,.35) {$\cdots$};
		\node at (.5,.6) {$k+l$};
\end{tikzpicture}}
\ \t{for all} \ y \in P_{-\left(k+l \right)} \ \t{and} \ l \geq 0 \ .
\end{equation}
Using \Cref{xycommmutative} we get $ x = \delta^{-k}
\raisebox{-7mm}{
\begin{tikzpicture}
\draw (-.2,-.6) to (-.2,1);
\draw[line width=.6mm] (.2,-.6) to (.2,.3) to[out=90,in=90] (.6,.3) to (.6,-.1) to[out=-90,in=-90] (1,-.1) to (1,1);
\draw[line width=.6mm] (.2,.7) to[out=-90, in=-90] (.6,.7) to[out=90, in=90] (.2,.7);
\node[draw,fill=white,thick,rounded corners,minimum width=2] at (0,0) {$x$};
\end{tikzpicture}} = \delta^{-k} \raisebox{-7mm}{
\begin{tikzpicture}
\draw (-.2,-.6) to (-.2,1);
\draw[line width=.6mm] (1,-.6) to  (1,1);
\draw[line width=.6mm] (.2,-.1) to (.2,.5) to[out=90, in=90] (.6,.5) to (.6,-.1) to[out=-90, in=-90] (.2,-.1);
\node[draw,fill=white,thick,rounded corners,minimum width=2] at (0,.2) {$x$};
\end{tikzpicture}}
\ \in P_1$ where the thick line denotes $k$ many parallel strings.
Since $ P_1 $ is one-dimesnional, $ x $ must be a scalar multiple of identity.
\end{proof}

\subsection{Directed graphs}\ 

We will discuss 
an example arising out of directed gaphs (where we allow multiple edges from one vertex to the other).
Further, we assume the directed graphs are `strongly connected', that is, for $ v , w $ in the vertex set, there exists a path from $ v $ to $ w $. As a result, the corresponding adjacency matrices are irreducible and thereby, each possesses a Perron-Frobenius (PF) eigenvalue and PF eigenvectors.
In terms of category and functor, it is equivalent to consider a finite semisimple category $ \mcal M $ and a $ * $-linear functor $ \Gamma \in \t{End} \left(\mcal M \right) $ such that for simple $ v,w \in \t{ob} (\mcal M) $, there exists $ k\in \N $ satisfying $ \mcal M \left( v , \Gamma^k w \right) \neq \{0\} $.
From such a $ \Gamma $, we build the $ 0 $-cell $ \left\{\mcal M_{k-1} \os {\displaystyle \Gamma_k} \lra \mcal M_k \right\}_{k\geq 1} $ in $\textbf{UC}^\t{tr}$ where $ \mcal M_k = \mcal M $ and $ \Gamma_k = \Gamma $ for all $ k $, and the weight $ \ul \mu^k $ on $ \mcal M_k $ is given by $ d^{-k} \ul \mu $ where $ d $ is the PF eigenvalue of the adjacency matrix of $ \Gamma' $ and $ \ul \mu $ is PF eigenvector whose sum of the coordinates is $ 1 $.

\vspace{4mm}
Consider the $ 1 $-cell $ \Lambda_\bullet $ in ${\textbf{UC}^\t{tr}} \left( \Gamma_\bullet , \Gamma_\bullet \right) $ by setting $ \Lambda_k \coloneqq \Gamma $ for $ k \geq 0 $, with unitary connection $ W^k \coloneqq 1_{\Gamma^2}  $.
Note that the loop operator $ S_k : \t{End} \left(\Lambda_k\right) \ra \t{End} \left(\Lambda_{k-1}\right) $ is independent of $ k $ because (although the weight on $ \mcal M_k $ varies as $ k $ varies) our solution to conjugate equation for the duality of $ \Gamma_k : \mcal M_{k-1} \ra \mcal M_k $ is independent
; let us rename it as $ S: \t{End} \left(\Gamma\right) \ra \t{End} \left(\Gamma\right) $.
More explicity, $ S \eta = d^{-1} \raisebox{-4mm}{
\begin{tikzpicture}
\draw (-.4,-.3) to[out=-90,in=-90] (0,-.3) to (0,.3) to[out=90,in=90] (-.4,0.3) to (-.4,-.3);
\draw (.4,-.5) to (.4,.5);
\node[draw,fill=white,thick,rounded corners] at (0,0) {$\eta$};
\node at (.6,0) {$\Gamma$};
\end{tikzpicture}}
$ for all $ \eta \in \t{End} \left(\Gamma\right) $ where we use tracial solution to conjugate equation for $ \Gamma $ commensurate with $ \left(\ul \mu , \ul \mu\right) $.
Clearly, the range of $ S $ is contained in $ \left\{\xi \bigodot 1_\Gamma : \xi \in \t{End} \left( \t{id}_{\mcal M}\right) \right\} $.
Then an $ S $-harmonic sequence $ \left\{ \xi_k \bigodot 1_\Gamma\right\}_{k\geq 0} $ is completely captured by a sequence $ \left\{\xi_k\right\}_{k\geq 0} $ in the finite dimensional abelian C*-algebra $ \t{End} \left( \t{id}_{\mcal M} \right)$ satisfying $ d^{-1} \raisebox{-4mm}{
\begin{tikzpicture}
\draw (0,0) circle (3ex);
\node[draw,fill=white,thick,rounded corners] at (0,0) {$\xi_k$};
\node at (.7,0) {$\Gamma$};
\node at (-.75,0) {$\Gamma'$};
\end{tikzpicture}} = \xi_{k-1} $ for all $ k\geq 1 $.
The operator $ X \coloneqq d^{-1} \raisebox{-4mm}{
	\begin{tikzpicture}
		\draw (0,0) circle (3ex);
		\node[draw,fill=white,thick,rounded corners] at (0,0) {$\phantom{\xi_k}$};
		\node at (.7,0) {$\Gamma$};
		\node at (-.75,0) {$\Gamma'$};
\end{tikzpicture}} : \t{End} \left( \t{id}_{\mcal M} \right) \ra \t{End} \left( \t{id}_{\mcal M} \right) $ is UCP and $ \left\{\xi_k\right\}_k $ is $ X $-harmonic.
Using the categorical trace on natural transformations, the operator $ X $ has norm at most $ 1 $ and so is the spectral radius.
Applying \Cref{harmonicprop}, the bounded $ X $-harmonic sequences are linear span of elementary ones, namely, $ \left\{c^{-k} \  \xi \right\}_{k\geq 0} $ where $  \xi $ is an eigenvector of $ X $ for the eigenvalue $ c $ such that $ \abs{c} = 1$.
However, it is unclear whether such an elementary $ X $-harmonic sequence contribute towards a flat sequence from $ \Lambda_\bullet $ to $ \Lambda_\bullet $; a necessary condition for this is $c\,   1_\Gamma \bigodot \xi  = d\,  \xi \bigodot 1_\Gamma $.
A straight forward deduction from this condition will tell us that flat sequences are simple scalar multiples of the identity.

\comments{
\vspace{4mm}
(ii) Our second example of a $ 1 $-cell in $\textbf{UC}^{\,tr} \left(\Gamma_\bullet,\Gamma_\bullet\right) $ is given by $ \Lambda_k \coloneqq \t{id}_{\mcal M} \in \t{End} \left(\mcal M_k\right) $ with the unitary connection $ W_k \coloneqq W $ for all $ k $ where $ W $ is a fixed unitary in $ \t{End} \left(\Gamma \right) $.
Like example (i), the loop operators are independent of $ k $ in here as well; we denote it by $ S: \t{End} \left( \t{id}_{\mcal M} \right) \ra \t{End} \left( \t{id}_{\mcal M} \right) $.
In particular, $ S = X $ (as defined in example (i)) and hence independent of the unitary $ W $.
So, any BQFS from $ \Lambda_\bullet $ to $ \Lambda_\bullet $ is given by a bounded $ X $-harmonic sequence; the elementary ones share the same necessary condition as in example (i) for being flat.

\vspace{4mm}
}

In 
the above example, if we would have started with a finite connected undirected graph $ \Gamma $, then by \Cref{periodicOcneanucompact}, all BQFS from $ \Lambda_\bullet $ to $ \Lambda_\bullet $ would have been flat.

\subsection{Vertex models}\ 

Let $ \mcal M $ be the category of finite dimensional Hilbert spaces, and $ \Gamma \coloneqq \t{id}_{\mcal M} \otimes \ell^2 (X) \in \t{End} \left(\mcal M \right)  $, $ \Lambda \coloneqq \t{id}_{\mcal M} \otimes \ell^2 (Y) \in \t{End} \left(\mcal M \right) $ be two functors where $ X , Y $ are some nonempty finite sets.
Consider the $ 0 $-cell $ \Gamma_\bullet $ in $\textbf{UC}^\t{tr}$ defined by $ \mcal M_k \coloneqq \mcal M $ and $ \Gamma_k = \Gamma $ for all $ k$ where the weight of $ \C $ in $ \mcal M_k $ is $ \abs{X}^{-k} $.
For a $ 1 $-cell in $\textbf{UC}^\t{tr} \left( \Gamma_\bullet , \Gamma_\bullet \right)$, we consider $ \left\{\Lambda_k \coloneqq \Lambda \right\}_{k\geq 0} $ with the unitary connections $ W^k \coloneqq \t{id}_\bullet \otimes U F $ where $ U : \ell^2 (X) \otimes \ell^2(Y) \ra \ell^2(X) \otimes \ell^2(Y)  $ is a unitary and $ F : \ell^2 (Y) \otimes \ell^2(X) \ra \ell^2(X) \otimes \ell^2(Y)  $ is canonical flip map.
Note that $ \t{End} \left(\Lambda \right)  \cong M_Y (\C) $ and the loop operators are independent of $ k $; let us denote it by $ S: M_Y (\C) \ra M_Y (\C) $.
One can deduce the following two formula,\\
\[
\left(S\eta \right)_{y,y'} =  \abs{X}^{-1} \us{\substack{{x_1,x_2 \in X}\\{y_1,y_2\in Y}}}{\sum} \overline{U^{x_2 y_2}_{x_1 y}}\ \eta_{y_2 y_1}\ U^{x_2 y_1}_{x_1 y'} \ \ \t{ and } \ \ 
\left(S^{*}\eta \right)_{y,y'} =  \abs{X}^{-1} \us{\substack{{x_1,x_2 \in X}\\{y_1,y_2\in Y}}}{\sum} U^{x_1 y}_{x_2 y_2}\ \eta_{y_2 y_1} \ \overline {U^{x_1 y'}_{x_2 y_1}}
\]
for all $\eta \in M_{Y}(\C)$ and $y,y' \in Y $.
By \Cref{periodicOcneanucompact}, every BQFS from $ \Lambda_\bullet $ to $ \Lambda_\bullet $ becomes flat.

\Contact
\end{document}